\documentclass[A4paper,oneside,11pt]{article}
\usepackage{amssymb,amsthm,amsmath,marvosym,amsfonts,fontenc}
\usepackage[normalem]{ulem}
\usepackage[usenames,dvipsnames]{color}
\usepackage{enumerate,marginnote}
\usepackage{mdwlist}
\usepackage{subfigure}
\usepackage{pifont}
\usepackage{bm}
\usepackage{multind}\makeindex{general}\makeindex{mathsymbols}
\usepackage{caption}
\captionsetup{margin=26pt,font=small,labelfont=bf}
\usepackage{fullpage} 
\usepackage{epsfig,psfrag}
\usepackage{fancyhdr} 
\setlength{\headheight}{14pt} 
\setlength{\headsep}{9pt} 

\usepackage{xr}
\externaldocument[p0.]{LKS-CUT0}
\externaldocument[p1.]{LKS-CUT1}
\externaldocument[p3.]{LKS-CUT3}

\pagestyle{fancy}
\numberwithin{equation}{section}
\numberwithin{table}{section} 
\numberwithin{figure}{section}
\newtheorem{theorem}{Theorem}[section]
\newtheorem{subtheorem}{Theorem}[theorem] 
\newtheorem{subsubtheorem}{Theorem}[subtheorem]
\newtheorem{lem}[theorem]{Lemma}
\newtheorem{lemma}[theorem]{Lemma}
\newtheorem{conjecture}[theorem]{Conjecture}

\newtheorem{fact}[theorem]{Fact}
\newtheorem{remark}[theorem]{Remark}
\newtheorem{definition}[theorem]{Definition}
\newtheorem{setting}[theorem]{Setting}

\theoremstyle{remark}
\newtheorem{claim}[subtheorem]{Claim}

\newtheorem{subclaim}[subsubtheorem]{Subclaim}

\newcommand{\By}[2]{\overset{\mbox{\tiny{#1}}}{#2}}
\newcommand{\ByRef}[2]{   \By{\eqref{#1}}{#2} }

\newcommand{\leBy}[1]{    \By{#1}{\le} }
\newcommand{\geBy}[1]{    \By{#1}{\ge} }

\newcommand{\gByRef}[1]{  \ByRef{#1}{>} }
\newcommand{\leByRef}[1]{ \ByRef{#1}{\le} }
\newcommand{\geByRef}[1]{ \ByRef{#1}{\ge} }

\newcommand{\Referee}[1]{}
\newcommand{\BUG}[1]{#1}

\let\sm\setminus
\let\subset\subseteq 
\let\supset\supseteq 
\let\epsilon\varepsilon
\let\sharp\#
\def\dcup{\dot\cup} 
\renewcommand{\leq}{\leqslant}
\renewcommand{\le}{\leqslant}
\renewcommand{\geq}{\geqslant}
\renewcommand{\ge}{\geqslant}

\let\oldmarginpar\marginpar
\renewcommand\marginpar[1]{\-\oldmarginpar[\raggedleft\footnotesize #1]%
{\raggedright\footnotesize #1}}

\newcommand{\HIDDENPROOF}[1]{
%
}

\newcommand{\HIDDENTEXT}[1]{
}

\title{The Approximate Loebl--Koml\'os--S\'os Conjecture III:\\
The finer structure of LKS graphs} 
\author{Jan
Hladk\'y
\thanks{\emph{Corresponding author.} Institute of Mathematics, Academy of Science of the Czech Republic. \v Zitn\'a 25, 110 00, Praha, Czech Republic. The Institute of Mathematics of the Academy of Sciences of the Czech Republic is supported by RVO:67985840. Email:
\texttt{honzahladky@gmail.com}.
The research leading to these results has received funding from the People Programme (Marie Curie Actions) of the European Union's Seventh Framework Programme (FP7/2007-2013) under REA grant agreement umber 628974. Much of the work was done while supported by an EPSRC postdoctoral fellowship while affiliated with DIMAP and Mathematics Institute, University of
Warwick.}
\quad 
J\'anos Koml\'os\thanks{Department of Mathematics, Rutgers University, 110 Frelinghuysen Rd., Piscataway, NJ~08854-8019, USA} 
\quad 
Diana Piguet\thanks{Institute of Computer Science, Czech Academy of Sciences, Pod Vod\'arenskou v\v e\v z\'i 2, 182~07 Prague, Czech Republic. With institutional support RVO:67985807. Supported by the Marie Curie fellowship FIST, DFG grant TA 309/2-1,  Czech Ministry of Education project 1M0545, EPSRC award EP/D063191/1, and EPSRC Additional Sponsorship EP/J501414/1.
	The research leading to these results has received funding from the European Union Seventh
	Framework Programme (FP7/2007-2013) under grant agreement no. PIEF-GA-2009-253925.
    The work leading to this invention was supported by the European Regional Development Fund (ERDF), project ``NTIS -- New Technologies for Information Society'', European Centre of Excellence, CZ.1.05/1.1.00/02.0090. Partially supported by the Czech Science Foundation, grant number GJ16-07822Y.}
    \\ 
    Mikl\'os Simonovits\thanks{R\'enyi
    Institute, Budapest, Hungary. Supported by OTKA~78439, OTKA~101536, ERC-AdG.~321104} 
\quad 
Maya Stein\thanks{Department of Mathematical Engineering,
University of Chile, Santiago, Chile.  Supported by Fondecyt Iniciacion grant 11090141, Fondecyt Regular grant 1140766 and CMM Basal.}
\quad 
Endre Szemer\'edi\thanks{R\'enyi
	Institute, Budapest, Hungary. Supported by OTKA~104483 and ERC-AdG.~321104}}

\def\semiregular{regularized }
\def\semiregulars{regularized}

\def\kknnaagg{hub }
\def\kknnaaggss{hubs }

\def\kknnaaggssNOSPACE{hubs}

\newcommand{\PARAMETERPASSING}[2]{{\mathrm{#1}\ref{#2}}}

\def\NN{\mathbb{N}}

\newcommand{\M}{\mathcal M}\newcommand{\V}{\mathcal V}
\newcommand{\eps}{\epsilon}


\def\probability{\mathbf{P}}
\def\expectation{\mathbf{E}}

\def\mindeg{\mathrm{mindeg}}
\def\maxdeg{\mathrm{maxdeg}}
\def\density{\mathrm{d}}
\def\neighbour{\mathrm{N}}

\def\shadow{\mathrm{shadow}}
\newcommand{\treeclass}[1]{\mathbf{trees}({#1})}

\newcommand{\LKSgraphs}[3]{\mathbf{LKS}({#1},{#2},{#3})}
\newcommand{\LKSmingraphs}[3]{\mathbf{LKSmin}({#1},{#2},{#3})}
\newcommand{\LKSsmallgraphs}[3]{\mathbf{LKSsmall}({#1},{#2},{#3})}

\newcommand{\smallvertices}[3]{\mathbb{S}_{{#1},{#2}}({#3})}
\newcommand{\largevertices}[3]{\mathbb{L}_{{#1},{#2}}({#3})}

\newcommand{\JUSTIFY}[1]{\mbox{\tiny{(#1)}}\quad}

\def\Gcapt{G_\nabla}
\def\GD{G_{\mathcal{D}}}
\def\Gblack{G_{\mathrm{reg}}}
\def\Gexp{G_{\mathrm{exp}}}
\def\BGblack{\mathbf{G}_{\mathrm{reg}}}
\def\smallatoms{\mathbb{E}}
\def\clusters{\mathbf{V}}
\def\class{\nabla}
\def\HugeVertices{\mathbb{H}}
\def\DenseSpots{\mathcal{D}}

\def\YA{\mathbb{YA}}
\def\YB{\mathbb{YB}}
\def\WantiC{V_{\leadsto\HugeVertices}}

\def\NUP{\mathrm{N}^{\uparrow}}
\def\NDOWN{\mathrm{N}^{\downarrow}}
\def\Vgood{V_\mathrm{good}}

\def\exceptVertSplit{\bar V}
\def\exceptSemSplit{\bar \V}
\def\exceptClustSplit{\bar \clusters}

\def\gC{\mathcal{C}}
\def\gP{\mathbb{J}}
\def\gPatoms{\mathbb{J}_{\smallatoms}}

\def\shrubA{\mathcal S_{A}}
\def\shrubB{\mathcal S_{B}}


\def\XA{\mathbb{XA}}
\def\XB{\mathbb{XB}}
\def\XC{\mathbb{XC}}

\def\Mgood{\M_{\mathrm{good}}}
\def\NAtom{{\mathcal N_{\smallatoms}}}

\newcommand{\colouringp}[1]{\mathbb{A}_{#1}}
\newcommand{\colouringpI}[1]{^{\restriction{#1}}}
\def\colouringpartition{(\colouringp{0},\colouringp{1},\colouringp{2})}
\newcommand{\proporce}[1]{\mathfrak{p}_{#1}}
\def\shadowsplit{\mathbb{F}}


\def\largeintoatoms{V_{\leadsto\smallatoms}}
\def\clustersintoatoms{\clusters_{\leadsto\smallatoms}}

\def\clustersize{\mathfrak{c}}

\def\LargeTen{\mathcal L^*}

\def\epsilonD{\pi}
\def\alphaD{\widehat{\alpha}}


\def\AXA{\mathbb{A}}
\def\aXa{\mathfrak{q}}
\def\VXV{\mathbb{B}}

\renewcommand{\today}{}
\date{}

\begin{document}
\pagenumbering{roman}
\maketitle
\begin{abstract}
This is the third of a series  of four papers in which
we prove the following relaxation of the
Loebl--Koml\'os--S\'os Conjecture: For every~$\alpha>0$
there exists a number~$k_0$ such that for every~$k>k_0$
 every $n$-vertex graph~$G$ with at least~$(\frac12+\alpha)n$ vertices
of degree at least~$(1+\alpha)k$ contains each tree $T$ of order~$k$ as a
subgraph. 

In the first paper of the series, we gave a
decomposition of the graph~$G$  into several parts of different characteristics. In the second paper, we found a combinatorial structure inside the decomposition. In this paper, we will give a refinement of this structure. In the forthcoming fourth paper, the refined structure will be used for embedding the tree $T$.
\end{abstract}

\bigskip\noindent
{\bf Mathematics Subject Classification: } 05C35 (primary), 05C05 (secondary).\\
{\bf Keywords: }extremal graph theory; Loebl--Koml\'os--S\'os Conjecture; tree embedding; regularity lemma; sparse graph; graph decomposition.

\newpage

\rhead{\today}

\newpage

\tableofcontents
\newpage
\pagenumbering{arabic}
\setcounter{page}{1}

\section{Introduction}\label{sec:intro2}

This is the third of a series of four papers~\cite{cite:LKS-cut0, cite:LKS-cut1, cite:LKS-cut2, cite:LKS-cut3} 
in which we provide an approximate solution of the Loebl--Koml\'os--S\'os Conjecture. The conjecture reads as follows.
 
\begin{conjecture}[Loebl--Koml\'os--S\'os Conjecture 1995~\cite{EFLS95}]\label{conj:LKS}
Suppose that $G$ is an $n$-vertex graph with at least $n/2$ vertices of degree more than $k-2$. Then $G$ contains each tree of order $k$.
\end{conjecture}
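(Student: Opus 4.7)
The plan is to leverage the approximate structural decomposition built up across the preceding papers of this series and to try to close the $\alpha$ gap by combining stability analysis with a direct treatment of the extremal configurations. Concretely, given $G$ on $n$ vertices with at least $n/2$ vertices of degree more than $k-2$ and an arbitrary tree $T$ of order $k$, I would aim to embed $T$ into $G$ in two passes.

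First, I would re-run the decomposition of the first paper, the combinatorial skeleton of the second paper, and the refined structure that is the subject of the present paper, but with the slack parameter $\alpha$ replaced by a function $\alpha(k)$ tending to $0$ slowly as $k \to \infty$. If the decomposition of $G$ exhibits even a small amount of flexibility in one of the standard places --- a nontrivial expander component, a dense-spot matching of positive measure, or a set of \emph{huge} vertices larger than any fixed power of $\alpha$ --- then the embedding machinery of the fourth paper should still succeed, provided one verifies that no accumulated error term exceeds a $(1+o(1))$ factor along the chain of reductions. This would reduce the exact conjecture to the case where the decomposition of $G$ already sits in a \emph{pure} extremal shape.

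Second, I would classify the extremal shapes. Standard examples show that the essentially tight graphs for LKS are: (i) disjoint unions of cliques of order $k-1$; (ii) blow-ups of $K_{k-1,k-1}$ and related bipartite configurations; and (iii) small modifications of (i) and (ii) near the boundary count $n/2$. For each shape I would embed $T$ by hand, splitting into subcases based on invariants of $T$ (balance of its two color classes, existence of a bare path, number of leaves near a centroid); a greedy or matching-based embedding of $O(k)$ steps should suffice in each subcase, since in a near-rigid $G$ each choice of the next vertex is essentially forced and its feasibility can be checked via Hall's condition on the skeleton.

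The main obstacle, by a wide margin, will be the first pass. The $\alpha$-slack in the present series is used in essentially every reduction: to round cluster sizes, to absorb atypical vertices into the regular skeleton, to match shrubs to the structural graph, and to invoke the Regularity Lemma while tolerating exceptional edges. Each reduction loses a positive constant factor that currently prevents taking $\alpha \to 0$. Removing these losses would require sharpened, $\alpha$-free analogues of key ingredients --- in particular an LKS-tailored refinement of the sparse Regularity Lemma and a sharp version of the shrub-to-skeleton matching from the second paper --- and it is exactly here that the conjecture has resisted attack for over two decades. A secondary technical obstacle is that the extremal shapes depend on the parity of $k$ and on the precise bipartition of $T$, so the stability-to-extremal transition must be made uniform over all trees of order $k$, which in turn forces the direct embedding in the extremal case to be treated as a family parameterised by these invariants rather than as a single argument.
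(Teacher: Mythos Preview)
The statement you are attempting to prove is Conjecture~\ref{conj:LKS}, and it is stated in the paper precisely as a \emph{conjecture}: the paper does not prove it, and there is no ``paper's own proof'' to compare your proposal against. The main result of the series is Theorem~\ref{thm:main}, the approximate version with slack $\alpha>0$; the exact statement remains open (and indeed the dense case was only settled exactly in separate work such as~\cite{HlaPig:LKSdenseExact}).

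Your proposal is therefore not a proof but a research programme, and you say as much yourself: you identify that every reduction in the series loses a positive constant, that removing these losses would require ``$\alpha$-free analogues'' of the sparse regularity lemma and the matching lemmas, and that ``it is exactly here that the conjecture has resisted attack for over two decades.'' A proof proposal that explicitly names its own main step as an unsolved problem is not a proof. The second pass --- a case analysis of extremal configurations --- is a reasonable strategy and is indeed how exact results of this type are typically finished, but it only becomes relevant once the first pass (stability with vanishing slack) is established, and that is the entire difficulty.
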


We discuss the history and state of the art in detail  in the first paper~\cite{cite:LKS-cut0} of our series. The main result, which will be proved in~\cite{cite:LKS-cut3}, is the 
approximate solution of the Loebl--Koml\'os--S\'os Conjecture, namely the following.

\begin{theorem}[Main result~\cite{cite:LKS-cut3}]\label{thm:main}
For every $\alpha>0$ there exists  $k_0$ such that for any
$k>k_0$ we have the following. Each $n$-vertex graph $G$ with at least
$(\frac12+\alpha)n$ vertices of degree at least $(1+\alpha)k$ contains each tree $T$ of
order $k$.
\end{theorem}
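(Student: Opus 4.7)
The plan is to combine the three preparatory papers with a careful embedding argument at the end. First I would pass to a vertex-minimal counterexample $G \in \LKSmingraphs{n}{k}{\alpha}$; this is standard and eliminates parasitic low-degree vertices that would otherwise have to be handled ad hoc. Applying the decomposition from~\cite{cite:LKS-cut0} yields a sparse-regularity-style partition of $G$: a cluster graph $\Gblack$ carrying $\eps$-regular pairs of density of order $k/n$, a family of dense spots $\DenseSpots$ isolating places of much-higher-than-average local density, an expander-like residue $\Gexp$, and a small exceptional set. The degree hypothesis then translates into a lower bound on the total weight of the structured portion.

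Second, using the coarse combinatorial structure from~\cite{cite:LKS-cut1}, I would extract inside this scaffold a large generalized-matching object $\M$ on cluster-level vertices, together with accompanying connectivity guarantees. The role of the present paper is to \emph{refine} that object so that cluster vertices are also partitioned into a constant number of flavours, say $\colouringp{0},\colouringp{1},\colouringp{2}$, with side-dependent density information attached to each edge of $\M$. The point of the refinement is that the two bipartition classes of the tree $T$ may then be injected into disjoint flavour classes, regardless of how unbalanced $T$ is or where the small-degree vertices of $T$ are located.

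Third, the embedding proper (in~\cite{cite:LKS-cut3}) should proceed by cutting $T$ along $O(1/\alpha)$ well-chosen cut-vertices into shrubs $\shrubs$ of bounded size, assigning the cut-vertices to designated seed locations determined by the flavour colouring, and then greedily embedding each shrub into a regular pair along $\M$, while routing between seeds through $\Gexp$ or $\DenseSpots$ whenever a shrub is too large to fit in a single regular pair. The slack between $(1+\alpha)k$ and $k$ is what powers the greedy step.

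The main obstacle, which in fact motivates the entire series and the refinement produced in this paper, is the near-extremal case: graphs close to a disjoint union of cliques of order about $k+1$, or to other Erd\H os--Gallai-type configurations. In such graphs the matching $\M$ is forced to be lopsided, and a naive embedding of $T$ would fail because the colour classes of $T$ would not fit. The refined flavour structure is designed precisely so that, after a case analysis on the shape of $T$ (path-like versus bushy, balanced versus unbalanced bipartition), at least one admissible flavour assignment always exists — and proving the existence of such a refinement inside \emph{every} graph meeting the LKS hypothesis is, I expect, the hardest component of the whole argument.
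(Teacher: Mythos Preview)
Your outline captures the four-paper pipeline at the coarsest level, but your description of what \emph{this} paper contributes is substantively wrong, and that misunderstanding would leave a real gap when you reach the embedding.

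First, a minor correction: the reduction is to $\LKSsmallgraphs{n}{k}{\eta}$, an \emph{edge}-minimality relaxation (Definition~\ref{def:LKSsmall}), not a vertex-minimal counterexample. More importantly, the sparse decomposition has components you do not mention and which drive the entire case analysis here: the set $\HugeVertices$ of vertices of enormous degree (at least $\Omega^{**}k$) and the avoiding set $\smallatoms$. Whether or not $\HugeVertices$ carries a positive proportion of the useful edges is the first major fork in the proof (Lemma~\ref{lem:ConfWhenCXAXB} versus Lemmas~\ref{lem:ConfWhenNOTCXAXB}--\ref{lem:ConfWhenMatching}), and it determines which of the ten configurations $\mathbf{(\diamond1)}$--$\mathbf{(\diamond10)}$ one ends up in.

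Second, your interpretation of the random splitting $(\colouringp{0},\colouringp{1},\colouringp{2})$ is incorrect. These sets are \emph{not} receptacles for the two colour classes of $T$; they are receptacles for the knags (cut-vertices), the internal shrubs, and the end shrubs of the fine partition of $T$, with proportions $\proporce{0}:\proporce{1}:\proporce{2}$ chosen to match the sizes of those pieces (see Definition~\ref{def:proportionalsplitting} and the discussion around it). The bipartition of $T$ is handled inside each configuration, typically via the super-regular or expander pair $(V_0,V_1)$ in Preconfigurations $\mathbf{(reg)}$/$\mathbf{(exp)}$, not via the $\colouringp{i}$.

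Third, and this is the main gap: the output of this paper is not a single refined ``flavour structure'' but a disjunction of ten structurally distinct configurations (Lemma~\ref{outerlemma}), each tailored to a different combination of where the useful edges live ($\HugeVertices$, $\Gexp$, dense spots, the regular matchings $\M_A\cup\M_B$) and, for $\mathbf{(\diamond6)}$--$\mathbf{(\diamond9)}$, to parameters of $T$ itself. Your sketch has no mechanism for producing this case split, and the cleaning lemmas (Lemmas~\ref{lem:envelope}--\ref{lem:clean-Match}) that turn the rough density information into the minimum-degree conditions demanded by each configuration are where most of the work of this paper actually lies. Without that machinery you would not be able to certify, for a given $G$ and a given $T$, which embedding strategy in~\cite{cite:LKS-cut3} is available.
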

In the first paper~\cite{cite:LKS-cut0} we exposed the decomposition techniques, finding a \emph{sparse decomposition} of the host graph $G$. The sparse decomposition should be thought of as a counterpart to the Szemer\'edi regularity lemma (but compared to the Szemer\'edi regularity lemma the sparse decomposition seems to be less versatile). In the second paper~\cite{cite:LKS-cut1}, we combined the sparse decomposition with a matching structure, obtaining in \cite[Lemma~\ref{p1.prop:LKSstruct}]{cite:LKS-cut1} what we call \emph{the rough structure}. The rough structure obtained in~\cite[Lemma~\ref{p1.prop:LKSstruct}]{cite:LKS-cut1} depends on the graph $G$ only, i.e., is independent of the tree $T$. The rough structure encodes the general information how~$T$ should be embedded on a macroscopic scale. However, from the perspective of embedding small parts of~$T$ locally, the properties of the rough structure are insufficient. In the present paper we take the preparation of the host graph one step further, refining the rough structure. This way we obtain one of ten possible \emph{configurations}. Formally, each of the configuration --- denoted by $\mathbf{(\diamond1)}$--$\mathbf{(\diamond10)}$\Referee{(1)} --- is a collection of favourable properties the said graph must satisfy. Each of these configurations is based on the building blocks of the sparse decomposition, and describes in a very fine way a substructure in~$G$. Some of the configurations involve some basic parameters of the tree $T$. That is, while the presence of some individual configurations (namely, configuration $\mathbf{(\diamond1)}$--$\mathbf{(\diamond5)}$ and $\mathbf{(\diamond10)}$ introduced in Section~\ref{sec:configurations}) suffices for embedding of each $k$-vertex tree, configurations  $\mathbf{(\diamond6)}$--$\mathbf{(\diamond9)}$ are accompanied by parameters (denoted by $h$, $h_1$ and $h_2$ in Definitions~\ref{def:CONF6}--\ref{def:CONF9}) that depend on certain parameters of the tree $T$.

In the last paper~\cite{cite:LKS-cut3} we will prove that each of these ten configurations allows to embed~$T$. 
This will complete the proof of Theorem~\ref{thm:main}. An overview of how the embedding goes for each individual configuration is given in~\cite[Section~\ref{p3.ssec:embeddingOverview}]{cite:LKS-cut3}. We recommend the reader to consult this part of~\cite{cite:LKS-cut3} in parallel when reading through the definitions of the configurations in Section~\ref{sec:typesofconfigurations}.

\medskip

\begin{figure}[ht]
	\centering 
	\includegraphics{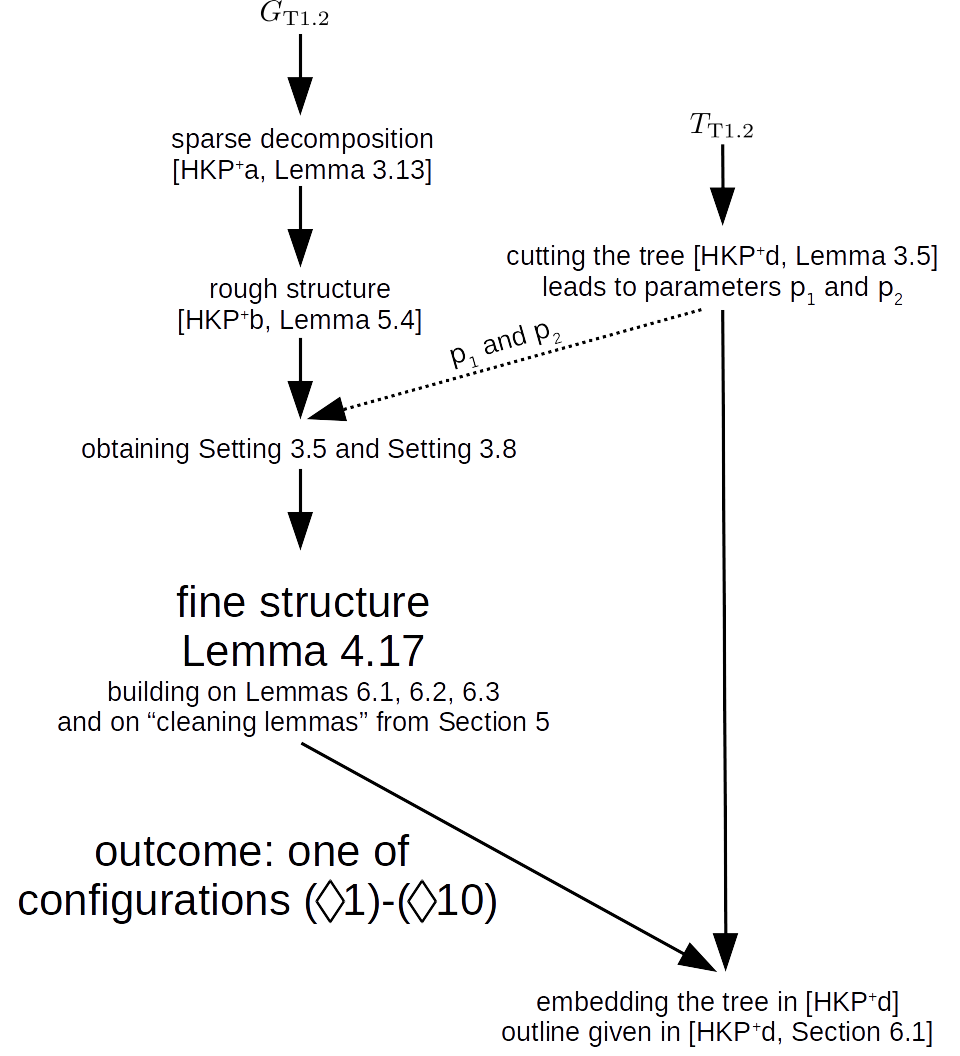}
	\caption{Diagram of the proof of Theorem~\ref{thm:main} with focus on the part dealt with in this paper.}
	\label{fig:LKSCUT2struct}
\end{figure} 
The paper is organized as follows. In Section~\ref{sec:notation} we introduce some basic notation. In Section~\ref{sec:configurations} we introduce some further auxiliary notions, and two ``settings'' that will be common to the rest of the paper.
In Section~\ref{sec:typesofconfigurations}, we present the 
main result of this paper, Lemma~\ref{outerlemma}. The lemma says that in any graph that satisfies the conditions of Theorem~\ref{thm:main}, we can find at least one of the ten configurations described above. To prove it, we first introduce some preliminary ``cleaning lemmas'' in Section~\ref{ssec:cleaning}. The proof of Lemma~\ref{outerlemma} then occupies Section~\ref{ssec:obtainingConf}. This is illustrated in Figure~\ref{fig:LKSCUT2struct}.

\section{Notation, basic facts, and bits from other papers in the series}\label{sec:notation}

\subsection{General notation}
The set $\{1,2,\ldots, n\}$ of the first $n$ positive integers is
denoted by \index{mathsymbols}{*@$[n]$}$[n]$. 
We frequently employ indexing by many indices. We write
superscript indices in parentheses (such as $a^{(3)}$), as
opposed to notation of powers (such as $a^3$).
We use sometimes subscript to refer to
parameters appearing in a fact/lemma/theorem. For example
$\alpha_\PARAMETERPASSING{T}{thm:main}$ refers to the parameter $\alpha$ from Theorem~\ref{thm:main}.
We omit rounding symbols when this does not affect the
correctness of the arguments.

Table~\ref{tab:notation} shows the system of notation we use in this paper and in~\cite{cite:LKS-cut0,cite:LKS-cut1,cite:LKS-cut3}.
\begin{table}[h]
\centering
\caption{Specific notation used in the series.}
\label{tab:notation}
\begin{tabular}{r|l}
\hline
lower case Greek letters &  small positive constants ($\ll 1$)\\
                         & $\phi$ reserved for embedding; $\phi:V(T)\rightarrow V(G)$\\
\hline
upper case Greek letters & large positive constants ($\gg 1$)\\
\hline
one-letter bold& sets of clusters \\
\hline
bold (e.g., $\treeclass{k},\LKSgraphs{n}{k}{\eta}$)& classes of graphs\\
\hline
blackboard bold (e.g., $\HugeVertices,\smallatoms,\smallvertices{\eta}{k}{G},\XA$)& distinguished vertex sets except for\\
& $\NN$ which denotes the set $\{1,2,\ldots\}$\\
\hline 
script  (e.g., $\mathcal A,\mathcal D,\mathcal N$)& families (of vertex sets, ``dense spots'', and regular pairs)\\
\hline
$\class$(=nabla)&sparse decomposition (see Definition~\ref{sparseclassdef})\\
\hline
\end{tabular}
\end{table}

We write \index{mathsymbols}{*VG@$V(G)$}$V(G)$ and \index{mathsymbols}{*EG@$E(G)$}$E(G)$ for the vertex set and edge set of a graph $G$, respectively. Further, \index{mathsymbols}{*VG@$v(G)$}$v(G)=|V(G)|$ is the order of $G$, and \index{mathsymbols}{*EG@$e(G)$}$e(G)=|E(G)|$ is its number of edges. If $X,Y\subset V(G)$ are two, not necessarily disjoint, sets of vertices we write \index{mathsymbols}{*EX@$e(X)$}$e(X)$ for the number of edges induced by $X$, and \index{mathsymbols}{*EXY@$e(X,Y)$}$e(X,Y)$ for the number of ordered pairs $(x,y)\in X\times Y$ such that $xy\in E(G)$. In particular, note that $2e(X)=e(X,X)$.

\index{mathsymbols}{*DEG@$\deg$}\index{mathsymbols}{*DEGmin@$\mindeg$}\index{mathsymbols}{*DEGmax@$\maxdeg$}
For a graph $G$, a vertex $v\in V(G)$ and a set $U\subset V(G)$, we write
$\deg(v)$ and $\deg(v,U)$ for the degree of $v$, and for the number of
neighbours of $v$ in $U$, respectively. We write $\mindeg(G)$ for the minimum
degree of $G$, $\mindeg(U):=\min\{\deg(u)\::\: u\in U\}$, and
$\mindeg(V_1,V_2)=\min\{\deg(u,V_2)\::\:u\in V_1\}$ for two sets $V_1,V_2\subset
V(G)$. Similar notation is used for the maximum degree, denoted by $\maxdeg(G)$.
The neighbourhood of a vertex $v$ is denoted by
\index{mathsymbols}{*N@$\neighbour(v)$}$\neighbour(v)$, and we write $\neighbour(U)=\bigcup_{u\in
U}\neighbour(u)$. These symbols have a subscript to emphasize the host graph.

The symbol ``$-$'' is used
for two graph operations: if $U\subset V(G)$ is a vertex
set then $G-U$ is the subgraph of $G$ induced by 
$V(G)\setminus U$. If $H\subset G$ is a subgraph of $G$ then the graph
$G-H$ is defined on the vertex set $V(G)$ and corresponds
to deletion of edges of $H$ from $G$.



A family $\mathcal A$ of pairwise disjoint subsets of $V(G)$ is an \index{general}{ensemble}\index{mathsymbols}{*ENSEMBLE@$\ell$-ensemble}\emph{$\ell$-ensemble in $G$} if  $|A|\ge \ell$ for each $A\in\mathcal A$. 


\subsection{Regular pairs}

We now define regular pairs in the sense of
Szemer\'edi's regularity lemma.
Given a graph $H$ and a pair $(U,W)$ of disjoint
sets $U,W\subset V(H)$ the
\index{general}{density}\index{mathsymbols}{*D@$\density(U,W)$}\emph{density of the pair $(U,W)$} is defined as
$$\density(U,W):=\frac{e(U,W)}{|U||W|}\;.$$
Similarly, for a bipartite graph $G$ with colour
classes $U$, $W$ we talk about its \index{general}{bipartite
density}\emph{bipartite density}\index{mathsymbols}{*D@$\density(G)$} $\density(G)=\frac{e(G)}{|U||W|}$.
For a given $\varepsilon>0$, a pair $(U,W)$ of disjoint
sets $U,W\subset V(H)$ 
is called an \index{general}{regular pair}\emph{$\epsilon$-regular
pair} if $|\density(U,W)-\density(U',W')|<\epsilon$ for every
$U'\subset U$, $W'\subset W$ with $|U'|\ge \epsilon |U|$, $|W'|\ge
\epsilon |W|$. If the pair $(U,W)$ is not $\epsilon$-regular,
then it is called \index{general}{irregular}\emph{$\epsilon$-irregular}. A stronger notion than
regularity is that of super-regularity which we recall now. A pair $(A,B)$ is
\index{general}{super-regular pair}\emph{$(\epsilon,\gamma)$-super-regular} if it is
$\epsilon$-regular, and both $\mindeg(A,B)\ge\gamma |B|$, and
$\mindeg(B,A)\ge \gamma |A|$. Note that then $(A,B)$ has bipartite density at least $\gamma$.


The following facts are well known.

\begin{fact}\label{fact:BigSubpairsInRegularPairs}
Suppose that $(U,W)$ is an $\varepsilon$-regular pair of density
$d$. Let $U'\subset W, W'\subset W$ be sets of vertices with $|U'|\ge
\alpha|U|$, $|W'|\ge \alpha|W|$, where $\alpha>\epsilon$.
Then the pair $(U',W')$ is a $2\varepsilon/\alpha$-regular pair of density at least
$d-\varepsilon$.
\end{fact}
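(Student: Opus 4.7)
The plan is to verify the two requested conclusions separately, using the hypothesis $\alpha>\varepsilon$ in both cases to ensure that any witness subsets we consider still meet the ``at least $\varepsilon$-fraction'' thresholds for the $\varepsilon$-regularity of the original pair $(U,W)$. Throughout I assume $U'\subset U$ (reading the typo in the statement accordingly).

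For the density estimate, I would simply plug $(U',W')$ itself into the definition of $\varepsilon$-regularity of $(U,W)$. Since $|U'|\ge\alpha|U|>\varepsilon|U|$ and $|W'|\ge\alpha|W|>\varepsilon|W|$, the pair $(U',W')$ is a legitimate witness, so $|\density(U',W')-d|<\varepsilon$, which immediately yields $\density(U',W')\ge d-\varepsilon$.

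For the regularity estimate, I would take arbitrary subsets $U''\subset U'$ and $W''\subset W'$ with $|U''|\ge(2\varepsilon/\alpha)|U'|$ and $|W''|\ge(2\varepsilon/\alpha)|W'|$. The key computation is that substituting $|U'|\ge\alpha|U|$ gives $|U''|\ge 2\varepsilon|U|\ge\varepsilon|U|$, and analogously $|W''|\ge\varepsilon|W|$, so the subsets $U''$ and $W''$ again qualify as witnesses for the $\varepsilon$-regularity of $(U,W)$. Hence $|\density(U'',W'')-d|<\varepsilon$, and combining with the density bound $|\density(U',W')-d|<\varepsilon$ from the previous paragraph by the triangle inequality yields $|\density(U',W')-\density(U'',W'')|<2\varepsilon\le 2\varepsilon/\alpha$ (the last inequality holding because $\alpha\le 1$).

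There is no real obstacle here: the only thing to be careful about is the bookkeeping of constants and the direction of the substitution $|U'|\ge\alpha|U|$, which turns the relative threshold $2\varepsilon/\alpha$ in $(U',W')$ into the absolute threshold $2\varepsilon$ in $(U,W)$, comfortably above $\varepsilon$. The factor $2$ in $2\varepsilon/\alpha$ is exactly what is needed to absorb the double application of the $\varepsilon$-regularity of $(U,W)$ via the triangle inequality.
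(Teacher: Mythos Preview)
Your proof is correct and is the standard argument for this well-known fact. The paper itself does not provide a proof; it simply lists Fact~\ref{fact:BigSubpairsInRegularPairs} among ``well known'' facts and moves on, so there is nothing to compare against beyond noting that your argument is exactly the textbook verification one would expect.
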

\begin{fact}\label{fact:manyTypicalVertices}
Suppose that $(U,W)$ is an $\varepsilon$-regular pair of density
$d$. Then all but at most $\epsilon|U|$ vertices $v\in U$ satisfy
$\deg(v,W)\ge (d-\epsilon)|W|$.
\end{fact}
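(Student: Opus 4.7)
The plan is a one-step contradiction argument that uses nothing beyond the definition of $\epsilon$-regularity. First I would let $U':=\{v\in U\::\:\deg(v,W)<(d-\epsilon)|W|\}$ denote the set of vertices failing the stated lower bound, and suppose for contradiction that $|U'|>\epsilon|U|$ (so that $U'$ qualifies as a witness for the regularity condition).

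Next I would bound the edge count $e(U',W)$ in two incompatible ways. On the one hand, summing the defining inequality $\deg(v,W)<(d-\epsilon)|W|$ over $v\in U'$ yields $e(U',W)<(d-\epsilon)|U'||W|$, i.e.\ $\density(U',W)<d-\epsilon$. On the other hand, since $|U'|\ge\epsilon|U|$ and trivially $|W|\ge\epsilon|W|$, the $\epsilon$-regularity of $(U,W)$ applied to the pair $(U',W)$ forces $|\density(U',W)-d|<\epsilon$, hence $\density(U',W)>d-\epsilon$. These two bounds contradict each other, so the assumption $|U'|>\epsilon|U|$ was false.

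There is no real obstacle here — the proof is a direct unpacking of definitions — and the only point worth mentioning is why $W$ itself counts as a valid second witness; this is immediate from $|W|\ge\epsilon|W|$, and is the reason one only needs regularity (and not super-regularity) for the conclusion.
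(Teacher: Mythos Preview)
Your argument is correct and is exactly the standard proof of this well-known fact. The paper itself states Fact~\ref{fact:manyTypicalVertices} without proof (it is listed among ``well known'' facts), so there is nothing further to compare.
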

%
%
%
%

The next lemma asserts that if we have many  $\epsilon$-regular pairs $(R,Q_i)$, then most vertices in $R$ have approximately the total degree into the set
$\bigcup_i Q_i$ that we would expect.
\begin{lemma}\label{lem:degreeIntoManyPairs}
Let $Q_1,\ldots,Q_\ell$ and $R$ be disjoint vertex sets. Suppose
further that  for each
$i\in[\ell]$, the pair $(R,Q_i)$ is $\epsilon$-regular. Then we have
\begin{enumerate}[(a)]
\item \label{mindestensdengrad}
$ \deg (v,\bigcup_i Q_i)\geq \frac{e(R,\bigcup_i Q_i)}{|R|}-\epsilon\left|\bigcup_i
Q_i\right|$ for all but at most $\epsilon|R|$ vertices $v\in R$, and
\item \label{hoechstensdengrad} 
$ \deg (v,\bigcup_i Q_i)\le
\frac{e\left(R,\bigcup_i Q_i\right)}{|R|}+\epsilon\left|\bigcup_i
Q_i\right|$ for all but at most $\epsilon|R|$ vertices $v\in R$.
\end{enumerate}
\end{lemma}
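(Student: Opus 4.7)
The plan is to prove both parts by a single ``bad set'' double-counting argument, where the key idea is to apply the $\epsilon$-regularity of each pair $(R,Q_i)$ not to individual vertices (which would produce $\ell\epsilon|R|$ exceptional vertices after a union bound) but to one common exceptional set $B\subseteq R$. This is what buys the $\epsilon|R|$ bound independent of $\ell$.

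Write $Q:=\bigcup_i Q_i$ for brevity. For part~\eqref{mindestensdengrad}, let $B:=\{v\in R\::\:\deg(v,Q)<e(R,Q)/|R|-\epsilon|Q|\}$ and suppose for contradiction that $|B|>\epsilon|R|$. For each $i$ with $Q_i\ne\emptyset$, the subset $B\subseteq R$ has size $\ge \epsilon|R|$ and $Q_i\subseteq Q_i$ trivially has size $\ge\epsilon|Q_i|$, so the $\epsilon$-regularity of $(R,Q_i)$ yields
\[
\left|\frac{e(B,Q_i)}{|B|\,|Q_i|}-\frac{e(R,Q_i)}{|R|\,|Q_i|}\right|<\epsilon,
\qquad\text{hence}\qquad
e(B,Q_i)\ge \frac{e(R,Q_i)}{|R|}\,|B|-\epsilon|B|\,|Q_i|.
\]
Summing this inequality over all $i\in[\ell]$ (the empty $Q_i$'s contribute $0$ on both sides) gives $e(B,Q)\ge |B|\,e(R,Q)/|R|-\epsilon|B|\,|Q|$. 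On the other hand, the definition of $B$ directly gives $e(B,Q)=\sum_{v\in B}\deg(v,Q)<|B|\bigl(e(R,Q)/|R|-\epsilon|Q|\bigr)$, a contradiction. Part~\eqref{hoechstensdengrad} is strictly symmetric: define $B':=\{v\in R\::\:\deg(v,Q)>e(R,Q)/|R|+\epsilon|Q|\}$ and run the same argument with the inequality flipped in each regularity application.

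There is no real obstacle here --- the only conceptual point is to resist the temptation to apply Fact~\ref{fact:manyTypicalVertices} separately to each $i$ and union-bound, and instead to exploit regularity globally via the common set $B$. A minor bookkeeping point to keep in mind is that $\epsilon$-regularity of $(R,Q_i)$ is used with $W=Q_i$ itself, which is always a valid choice whenever $Q_i$ is non-empty; empty $Q_i$'s may simply be discarded from the family as they contribute nothing to either side of the claimed inequalities.
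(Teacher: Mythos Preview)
Your proof is correct and follows essentially the same approach as the paper's: define the bad set $B$, assume $|B|>\epsilon|R|$, and derive a contradiction between the upper bound on $e(B,Q)$ coming from the definition of $B$ and the lower bound coming from regularity. The only cosmetic difference is that the paper, after obtaining the summed inequality $e(B,Q)<|B|\,e(R,Q)/|R|-\epsilon|B||Q|$, uses averaging to pick a single index $i$ witnessing $\density(B,Q_i)<\density(R,Q_i)-\epsilon$ and invokes regularity once, whereas you apply regularity to every $i$ and sum; these are logically equivalent moves.
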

\begin{proof}
We prove \eqref{mindestensdengrad}, the proof of~\eqref{hoechstensdengrad} is similar.
Suppose for contradiction that~\eqref{mindestensdengrad} does not hold.
Without loss of generality, assume that there is a set $X\subset
R$, $|X|>\epsilon |R|$ such that $\frac{e(R,\bigcup
Q_i)}{|R|}-\epsilon|\bigcup Q_i|> \deg(v,\bigcup Q_i)$ for
each $v\in X$. By averaging, there is an index $i\in [\ell]$
such that $\frac{|X|}{|R|}e(R,Q_i)-\epsilon|X||Q_i|>
e(X,Q_i)$, or equivalently, $\density(R,Q_i)-\epsilon>
\density(X,Q_i).$ This contradicts the $\epsilon$-regularity of the pair $(R,Q_i)$.
\end{proof}

\subsection{LKS graphs}
We now give some notation specific to our setting. We write \index{mathsymbols}{*trees@$\treeclass{k}$}
$\treeclass{k}$ for the set of all trees (up to isomorphism) of order $k$. We write 
\index{mathsymbols}{*LKSgraphs@$\LKSgraphs{n}{k}{\eta}$}$\LKSgraphs{n}{k}{\alpha}$
for the class of all $n$-vertex graphs with at least
$(\frac12+\alpha)n$ vertices of degrees at least
$(1+\alpha)k$. With this notation Conjecture~\ref{conj:LKS} states that every graph in $\LKSgraphs{n}{k-1}{0}$ contains every tree from $\treeclass{k}$.

Given a graph $G$, denote by
\index{mathsymbols}{*S@$\smallvertices{\eta}{k}{G}$}$\smallvertices{\eta}{k}{G}$ the set of those
vertices of $G$ that have degree less than $(1+\eta)k$ and by
\index{mathsymbols}{*L@$\largevertices{\eta}{k}{G}$}$\largevertices{\eta}{k}{G}$ the set of those
vertices of $G$ that have degree at least $(1+\eta)k$.

In~\cite{cite:LKS-cut0} we introduced the class $\LKSmingraphs{n}{k}{\eta}$ of the graphs that are edge-minimal with respect to membership to $\LKSgraphs{n}{k}{\eta}$. It would be sufficient to prove Theorem~\ref{thm:main} for graphs in $\LKSmingraphs{n}{k}{\eta}$. This class, however, is too rigid with respect changes that are necessary when applying the sparse decomposition. Therefore, in~\cite[Section~\ref{p0.ssec:LKSgraphs}]{cite:LKS-cut0} we derived a relaxation of the class $\LKSmingraphs{n}{k}{\eta}$ which we introduce next. 
\begin{definition}\label{def:LKSsmall}
Let \index{mathsymbols}{*LKSsmallgraphs@$\LKSsmallgraphs{n}{k}{\eta}$}$\LKSsmallgraphs{n}{k}{\eta}$ be the class of graphs $G\in\LKSgraphs{n}{k}{\eta}$ having the following three properties:
\begin{enumerate}
   \item All the neighbours of every vertex $v\in V(G)$ with $\deg(v)>\lceil(1+2\eta)k\rceil$ have degrees at most $\lceil(1+2\eta)k\rceil$.\label{def:LKSsmallA}
   \item All the neighbours of every vertex of $\smallvertices{\eta}{k}{G}$
    have degree exactly $\lceil(1+\eta)k\rceil$. \label{def:LKSsmallB}
   \item We have $e(G)\le kn$.\label{def:LKSsmallC}
\end{enumerate}
\end{definition}

\subsection{Sparse decomposition}\label{sec:FromPaper1}
Here we recall some definitions from~\cite{cite:LKS-cut0}: dense spots, avoiding sets, and the key notions of bounded and sparse decomposition. This section is a rather dry list for later reference only, and the reader should consult \cite[Section~\ref{p0.sec:class}]{cite:LKS-cut0} for a more detailed description of these notions. Here, we just recall that the purpose for introducing dense spots, avoiding sets, nowhere-dense graph is that together with high degree vertices they form a sparse decomposition of a given graph. The main result of the first paper in the series, \cite[Lemma~\ref{p0.lem:LKSsparseClass}]{cite:LKS-cut0}, asserts that each graph from $\LKSgraphs{n}{k}{\eta}$ has a sparse decomposition in which almost all edges are of one of the above types. (In fact, the sparse decomposition is not specific to LKS graphs, and indeed in \cite[Lemma~\ref{p0.lem:genericBD}]{cite:LKS-cut0} we provide a corresponding general statement.)
\begin{definition}[\bf \index{general}{dense spot}$(m,\gamma)$-dense spot,
\index{general}{nowhere-dense}$(m,\gamma)$-nowhere-dense]\label{def:densespot} 
Suppose that $m\in\NN$ and $\gamma>0$.
An \emph{$(m,\gamma)$-dense spot} in a graph $G$ is a non-empty bipartite sub\-graph  $D=(U,W;F)$ of  $G$ with
$\density(D)>\gamma$ and $\mindeg (D)>m$. We call a graph $G$
\emph{$(m,\gamma)$-nowhere-dense} if it does not contain any $(m,\gamma)$-dense spot.

When the parameters $m$ and $\gamma$ are not relevant, we call $D$ simply a \emph{dense spot}.
\end{definition}
Note that dense spots do not have a 
specified orientation. That is, we view $(U,W;F)$ and $(W,U;F)$ as
the same object.

\begin{definition}[\bf $(m,\gamma)$-dense
cover]\index{general}{dense cover}
Suppose that $m\in\NN$ and $\gamma>0$.
An \emph{$(m,\gamma)$-dense cover} of a given
graph $G$ is a family $\DenseSpots$ of edge-disjoint
$(m,\gamma)$-dense
spots such that $E(G)=\bigcup_{D\in\DenseSpots}E(D)$.
\end{definition}

The following two facts are proved in~\cite[Facts~\ref{p0.fact:sizedensespot},~\ref{p0.fact:boundedlymanyspots}]{cite:LKS-cut0}.
\begin{fact}\label{fact:sizedensespot}
Let $(U,W;F)$ be a $(\gamma k,\gamma)$-dense spot in a
graph $G$ of maximum degree at most $\Omega k$. Then
$\max\{|U|,|W|\}\le \frac{\Omega}{\gamma}k.$
\end{fact}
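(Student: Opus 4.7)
The plan is to bound $|U|$ and $|W|$ by double-counting edges in the dense spot, using the density lower bound as a lower bound on $e(U,W)$ and the maximum degree of $G$ as an upper bound on $e(U,W)$.

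First I would argue that since $D=(U,W;F)$ has bipartite density strictly greater than $\gamma$, we have
\[
e(U,W)>\gamma|U||W|.
\]
On the other hand, since $D$ is a subgraph of $G$, every vertex $u\in U$ satisfies $\deg_D(u)\le \deg_G(u)\le\Omega k$, so summing over $U$,
\[
e(U,W)=\sum_{u\in U}\deg_D(u)\le \Omega k\,|U|.
\]
Combining the two inequalities yields $\gamma|U||W|<\Omega k\,|U|$, and after dividing by $\gamma|U|$ (note $|U|\ge 1$ since $D$ is non-empty and has positive minimum degree) we obtain $|W|<\tfrac{\Omega}{\gamma}k$. The same double counting with the roles of $U$ and $W$ swapped (i.e.\ bounding $e(U,W)=\sum_{w\in W}\deg_D(w)\le\Omega k\,|W|$) gives $|U|<\tfrac{\Omega}{\gamma}k$, which together implies $\max\{|U|,|W|\}\le\tfrac{\Omega}{\gamma}k$, as required.

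I do not anticipate a genuine obstacle: the fact is essentially a one-line calculation, and the only mild point of care is to note that the dense spot's minimum degree condition (together with non-emptiness) guarantees that both $U$ and $W$ are non-empty, so the divisions above are legitimate. The minimum-degree lower bound $\gamma k$ from the definition of a dense spot is not needed for this fact at all — only the density $>\gamma$ and the global maximum degree $\le\Omega k$ enter the argument.
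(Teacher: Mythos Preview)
Your argument is correct and is exactly the standard double-counting proof one would expect; the paper itself does not reprove this fact here but defers to~\cite{cite:LKS-cut0}, where the same calculation appears. There is nothing to add.
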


\begin{fact}\label{fact:boundedlymanyspots}
Let $H$ be a graph of maximum degree at most $\Omega k$, let $v\in V(H)$, and let $\DenseSpots$ be a family of edge-disjoint $(\gamma k,\gamma)$-dense spots. Then less than $\frac{\Omega}{\gamma}$ dense spots from $\DenseSpots$ contain $v$.
\end{fact}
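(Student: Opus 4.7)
The plan is to use a straightforward edge-counting argument, exploiting the minimum-degree condition on dense spots together with the hypothesis that the family $\DenseSpots$ consists of \emph{edge-disjoint} spots.

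First I would fix the vertex $v\in V(H)$ and let $t$ denote the number of dense spots in $\DenseSpots$ that contain $v$. Enumerate these as $D_1=(U_1,W_1;F_1),\ldots,D_t=(U_t,W_t;F_t)$, so that $v\in U_i\cup W_i$ for every $i\in[t]$. By the definition of an $(\gamma k,\gamma)$-dense spot we have $\mindeg(D_i)>\gamma k$, and since $v$ belongs to $D_i$ this yields $\deg_{D_i}(v)>\gamma k$ for each $i$.

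Next I would sum these contributions. Because the edge sets $F_1,\ldots,F_t$ are pairwise disjoint subsets of $E(H)$, the $F_i$-edges incident to $v$ are pairwise distinct, and hence
\[
\sum_{i=1}^{t}\deg_{D_i}(v)\;\le\;\deg_H(v)\;\le\;\Omega k.
\]
Combining this with the lower bound $\deg_{D_i}(v)>\gamma k$ gives $t\gamma k<\Omega k$, i.e.\ $t<\Omega/\gamma$, which is exactly the claimed bound.

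There is essentially no obstacle here; the only thing to be careful about is that the strict inequality in the conclusion comes from the strict inequality in the definition of dense spot ($\mindeg(D)>\gamma k$, not $\ge$), so the argument gives $t<\Omega/\gamma$ rather than $t\le\Omega/\gamma$. No use is made of the bipartite density assumption $\density(D_i)>\gamma$, which is needed elsewhere but plays no role in this purely degree-based count.
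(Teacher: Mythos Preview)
Your proof is correct and is exactly the standard edge-counting argument one would expect; the paper itself does not reprove this fact but cites it from the first paper in the series, where the same argument is used. Your care with the strict inequality is appropriate.
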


We now define the avoiding set. Informally, a set $\smallatoms$ of vertices is avoiding if for each set $U$ of size at most $\Lambda k$ (where $\Lambda\gg 1$ is a large constant) and for each vertex $v\in\smallatoms$ there is a dense spot containing $v$ and almost disjoint from $U$. Favourable properties of avoiding sets for embedding trees are shown in~\cite[Section~\ref{p0.sssec:whyavoiding}]{cite:LKS-cut0}.
\begin{definition}[\bf
\index{general}{avoiding}$(\Lambda,\epsilon,\gamma,k)$-avoiding set]\label{def:avoiding} 
Suppose that $\epsilon,\gamma>0$, $\Lambda>0$, and $k\in\NN$. Suppose that
$G$ is a graph and $\DenseSpots$ is a family of dense spots in $G$. A set
\index{mathsymbols}{*E@$\smallatoms$}$\smallatoms\subset \bigcup_{D\in\DenseSpots} V(D)$ is \emph{$(\Lambda,\epsilon,\gamma,k)$-avoiding} with
respect to $\DenseSpots$ if for every $U\subset V(G)$ with $|U|\le \Lambda k$ the following holds for all but at most $\epsilon k$ vertices $v\in\smallatoms$. There is a dense spot $D\in\DenseSpots$ with $|U\cap V(D)|\le \gamma^2 k$ that contains $v$.
\end{definition}

Finally, we can introduce the most important tool in the proof of Theorem~\ref{thm:main}, the \emph{sparse decomposition}. It generalises the notion of equitable partition from Szemer\'edi's regularity lemma. The first step towards this end is the notion of bounded decomposition. An illustration is given in Figure~\ref{fig:sparsedecomposition}.
\begin{figure}[t]
     \centering
     \subfigure[Bounded decomposition]{
          \includegraphics[width=.4\textwidth]{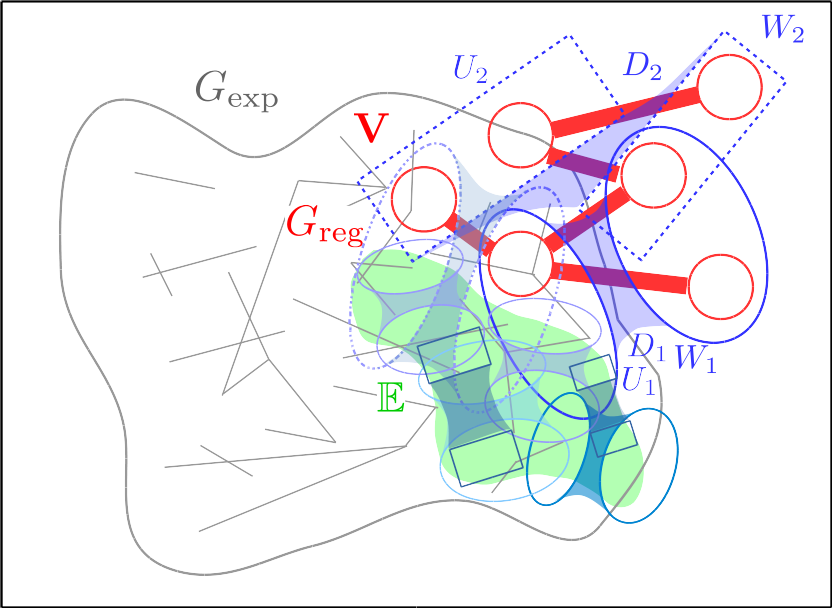}}
     \hspace{.07in}
     \subfigure[Sparse decomposition]{
          \includegraphics[width=.4\textwidth]{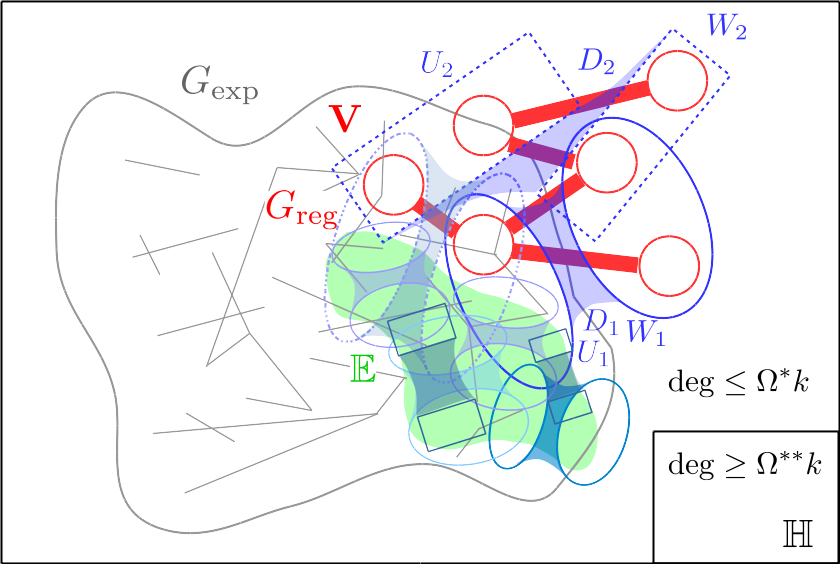}}
     \hspace{.07in}
     \caption{A simplified illustration of a bounded/sparse decomposition of a graph. The nowhere-dense graph $\Gexp$ shown in grey, the cluster graph $\Gblack$ and clusters $\clusters$ shown in red, the avoiding set $\smallatoms$ in green, and the dense spots $\DenseSpots$ in blue (different shades and shapes). The difference between the bounded and the sparse decomposition is that no distinction regarding degrees of vertices is made in the former.}
\label{fig:sparsedecomposition}
\end{figure}

\begin{definition}[\index{general}{bounded decomposition}{\bf
$(k,\Lambda,\gamma,\epsilon,\nu,\rho)$-bounded decomposition}]\label{bclassdef}
Let $\mathcal V=\{V_1, V_2,\ldots, V_s\}$ be a partition of the vertex set of a graph $G$. We say that $( \clusters,\DenseSpots, \Gblack, \Gexp,
\smallatoms )$ is a {\em $(k,\Lambda,\gamma,\epsilon,\nu,\rho)$-bounded
decomposition} of $G$ with respect to $\mathcal V$ if the following properties
are satisfied:
\begin{enumerate}
\item\label{defBC:nowheredense}
$\Gexp$ is  a $(\gamma k,\gamma)$-nowhere-dense subgraph of $G$ with $\mindeg(\Gexp)>\rho k$.
\item\label{defBC:clusters} $\clusters$ consists of disjoint subsets of 
$ V(G)$.
\item\label{defBC:RL} $\Gblack$ is a subgraph of $G-\Gexp$ on the vertex set $\bigcup \clusters$. For each edge
 $xy\in E(\Gblack)$ there are distinct $C_x\ni x$ and $C_y\ni y$ from $\clusters$,
and  $G[C_x,C_y]=\Gblack[C_x,C_y]$. Furthermore, 
$G[C_x,C_y]$ forms an $\epsilon$-regular pair of  density at least $\gamma^2$.
\item We have $\nu k\le |C|=|C'|\le \epsilon k$ for all
$C,C'\in\clusters$.\label{Csize}
\item\label{defBC:densepairs}  $\DenseSpots$ is a family of edge-disjoint $(\gamma
k,\gamma)$-dense spots  in $G-\Gexp$.  For
each $D=(U,W;F)\in\DenseSpots$ all the edges of $G[U,W]$ are covered
by $\DenseSpots$ (but not necessarily by $D$).
\item\label{defBC:dveapul} If  $\Gblack$
contains at least one edge between $C_1\in\clusters$ and $C_2\in\clusters$ then there exists a dense
spot $D=(U,W;F)\in\DenseSpots$ such that $C_1\subset U$ and $C_2\subset
W$.
\item\label{defBC:prepartition}
For
each $C\in\clusters$ there is $V\in\mathcal V$ so that either $C\subseteq V\cap V(\Gexp)$ or $C\subseteq V\setminus V(\Gexp)$.
For
each $C\in\clusters$ and each $D=(U,W; F)\in\DenseSpots$ we have that either $C$ is disjoint from $D$ or contained in $D$.
\item\label{defBC:avoiding}
$\smallatoms$ is a $(\Lambda,\epsilon,\gamma,k)$-avoiding subset  of
$V(G)\setminus \bigcup \clusters$ with respect to dense spots $\DenseSpots$.
\end{enumerate}

\smallskip
We say that the bounded decomposition $(\clusters,\DenseSpots, \Gblack, \Gexp,
\smallatoms )$ {\em respects the avoiding threshold~$b$}\index{general}{avoiding threshold} if for each $C\in \clusters$ we either have $\maxdeg_G(C,\smallatoms)\le b$, or $\mindeg_G(C,\smallatoms)> b$.
\end{definition}

The members of $\clusters$ are called \index{general}{cluster}{\it clusters}. Define the
{\it cluster graph} \index{mathsymbols}{*Gblack@$\BGblack$}  $\BGblack$ as the graph
on the vertex set $\clusters$ that has an edge $C_1C_2$
for each pair $(C_1,C_2)$ which has density at least $\gamma^2$ in the graph
$\Gblack$. 

We can now introduce the notion of sparse decomposition in which we enhance a bounded decomposition  by distinguishing between vertices of huge and moderate degree.
\begin{definition}[\bf \index{general}{sparse
decomposition}$(k,\Omega^{**},\Omega^*,\Lambda,\gamma,\epsilon,\nu,\rho)$-sparse decomposition]\label{sparseclassdef}
Suppose that $k\in\NN$ and $\epsilon,\gamma,\nu,\rho>0$ and $\Lambda,\Omega^*,\Omega^{**}>0$. 
Let $\mathcal V=\{V_1, V_2,\ldots, V_s\}$ be a partition of the vertex set of a graph $G$. We say that 
$\class=(\HugeVertices, \clusters,\DenseSpots, \Gblack, \Gexp, \smallatoms )$
is a
{\em $(k,\Omega^{**},\Omega^*,\Lambda,\gamma,\epsilon,\nu,\rho)$-sparse decomposition} of $G$
with respect to $V_1, V_2,\ldots, V_s$ if the following holds.
\begin{enumerate}
\item\label{def:classgap}
\index{mathsymbols}{*H@$\HugeVertices$} $\HugeVertices\subset V(G)$,
$\mindeg_G(\HugeVertices)\ge\Omega^{**}k$,
$\maxdeg_K(V(G)\setminus \HugeVertices)\le\Omega^{*}k$, where $K$ is spanned by the edges of $\bigcup\DenseSpots$, $\Gexp$, and
edges incident with $\HugeVertices$,
\item \label{def:spaclahastobeboucla} $( \clusters,\DenseSpots, \Gblack,\Gexp,\smallatoms)$ is a 
$(k,\Lambda,\gamma,\epsilon,\nu,\rho)$-bounded decomposition of
$G-\HugeVertices$ with respect to $V_1\setminus \HugeVertices, V_2\setminus \HugeVertices,\ldots, V_s\setminus \HugeVertices$.
\end{enumerate}
\end{definition}

If the parameters do not matter, we call $\class$ simply a {\em sparse
decomposition}, and similarly we speak about a {\em bounded decomposition}.

 \begin{definition}[\bf captured edges, graphs $\Gcapt$ and $\GD$]\label{capturededgesdef}
\index{general}{captured edges}
\Referee{(3)}
In the situation of Definition~\ref{sparseclassdef}, we define the graph
\index{mathsymbols}{*GD@$\GD$}$\GD$ as the graph induced by the dense spots, i.e., $V(\GD)=\bigcup_{D\in\DenseSpots}V(D)$, $E(\GD)=\bigcup_{D\in\DenseSpots}E(D)$.

We refer to the edges in
$ E(\Gblack)\cup E(\Gexp)\cup
E_G(\HugeVertices,V(G))\cup E_{\GD}(\smallatoms,\smallatoms\cup \bigcup \clusters)$
as \index{general}{captured edges}{\em captured} by the sparse decomposition. 
 We write
\index{mathsymbols}{*Gclass@$\Gcapt$}$\Gcapt$ for the subgraph of $G$ on the same vertex set which consists of the captured edges.

Likewise, the \emph{captured edges} of a bounded decomposition
$(\clusters,\DenseSpots, \Gblack,\Gexp,\smallatoms )$ of a graph $G$ are those
in $E(\Gblack)\cup E(\Gexp)\cup E_{\GD}(\smallatoms,\smallatoms\cup\bigcup\clusters)$.
\end{definition}

\subsection{Regularized matchings}
We recall the notion of a \semiregular matching, introduced in~\cite{cite:LKS-cut1}.\footnote{In older versions of~\cite{cite:LKS-cut1,cite:LKS-cut3} (available on the arXiv) and in the published version of~\cite{LKS:overview} we used the name of ``semiregular matchings''.}
\begin{definition}[\bf $(\epsilon,d,\ell)$-\semiregular matching]\label{def:semiregular} 
	Suppose that $\ell\in\NN$ and  $d,\epsilon>0$.
	A collection
	$\mathcal N$ of pairs $(A,B)$ with $A,B\subset V(H)$ is called an
	\index{general}{regularized@\semiregular matching}\emph{$(\epsilon,d,\ell)$-\semiregular matching} of a
	graph $H$ if \begin{enumerate}[(i)] \item $|A|=|B|\ge \ell$ for each
		$(A,B)\in\mathcal N$, \item $(A,B)$ induces in $H$ an $\epsilon$-regular pair of
		density at least $d$, for each $(A,B)\in\mathcal N$, and \item the sets $\{A\}_{(A,B)\in\mathcal N}$ and $\{B\}_{(A,B)\in\mathcal N}$ are  pairwise disjoint.
	\end{enumerate}
	Sometimes, when the parameters do not matter we simply write  \emph{\semiregular matching}.
\end{definition}
Suppose that $\mathcal N$ is a \semiregular matching, and $(A,B)\in\mathcal N$. Then we call $A$ a \index{general}{partner}\emph{partner} of $B$, and $B$ a partner of $A$ (in $\mathcal N$).

We shall make use of some auxiliary results from~\cite{cite:LKS-cut1}. To this end, we need a definition.
\begin{definition}[{\cite[Definition~\ref{p1.tupelclass}]{cite:LKS-cut1}}]\label{tupelclass}
	We define $\mathcal G(n,k,\Omega,\rho,\nu, \tau)$\index{mathsymbols}{*G@$\mathcal G(n,k,\Omega,\rho,\nu, \tau)$} to be the class of all tuples $(G,\DenseSpots,H,\mathcal A)$ with the following properties:
	\begin{enumerate}[(i)]
		\item  $G$ is a graph of order $n$ with $\maxdeg(G)\le \Omega k$,\label{maxroach}
		\item $H$ is a bipartite subgraph of
		$G$ with colour classes $A_H$ and $B_H$ and with $e(H)\ge \tau kn$,\label{duke}
		\item  $\DenseSpots$ is a $(\rho k, \rho)$-dense cover of $G$,
		\item $\mathcal A$ is a $(\nu k)$-ensemble in $G$,
		and $A_H\subseteq \bigcup \mathcal A$,\label{bird}
		\item  $A\cap U\in\{\emptyset,A\}$ for each $A\in\mathcal A$ and for each $D=(U,W;F)\in\DenseSpots$.\label{mingus}
	\end{enumerate}
\end{definition}

\begin{lemma}[{\cite[Lemma~\ref{p1.lem:edgesEmanatingFromDensePairsIII}]{cite:LKS-cut1}}]\label{lem:edgesEmanatingFromDensePairsIII}
	For every $\bar\Omega\in\NN$ and $\bar\rho,\bar\epsilon,\bar\tau \in(0,1)$ there exists
	an $\bar\alpha>0$ such that for every $\bar\nu\in (0,1)$ there is a number $\bar k_0\in\NN$
	such that the following holds for every $k>\bar k_0$. 
	
	For each $(\bar G,\bar\DenseSpots,\bar H,\bar{\mathcal A})\in\mathcal G(n,k,\bar\Omega,\bar\rho,\bar\nu,\bar\tau)$ there exists an
	$(\bar\epsilon ,\frac{\bar\tau\bar\rho}{8\Omega},\bar\alpha\bar\nu k)$-\semiregular matching $\bar{\mathcal M}$ of $\bar G$ such that
	\begin{enumerate}[(1)]
		\item for each $(X,Y)\in\bar{\mathcal M}$ there are $A\in\bar{\mathcal A}â$, and
		$D=(U,W;F)\in \bar\DenseSpots$ such that  $X\subset U\cap A\cap A_H$ and $Y\subset
		W\cap B_H$,\label{bedingung1}
		and
		\item $|V(\bar{\mathcal M})|\ge\frac{\bar\tau}{2\bar\Omega} n$.\label{bedingung3}
	\end{enumerate}
\end{lemma}

\subsection{Cutting trees}\label{sec:TREEcut}
We outline the way we process any $k$-vertex tree $T$ in our proof of Theorem~\ref{thm:main}. This is done in detail in \cite[Section~\ref{p3.ssec:cut}]{cite:LKS-cut3}. The purpose of the informal description below is only to serve as a reference when we motivate the configurations in Section~\ref{ssec:TypesConf}.

Given $T$, we introduce a constant number (i.e., independent of $k$) of \index{general}{cut-vertex}\emph{cut-vertices} $W\subset V(T)$. We can do so in such a way that the following properties are satisfied:\footnote{Here, we list only properties that are relevant for the description later. See~\cite[Definition~\ref{p3.ellfine} and Lemma~\ref{p3.lem:TreePartition}]{cite:LKS-cut3} for details.}
\begin{itemize}
\item The set $W$ is partitioned into sets \index{mathsymbols}{*WA@$W_A$}\index{mathsymbols}{*WB@$W_B$}$W_A\dcup W_B$ such that the distance between each vertex of $W_A$ and each vertex of $W_B$ is odd.
\item The trees of $T-W$, which are called \index{general}{shrub}\emph{shrubs}, are all small, i.e., of order $O(\frac{k}{|W|})$. Each shrub either neighbours one vertex of $W$ (in which case it is called an \index{general}{end shrub}\emph{end shrub}) or two vertices of $W$  (in which case it is called an \index{general}{internal shrub}\emph{internal shrub}).
\item The two neighbours in $W$ of each internal shrub are from $W_A$.
\item The components of $T[W]$ are referred to as \index{general}{\kknnaagg}\emph{\kknnaaggssNOSPACE}.
\item The shrubs that neighbour a vertex (or two vertices) of $W_A$ are denoted \index{mathsymbols}{*SA@$\shrubA$}$\shrubA$. The shrubs that neighbour a vertex of $W_B$ are denoted \index{mathsymbols}{*SB@$\shrubB$}$\shrubB$.
\end{itemize}
We call the quadruple $(W_A,W_B,\shrubA,\shrubB)$ a \index{general}{fine partition}\emph{fine partition} of $T$.

\section{Shadows, random splitting, and common settings}\label{sec:configurations}
In this section we will prove some preliminaries needed for the main results of this paper, presented in Section~\ref{sec:typesofconfigurations}.
The present section is organized as follows. In Section~\ref{ssec:shadows} we introduce an auxiliary notion of shadows and prove some simple properties. Section~\ref{ssec:RandomSplittins}
introduces randomized splitting of the vertex set of an input
graph. In Section~\ref{ssec:ExceptionalVertices} we 
 introduce building blocks for
the finer structure we will obtain in Section~\ref{sec:typesofconfigurations}.

\subsection{Shadows}\label{ssec:shadows}
We will find it convenient to work with the notion of a shadow. To motivate this notion, we recall the greedy embedding strategy. Suppose that $T$ is a tree of order $k$ and $G$ is a graph with minimum degree at least $k-1$. We can then root $T$ at an arbitrary vertex. Then, we embed that vertex in $G$. Now, at each step, we have a partial embedding of $T$ in $G$. We pick one vertex of $T$ that is already embedded but whose children are yet unembedded, and we embed those in $T$. The minimum degree condition tells us that we can always accommodate these children.

The greedy embedding strategy clearly fails in the setting of Theorem~\ref{thm:main}. So, we need to enhance the strategy by not embedding the vertices of $T_\PARAMETERPASSING{T}{thm:main}$ in some part $U$ (which is not suitable for continuing the embedding) of $G_\PARAMETERPASSING{T}{thm:main}$. This forces us to look-ahead: when embedding a vertex $v$ of $T_\PARAMETERPASSING{T}{thm:main}$ we want not only to avoid $U$, but also vertices that send many edges to $U$, since we want to avoid $U$ also with the children of $v$. The notion of shadow formalizes this.

Given a graph $H$, a set
$U\subset V(H)$, and a number $\ell$ we define inductively
\index{mathsymbols}{*SHADOW@$\shadow$}
\begin{align*}
& \shadow^{(0)}_H(U,\ell):=U\text{, and }\\
& \shadow^{(i)}_H(U,\ell):=\{ v\in V(H)\::\:
\deg_H(v,\shadow^{(i-1)}_H(U,\ell))>\ell \} \text{ for }i\geq 1.
\end{align*}
We abbreviate $\shadow^{(1)}_H(U,\ell)$ as $\shadow_H(U,\ell)$. Further, the
graph $H$ is omitted from the subscript if it is clear from the context.
Note that the shadow of a set $U$ might intersect $U$.

Below, we state two facts which bound the size of a shadow of a given set.
Fact~\ref{fact:shadowbound} gives a bound for general graphs of bounded maximum
degree and Fact~\ref{fact:shadowboundEXPANDER} gives a stronger bound for
nowhere-dense graphs.
\begin{fact}\label{fact:shadowbound}
Suppose $H$ is a graph with $\maxdeg(H)\le \Omega k$. Then for each
$\alpha>0, i\in\{0,1,\ldots\}$, and each set $U\subset V(H)$, we have
$$|\shadow^{(i)}(U,\alpha k)|\le \left(\frac{\Omega}{\alpha}\right)^i|U|\;.$$
\end{fact}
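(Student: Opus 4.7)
The plan is to proceed by induction on $i$, with the base case $i=0$ being immediate since $\shadow^{(0)}(U,\alpha k) = U$ by definition, so $|\shadow^{(0)}(U,\alpha k)| = |U| = (\Omega/\alpha)^0 |U|$.

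For the inductive step, assume the bound for $i-1$. I would double-count the edges in $H$ between $\shadow^{(i)}(U,\alpha k)$ and $\shadow^{(i-1)}(U,\alpha k)$. By the definition of $\shadow^{(i)}$, every vertex $v\in\shadow^{(i)}(U,\alpha k)$ contributes more than $\alpha k$ such edges, giving a lower bound of $\alpha k \cdot |\shadow^{(i)}(U,\alpha k)|$ on the edge count. On the other hand, every vertex $w \in \shadow^{(i-1)}(U,\alpha k)$ has total degree at most $\Omega k$ in $H$, yielding an upper bound of $\Omega k \cdot |\shadow^{(i-1)}(U,\alpha k)|$ on the same count. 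Combining these and cancelling $k$ gives
$$\alpha \cdot |\shadow^{(i)}(U,\alpha k)| \;\le\; \Omega \cdot |\shadow^{(i-1)}(U,\alpha k)|,$$
and the inductive hypothesis finishes the bound by a factor $\Omega/\alpha$.

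There is really no obstacle here; the argument is a textbook double-counting/induction. The only minor point worth being careful about is the strict inequality ``$>\alpha k$'' in the definition of shadow versus the non-strict ``$\le$'' in the conclusion, but this only makes the lower-bound edge count strict, which is consistent with the weak inequality claimed in the statement. I would write the proof in roughly five lines.
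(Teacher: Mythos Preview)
Your proposal is correct and follows essentially the same approach as the paper: induction on $i$ with the inductive step handled by double-counting edges between $\shadow^{(i-1)}$ and $\shadow^{(i)}$, using the maximum degree for the upper bound and the defining threshold $\alpha k$ for the lower bound. The paper phrases the reduction slightly differently (it notes that by induction it suffices to prove the case $i=1$), but the underlying argument is identical.
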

\begin{proof}
Proceeding by induction on $i$ it suffices to show that
$|\shadow^{(1)}(U,\alpha k)|\le \Omega|U|/\alpha$. To this end, observe that
$U$ sends out at most $\Omega k |U|$ edges while each vertex of
$\shadow(U,\alpha k)$ receives at least $\alpha k$ edges from $U$.
\end{proof}
\begin{fact}\label{fact:shadowboundEXPANDER}
Let $\alpha,\gamma,Q>0$ be three numbers such that $1\le Q\le
\frac{\alpha}{16\gamma}$. Suppose that $H$ is a $(\gamma k,\gamma)$-nowhere-dense graph, and let $U\subset V(H)$ with $|U|\le Qk$. Then we have $$|\shadow(U,\alpha
k)|\le \frac{16Q^2\gamma}{\alpha}k.$$
\end{fact}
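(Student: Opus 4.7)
The plan is to argue by contradiction: assume $|S|>\frac{16Q^{2}\gamma}{\alpha}k$ where $S:=\shadow_H(U,\alpha k)$, and produce a $(\gamma k,\gamma)$-dense spot inside $H[S\cup U]$, contradicting the nowhere-density hypothesis. The starting observation is the edge count $e_H(S,U)>\alpha k\,|S|$, which is immediate from the definition of the shadow.

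The core step is a standard ``cleaning'' of the bipartite graph $H[S,U]$: iteratively delete any vertex, on either side, whose current degree into the opposite side is at most $\gamma k$. The process deletes at most $\gamma k\,(|S|+|U|)$ edges in total. Using $|U|\le Qk$, the hypothesis $\gamma\le\alpha/(16Q)$, and the assumed lower bound on $|S|$, I would estimate $\gamma k|S|\le \alpha k|S|/16$ and $\gamma k|U|\le \gamma Q k^{2}\le \alpha k|S|/16$, so that the number of deleted edges is at most $\alpha k|S|/8$. In particular this is strictly less than $e_H(S,U)$, so the remaining sets $S^{*}\subseteq S$ and $U^{*}\subseteq U$ are non-empty, and by construction every vertex of $S^{*}\cup U^{*}$ has more than $\gamma k$ neighbours in the opposite side of $(S^{*},U^{*})$.

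For the density, I would estimate $e(S^{*},U^{*})\ge \alpha k|S|-\alpha k|S|/8=7\alpha k|S|/8$, and then, using $|S^{*}|\le |S|$ and $|U^{*}|\le |U|\le Qk$,
\[
\density(S^{*},U^{*})=\frac{e(S^{*},U^{*})}{|S^{*}|\,|U^{*}|}\ge \frac{7\alpha k|S|/8}{|S|\,|U|}\ge \frac{7\alpha}{8Q}>\gamma,
\]
the last step being another application of $16Q\le \alpha/\gamma$. Thus $(S^{*},U^{*})$ is a $(\gamma k,\gamma)$-dense spot in $H$, contradicting nowhere-density.

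The only delicate point is that the cleaning procedure must simultaneously deliver the minimum-degree condition \emph{and} the density condition. The former is automatic from the termination of the cleaning; the latter has to be verified separately, and this is where both the quadratic dependence on $Q$ in the bound on $|S|$ and the hypothesis $16Q\le \alpha/\gamma$ are used, so that the $|U|\le Qk$ factor in the denominator of the density is absorbed.
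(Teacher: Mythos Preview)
Your arithmetic is correct and your route is genuinely different from the paper's. The paper does not clean a bipartite graph; instead it fixes $W\subset\shadow(U,\alpha k)$ of size exactly $\frac{16Q^2\gamma}{\alpha}k\le Qk$, counts edges inside the \emph{union} $U\cup W$ to get average degree at least $8\gamma Qk$ in $H[U\cup W]$, passes to a subgraph $H'$ of minimum degree at least $4\gamma Qk$, and finally takes a maximum cut of $H'$ to produce the bipartite $(\gamma k,\gamma)$-dense spot. Your cleaning argument is more direct---no average-to-minimum-degree extraction, no max cut---and the minimum-degree and density conditions fall out simultaneously from the two estimates $\gamma k|S|\le\alpha k|S|/16$ and $\gamma k|U|\le\alpha k|S|/16$.

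There is, however, one genuine gap. The paper explicitly remarks that $\shadow(U,\alpha k)$ may intersect $U$, and both the notion of a dense spot $(A,B;F)$ and the density $\density(A,B)$ in this paper are only defined for \emph{disjoint} $A,B$. Your cleaning of ``the bipartite graph $H[S,U]$'' tacitly treats $S$ and $U$ as disjoint; if $S\cap U\neq\emptyset$, the output $(S^{*},U^{*})$ need not be a dense spot in the required sense, and simply discarding the overlap from one side can destroy the minimum-degree guarantee on the other. The paper's max-cut on $H[U\cup W]$ is precisely the device that manufactures disjoint colour classes regardless of overlap. Your argument goes through verbatim once $S\cap U=\emptyset$; to cover the general case you would need either to run your cleaning on $(S\setminus U,U)$ when $|S\setminus U|$ is large enough, and otherwise observe that a large $S\cap U$ forces many edges inside $U$ and fall back on an average-degree-plus-max-cut argument there---which is essentially the paper's proof.
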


\begin{proof}
Suppose the contrary and let $W\subset \shadow(U,\alpha k)$ be of size
$|W|=\frac{16Q^2\gamma}{\alpha}k\le Qk$.\Referee{(2)} Then $e_H(U\cup W)\ge \frac12 \sum_{v\in W}\deg_H(v,U)\ge 8\gamma Q^2 k^2$. Thus $H[U\cup W]$ has average degree at least $$\frac{2 e_H(U\cup W)}{|U|+|W|}\ge 8\gamma Qk\;,$$ and therefore, by a well-known fact, contains a subgraph $H'$ of minimum degree at least $4\gamma Qk$. Taking a maximal cut $(A,B)$ in $H'$, it is easy to see that $H'[A,B]$ has minimum degree at least $2\gamma Qk\ge \gamma k$. Further, $H'[A,B]$ has density at least $\frac{|A|\cdot 2\gamma Qk}{|A||B|}\geq\gamma$,  contradicting that $H$ is $(\gamma k,
\gamma)$-nowhere-dense.
\end{proof}

\subsection{Random splitting}\label{ssec:RandomSplittins}
Suppose a graph $G$ (together with its bounded decomposition)
 is given. In this section we split its vertex
set into several classes the sizes of which have given ratios. It is important that most vertices will
have their degrees split obeying approximately these ratios. The
corresponding statement is given in Lemma~\ref{lem:randomSplit}. It will be used to split the vertices of the host graph $G=G_\PARAMETERPASSING{T}{thm:main}$
according to which part of the tree
$T=T_\PARAMETERPASSING{T}{thm:main}\in\treeclass{k}$ they will host. More
precisely, suppose that $(W_A,W_B,\shrubA,\shrubB)$ is a fine partition
of $T$. Let $t_\mathrm{int}$ and $t_\mathrm{end}$
be the total sizes of the internal and end shrubs, respectively. We then
want to partition $V(G)$ into three sets
$\colouringp{0},\colouringp{1},\colouringp{2}$ in the ratio (approximately)
$$(|W_A|+|W_B|)\::\:t_\mathrm{int}\::\:t_\mathrm{end}$$ so that degrees of the
vertices of $V(G)$ are split proportionally. This will allow us to embed the vertices of
$W_A\cup W_B$ into $\colouringp{0}$, the internal shrubs into $\colouringp{1}$, and
end shrubs into $\colouringp{2}$. Actually, since our embedding procedure is
more complex, we not only require the degrees to be split proportionally, but
also to partition proportionally the objects from the bounded decomposition.
In~\cite{cite:LKS-cut3} it will get clearer why such a random
splitting needs to be used.

Lemma~\ref{lem:randomSplit} below is formulated in an abstract setting, without
any reference to the tree $T$, and with a general number of classes in the partition.

\begin{lemma}\label{lem:randomSplit} For each $p\in\NN$ and $a>0$ there exists $k_0>0$ such that
for each $k>k_0$ we have the following.

Suppose $G$ is a graph of order $n\ge k_0$ and $\maxdeg(G)\le\Omega^*k$ with its 
$(k,\Lambda,\gamma,\epsilon,k^{-0.05},\rho)$-bounded
decomposition $( \clusters,\DenseSpots, \Gblack, \Gexp, \smallatoms )$. As
usual, we write $\Gcapt$ for the subgraph captured by $( \clusters,\DenseSpots,
\Gblack, \Gexp, \smallatoms )$, and $\GD$ for the spanning subgraph of $G$
consisting of the edges in $\DenseSpots$. Let $\M$ be an
$(\epsilon,d,k^{0.95})$-\semiregular matching in $G$, and $\VXV_1,\ldots, \VXV_p$
be subsets of $V(G)$. Suppose that $\Omega^*\ge 1$ and $\Omega^*/\gamma<k^{0.1}$.

Suppose that $\aXa_1,\ldots,\aXa_p\in\{0\}\cup[a,1]$ are reals with $\sum \aXa_i\le 1$.
Then there exists a partition $\AXA_1\cup \ldots\cup \AXA_p=V(G)$, and sets $\bar
V\subset V(G)$, $\bar\V\subset \V(\M)$, $\bar\clusters\subset\clusters$ with the following properties.
\begin{enumerate}[(1)]
  \item\label{It:H1} $|\bar V|\le \exp(-k^{0.1})n$,
  $|\bigcup\bar\V|\le \exp(-k^{0.1})n$,
  $|\bigcup\bar\clusters|<\exp(-k^{0.1})n$.
  \item\label{It:H2} For each $i\in [p]$ and each $C\in
  \clusters\setminus\bar\clusters$ we have $|C\cap \AXA_i|\ge
  \aXa_i|\AXA_i|-k^{0.9}$.
  \item\label{It:H3} For each $i\in [p]$ and each $C\in \V(\M)\setminus\bar\V$
  we have $|C\cap \AXA_i|\ge \aXa_i|\AXA_i|-k^{0.9}$.
  \item\label{It:H4} For each $i\in [p]$, $D=(U,W; F)\in\DenseSpots$
   and 
   $\mindeg_D (U\setminus \bar V,W\cap \AXA_i)\ge \aXa_i\gamma k-k^{0.9}$.
  \item\label{It:HproportionalSizes} For each $i,j\in[p]$ we have $|\AXA_i\cap
  \VXV_j|\ge \aXa_i|\VXV_j|-n^{0.9}$.
  \item\label{It:H5} For each $i\in [p]$ each $J\subset[p]$ and each $v\in V(G)\setminus \bar V$ we have $$\deg_H(v,\AXA_i\cap \VXV_J)\ge \aXa_i\deg_H(v,\VXV_J)-2^{-p}k^{0.9}\;,$$ for each graph $H\in\{G,\Gcapt,\Gexp,\GD,\Gcapt\cup\GD\}$,
  where 
$\VXV_J:=\big(\bigcap_{j\in J}\VXV_j\big)\sm \big(\bigcup_{j\in [p]\setminus J}
\VXV_j\big)$.
  \item\label{It:H6} For each $i,i',j,j'\in [p]$ ($j\neq j'$), we have
\begin{align*}
e_H(\AXA_i\cap \VXV_{j},\AXA_{i'}\cap \VXV_{j'})&\ge
  \aXa_i\aXa_{i'} e_H(\VXV_j,\VXV_{j'})-k^{0.6}n^{0.6}\;,\\
e_H(\AXA_i\cap \VXV_{j},\AXA_{i'}\cap \VXV_{j})&\ge
  \aXa_i\aXa_{i'} e(H[\VXV_j])-k^{0.6}n^{0.6}\qquad\mbox{if $i\neq i'$, and}\\
  e(H[\AXA_i\cap \VXV_{j}])&\ge
  \aXa_i^2 e(H[\VXV_j])-k^{0.6}n^{0.6}\;.   
\end{align*}  
for each graph
  $H\in\{G,\Gcapt,\Gexp,\GD,\Gcapt\cup\GD\}$.
  \item\label{It:H7} For each $i\in[p]$ if $\aXa_i=0$ then
  $\AXA_i=\emptyset$.
\end{enumerate}
\end{lemma}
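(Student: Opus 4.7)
\emph{Plan.} I would build $\AXA_1,\ldots,\AXA_p$ randomly: each vertex $v\in V(G)$ is independently assigned to class $i$ with probability $\aXa_i$. When $\sum_i\aXa_i<1$, a ``dummy'' class of probability $1-\sum_i\aXa_i$ absorbs the leftover mass and is afterwards merged into any one fixed $\AXA_j$ with $\aXa_j>0$; this can only increase the quantities appearing on the left-hand sides of \eqref{It:H2}--\eqref{It:H6}. Condition \eqref{It:H7} is then immediate, as probability-zero events produce empty classes. The remaining conditions all have the same flavour: the left-hand side is a sum of Bernoullis---or of products of two Bernoullis for the edge counts in \eqref{It:H6}---whose expectation equals or exceeds the main term on the right, so a concentration inequality plus a union bound suffices, with the exceptional sets $\bar V,\bar\V,\bar\clusters$ collecting the objects on which concentration fails.

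For the ``object-based'' conditions \eqref{It:H2}, \eqref{It:H3}, and \eqref{It:H4}, the relevant sets have size bounded below by $\nu k=k^{0.95}$, $k^{0.95}$, and $\gamma k\ge k^{0.9}$ respectively (the last using $\Omega^*/\gamma<k^{0.1}$ and $\Omega^*\ge 1$). A Chernoff bound then gives that each condition fails for a single object with probability at most $\exp(-\Omega(k^{0.8}))$. Taking a union bound over $i\in[p]$ and marking bad the offending cluster, partner, or dense-spot vertex places at most $\exp(-k^{0.1})n$ vertices' worth into $\bar\clusters$, $\bar\V$, and $\bar V$ respectively, by Markov's inequality.

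For the ``vertex-based'' conditions \eqref{It:HproportionalSizes} and \eqref{It:H5}, fix $v\in V(G)$, $i\in[p]$, $J\subseteq[p]$, and $H\in\{G,\Gcapt,\Gexp,\GD,\Gcapt\cup\GD\}$; the random variable $\deg_H(v,\AXA_i\cap \VXV_J)$ is a sum of independent indicators with mean $\aXa_i\deg_H(v,\VXV_J)$. When the mean exceeds $k^{0.8}$, Chernoff gives deviation at most $2^{-p}k^{0.9}$ with failure probability $\exp(-\Omega(k^{0.7}))$; otherwise the inequality is trivial since its right-hand side is non-positive. There are only $O(1)$ choices of $(i,J,H)$, so marking bad every $v$ on which any of these events fails enlarges $\bar V$ by the required amount. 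Condition \eqref{It:HproportionalSizes} follows by the same argument applied to the global quantity $|\VXV_j|$, whose sample fluctuations are of order $n^{1/2}\ll n^{0.9}$.

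The main obstacle is condition \eqref{It:H6}: the quantity $\mathcal E:=e_H(\AXA_i\cap\VXV_j,\AXA_{i'}\cap\VXV_{j'})$ is quadratic in the independent Bernoulli assignments, with expectation $\aXa_i\aXa_{i'}e_H(\VXV_j,\VXV_{j'})$ (respectively $\aXa_i^2 e(H[\VXV_j])$ in the diagonal case). Because changing the class of one vertex alters $\mathcal E$ by at most $\maxdeg(H)\le\Omega^*k\le k^{1.1}$, McDiarmid's inequality delivers concentration; the delicate step is to verify that the resulting deviation is dominated by the slack $k^{0.6}n^{0.6}$ allowed in the statement. Either the regime $e_H(\VXV_j,\VXV_{j'})\le k^{0.6}n^{0.6}$ is trivial (both sides are $\le k^{0.6}n^{0.6}$), or one invokes a sharper Bernstein/Janson-type bound whose variance scales as $\maxdeg(H)\cdot e_H(\VXV_j,\VXV_{j'})$ rather than $\maxdeg(H)^2\cdot n$, together with the hypothesis $\Omega^*/\gamma<k^{0.1}$. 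A union bound over the $O(1)$ admissible tuples $(i,i',j,j',H)$ then finishes the verification, and combining it with the bounds on $\bar V$, $\bar\V$, $\bar\clusters$ gives the existence of a realisation satisfying \eqref{It:H1}--\eqref{It:H7}.
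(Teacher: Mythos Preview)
Your proposal is correct and follows essentially the same route as the paper: independently assign each vertex to class $i$ with probability $\aXa_i$, define the exceptional sets $\bar V,\bar\V,\bar\clusters$ as those objects on which the relevant Chernoff-type estimates fail, and combine Markov's inequality with a union bound. The paper handles $\sum_i\aXa_i<1$ by simply noting that all bounds in \eqref{It:H2}--\eqref{It:H6} are lower bounds (so one may assume equality), and it only spells out the details for property~\eqref{It:H4}, dismissing \eqref{It:HproportionalSizes} and \eqref{It:H6} as ``controlled in a straightforward way by the Chernoff bound''; your explicit discussion of the quadratic nature of~\eqref{It:H6} and the possible need for a bounded-differences or variance-based inequality is in fact more careful than what the paper provides.
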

\begin{proof}
We can assume that $\sum \aXa_i=1$ since all bounds in \eqref{It:H2}--\eqref{It:H6} are lower bounds. Assume that $k$ is
large enough. We assign each vertex $v\in V(G)$ to one of the sets $\AXA_1$,
\ldots, $\AXA_p$ at random with respective probabilities $\aXa_1,\ldots,\aXa_p$. Let $\bar V_1$ and $\bar V_2$ be the
vertices which do not satisfy~\eqref{It:H4} and~\eqref{It:H5},
respectively. Let $\bar\V$ be the sets of $\V(\M)$ which do
not satisfy~\eqref{It:H3}, and let $\bar\clusters$ be the
clusters of $\clusters$ which do not satisfy~\eqref{It:H2}.
Setting~$\bar V:=\bar V_1\cup\bar V_2$, we need to show
that~\eqref{It:H1}, \eqref{It:HproportionalSizes} and~\eqref{It:H6} are
fulfilled simultaneously with positive probability. Using the union bound,
it suffices to show that each of the properties~\eqref{It:H1},
\eqref{It:HproportionalSizes} and \eqref{It:H6} is violated with probability
at most $0.2$. The probability of each of these three
properties can be controlled in a
straightforward way by the Chernoff bound. We only give such a bound (with error probability at most $0.1$) on the size of the set $\bar V_1$ (appearing
in~\eqref{It:H1}), which is the most difficult one to control.

For $i\in [p]$, let $\bar V_{1,i}$ be the set of vertices $v$ for which there
exists $D=(U,W; F)\in\DenseSpots$, $U\ni v$, such that
$\deg_D(v,W\cap \AXA_i)<\aXa_i\gamma k-k^{0.9}$. We aim to show
that for each $i\in [p]$ the probability that $|\bar
V_{1,i}|>\exp(-k^{0.2})n$ is at most $\frac{1}{10p}$. Indeed, summing such an error bound together with similar bounds for other properties will allow us to conclude with the statement.
This will in turn follow from the Markov Inequality provided that we show that
\begin{equation}\label{itsraining}
\expectation[|\bar V_{1,i}|]\le
\frac{1}{10p}\cdot\exp(-k^{0.2})n\;.
\end{equation}
Indeed, let us consider an arbitrary vertex $v\in V(G)$. By Fact~\ref{fact:boundedlymanyspots},
$v$ is contained in at most $\Omega^*/\gamma$ dense spots of $\DenseSpots$. For a 
fixed dense spot $D=(U,W;F)\in\DenseSpots$ with $v\in
U$ let us bound the probability of the event $\mathcal
E_{v,i,D}$ that $\deg_D(v,W\cap \AXA_i)<\aXa_i\gamma k-k^{0.9}$.
To this end, fix a set $N\subset W\cap \neighbour_D(v)$ of size exactly
$\gamma k$ before the random assignment is performed. Now, elements of
$V(G)$ are distributed randomly into the sets $\AXA_1,\ldots, \AXA_p$. In particular, the number
$|\AXA_i\cap N|$ has binomial distribution with parameters
$\gamma k$ and $\aXa_i$. Using the Chernoff bound, we get
$$\probability[\mathcal E_{v,i,D}]\le
\probability\left[|\AXA_i\cap N|<\aXa_i\gamma k-k^{0.9}\right]\le
\exp(-k^{0.3}) \;.$$
Thus, it follows by summing
the tail over at most $\Omega^*/\gamma\le k^{0.1}$ dense
spots containing $v$, that
\begin{equation}\label{itsunny}
\probability[v\in \bar V_{1,i}]\le k^{0.1}\cdot
\exp(-k^{0.3})\;.
\end{equation}
Now,~\eqref{itsraining} follows by linearity of expectation.
\end{proof}

Lemma~\ref{lem:randomSplit} is utilized for the purpose of our proof of
Theorem~\ref{thm:main} using the notion of proportional partition introduced in
Definition~\ref{def:proportionalsplitting} below.

\subsection{Common settings}\label{ssec:ExceptionalVertices}
Throughout Section~\ref{sec:configurations} 
 we shall
be working with the setting that comes from \cite[Lemma~\ref{p1.prop:LKSstruct}]{cite:LKS-cut1}. In order
to keep statements of the subsequent lemmas reasonably short we introduce a common setting.

Suppose that $G$ is a graph with a $(k,\Omega^{**},\Omega^*,\Lambda,\gamma,\epsilon,\nu,\rho)$-sparse decomposition 
$$\class=(\HugeVertices, \clusters,\DenseSpots, \Gblack, \Gexp,\smallatoms
)\;$$ with respect to $(\largevertices{\eta}{k}{G},\smallvertices{\eta}{k}{G})$.\Referee{(4)}
 Suppose further that  $\mathcal M_A,\mathcal M_B$ are
$(\epsilon',d,\gamma k)$-\semiregular matchings  in $\GD$. We then define the triple \index{mathsymbols}{*XA@$\XA(\eta,\class, \mathcal M_A,\mathcal M_B)$} \index{mathsymbols}{*XB@$\XB(\eta,\class, \mathcal M_A,\mathcal M_B)$}
\index{mathsymbols}{*XC@$\XC(\eta,\class, \mathcal M_A,\mathcal
M_B)$}
$(\XA,\XB,\XC)=(\XA,\XB,\XC)(\eta,\class, \mathcal M_A,\mathcal M_B)$
by setting
\begin{align*}
\XA&:=\largevertices{\eta}{k}{G}\setminus V(\mathcal M_B)\;,\\
\XB&:=\left\{v\in V(\mathcal M_B)\cap \largevertices{\eta}{k}{G}\::\:\widehat{\deg}(v)<(1+\eta)\frac
k2\right\}\;,\\
\XC&:=\largevertices{\eta}{k}{G}\setminus(\XA\cup\XB)\;,
\end{align*}
where 
$\widehat{\deg}(v)$ on the second
line is defined by
\begin{equation}\label{eq:defhatdeg}
\widehat{\deg}(v):=\deg_G\big(v,\smallvertices{\eta}{k}{G}\setminus
(V(\Gexp)\cup\smallatoms\cup V(\mathcal M_A\cup\mathcal M_B)\big)\;.
\end{equation}
\begin{remark}\label{rem:aboutXAXB}
The sets $\XA,\XB,\XC$ were defined in~\cite[Definition~\ref{p1.def:XAXBXC}]{cite:LKS-cut1}. Of course, in applications, the matchings $\mathcal M_A$ and $\mathcal M_B$ will be guaranteed to have some favourable properties. These properties are formulated in~\cite[Lemma~\ref{p1.prop:LKSstruct}]{cite:LKS-cut1} and are listed in \eqref{commonsetting1}--\eqref{commonsettingNicDoNAtom} of Setting~\ref{commonsetting} below. It was argued in \cite[Section~\ref{p1.ssec:motivation}]{cite:LKS-cut1} why then the set $\XA$ has excellent properties for accommodating cut-vertices of $T_\PARAMETERPASSING{T}{thm:main}$, and the set $\XB$ has ``half-that-excellent properties'' for accommodating cut-vertices. In particular, the formula defining $\XB$ suggests that we cannot make use of the set $\smallvertices{\eta}{k}{G}\setminus
(V(\Gexp)\cup\smallatoms\cup V(\mathcal M_A\cup\mathcal M_B)$ for the purpose of embedding shrubs neighbouring the  cut-vertices embedded into $\XB$.
\end{remark}

With this notation, we can introduce the common setting, Setting~\ref{commonsetting}. Setting~\ref{commonsetting} serves as an interface between what has been done in~\cite{cite:LKS-cut0,cite:LKS-cut1} and what will be needed in~\cite{cite:LKS-cut3}. Thus, where possible, we interlace the (highly technical) definitions of Setting~\ref{commonsetting} with some motivation and references.
\begin{setting}\label{commonsetting}
We assume that the constants $\Lambda,\Omega^*,\Omega^{**},k_0$ and $\alphaD,\gamma,\epsilon,\epsilon',\eta,\pi,\rho, \tau, d$ satisfy
\begin{align}
\label{eq:KONST}
\begin{split}
 \frac12\ge\eta\gg\frac1{\Omega^*}\gg \frac1{\Omega^{**}}\gg\rho\gg\gamma\gg
d \ge\frac1{\Lambda}\ge\epsilon\ge
\pi\ge  \alphaD
\ge \epsilon'\ge
\nu\gg \tau \gg \frac{1}{k_0}>0\;,
\end{split}
\end{align}
and that $k\ge k_0$.
Here, by writing $c>a_1\gg
a_2\gg \ldots \gg a_\ell>0$ we mean that there exist suitable non-decreasing functions
$f_i:(0,c)^i\rightarrow (0,c)$ ($i=1,\ldots,\ell-1$) such that for each $i\in
[\ell-1]$ we have $a_{i+1}<f_{i}(a_1,\ldots,a_i)$. A suitable choice of these functions in~\eqref{eq:KONST} is determined by the properties we require in~\cite{cite:LKS-cut3}.

\medskip

Suppose that $G\in\LKSsmallgraphs{n}{k}{\eta}$ is given with its
$(k,\Omega^{**},\Omega^*,\Lambda,\gamma,\epsilon',\nu,\rho)$-sparse
decomposition $$\class=(\HugeVertices, \clusters,\DenseSpots, \Gblack,
\Gexp,\smallatoms )\;, $$
with respect to the partition
$\{\smallvertices{\eta}{k}{G},\largevertices{\eta}{k}{G}\}$, and with respect to the avoiding threshold $\frac{\rho k}{100\Omega^*}$. We write 
\begin{equation}
 \index{mathsymbols}{*VA@$\largeintoatoms$}\index{mathsymbols}{*VA@$\clustersintoatoms$}
\largeintoatoms:=\shadow_{\Gcapt-\HugeVertices}(\smallatoms,\frac{\rho k}{100\Omega^*})\quad\mbox{and}\quad\clustersintoatoms:=\{C\in\clusters\::\:C\subset \largeintoatoms\}\;.
\label{eq:deflargeintoatoms}
\end{equation}
The graph \index{mathsymbols}{*Gblack@$\BGblack$}$\BGblack$ is the corresponding cluster graph. Let $\clustersize$
\index{mathsymbols}{*C@$\clustersize$}
be the size of an arbitrary cluster\footnote{The number $\clustersize$ is not defined when $\clusters=\emptyset$. However in that case $\clustersize$ is never actually used.} in~$\clusters$.
 Let \index{mathsymbols}{*G@$\Gcapt$}$\Gcapt$ be the spanning subgraph of $G$ formed by the edges captured by $\class$. There are two $(\epsilon,d,\pi \clustersize)$-\semiregular matchings $\mathcal M_A$
and $\mathcal M_B$ in $\GD$, with the following
properties (we abbreviate
$\XA:=\XA(\eta,\class, \mathcal M_A,\mathcal M_B)$,
$\XB:=\XB(\eta,\class, \mathcal M_A,\mathcal M_B)$, and
$\XC:=\XC(\eta,\class, \mathcal M_A,\mathcal M_B)$):\footnote{Let us note that Properties~\eqref{commonsetting1}--\eqref{commonsettingNicDoNAtom} come from~\cite[Lemma~\ref{p1.prop:LKSstruct}]{cite:LKS-cut1} and Properties~\eqref{commonsetting:numbercaptured} and~\eqref{commonsetting:DenseCaptured} come from~\cite[Lemma~\ref{p0.lem:LKSsparseClass}]{cite:LKS-cut0}.}\Referee{(6)}
\begin{enumerate}[(1)]
\item\label{commonsetting1}
$V(\mathcal M_A)\cap V(\mathcal
M_B)=\emptyset$,
\item\label{commonsetting1apul}
$V_1(\mathcal \M_B)\subset S^0$, where
\begin{equation}\label{eq:defS0}
S^0:=\smallvertices{\eta}{k}{G}\setminus
(V(\Gexp)\cup\smallatoms)\;,
\end{equation}
\item\label{commonsetting2} for each $(X,Y)\in \M_A\cup\M_B$, there is a dense
spot $(U,W; F)\in \DenseSpots$ with $X\subset U$ and $Y\subset W$, and further,
either $X\subset \smallvertices{\eta}{k}{G}$ or $X\subset
\largevertices{\eta}{k}{G}$, and $Y\subset \smallvertices{\eta}{k}{G}$ or
$Y\subset \largevertices{\eta}{k}{G}$,
\item\label{commonsetting3}
for each $X_1\in\V_1(\M_A\cup\M_B)$ there exists a cluster $C_1\in \clusters$ such that $X_1\subset C_1$, and for each $X_2\in\V_2(\M_A\cup\M_B)$ there exists $C_2\in \clusters\cup\{\largevertices{\eta}{k}{G}\cap \smallatoms\}$ such that $X_2\subset C_2$,
\item\label{commonsettingMgood} each pair of the \semiregular matching
$\Mgood:=\{(X_1,X_2)\in\M_A\::\: X_1\cup X_2\subset \XA\}$ corresponds to an
edge in $\BGblack$,
\item\label{commonsettingXAS0}$e_{\Gcapt}\big(\XA,S^0\setminus V(\mathcal
M_A)\big)\le \gamma kn$,
\item\label{commonsetting4} $e_{\Gblack}(V(G)\setminus V(\M_A\cup \M_B))\le
\gamma^2kn$,
\item\label{commonsettingNicDoNAtom} for the \semiregular matching \index{mathsymbols}{*Natom@$\NAtom$}
$\NAtom:=\{(X,Y)\in\M_A\cup\M_B\::\: (X\cup Y)\cap\smallatoms\not=\emptyset\}$ we have $e_{\Gblack}\big(V(G)\setminus
V(\M_A\cup \M_B),V(\NAtom)\big)\le \gamma^2 kn$,
\item \label{commonsetting:numbercaptured}$|E(G)\setminus E(\Gcapt)|\le 2\rho
kn$, 
\item\label{commonsetting:DenseCaptured}$|E(\GD)\setminus (E(\Gblack)\cup
E_G[\smallatoms,\smallatoms\cup\bigcup\clusters])|\le \frac 54\gamma kn$.
\end{enumerate}

We now define several additional vertex sets. The first of them, the set $V_+$, is just the complement of the set used in~\eqref{eq:defhatdeg}.\Referee{(5)}
\begin{align}
\label{eq:defV+}\index{mathsymbols}{*V_+@$V_+$}
V_+&:=V(G)\setminus 
(S^0\setminus V(\mathcal M_A\cup\mathcal
M_B)) \\  \label{defV+eq}
& = \largevertices{\eta}{k}{G}\cup
V(\Gexp)\cup\smallatoms\cup V(\mathcal M_A\cup\mathcal
M_B)\;.
\end{align}

The set $L_\#$ defined below is the set of ``bad vertices of $\largevertices{\eta}{k}{G}$'', that is, the set of those vertices which have many uncaptured neighbours in the sparse decomposition. If we think of the set $V_+$ as candidate vertices for embedding certain shrubs (cf.~Remark~\ref{rem:aboutXAXB}) then we better discard vertices with a big uncaptured degree from that set. This leads us to the definition of the set $\Vgood$. Since the set $\HugeVertices$ is treated separately, it is also deleted from $\Vgood$.
\begin{align}
\label{eq:defLsharp}\index{mathsymbols}{*L@$L_\#$}
L_\#&:=\largevertices{\eta}{k}{G}\setminus\largevertices{\frac9{10}\eta}{k}{\Gcapt}\;\mbox{,
and}\\ 
\label{eq:defVgood}\index{mathsymbols}{*Vgood@$\Vgood$} 
\Vgood&:=V_+\setminus (\HugeVertices\cup L_\#)\;.
\end{align}

We can now define sets $\YA$ and $\YB$ which should be regarded as cleaned versions of the sets $\XA$ and $\XB$. Here, by a \emph{cleaning} we mean the process of getting rid of certain atypical vertices. Indeed, Lemma~\ref{lem:YAYB} below asserts that the $\YA$ approximately equals $\XA$ and $\YB$ approximately equals $\XB$. Set
\begin{align}
\label{eq:defYA}
\YA&:=
 \shadow_{\Gcapt}\left(V_+\setminus L_\#, (1+\frac\eta{10})k\right) \setminus \shadow_{G-\Gcapt}\left(V(G),\frac\eta{100} k\right)\;,
 \index{mathsymbols}{*YA@$\YA$}\\
\label{eq:defYB} 
\YB&:=
 \shadow_{\Gcapt}\left(V_+\setminus L_\#, (1+\frac\eta{10})\frac k2\right) \setminus
 \shadow_{G-\Gcapt}\left(V(G),\frac\eta{100} k\right)\index{mathsymbols}{*YB@$\YB$}\;.
\end{align}
When the set $\HugeVertices$ is negligible the configuration we obtain does not involve $\HugeVertices$ at all. In other words, $\HugeVertices$ is not used for embedding. Thus, we use the concept of shadows in the way described  at the beginning of Section~\ref{ssec:shadows} to avoid $\HugeVertices$, and define $\WantiC$ as follows.
\begin{align}
\WantiC&:=(\XA\cup\XB)\cap \shadow_G\left(\HugeVertices, \tfrac{\eta}{100}
k \right) \index{mathsymbols}{*V@$\WantiC$}\label{eq:defWantiC}\;.
\end{align}
Next, we define ``bad sets'' $\gPatoms$, $\gP_1$, $\gP$, $\gP_2$ and $\gP_3$, again using shadows.  
\begin{align}
\nonumber
\gPatoms&:=\shadow_{\Gblack}(V(\NAtom),\gamma k)\setminus V(\M_A\cup\M_B)\;,
\index{mathsymbols}{*Je@$\gPatoms$}\\
\nonumber
\gP_1&:=\shadow_{\Gblack}(V(G)\setminus V(\M_A\cup\M_B),\gamma k)\setminus V(\M_A\cup\M_B)\;,
\index{mathsymbols}{*J1@$\gP_1$}
\\ \nonumber
\gP&:=(\XA\setminus \YA)\cup((\XA\cup \XB)\setminus \YB)\cup\WantiC
\cup
L_\sharp \cup \gP_1\\
\nonumber
&~~~~~\cup\shadow_{\GD\cup\Gcapt}(\WantiC\cup
L_\sharp\cup\gPatoms\cup\gP_1,\eta^2 k/10^5)\;,
\index{mathsymbols}{*J@$\gP$}
\\
\nonumber
\gP_2&:=\XA\cap \shadow_{\Gcapt}(S^0\setminus
V(\M_A),\sqrt\gamma k)\;,
\index{mathsymbols}{*J2@$\gP_2$}\\
\nonumber
\gP_3&:=\XA\cap\shadow_{\Gcapt}(\XA, \eta^3k/10^3)\;.
\index{mathsymbols}{*J3@$\gP_3$}
\end{align}
Eliminating $\gPatoms$ from an embedding procedure, for example, will guarantee that we will not be forced to enter the set $\NAtom$. This is convenient in some situations. Which sets are ``bad'' depends on a particular configuration we want to get. That is, some properties given in the definitions of our configurations in Section~\ref{ssec:TypesConf} could be phrased in terms of avoiding some of the sets $\gPatoms$, $\gP_1$, $\gP$, $\gP_2$ and $\gP_3$. 
For some other properties of the configurations, we take only some of the sets $\gPatoms$, $\gP_1$, $\gP$, $\gP_2$ and $\gP_3$ as initial natural forbidden sets, but then we need to apply some non-trivial cleaning (in Lemmas~\ref{lem:ConfWhenCXAXB}, \ref{lem:ConfWhenNOTCXAXB}, and 
\ref{lem:ConfWhenMatching}) to get a desired configuration.

We define a set $\mathcal F$ of clusters of $\M_A\cup \M_B$. As it turns out (see Lemma~\ref{lem:propertyYA12}), $\mathcal F$ is actually an $(\M_A\cup \M_B)$-cover.
\begin{align}
\label{def:Fcover}
 \mathcal F&:=\{C\in \V(\M_A):C\subset \XA\}\cup\V_1(\M_B)\;.
\end{align}
\end{setting}
On the interface between Lemma~\ref{outerlemma} and Lemma~\ref{lem:ConfWhenMatching} we shall need to work with a \semiregular matching which is formed of only those edges $E(\DenseSpots)$ which are either incident with $\smallatoms$, or included in $\Gblack$. The following lemma provides us with an appropriate ``cleaned version of $\DenseSpots$''. The notion of being absorbed adapts in a straightforward way to two families of dense spots: a family of dense spots ${\DenseSpots}_1$ \emph{is absorbed} by another family ${\DenseSpots}_2$ if for every $D_1\in{\DenseSpots}_1$ there exists $D_2\in{\DenseSpots}_2$ such that $D_1$ is contained in $D_2$ as a subgraph.

\begin{lemma}\label{lem:clean-spots}
Assume we are in Setting~\ref{commonsetting}. Then there exists a family
\index{mathsymbols}{*D@$\DenseSpots_\class$}
$\DenseSpots_\class$ of edge-disjoint
$(\gamma^3 k/4,\gamma/2)$-dense spots absorbed by $\DenseSpots$ such that
\begin{enumerate}
 \item $|E(\DenseSpots)\setminus E(\DenseSpots_\class)|\le \rho kn$, and
 \item $E(\DenseSpots_\class)\subset E(\Gblack)\cup E(G[\smallatoms, \smallatoms\cup\bigcup\clusters])$.
\end{enumerate}
\end{lemma}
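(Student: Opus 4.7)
The plan is to construct $\DenseSpots_\class$ by processing each $D=(U,W;F)\in\DenseSpots$ independently. Restrict to $F_H := F\cap \big(E(\Gblack)\cup E(G[\smallatoms,\smallatoms\cup\bigcup\clusters])\big)$, the portion of $F$ that already lies in the set prescribed by item~2 of the lemma. Then iteratively remove from the bipartite graph $(U,W;F_H)$ any vertex whose current degree is at most $\gamma^3 k/4$, terminating at some $(U^*,W^*;F^*)$. Add $(U^*,W^*;F^*)$ to $\DenseSpots_\class$ provided it is non-empty and $|F^*|>(\gamma/2)|U^*||W^*|$; otherwise discard $D$.

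Correctness of the output is immediate: the peeling termination forces $\mindeg>\gamma^3 k/4$ and the density check forces $\density>\gamma/2$, so each member of $\DenseSpots_\class$ is a $(\gamma^3 k/4,\gamma/2)$-dense spot of $G$. Each such spot is a subgraph of its parent $D$, so $\DenseSpots_\class$ is absorbed by $\DenseSpots$ and is automatically edge-disjoint, with edges in $E(\Gblack)\cup E(G[\smallatoms,\smallatoms\cup\bigcup\clusters])$ by the definition of $F_H$.

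The nontrivial part is the bound $|E(\DenseSpots)\setminus E(\DenseSpots_\class)|\le\rho kn$. Per $D$, the loss splits into the discarded bad edges $B_D:=|F|-|F_H|$, the peeling loss $\le (\gamma^3 k/4)(|U|+|W|)$ when $D$ is kept, and the whole $|F|$ when $D$ is discarded. Property~\eqref{commonsetting:DenseCaptured} of Setting~\ref{commonsetting} bounds $\sum_D B_D\le \tfrac 54\gamma kn$, while Fact~\ref{fact:boundedlymanyspots} bounds $\sum_D (|U|+|W|)\le(\Omega^*/\gamma)n$, so the kept-spot contributions sum to at most $\tfrac 54\gamma kn+\Omega^*\gamma^2 kn/4$, which is $\ll\rho kn$ by~\eqref{eq:KONST}.

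The main obstacle is the discard contribution, since a naive $|F|\le|U||W|$ summed over $D$ would overshoot by a factor of $1/\gamma$. The key is that failure of the density check forces $|F_H|\le(\gamma/2)|U||W|+(\gamma^3 k/4)(|U|+|W|)$ (either $F^*$ is empty, or $|F^*|\le(\gamma/2)|U^*||W^*|\le(\gamma/2)|U||W|$, plus the peeling loss). Combined with the dense-spot property $|F|>\gamma|U||W|$, which yields $(\gamma/2)|U||W|<|F|/2$, one rearranges to $|F|<(\gamma^3 k/2)(|U|+|W|)+2B_D$ on every discarded $D$. Summing, the discard loss is at most $\Omega^*\gamma^2 kn/2+\tfrac 52\gamma kn$, again $\ll\rho kn$ by~\eqref{eq:KONST} (in particular $\gamma\ll\rho$ and $\Omega^*\gamma\ll 1$).
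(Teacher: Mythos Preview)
Your proof is correct. Both you and the paper follow the same outline---restrict each dense spot to the ``good'' edges, peel low-degree vertices, and discard spots that become too degenerate---but the implementations differ in two respects. The paper decides \emph{upfront} whether to discard $D$, by checking whether the bad-edge fraction exceeds $\sqrt\gamma$; this makes the discard loss trivial (at most $\tfrac1{\sqrt\gamma}$ times the total bad-edge count from Setting~\ref{commonsetting}\eqref{commonsetting:DenseCaptured}), and the density of every surviving spot is automatic since only a $2\sqrt\gamma$-fraction of its edges was lost. You instead process every spot identically and decide \emph{afterwards} via an explicit density check, then recover the discard loss through the inequality $|F|<2B_D+(\gamma^3 k/2)(|U|+|W|)$, derived from the failed density test together with $|F|>\gamma|U||W|$. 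A secondary difference is the peeling threshold (the paper uses the relative threshold $\gamma^2|A|/4$, you use the absolute $\gamma^3 k/4$) and the global resource invoked: the paper bounds $e(\DenseSpots)\le e(G)\le kn$, whereas you bound $\sum_D(|U|+|W|)\le(\Omega^*/\gamma)n$ via Fact~\ref{fact:boundedlymanyspots}. Your route is a bit more uniform and avoids the artificial $\sqrt\gamma$ cutoff; the paper's route gives the density of the kept spots for free. Both give a loss comfortably below $\rho kn$ by~\eqref{eq:KONST} (in your case using $\Omega^*\gamma\ll 1$ and $\gamma\ll\rho$).
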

The proof of Lemma~\ref{lem:clean-spots} is a warm-up for proofs in Section~\ref{ssec:cleaning}.
\begin{proof}[Proof of Lemma~\ref{lem:clean-spots}]
Let $\DenseSpots^-\subset\DenseSpots$ be the set of dense spots $D\in\DenseSpots$ for which \Referee{(A)}
\begin{align*}
\sqrt{\gamma}e(D) \le \big|E(D)\setminus (E(\Gblack)\cup E(G[\smallatoms,
 \smallatoms\cup\bigcup\clusters])\big|\;.
\end{align*}
Thus,
\begin{align}
\nonumber
\sqrt{\gamma}e(\DenseSpots^-) &\le \big|E(\DenseSpots^-)\setminus (E(\Gblack)\cup E(G[\smallatoms,
\smallatoms\cup\bigcup\clusters])\big|\\
\nonumber
&\le \big|E(\DenseSpots)\setminus (E(\Gblack)\cup E(G[\smallatoms,
\smallatoms\cup\bigcup\clusters])\big|\\
\label{eq:vrzeto}
\JUSTIFY{by S\ref{commonsetting}\eqref{commonsetting:DenseCaptured}}
&\le \frac54\gamma kn
\;.
\end{align}

 For each  $D\in\DenseSpots\setminus \DenseSpots^-$ we show below how to extract a $(\gamma^3
 k/4,\gamma/2)$-dense spot $D'\subset D$ with
\begin{equation}\label{eq:Pest}
e(D')\ge
 (1-2\sqrt{\gamma})e(D)
\end{equation}
and $E(D')\subset E(\Gblack)\cup E(G[\smallatoms, \smallatoms\cup\bigcup\clusters])$. Let $\DenseSpots_\class$ be the set of all thus obtained $D'$. That is, we have $E(\DenseSpots_\class)\subset E(\DenseSpots\setminus\DenseSpots^-)$. This ensures Property~2. We also have Property~1, since\Referee{(A)}
\begin{align*}
|E(\DenseSpots)\setminus E(\DenseSpots_\class)|&=|E(\DenseSpots^-)|+|E(\DenseSpots\setminus \DenseSpots^-)\setminus E(\DenseSpots_{\class})|\\
\JUSTIFY{\eqref{eq:vrzeto} for 1st term and \eqref{eq:Pest} for 2nd term}&\le
\frac54\sqrt{\gamma}kn+2\sqrt{\gamma}\cdot
e(\DenseSpots)\\ \JUSTIFY{as
$e(\DenseSpots)\le e(G)\le kn$}&\le \rho kn\;.
\end{align*}

We now show how to extract a $(\gamma^3 k/4,\gamma/2)$-dense spot $D'\subset D$
with $e(D')\ge (1-2\sqrt{\gamma})e(D)$ and $E(D')\subset E(\Gblack)\cup E(G[\smallatoms, \smallatoms\cup\bigcup\clusters])$ from any spot $D\in\DenseSpots\setminus \DenseSpots^-$. Let $D=(A,B; F)$, and $a:=|A|$, $b:=|B|$. As
$D$ is $(\gamma k, \gamma)$-dense, we have $a,b\ge \gamma k$. 
Note also that Definition~\ref{def:densespot} gives that\Referee{(B)}
\begin{equation}\label{eq:mtrh}
e(D)\ge\gamma ab>\frac{\gamma^{1.5}ab}{2}\;.
\end{equation}
First, we 
discard from $D$ all edges not contained in $E(\Gblack)\cup
E(G[\smallatoms,\smallatoms\cup\bigcup\clusters])$ to obtain a dense spot
$D^*\subset D$ with $e(D^*)\ge (1-\sqrt{\gamma})e(D)$. Next, we perform a sequential cleaning procedure in $D^*$. As long as there are such vertices, discard from $A$ any vertex whose current degree is less
than $\gamma^2b/4$, and discard from $B$ any vertex whose current degree is less
than $\gamma^2a/4$. When this procedure terminates,  the
resulting graph $D'=(A',B'; F')$ has $\mindeg_{D'}(A')\ge \gamma^2 b/4\ge
\gamma^3 k/4$ and  $\mindeg_{D'}(B')\ge \gamma^3 k/4$. Note that we
deleted at most $a\cdot \gamma^2 b/4+b\cdot\gamma^2 a/4$ edges out of the at
least $(1-\sqrt{\gamma})e(D)$ edges of $D^*$. This means that 
$$
e(D')\ge (1-\sqrt{\gamma})e(D)-\gamma^2ab/2\geByRef{eq:mtrh} (1-2\sqrt{\gamma})e(D)\;,
$$as desired.\Referee{(B)} Thus we also have  the required density of $D'$, namely 
$\density_{D'}(A',B')\ge (1-2\sqrt{\gamma})\gamma\ge \gamma/2$. 
\end{proof}

\bigskip
In some cases we shall also partition the set $V(G)$ into three sets as in
Lemma~\ref{lem:randomSplit}. This motivates the following
definition.
\begin{definition}[\bf Proportional
splitting]\label{def:proportionalsplitting}\index{general}{proportional splitting}
Let $\proporce{0},\proporce{1},\proporce{2}>0$ be three positive reals with
$\sum_i\proporce{i}\le 1$. Under Setting~\ref{commonsetting}, suppose that
$\colouringpartition$ is a partition of $V(G)\setminus\HugeVertices$ satisfying the assertions of Lemma~\ref{lem:randomSplit} with parameter
$p_\PARAMETERPASSING{L}{lem:randomSplit}:=10$ for graph
$G^*_\PARAMETERPASSING{L}{lem:randomSplit}:=(\Gcapt-\HugeVertices)\cup\GD$ (here the union means union of the edges), bounded decomposition $( \clusters,\DenseSpots,
\Gblack, \Gexp, \smallatoms )$, matching
$\M_\PARAMETERPASSING{L}{lem:randomSplit}:=\M_A\cup\M_B$, sets $\VXV_1:=\Vgood,
\VXV_2:=\XA\setminus(\HugeVertices\cup \gP)$, $\VXV_3:=\XB\setminus \gP$,
$\VXV_4:=V(\Gexp)$, $\VXV_5:=\smallatoms$, $\VXV_6:=\largeintoatoms$, $\VXV_7:=\gPatoms$, $\VXV_8:=\largevertices{\eta}{k}{G}$, $\VXV_9:=L_\sharp$, $\VXV_{10}:=\WantiC$ and reals
$\aXa_1:=\proporce{0},\aXa_2:=\proporce{1}$, $\aXa_3:=\proporce{2}$,
$\aXa_4:=\ldots=\aXa_{10}=0$. Note that by
Lemma~\ref{lem:randomSplit}\eqref{It:H7} we have that $\colouringpartition$ is
a partition of $V(G)\setminus \HugeVertices$. We call $\colouringpartition$
a \emph{proportional $(\proporce{0}:\proporce{1}:\proporce{2})$ splitting}.

We refer to properties of the proportional
$(\proporce{0}:\proporce{1}:\proporce{2})$  splitting $\colouringpartition$
using the numbering of Lemma~\ref{lem:randomSplit}; for example,
``Definition~\ref{def:proportionalsplitting}\eqref{It:HproportionalSizes}''
tells us, among others, that $|(\XA\setminus(\gP\cup\HugeVertices))\cap
\colouringp{0}|\ge\proporce{0}|\XA\setminus(\gP\cup\HugeVertices)|-n^{0.9}$.
\end{definition}
\begin{setting}\label{settingsplitting}
Under Setting~\ref{commonsetting}, suppose that we are given a
proportional 
\index{mathsymbols}{*Pa@$\proporce{i}$}
$(\proporce{0}:\proporce{1}:\proporce{2})$
splitting \index{mathsymbols}{*P@$\colouringp{i}$}
$\colouringpartition$ of $V(G)\setminus\HugeVertices$. We assume
that 
\begin{equation}\label{eq:proporcevelke}
\proporce{0},\proporce{1},\proporce{2}\ge\frac{\eta}{100}\;.
\end{equation}
Let\index{mathsymbols}{*V@$\exceptVertSplit$}\index{mathsymbols}{*V@$\exceptSemSplit$}\index{mathsymbols}{*V@$\exceptClustSplit$}
$\exceptVertSplit,\exceptSemSplit,\exceptClustSplit$ be the exceptional sets as in Definition~\ref{def:proportionalsplitting}\eqref{It:H1}.

We write \index{mathsymbols}{*F@$\shadowsplit$}
\begin{equation}\label{def:shadowsplit}
\shadowsplit:=\shadow_{\GD}\left(\bigcup 
\exceptSemSplit\cup\bigcup \exceptSemSplit^*\cup \bigcup \exceptClustSplit,\frac{\eta^2k}{10^{10}}\right)\;,
\end{equation} where \index{mathsymbols}{*V@$\exceptSemSplit^*$}$\exceptSemSplit^*$ are family of partners of $\exceptSemSplit$ in $\M_A\cup\M_B$.

We have 
\begin{equation}\label{eq:boundShadowsplit}
|\shadowsplit|\le \epsilon n\;.
\end{equation}

For an arbitrary
set $U\subset V(G)$ and for $i\in\{0,1,2\}$ we write \index{mathsymbols}{*U@$U\colouringpI{i}$}$U\colouringpI{i}$ for
the set $U\cap\colouringp{i}$. 

For each $(X,Y)\in\M_A\cup\M_B$ such that $X,Y\notin\exceptSemSplit$
we write $(X,Y)\colouringpI{i}$ for an arbitrary fixed pair $(X'\subset
X,Y'\subset Y)$ with the property that
$|X'|=|Y'|=\min\{|X\colouringpI{i}|,|Y\colouringpI{i}|\}$. We extend this notion
of restriction to an arbitrary \semiregular matching $\mathcal N\subset
\M_A\cup\M_B$ as follows. We set\index{mathsymbols}{*N@$\mathcal N\colouringpI{i}$}
$$\mathcal
N\colouringpI{i}:=\big\{(X,Y)\colouringpI{i}\::\:(X,Y)\in\mathcal
N \text{ with } X,Y\notin\exceptSemSplit\big\}\;.$$
\end{setting}

The next lemma provides some simple properties of a restriction of a
\semiregular matching.

\begin{lemma}\label{lem:RestrictionSemiregularMatching}
Assume Setting~\ref{commonsetting} and Setting~\ref{settingsplitting}. Then for each $i\in\{0,1,2\}$, and for
each $\mathcal N\subset\M_A\cup\M_B$ we have that $\mathcal N\colouringpI{i}$ is
a $(\frac{400\epsilon}{\eta},\frac
d2,\frac{\eta\pi}{200}\clustersize)$-\semiregular matching satisfying
\begin{equation}\label{eq:restrictedmatchlarge}
|V(\mathcal N\colouringpI{i})|\ge
\proporce{i}|V(\mathcal N)|-2k^{-0.05}n\;.
\end{equation}
Moreover for all $v\not\in \shadowsplit$ and for all $i=0,1,2$ we have 
$\deg_{\GD}(v, V(\mathcal N)\colouringpI{i}\setminus V(\mathcal
N\colouringpI{i}))\le \frac {\eta^2k}{10^5}$. 
\end{lemma}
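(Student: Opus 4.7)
My plan is to analyze the pairs of $\mathcal N$ separately, distinguishing the \emph{good} pairs $(X,Y)\in\mathcal N$ with $X,Y\notin\exceptSemSplit$ from the \emph{bad} pairs. The key tool is Lemma~\ref{lem:randomSplit}\eqref{It:H3}, which for any good $X\in\V(\M)\setminus\exceptSemSplit$ gives $|X\cap\colouringp{i}|\ge\proporce{i}|X|-k^{0.9}$. Summing over $i\in\{0,1,2\}$ and using $\proporce{0}+\proporce{1}+\proporce{2}=1$, this also yields the matching upper bound $|X\cap\colouringp{i}|\le\proporce{i}|X|+2k^{0.9}$. In particular, for any good pair $(X,Y)$ we have $\bigl||X\colouringpI{i}|-|Y\colouringpI{i}|\bigr|\le 3k^{0.9}$, so the restricted pair $(X',Y')$ with $|X'|=|Y'|=\min\{|X\colouringpI{i}|,|Y\colouringpI{i}|\}$ contains all but at most $3k^{0.9}$ vertices of $(X\cup Y)\cap\colouringp{i}$.

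For the semiregularity claim in the first part of the statement, the estimate above gives $\min\{|X\colouringpI{i}|,|Y\colouringpI{i}|\}\ge\proporce{i}|X|-k^{0.9}\ge\frac{\eta}{100}\pi\clustersize-k^{0.9}\ge\frac{\eta\pi}{200}\clustersize$ for $k$ large (using $\proporce{i}\ge\eta/100$ and $|X|\ge\pi\clustersize\ge\pi\nu k$), which is the required size bound. Since $|X'|/|X|\ge\eta/200>\epsilon$, Fact~\ref{fact:BigSubpairsInRegularPairs} yields that $(X',Y')$ is $\tfrac{400\epsilon}{\eta}$-regular with density at least $d-\epsilon\ge d/2$; pairwise disjointness of pairs in $\mathcal N\colouringpI{i}$ is inherited from $\M_A\cup\M_B$. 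For the lower bound on $|V(\mathcal N\colouringpI{i})|$ in~\eqref{eq:restrictedmatchlarge}, summing $2\min\{|X\colouringpI{i}|,|Y\colouringpI{i}|\}\ge 2\proporce{i}|X|-2k^{0.9}$ over the good pairs, the error $2k^{0.9}$ per pair totals at most $2|\mathcal N|k^{0.9}\le nk^{-0.1}/(\pi\nu)$ (using $|\mathcal N|\le n/(2\pi\nu k)$), while the dropped bad pairs contribute at most $2|\bigcup\exceptSemSplit|\le 2\exp(-k^{0.1})n$ via Lemma~\ref{lem:randomSplit}\eqref{It:H1}; for $k$ sufficiently large the total slack is at most $2k^{-0.05}n$, as required.

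The ``moreover'' clause needs the most care. Split $V(\mathcal N)\colouringpI{i}\setminus V(\mathcal N\colouringpI{i})=A\cup B$, where $A$ collects the vertices coming from pairs with an endpoint in $\exceptSemSplit$ and $B$ the leftover vertices from the good pairs. Since $A\subset\bigcup\exceptSemSplit\cup\bigcup\exceptSemSplit^*$, the hypothesis $v\notin\shadowsplit$ together with~\eqref{def:shadowsplit} immediately gives $\deg_{\GD}(v,A)\le\frac{\eta^2k}{10^{10}}$. By the first paragraph, each good pair contributes at most $3k^{0.9}$ vertices to $B$, so the task reduces to bounding the number of matching pairs to which $v$ sends $\GD$-edges. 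This is the main obstacle: I plan to show that this number is bounded by a constant independent of $k$. Any such pair has a cluster contained in a dense spot containing $v$ (because edges of $\GD$ live inside $\bigcup\DenseSpots$, and by Definition~\ref{bclassdef}\eqref{defBC:prepartition} combined with $X\subset C$ for each $X\in\V(\M)$, any matching cluster that meets a dense spot $D$ is contained in one of its two sides). Since matching clusters are pairwise disjoint and of size at least $\pi\clustersize\ge\pi\nu k$, while $|D|\le 2\Omega^*k/\gamma$ by Fact~\ref{fact:sizedensespot}, at most $2\Omega^*/(\gamma\pi\nu)$ matching pairs meet any fixed $D$; using Fact~\ref{fact:boundedlymanyspots} (at most $\Omega^*/\gamma$ dense spots contain $v$), $v$ touches at most $K:=2(\Omega^*)^2/(\gamma^2\pi\nu)$ matching pairs in $\GD$. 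Therefore $\deg_{\GD}(v,B)\le 3Kk^{0.9}\le\frac{\eta^2k}{2\cdot 10^5}$ for $k$ large, and adding this to the bound on $\deg_{\GD}(v,A)$ finishes the proof.
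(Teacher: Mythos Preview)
Your proof is correct and takes essentially the same approach as the paper: the lower bound from Definition~\ref{def:proportionalsplitting}\eqref{It:H3} combined with Fact~\ref{fact:BigSubpairsInRegularPairs} handles the semiregularity and~\eqref{eq:restrictedmatchlarge}, and for the moreover clause the paper likewise splits off the contribution of $\bigcup\exceptSemSplit\cup\bigcup\exceptSemSplit^*$ via~\eqref{def:shadowsplit} and bounds the remaining leftover by the dense-spot counting of Facts~\ref{fact:sizedensespot} and~\ref{fact:boundedlymanyspots}, arriving at the same expression $\frac{\eta^2k}{10^{10}}+\frac{(\Omega^*)^2}{\pi\nu\gamma^2}\cdot 3k^{0.9}$. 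The paper's write-up is terser but the underlying computation matches yours.
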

\begin{proof}
Let us consider an arbitrary pair $(X,Y)\in\mathcal N$. By
Definition~\ref{def:proportionalsplitting}\eqref{It:H3} we have
\begin{equation}\label{eq:lossesOnePair}
|X\colouringpI{i}|\ge\proporce{i}|X|-k^{0.9}\geByRef{eq:proporcevelke}
\frac{\eta}{200}|X|\quad\mbox{and}\quad|Y\colouringpI{i}|\ge\proporce{i}|Y|-k^{0.9}\geByRef{eq:proporcevelke}
\frac{\eta}{200}|Y|\;.
\end{equation}
In particular, Fact~\ref{fact:BigSubpairsInRegularPairs} gives that
$(X,Y)\colouringpI{i}$ is a $400\epsilon/\eta$-regular pair of density at least
$d/2$.

We now turn to~\eqref{eq:restrictedmatchlarge}. The total order of pairs
$(X,Y)\in\mathcal N$ excluded entirely from $\mathcal N\colouringpI{i}$\Referee{(8)} is
at most 
\begin{equation}\label{eq:exclude1w}
2\exp(-k^{0.1})n<k^{-0.05}n
\end{equation}
by
Definition~\ref{def:proportionalsplitting}\eqref{It:H1}. Further, for each $(X,Y)\in\mathcal N$ whose part is included to $\mathcal
N\colouringpI{i}$\Referee{(8)} we have by that
\begin{equation}\label{eq:Imyd}
|V((X,Y)\colouringpI{i})|\geByRef{eq:lossesOnePair}\proporce{i}(|X|+|Y|)-2k^{0.9}\;.
\end{equation}
Recall that $\mathcal M_A$ and $\mathcal M_B$ are $(\epsilon,d,\pi \clustersize)$-\semiregulars. In particular, $\mathcal M_A$
and $\mathcal M_B$ are $(\epsilon,d,k^{0.95})$-\semiregulars. Consequently, 
\begin{equation}\label{eq:HDhs}
|\mathcal N|\le |\mathcal M_A\cup\mathcal M_B|\le\frac{n}{2k^{0.95}}
\end{equation}\Referee{(C)} Collecting the loss caused by entirely excluded pairs in~\eqref{eq:exclude1w} and the loss of at most $2k^{0.9}$ vertices from~\eqref{eq:Imyd} to each of the at most $|\mathcal N|$-many non-excluded pairs, we get that
$$
|V(\mathcal N\colouringpI{i})|\geByRef{eq:exclude1w}
\proporce{i}|V(\mathcal N)|-k^{-0.05}n-2k^{0.9}|\mathcal N|\geByRef{eq:HDhs}
\proporce{i}|V(\mathcal N)|-2k^{-0.05}n\;,$$ and~\eqref{eq:restrictedmatchlarge} follows.

For the moreover part, note that by Fact~\ref{fact:sizedensespot} and Fact~\ref{fact:boundedlymanyspots}
\begin{equation*}
\deg_{\GD}(v, V(\mathcal N)\colouringpI{i}\setminus V(\mathcal
N\colouringpI{i}))\le \frac
{\eta^2k}{10^{10}}+\frac {(\Omega^*)^2}{\pi\nu\gamma^2}\cdot 3k^{0.9}\le \frac {\eta^2k}{10^5}\;.
\end{equation*}
\end{proof}

\smallskip
\HIDDENTEXT{there were lemmas lem:clSpadlychA, lem:clSpadlychB, now under TRIVIALCLEANING}

The following lemma gives a useful bound on the sizes of some sets defined on
page~\pageref{eq:defLsharp}.

\begin{lemma}\label{lem:YAYB}
Suppose we are in Setting~\ref{commonsetting}. Let 
\begin{equation}\label{eq:verifybeta}
\beta>\eta^2\sqrt{\gamma}
\end{equation} be arbitrary. Suppose that all but at most
$\beta kn$\Referee{(D)} edges are captured by $\class$. 
Then,\begin{align}
\label{eq:Lsharp-small}
|L_\#|&\le \frac{20\beta}{\eta}n\\
\label{eq:XAYA}
|\XA\setminus\YA|&\le \frac{600\beta}{\eta^2}n\;\mbox{,
and}\\\label{eq:XBYB} 
|(\XA\cup \XB)\setminus\YB|&\le
\frac{600\beta}{\eta^2}n \;.
\end{align}

Further, let $\tilde\beta>0$ be arbitrary. If $e_G(\HugeVertices,\XA\cup\XB)\le\tilde \beta kn$ then
\begin{align}\label{eq:WantiCbound}
|\WantiC|\le \frac{100\tilde\beta n}{\eta}\;. 
\end{align}
\end{lemma}
\begin{proof}
Let $W_1:=\{v\in V(G)\::\:\deg_G(v)-\deg_{\Gcapt}(v)\ge \eta k/100\}$. We have $|W_1|\le \frac{200\beta}{\eta}n\le  \frac{100\beta}{\eta^2}n$. 

Observe that $L_\#$ sends out at most
$(1+\frac9{10}\eta)k|L_\#|<\frac{40\beta}{\eta}kn$ edges in $\Gcapt$. Let $W_2:=\{v\in V(G)\::\:\deg_{\Gcapt}(v,L_\#)\ge \eta k/10\}$. We have $|W_2|\le \frac{400\beta}{\eta^2}n$.

Let $W_3:=\{v\in \XA\::\: \deg_{\Gcapt}(v,S^0\setminus V(\M_A))\ge \sqrt{\gamma} k\}$. By Setting~\ref{commonsetting}\eqref{commonsettingXAS0} we have\Referee{(D)} $$|W_3|\le \sqrt\gamma n\leByRef{eq:verifybeta} \frac{\beta}{\eta^2}n\;.$$

For~\eqref{eq:XAYA}, observe that $\XA\setminus\YA\subset W_1\cup W_2\cup W_3$. For~\eqref{eq:XBYB}, observe that $\XB\setminus \YB\subset W_1\cup W_2$ and that $\YA\subset \YB$. Thus, $(\XA\cup \XB)\setminus\YB\subset (\XA\setminus \YA)\cup(\XB\setminus \YB)\subset W_1\cup W_2\cup W_3$.

The bound~\eqref{eq:WantiCbound} follows from~\eqref{eq:defWantiC}.
\HIDDENTEXT{A lengthy proof removed and put into HIDDENTEXT.TEX under LENGTHY.}
\end{proof}

We finish this section with an auxiliary result which will only be used later in the proofs
of Lemmas~\ref{lem:ConfWhenNOTCXAXB} and~\ref{lem:ConfWhenMatching}.

\begin{lemma}\label{lem:propertyYA12}
Assume Settings~\ref{commonsetting} and~\ref{settingsplitting}.
We have
\begin{align}\label{eq:trivka}
\XA\colouringpI{0}\setminus(\gP\cup \shadowsplit)\subseteq
\colouringp{0}\setminus \left(\shadowsplit\cup \shadow_{\GD}\left(\WantiC, \frac
{\eta^2 k}{10^5}\right)\right)\;,\\
\label{eq:propertyYA12cB3}
\maxdeg_{\Gcapt}\left(\XA\setminus(\gP_2\cup \gP_3),\bigcup
\mathcal F\right)\le \frac{3\eta^3}{2\cdot 10^3} k\;,
\end{align}
and for $i=1,2$ we have
\begin{align}
\label{eq:propertyYA12cA} 
\mindeg_{\Gcapt}\left(\XA\setminus(\gP\cup\exceptVertSplit),\Vgood\colouringpI{i} \right)\ge
\proporce{i}\left(1+\frac{\eta}{20}\right)k\;,\\
\label{eq:propertyYA12cB2}
\mindeg_{\Gcapt}\left(\XB\setminus(\gP\cup\exceptVertSplit),\Vgood\colouringpI{i} \right)\ge
\proporce{i}\left(1+\frac{\eta}{20}\right)\frac k2\;\mbox.
\end{align}

Moreover, $\mathcal F$ defined in~\eqref{def:Fcover} is an $(\M_A\cup\M_B)$-cover.
\end{lemma}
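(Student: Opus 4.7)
The lemma contains five assertions; I would handle them in the order (5), (1), (2), (3), (4), since the later ones reuse bookkeeping from the earlier ones.

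For the \emph{moreover} part, I want to verify that every pair $(X,Y)\in\M_A\cup\M_B$ has at least one side in $\mathcal F$. Pairs in $\M_B$ are trivial because $\V_1(\M_B)\subset\mathcal F$ by definition. For a pair $(X,Y)\in\M_A$, by Setting~\ref{commonsetting}\eqref{commonsetting2} each of $X,Y$ is contained entirely in $\largevertices{\eta}{k}{G}$ or entirely in $\smallvertices{\eta}{k}{G}$, and by the construction of $\M_A$ in \cite[Lemma~\ref{p1.prop:LKSstruct}]{cite:LKS-cut1} at least one side, say $X$, is in $\largevertices{\eta}{k}{G}$. Since $V(\M_A)\cap V(\M_B)=\emptyset$ by \eqref{commonsetting1}, $X$ is disjoint from $V(\M_B)$, so $X\subset\XA$ and thus $X\in\mathcal F$.

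For \eqref{eq:trivka}, observe $\WantiC\subset\gP$ by the third bullet of the definition of $\gP$, and the shadow $\shadow_{\GD\cup\Gcapt}(\WantiC,\eta^2k/10^5)$ is also explicitly part of $\gP$. Hence any $v\in\XA\colouringpI{0}\setminus(\gP\cup\shadowsplit)$ avoids both $\shadowsplit$ and $\shadow_{\GD}(\WantiC,\eta^2k/10^5)$, and $v\in\colouringp{0}$ since $v\in\XA\colouringpI{0}$.

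For \eqref{eq:propertyYA12cB3}, split $\bigcup\mathcal F$ into $\bigcup\{C\in\V(\M_A):C\subset\XA\}\subset\XA$ and $\bigcup\V_1(\M_B)$. For $v\in\XA\setminus\gP_3$ the definition of $\gP_3$ yields $\deg_{\Gcapt}(v,\XA)\le\eta^3k/10^3$, which bounds the contribution from the first piece. For the second piece, note $\V_1(\M_B)\subset S^0$ by \eqref{commonsetting1apul} and $\V_1(\M_B)\cap V(\M_A)=\emptyset$ by \eqref{commonsetting1}, so $\bigcup\V_1(\M_B)\subset S^0\setminus V(\M_A)$; since $v\notin\gP_2$, this contributes at most $\sqrt\gamma k$. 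Summing and using $\sqrt\gamma\le\eta^3/(2\cdot 10^3)$ from \eqref{eq:KONST} gives the claimed bound.

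For \eqref{eq:propertyYA12cA} and \eqref{eq:propertyYA12cB2}, take $v\in\XA\setminus(\gP\cup\exceptVertSplit)$ (or $v\in\XB\setminus\cdots$). Since $\gP\supset\XA\setminus\YA$ (resp.\ $\gP\supset(\XA\cup\XB)\setminus\YB$), $v\in\YA$ (resp.\ $v\in\YB$), which together with the definitions \eqref{eq:defYA}, \eqref{eq:defYB} gives
\[
\deg_{\Gcapt}(v,V_+\setminus L_\#)\ge(1+\eta/10)k \quad\text{(resp.\ }\ge(1+\eta/10)k/2\text{)}.
\]
Because $v\notin\gP\supset\WantiC$ and $v\in\XA\cup\XB$, the definition \eqref{eq:defWantiC} yields $\deg_G(v,\HugeVertices)\le\eta k/100$. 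Since $\Vgood=V_+\setminus(\HugeVertices\cup L_\#)$, subtracting gives $\deg_{\Gcapt}(v,\Vgood)\ge(1+9\eta/100)k$, respectively $\ge(1/2+\eta/25)k$. Finally, $v\notin\exceptVertSplit$ combined with Definition~\ref{def:proportionalsplitting}\eqref{It:H5} applied to $H=\Gcapt$ and $\VXV_1=\Vgood$ yields $\deg_{\Gcapt}(v,\Vgood\colouringpI{i})\ge\proporce{i}\deg_{\Gcapt}(v,\Vgood)-2^{-10}k^{0.9}$, which exceeds $\proporce{i}(1+\eta/20)k$ (resp.\ $\proporce{i}(1+\eta/20)k/2$) for $k$ large.

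The only nontrivial obstacle is the moreover part, where one must invoke the precise output of \cite[Lemma~\ref{p1.prop:LKSstruct}]{cite:LKS-cut1} to know that every $\M_A$-pair has a side in $\largevertices{\eta}{k}{G}$; the rest is a careful bookkeeping of the shadow-based definitions.
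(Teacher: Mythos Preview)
Your proof is correct and follows essentially the same route as the paper's own argument: the same splitting of $\bigcup\mathcal F$ for \eqref{eq:propertyYA12cB3}, the same chain $\deg_{\Gcapt}(v,V_+\setminus L_\#)\to\deg_{\Gcapt}(v,\Vgood)\to\deg_{\Gcapt}(v,\Vgood\colouringpI{i})$ for \eqref{eq:propertyYA12cA}--\eqref{eq:propertyYA12cB2}, and the same observation that $\gP$ already contains the relevant shadows for \eqref{eq:trivka}.

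One small difference worth noting concerns the moreover part. You appeal to the construction of $\M_A$ in the previous paper to conclude that each $\M_A$-pair has a side in $\largevertices{\eta}{k}{G}$. The paper instead derives this self-containedly from Setting~\ref{commonsetting}: since $G\in\LKSsmallgraphs{n}{k}{\eta}$, Definition~\ref{def:LKSsmall}\eqref{def:LKSsmallB} forces $\smallvertices{\eta}{k}{G}$ to be independent, so an $\M_A$-pair (which has positive density) cannot have both sides in $\smallvertices{\eta}{k}{G}$; combined with Setting~\ref{commonsetting}\eqref{commonsetting2} this gives the conclusion without invoking~\cite{cite:LKS-cut1}. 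A second tiny imprecision: property~\eqref{It:H5} of Definition~\ref{def:proportionalsplitting} applies to the atoms $\VXV_J$, not directly to $\Vgood=\VXV_1$; one must sum over all $J\ni 1$, giving an error of order $k^{0.9}$ rather than $2^{-10}k^{0.9}$ --- but this is still negligible and the paper handles it the same way.
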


\begin{proof}
The definition of $\gP$ gives~\eqref{eq:trivka}.

For~\eqref{eq:propertyYA12cA} and~\eqref{eq:propertyYA12cB2}, assume that $i=2$ (the other case is analogous).
Observe that 
\begin{align*}
\mindeg_{\Gcapt}&\left(\YA\setminus
(\WantiC\cup\exceptVertSplit),\Vgood\colouringpI{2}\right)\\
\JUSTIFY{by Def~\ref{def:proportionalsplitting}\eqref{It:H5}}
&\ge\proporce{2}\cdot \mindeg_{\Gcapt}(\YA\setminus
\WantiC,\Vgood)-k^{0.9}\\ 
\JUSTIFY{by
\eqref{eq:defVgood}}
&\ge\proporce{2}\cdot \big(\mindeg_{\Gcapt}(\YA,V_+\setminus
L_\sharp)-\maxdeg_{\Gcapt}(\YA\setminus\WantiC,\HugeVertices)\big)-k^{0.9}\\ 
\JUSTIFY{by~\eqref{eq:defYA}, \eqref{eq:defWantiC}}
&\ge \proporce{2}\cdot \left(\left(1+\frac{\eta}{10}\right)k-\frac{\eta k}{100}\right)-k^{0.9}\\ 
\JUSTIFY{by~\eqref{eq:KONST}, \eqref{eq:proporcevelke}}
&\ge \proporce{2}\cdot \left(1+\frac{\eta}{20}\right)k\;,
\end{align*}
which proves~\eqref{eq:propertyYA12cA}, as $\XA\setminus (\gP\cup
 \exceptVertSplit)\subseteq \YA\setminus (\WantiC\cup \exceptVertSplit)$.
Similarly, we obtain that $$\mindeg_{\Gcapt}\left(\YB\setminus
(\WantiC\cup\exceptVertSplit),\Vgood\colouringpI{2}\right)\ge
\proporce{2}\left(1+\frac{\eta}{20}\right)\frac k2\;,$$ which proves~\eqref{eq:propertyYA12cB2}.

We have $\maxdeg_{\Gcapt}(\XA\setminus \gP_3,\XA)< \frac{\eta^3}{10^3} k$, and 
$\maxdeg_{\Gcapt}(\XA\setminus \gP_2,S^0\setminus V(\M_A))<\sqrt\gamma k$.
Thus~\eqref{eq:propertyYA12cB3} follows from
Setting~\ref{commonsetting}\eqref{commonsetting1apul}
and by~\eqref{eq:KONST}.

For the ``moreover'' part, it
suffices to prove that $\{C\in \V(\M_A):C\subset \XA\}=\mathcal F\sm\V_1(\M_B)$ is an $\M_A$-cover. Let  $(T_1,T_2)\subset \M_A$. As $G\in\LKSsmallgraphs{n}{k}{\eta}$, we have,
by Setting~\ref{commonsetting}\eqref{commonsetting2} that for some
$i\in\{1,2\}$, $T_i$ is contained in $\largevertices{\eta}{k}{G}$. Then
by Setting~\ref{commonsetting}\eqref{commonsetting1}, $T_i\subset \XA$, as desired.
\end{proof}

\section{Ten types of Configurations}\label{sec:typesofconfigurations}

We now come to the heart of the present paper. We will
 introduce ten configurations
--- called $\mathbf{(\diamond1)}$--$\mathbf{(\diamond10)}$ --- which may be found in a
graph $G\in \LKSgraphs{n}{k}{\eta}$.\footnote{Saying that ``we have Configuration X'', ``the graph is in Configuration X'', or  ``Configuration X occurs'' is the same.} We will be able to infer from the main
results of this section
(Lemmas~\ref{lem:ConfWhenCXAXB}--\ref{lem:ConfWhenMatching}) and from other
structural results of this paper and of~\cite{cite:LKS-cut1} that each graph $G\in \LKSgraphs{n}{k}{\eta}$
contains at least one of these configurations.
Lemmas~\ref{lem:ConfWhenCXAXB}--\ref{lem:ConfWhenMatching} are based on the
structure provided by \cite[Lemma~\ref{p1.prop:LKSstruct}]{cite:LKS-cut1}. We refer to~\cite[Section~\ref{p3.ssec:embeddingOverview}]{cite:LKS-cut3} where we describe in more detail how each of the configurations $\mathbf{(\diamond1)}$--$\mathbf{(\diamond10)}$ 
 can be used for the
embedding of any given tree from $\treeclass{k}$, as required for
Theorem~\ref{thm:main}. A full description and proofs of the embedding strategies is given in \cite[Section~\ref{p3.sec:MainEmbedding}]{cite:LKS-cut3}.

The organization of this section is as follows.
In
Section~\ref{ssec:TypesConf} we state some preliminary definitions and introduce the configurations $\mathbf{(\diamond1)}$--$\mathbf{(\diamond10)}$. In Section~\ref{ssec:cleaning} we prove certain ``cleaning lemmas''. The main results are then stated and proved  in Section~\ref{ssec:obtainingConf}. The results of
Section~\ref{ssec:obtainingConf} rely on the auxiliary lemmas of
Section~\ref{ssec:RandomSplittins} and~\ref{ssec:cleaning}.

\subsection{The configurations}\label{ssec:TypesConf}
We can now define the following preconfigurations~$\mathbf{(\clubsuit)}$,
$\mathbf{(\heartsuit1)}$, $\mathbf{(\heartsuit2)}$, $\mathbf{(exp)}$, and
$\mathbf{(reg)}$, and the configurations\footnote{The
word ``configuration'' is used for a final structure in a graph which is suitable for embedding purposes while ``preconfigurations'' are building blocks for configurations.} $\mathbf{(\diamond1)}$--$\mathbf{(\diamond10)}$. 
Lemma~\ref{outerlemma} (proof of which occupies Section~\ref{ssec:obtainingConf}) asserts that each graph
$\LKSgraphs{n}{k}{\eta}$ contains at least one of the configurations $\mathbf{(\diamond1)}$--$\mathbf{(\diamond10)}$.
More precisely, after getting the ``rough structure'' we obtained in~\cite{cite:LKS-cut1} we get one of the configurations $\mathbf{(\diamond1)}$--$\mathbf{(\diamond10)}$ from Lemma~\ref{outerlemma}, which builds on the analysis given in Lemmas~\ref{lem:ConfWhenCXAXB}--\ref{lem:ConfWhenMatching}. 

We now give a brief overview of these configurations. Recall that for our proof of Theorem~\ref{thm:main} we combine these configurations (in the host graph $G_\PARAMETERPASSING{T}{thm:main}$) with a given fine partition of the tree $T_\PARAMETERPASSING{T}{thm:main}$ which was informally explained in Section~\ref{sec:TREEcut}.

Configuration~$\mathbf{(\diamond1)}$ covers the easy and lucky case when $G$ contains a subgraph with high minimum degree. A very simple tree-embedding strategy similar to the greedy strategy turns out to work in this case.

The purpose of
Preconfiguration~$\mathbf{(\clubsuit)}$ is to utilize vertices
of~$\HugeVertices$. On the one hand these vertices seem very powerful because of
their large degree, on the other hand the edges incident with them are very
unstructured. Therefore Preconfiguration~$\mathbf{(\clubsuit)}$  distils some
structure in~$\HugeVertices$. This preconfiguration is then a part of
configurations~$\mathbf{(\diamond2)}$--$\mathbf{(\diamond5)}$ which deal with the
case when $\HugeVertices$ is substantial. Indeed,
Lemma~\ref{lem:ConfWhenCXAXB} asserts that whenever $\HugeVertices$ is incident with many edges, 
then at least one of
configurations~$\mathbf{(\diamond1)}$--$\mathbf{(\diamond5)}$ must occur. 

Let us note that each of the configurations $\mathbf{(\diamond1)}$--$\mathbf{(\diamond5)}$ alone suffices for embedding all $k$-vertex trees. However, when $\HugeVertices$ is negligible, we may need different configurations $\mathbf{(\diamond6)}$--$\mathbf{(\diamond10)}$ (with different parameters) for embedding different individual trees from $\treeclass{k}$.

The cases when the number of edges incident with $\HugeVertices$ is negligible
are covered by configurations $\mathbf{(\diamond6)}$--$\mathbf{(\diamond10)}$. More precisely, in this setting Lemma~\ref{outerlemma} transforms the output structure we obtained in~\cite{cite:LKS-cut1} into an input structure for either Lemma~\ref{lem:ConfWhenNOTCXAXB} or Lemma~\ref{lem:ConfWhenMatching}. These lemmas then assert that, indeed, one of the Configurations $\mathbf{(\diamond6)}$--$\mathbf{(\diamond10)}$ must occur. 
The configurations~$\mathbf{(\diamond6)}$--$\mathbf{(\diamond8)}$ involve
combinations of one of the
two preconfigurations $\mathbf{(\heartsuit1)}$ and $\mathbf{(\heartsuit2)}$ and
one of the two preconfigurations $\mathbf{(exp)}$ and
$\mathbf{(reg)}$. The idea here is that the \kknnaaggss are embedded
using the structure of $\mathbf{(exp)}$ or $\mathbf{(reg)}$ (whichever is
applicable), the internal shrubs are embedded using the structure which is
specific to each of the
configurations~$\mathbf{(\diamond6)}$--$\mathbf{(\diamond8)}$, and the end
shrubs are embedded using the structure of $\mathbf{(\heartsuit1)}$ or
$\mathbf{(\heartsuit2)}$. For this reason, configurations~$\mathbf{(\diamond6)}$--$\mathbf{(\diamond9)}$ are accompanied by parameters (denoted by $h$, $h_1$ and $h_2$ in Definitions~\ref{def:CONF6}--\ref{def:CONF9})
which correspond to the total orders of shrubs of different kinds.
The configuration $\mathbf{(\diamond10)}$ is very similar to the structures obtained in the dense setting
in~\cite{PS07+,HlaPig:LKSdenseExact}, and $\mathbf{(\diamond9)}$ should be
considered as half-way towards it.


\smallskip
Some of the  configurations below are accompanied with parameters in the parentheses; note that we do not make explicit those numerical parameters which are inherited from Setting~\ref{commonsetting}.

\bigskip We start  by defining Configuration~$\mathbf{(\diamond1)}$. This is a very easy configuration in which a modification of the greedy tree-embedding strategy works.
\begin{definition}[\bf Configuration~$\mathbf{(\diamond1)}$]\index{mathsymbols}{**1@$\mathbf{(\diamond1)}$}
We say that a graph $G$ is in \emph{Configuration~$\mathbf{(\diamond1)}$} if
there exists a non-empty bipartite graph $H\subset G$ with $\mindeg_G(V(H))\ge k$ and $\mindeg(H)\ge k/2$.
\end{definition}

\bigskip We now introduce the configurations $\mathbf{(\diamond2)}$--$\mathbf{(\diamond5)}$ which make use of the set $\HugeVertices$. These configurations build on Preconfiguration $\mathbf{(\clubsuit)}$. 

\begin{definition}[\bf Preconfiguration 
$\mathbf{(\clubsuit)}$]\index{mathsymbols}{***@$\mathbf{(\clubsuit)}$}
\label{def:PreClub}\Referee{(E)}
Suppose that we are in
Setting~\ref{commonsetting}. We say that the graph $G$ is in
\emph{Preconfiguration~$\mathbf{(\clubsuit)}(\Omega^{\star})$} if the following
conditions are satisfied.
$G$ contains non-empty sets $L''\subset L'\subset
\largevertices{\frac9{10}\eta}{k}{\Gcapt}\setminus\HugeVertices$, and a non-empty set $\HugeVertices'\subset
\HugeVertices$ such that
\begin{align}\label{eq:clubsuitCOND1}
\maxdeg_{\Gcapt} (L',\HugeVertices\setminus \HugeVertices')&<\frac{\eta
k}{100} \;\mbox{,}\\ 
\label{eq:clubsuitCOND2}
\mindeg_{\Gcapt}(\HugeVertices',\BUG{L'})&\ge \Omega^\star k\;\mbox{, and}\\
\label{eq:clubsuitCOND3}
\maxdeg_{\Gcapt}(L'',\largevertices{\frac{9}{10}\eta}{k}{\Gcapt}\setminus(\HugeVertices\cup
L'))&\le\frac{\eta k}{100}\;.
\end{align}
\end{definition}

\begin{definition}[\bf Configuration
$\mathbf{(\diamond2)}$]\index{mathsymbols}{**2@$\mathbf{(\diamond2)}$}Suppose that we are in Setting~\ref{commonsetting}. We say that the graph $G$ is in
\emph{Configuration $\mathbf{(\diamond2)}(\Omega^\star,
\tilde\Omega,\beta)$} if the following
conditions are satisfied.

 The triple $L'',L',\HugeVertices'$ witnesses preconfiguration
$\mathbf{(\clubsuit)}(\Omega^\star)$ in $G$. There exist a
non-empty set $\HugeVertices''\subset \HugeVertices'$, a set $V_1\subset V(\Gexp)\cap\YB\cap L''$, and a set $V_2\subset V(\Gexp)$ with the following properties.
\begin{align*}
\mindeg_{\Gcapt}(\HugeVertices'',V_1)&\ge\tilde\Omega k\;\\
\mindeg_{\Gcapt}(V_1,\HugeVertices'')&\ge \beta k\;,\\
\mindeg_{\Gexp}(V_1,V_2)&\ge \beta k\;,\\
\mindeg_{\Gexp}(V_2,V_1)&\ge \beta k\;.
\end{align*}
\end{definition}

\begin{definition}[\bf Configuration
$\mathbf{(\diamond3)}$]\index{mathsymbols}{**3@$\mathbf{(\diamond3)}$}
\label{def:CONF3}
Suppose that we are in
Setting~\ref{commonsetting}. We say that the graph $G$ is in
\emph{Configuration $\mathbf{(\diamond3)}(\Omega^\star,
\tilde\Omega,\zeta,\delta)$} if the following
conditions are satisfied.

 The triple $L'',L',\HugeVertices', $ witnesses preconfiguration
$\mathbf{(\clubsuit)}(\Omega^\star)$ in $G$. There exist a
non-empty set $\HugeVertices''\subset \HugeVertices'$, a set $V_1\subset \smallatoms\cap \YB\cap L''$, and a set $V_2\subset V(G)\setminus \HugeVertices$ such that the
following properties are satisfied.
\begin{align}
\nonumber
\mindeg_{\Gcapt}(\HugeVertices'',V_1)&\ge \tilde \Omega k\;,\\
\nonumber
\mindeg_{\Gcapt}(V_1,\HugeVertices'')&\ge \delta k\;,\\
\label{eq:WHtc}
\maxdeg_{\GD}(V_1, V(G)\setminus
(V_2\cup \HugeVertices))&\le \zeta k\;,\\ 
\label{confi3theothercondi}
\mindeg_{\GD}(V_2,V_1)&\ge \delta k\;.
\end{align}
\end{definition}

\begin{definition}[\bf Configuration
$\mathbf{(\diamond4)}$]\index{mathsymbols}{**4@$\mathbf{(\diamond4)}$}
\label{def:CONF4}
Suppose that we are in
Setting~\ref{commonsetting}. We say that the graph~$G$ is in
\emph{Configuration
$\mathbf{(\diamond4)}(\Omega^\star, \tilde\Omega,\zeta,\delta)$} if the
following conditions are satisfied.

 The triple $L'',L',\HugeVertices'$ witnesses preconfiguration
$\mathbf{(\clubsuit)}(\Omega^\star)$ in $G$. There exist a
non-empty set $\HugeVertices''\subset \HugeVertices'$, sets $V_1\subset \YB\cap L''$,
$\smallatoms'\subset \smallatoms$, and $V_2\subset V(G)\setminus \HugeVertices$ with the following properties
\begin{align}
\mindeg_{\Gcapt}(\HugeVertices'',V_1)&\ge \tilde\Omega k\;,\notag \\
\mindeg_{\Gcapt}(V_1,\HugeVertices'')&\ge \delta k\;,\notag \\
\mindeg_{\Gcapt\cup\GD}(V_1,\smallatoms')&\ge \delta k\;,\label{confi4:3} \\
\mindeg_{\Gcapt\cup\GD}(\smallatoms',V_1)&\ge \delta k\;,\label{confi4:4} \\
\mindeg_{\Gcapt\cup\GD}(V_2,\smallatoms')&\ge \delta k\;,\label{confi4othercondi} \\
\maxdeg_{\Gcapt\cup\GD}(\smallatoms',V(G)\setminus
(\HugeVertices\cup V_2))&\le \zeta k\;. \label{confi4lastcondi}
\end{align}
\end{definition}

\begin{definition}[\bf Configuration
$\mathbf{(\diamond5)}$]\index{mathsymbols}{**5@$\mathbf{(\diamond5)}$}
\label{def:CONF5}
Suppose that we are in
Setting~\ref{commonsetting}. We say that the graph $G$ is in
\emph{Configuration
$\mathbf{(\diamond5)}(\Omega^\star,\tilde\Omega,\delta,\zeta,\tilde\pi)$} if the
following conditions are satisfied.

 The triple $L'',L',\HugeVertices'$ witnesses preconfiguration
 $\mathbf{(\clubsuit)}(\Omega^\star)$ in $G$. There exists a non-empty set $\HugeVertices''\subset \HugeVertices'$, and a set $V_1\subset (\YB\cap
L''\cap \bigcup\clusters)\setminus V(\Gexp)$ such that the following conditions are fulfilled.
\begin{align}
\mindeg_{\Gcapt}(\HugeVertices'',V_1)&\ge \tilde\Omega k\;, \\
\mindeg_{\Gcapt}(V_1,\HugeVertices'')&\ge \delta k\;,
\label{eq:diamond5P2}\\
\mindeg_{\Gblack}(V_1)&\ge \zeta k\;.\label{confi5last}
\end{align}
Further, we have 
\begin{equation}\label{eq:diamond5P4}
\mbox{$C\cap V_1=\emptyset$ or $|C\cap V_1|\ge \tilde\pi|C|$}
\end{equation} for every
$C\in\clusters$.
\end{definition}

In remains to introduce
configurations~$\mathbf{(\diamond6)}$--$\mathbf{(\diamond10)}$. In these
configurations the set $\HugeVertices$ is not utilized. All these
configurations make use of Setting~\ref{settingsplitting}, i.e., the set
$V(G)\setminus \HugeVertices$ is partitioned into three sets
$\colouringp{0},\colouringp{1}$ and $\colouringp{2}$. The purpose of $\colouringp{0},\colouringp{1}$ and $\colouringp{2}$
is to make possible to embed the \kknnaaggssNOSPACE, the internal shrubs, and the
end shrubs of $T_\PARAMETERPASSING{T}{thm:main}$, respectively. Thus the
parameters $\proporce{0}, \proporce{1}$ and $\proporce{2}$ are chosen proportionally to
the sizes of these respective parts of $T_\PARAMETERPASSING{T}{thm:main}$.

We first introduce four preconfigurations $\mathbf{(\heartsuit 1)}$,
$\mathbf{(\heartsuit 2)}$, $\mathbf{(exp)}$ and $\mathbf{(reg)}$.

 An \emph{$\M$-cover}\index{general}{cover}\index{mathsymbols}{*COVER@$\M$-cover} of a
\semiregular matching $\M$ is a family $\mathcal F\subset \V(\M)$ with the property that at least one of
the elements $S_1$ and $S_2$ is a member of $\mathcal F$, for each $(S_1,S_2)\in
\M$.

\begin{definition}[\bf Preconfiguration
$\mathbf{(\heartsuit 1)}$]\index{mathsymbols}{**1@$\mathbf{(\heartsuit1)}$}
\label{def:heart1}
 Suppose that we are in
Setting~\ref{commonsetting} and Setting~\ref{settingsplitting}. We say that the
graph $G$ is in \emph{Preconfiguration
$\mathbf{(\heartsuit 1)}(\gamma',h)$} of $V(G)$ if there are two
non-empty sets $V_0,V_1\subset
\colouringp{0}\setminus \left(\shadowsplit\cup\shadow_{\GD}(\WantiC, \frac{\eta^2 k}{10^5})\right)$ with the following
properties.
\begin{align}
\label{COND:P1:3}
\mindeg_{\Gcapt}\left(V_0,\Vgood\colouringpI{2}\right)&\ge h/2 \;\mbox{, and}\\
\label{COND:P1:4}
\mindeg_{\Gcapt}\left(V_1,\Vgood\colouringpI{2}\right)&\ge h \;.
\end{align} Further, there is an
$(\M_A\cup\M_B)$-cover $\mathcal F$ such that
\begin{equation}\label{COND:P1:5}
\maxdeg_{\Gcapt}\left(V_1,\bigcup\mathcal F\right)\le \gamma' k\;.
\end{equation}
\end{definition}

\begin{definition}[\bf Preconfiguration
$\mathbf{(\heartsuit 2)}$]\index{mathsymbols}{**2@$\mathbf{(\heartsuit2)}$} Suppose that we
are in Setting~\ref{commonsetting} and Setting~\ref{settingsplitting}. We say that the
graph $G$ is in \emph{Preconfiguration
$\mathbf{(\heartsuit 2)}(h)$} of $V(G)$ if there are two
non-empty sets $V_0,V_1\subset \colouringp{0}\setminus \left(\shadowsplit\cup \shadow_{\GD}(\WantiC, \frac {\eta^2 k}{10^5})\right)$ with the
following properties.
\begin{align}
\begin{split}\label{COND:P2:4}
\mindeg_{\Gcapt}\left(V_0\cup V_1,\Vgood\colouringpI{2}\right)&\ge h.
\end{split}
\end{align}
\end{definition}

\begin{definition}[\bf Preconfiguration
$\mathbf{(exp)}$]\index{mathsymbols}{**exp@$\mathbf{(exp)}$} 
\label{def:exp8}
Suppose that we
are in Setting~\ref{commonsetting} and Setting~\ref{settingsplitting}. We say that the
graph $G$ is in \emph{Preconfiguration
$\mathbf{\mathbf{(exp)}}(\beta)$} if there are two
non-empty sets $V_0,V_1\subset \colouringp{0}$ with the following properties.
\begin{align}
\label{COND:exp:1}
\mindeg_{\Gexp}(V_0,V_1)&\ge \beta k\;,\\
\label{COND:exp:2}
\mindeg_{\Gexp}(V_1,V_0)&\ge \beta k\;.
\end{align}
\end{definition}

\begin{definition}[\bf Preconfiguration
$\mathbf{(reg)}$]\index{mathsymbols}{**reg@$\mathbf{(reg)}$}
\label{def:reg}
 Suppose that we
are in Setting~\ref{commonsetting} and Setting~\ref{settingsplitting}. We say that the graph $G$ is in \emph{Preconfiguration
$\mathbf{\mathbf{(reg)}}(\tilde \epsilon, d', \mu)$}  if there are two  non-empty sets $V_0,V_1\subset \colouringp{0}$
and a non-empty family of vertex-disjoint $(\tilde\epsilon,d')$-super-regular pairs $\{(Q_0^{(j)},Q_1^{(j)}\}_{j\in\mathcal Y}$ (with respect to the edge set $E(G)$) with $V_0:=\bigcup Q_0^{(j)}$ and $V_1:=\bigcup Q_1^{(j)}$ such that
\begin{align}
\label{COND:reg:0}
\min\left\{|Q_0^{(j)}|,|Q_1^{(j)}|\right\}&\ge\mu k\;.
\end{align}
\end{definition}

\begin{definition}[\bf Configuration
$\mathbf{(\diamond6)}$]\index{mathsymbols}{**6@$\mathbf{(\diamond6)}$}
\label{def:CONF6}
Suppose that we are in
Settings~\ref{commonsetting} and~\ref{settingsplitting}. We say that the graph $G$
is in \emph{Configuration
$\mathbf{(\diamond6)}(\delta, \tilde \epsilon,d',\mu, \gamma', h_2)$} if the
following conditions are satisfied.

The vertex sets $V_0,V_1$ 
witness Preconfiguration
 $\mathbf{(reg)}(\tilde \epsilon,d',\mu)$ or
 Preconfiguration~$\mathbf{(exp)}(\delta)$ and either Preconfiguration~$\mathbf{(\heartsuit1)}(\gamma',h_2)$ or
Preconfiguration~$\mathbf{(\heartsuit2)}(h_2)$. There exist non-empty sets $V_2,V_3\subset \colouringp{1}$ such that
 \begin{align}\label{COND:D6:1}
\mindeg_{G}(V_1,V_2)&\ge \delta k\;,\\ 
\label{COND:D6:2}
\mindeg_{G}(V_2,V_1)&\ge \delta k\;,\\ 
\label{COND:D6:3}
\mindeg_{\Gexp}(V_2,V_3)&\ge \delta k \;,\mbox{and}\\
\label{COND:D6:4}
\mindeg_{\Gexp}(V_3,V_2)&\ge \delta k\;.
\end{align}
\end{definition}

\begin{definition}[\bf Configuration
$\mathbf{(\diamond7)}$]\index{mathsymbols}{**7@$\mathbf{(\diamond7)}$}
\label{def:CONF7}
Suppose that we are in
Settings~\ref{commonsetting} and~\ref{settingsplitting}. We say that the graph $G$
is in \emph{Configuration
$\mathbf{(\diamond7)}(\delta, \rho', \tilde \epsilon, d',\mu, \gamma', h_2)$} if
the following conditions are satisfied.

The sets $V_0,V_1$  witness Preconfiguration
 $\mathbf{(reg)}( \tilde \epsilon, d',\mu)$ and either Preconfiguration~$\mathbf{(\heartsuit
 1)}(\gamma', h_2)$ or Preconfiguration~$\mathbf{(\heartsuit 2)}(h_2)$. There
 exist non-empty sets $V_2\subset \smallatoms\colouringpI{1}\setminus \exceptVertSplit$ and $V_3\subset \colouringp{1}$ such that
 \begin{align}
\label{COND:D7:1}
\mindeg_{G}(V_1,V_2)&\ge \delta k\;,\\
\label{COND:D7:2}
\mindeg_{G}(V_2,V_1)&\ge \delta k\;,\\
\label{COND:D7:3}
\maxdeg_{\GD}(V_2,\colouringp{1}\setminus V_3)&< \rho' k \;\mbox{and}\\
\label{COND:D7:4}
\mindeg_{\GD}(V_3,V_2)&\ge \delta k\;.
\end{align}
\end{definition}

\begin{definition}[\bf Configuration
$\mathbf{(\diamond8)}$]\index{mathsymbols}{**8@$\mathbf{(\diamond8)}$}
\label{def:CONF8}
Suppose that we are in
Settings~\ref{commonsetting} and~\ref{settingsplitting}. We say that the graph $G$
is in \emph{Configuration
$\mathbf{(\diamond8)}(\delta,\rho',\epsilon_1,\epsilon_2, d_1,d_2,\mu_1,\mu_2, h_1,h_2)$}
 if the following conditions are satisfied.

The vertex sets $V_0,V_1$  witness Preconfiguration
 $\mathbf{(reg)}(\epsilon_2, d_2,\mu_2)$ and Preconfiguration~$\mathbf{(\heartsuit 2)}(h_2)$.
 There exist non-empty sets $V_2\subset \colouringp{0}$, $V_3,V_4\subset \colouringp{1}$, $V_3\subset\smallatoms\setminus \exceptVertSplit$, and an $(\epsilon_1, d_1, \mu_1 k)$-\semiregular matching $\mathcal N$ absorbed by $(\M_A\cup\M_B)\setminus \NAtom$, $V(\mathcal N)\subset \colouringp{1}\setminus V_3$ such that
\begin{align}
\label{COND:D8:1}
\mindeg_{G}(V_1,V_2)&\ge \delta k\;,\\
\label{COND:D8:2}
\mindeg_{G}(V_2,V_1)&\ge \delta k\;,\\
\label{COND:D8:3}
\mindeg_{\Gcapt}(V_2,V_3)&\ge \delta k\;,\\
\label{COND:D8:4}
\mindeg_{\Gcapt}(V_3,V_2)&\ge \delta k\;,\\
\label{COND:D8:5}
\maxdeg_{\GD}(V_3,\colouringp{1}\setminus V_4)&< \rho' k \;,\\
\label{COND:D8:6}
\mindeg_{\GD}(V_4,V_3)&\ge \delta k\;\mbox{, and}\\
\label{COND:D8:7}
\deg_{\GD}(v,V_3)+\deg_{\Gblack}(v,V(\mathcal N))&\ge h_1\;\mbox{for each $v\in V_2$.}
\end{align} 
\end{definition}
\begin{definition}[\bf Configuration
$\mathbf{(\diamond9)}$]\index{mathsymbols}{**9@$\mathbf{(\diamond9)}$}
\label{def:CONF9}
Suppose that we are in
Settings~\ref{commonsetting}, and~\ref{settingsplitting}. We say that the graph $G$
is in \emph{Configuration
$\mathbf{(\diamond9)}(\delta, \gamma', h_1, h_2, \epsilon_1, d_1,
\mu_1,\epsilon_2, d_2,\mu_2)$} if the following conditions are satisfied.

The sets $V_0,V_1$ together with the $(\M_A\cup\M_B)$-cover $\mathcal F'$
witness Preconfiguration~$\mathbf{(\heartsuit1)}(\gamma',h_2)$. 
 There exists an $(\epsilon_1, d_1, \mu_1 k)$-\semiregular matching $\mathcal N$ absorbed by $\M_A\cup\M_B$, with $V(\mathcal N)\subset \colouringp{1}$.
Further, there is a family $\{(Q_0^{(j)},Q_1^{(j)})\}_{j\in\mathcal Y}$ as in Preconfiguration~$\mathbf{(reg)}(\epsilon_2,d_2,\mu_2)$. There is a set $V_2\subseteq
 V(\mathcal N)\setminus \bigcup \mathcal F'\subset \bigcup\clusters$ with the
 following properties:
\begin{align}
\label{conf:D9-XtoV}
\mindeg_{\GD}\left(V_1, V_2\right)\ge h_1\;,&\\
\label{conf:D9-VtoX}\mindeg_{\GD}\left(V_2,V_1\right)\ge
\delta k\;.
\end{align}
\end{definition}

Our last configuration, Configuration~$\mathbf{(\diamond10)}$, will lead to an embedding very similar to the one
in the dense case (as treated in~\cite{PS07+}; this will be explained in detail in~\cite{cite:LKS-cut3}). To formalize the configuration we need a preliminary definition. We shall
generalize the standard concept of a regularity graph (in the context of regular
partitions and Szemer\'edi's regularity lemma) to graphs with clusters whose sizes are only bounded from below.

\begin{definition}[\bf{$( \epsilon,d,\ell_1,\ell_2)$-regularized
graph}]\index{general}{regularized graph}\label{def:regularizedGraph} Let $G$ be
a graph, and let $\mathcal V$ be an $\ell_1$-ensemble that partitions $V(G)$.
Suppose that $G[X]$ is empty for each $X\in \mathcal V$ and suppose $G[X,Y]$ is
$\epsilon$-regular and of density either $0$ or at least $d$ for each $X,Y\in
\mathcal V$. Further suppose that for all $X\in \V$ it holds that 
$|\bigcup\neighbour_G(X)|\le \ell_2$.
Then we say that $(G,\mathcal V)$ is an \emph{$(\eps, d,\ell_1,
\ell_2)$-regularized graph}.

A \semiregular matching $\M$ of $G$ is \index{general}{consistent matching}\emph{consistent} with $(G,\mathcal V)$ if $\V(\M)\subset \V$.
\end{definition}

\begin{definition}[\bf Configuration
$\mathbf{(\diamond10)}(\tilde\eps,d',\ell_1, \ell_2,
\eta')$]\index{mathsymbols}{**10@$\mathbf{(\diamond10)}$}
\label{def:CONF10}
Assume Setting~\ref{commonsetting}. The graph $G$ contains an $(
\tilde\epsilon, d', \ell_1, \ell_2)$-regularized graph $(\tilde G,\V)$  and there
is a $( \tilde\epsilon,  d',\ell_1 )$-\semiregular matching $\M$ consistent with
$(\tilde G,\V)$.
There are a family $\LargeTen\subset \V$ and distinct clusters $A, B\in\V$  with
\begin{enumerate}[(a)]
\item\label{diamond10cond1} $E(\tilde G[A,B])\neq \emptyset$, 
\item\label{diamond10cond2} $\deg_{\tilde G}\big(v,V(\M)\cup \bigcup \LargeTen\big)\ge (1+\eta')k$ for all but at most $\tilde\epsilon |A|$
vertices $v\in A$ and for all but at most $\tilde\epsilon|B|$ vertices $v\in B$, and
\item\label{diamond10cond3}
 for each $X\in\LargeTen$ we have $\deg_{\tilde G}(v)\ge (1+\eta')k$ for all but at most $\tilde\epsilon|X|$ vertices $v\in X$.
 \end{enumerate}
\end{definition}

\subsection{The main result}\label{sec:outerlemma}

We are now ready to state the main result of the present paper, Lemma~\ref{outerlemma}.
In the remaining part of the paper we build up the arguments that lead to the proof of  Lemma~\ref{outerlemma}, which is given in Section~\ref{ssec:proofofouterlemma}.
\begin{lemma}\label{outerlemma}
Suppose we are in Settings~\ref{commonsetting} and~\ref{settingsplitting}. Further suppose
that at least one of the followings hold in $G$.
\begin{itemize}
\item[{\bf(K1)}]$2e_G(\XA)+e_G(\XA, \XB)\ge \eta kn/3$,
\item[{\bf(K2)}]  $|V(\Mgood)|\ge \eta n/3$, 
\end{itemize}
where $\Mgood:=\{(A,B)\in \mathcal M_A\::\: A\cup B\subset \XA \}$.
 Then one of the configurations
\begin{itemize}
\item$\mathbf{(\diamond1)}$,
\item$\mathbf{(\diamond2)}\BUG{\left( \frac{\eta^{39}\Omega^{**}}{4\cdot
	10^{90}(\Omega^*)^{11}},\frac{\sqrt[4]{\Omega^{**}}}2,\frac{\eta^{13}\rho^2}{128\cdot
	10^{30}\cdot (\Omega^*)^5}\right)}$,
\item$\mathbf{(\diamond3)}\BUG{\left(\frac{\eta^{39}\Omega^{**}}{4\cdot
	10^{90}(\Omega^*)^{11}},\frac{\sqrt[4]{\Omega^{**}}}2,\frac\gamma2,\frac{\eta^{13}\gamma^2}{128\cdot
	10^{30}\cdot(\Omega^*)^5}\right)}$,
\item$\mathbf{(\diamond4)}\BUG{\left(\frac{\eta^{39}\Omega^{**}}{4\cdot
	10^{90}(\Omega^*)^{11}},\frac{\sqrt[4]{\Omega^{**}}}2,\frac\gamma2,\frac{\eta^{13}\gamma^3}{384\cdot
	10^{30}(\Omega^*)^6}\right)}$,
\item$\mathbf{(\diamond5)}\BUG{\left(\frac{\eta^{39}\Omega^{**}}{4\cdot
	10^{90}(\Omega^*)^{11}},
\frac{\sqrt[4]{\Omega^{**}}}2,\frac{\eta^{13}}{128\cdot
	10^{30}\cdot
	(\Omega^*)^3},\frac{\eta}2,\frac{\eta^{13}}{128\cdot
	10^{30}\cdot (\Omega^*)^4}\right)}$,
  \item
  $\mathbf{(\diamond6)}\big(\frac{\eta^3\rho^4}{10^{14}(\Omega^*)^4},4\epsilonD,\frac{\gamma^3\rho}{32\Omega^*},\frac{\eta^2\nu}{2\cdot10^4
 },\frac{3\eta^3}{2000},\proporce{2}(1+\frac\eta{20})k\big)$,
  \item $\mathbf{(\diamond7)}\big(\frac
{\eta^3\gamma^3\rho}{10^{12}(\Omega^*)^4},\frac
{\eta\gamma}{400},4\epsilonD,\frac{\gamma^3\rho}{32\Omega^*},\frac{\eta^2\nu}{2\cdot10^4
}, \frac{3\eta^3}{2\cdot 10^3}, \proporce{2}(1+\frac\eta{20})k\big)$,
  \item
$\mathbf{(\diamond8)}\big(\frac{\eta^4\gamma^4\rho}{10^{15}
(\Omega^*)^5},\frac{\eta\gamma}{400},\frac{400\epsilon}{\eta},4\epsilonD,\frac
d2,\frac{\gamma^3\rho}{32\Omega^*},\frac{\eta\pi\clustersize}{200k},\frac{\eta^2\nu}{2\cdot10^4
}, \proporce{1}(1+\frac\eta{20})k,\proporce{2}(1+\frac\eta{20})k\big)$,
 \item 
 $\mathbf{(\diamond9)}\big(\frac{\rho
\eta^8}{10^{27}(\Omega^*)^3},\frac
{2\eta^3}{10^3}, \proporce{1}(1+\frac{\eta}{40})k,
\proporce{2}(1+\frac{\eta}{20})k, \frac{400\varepsilon}{\eta},
\frac{d}2,
\frac{\eta\pi\clustersize}{200k},4\epsilonD,\frac{\gamma^3\rho}{32\Omega^*},
\frac{\eta^2\nu}{2\cdot10^4 }\big)$,
  \item $\mathbf{(\diamond10)}\big( \epsilon, \frac{\gamma^2
d}2,\pi\sqrt{\epsilon'}\nu k, \frac
{(\Omega^*)^2k}{\gamma^2},\frac\eta{40} \big)$
\end{itemize}
occurs in $G$.
\end{lemma}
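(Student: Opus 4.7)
The plan is a case analysis based on the size of $e_G(\HugeVertices,\XA\cup\XB)$, which dictates whether Preconfiguration~$\mathbf{(\clubsuit)}$ can be extracted and hence whether we aim for configurations $\mathbf{(\diamond1)}$--$\mathbf{(\diamond5)}$ or $\mathbf{(\diamond6)}$--$\mathbf{(\diamond10)}$. Fix an auxiliary threshold $\tilde\beta$, chosen small enough that the bound $|\WantiC|\le 100\tilde\beta n/\eta$ furnished by~\eqref{eq:WantiCbound} of Lemma~\ref{lem:YAYB} is negligible compared to the $\eta n/3$ appearing in (K1) and (K2), yet large enough that the ``Case A'' hypothesis is meaningful for Lemma~\ref{lem:ConfWhenCXAXB}. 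In parallel, the bounds~\eqref{eq:Lsharp-small}--\eqref{eq:XBYB} of Lemma~\ref{lem:YAYB} together with Setting~\ref{commonsetting}\eqref{commonsetting:numbercaptured} guarantee that the sets $L_\#$, $\XA\setminus\YA$, and $(\XA\cup\XB)\setminus\YB$ are of size $O(\rho n/\eta^2)$, which by~\eqref{eq:KONST} is much smaller than $\eta n$, so these standing error terms can be absorbed uniformly in both cases.

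In the first case, assume $e_G(\HugeVertices,\XA\cup\XB)\ge \tilde\beta kn$. Here we feed the sparse decomposition $\class$ together with~$\tilde\beta$ directly into Lemma~\ref{lem:ConfWhenCXAXB} (whose hypothesis is precisely the existence of many edges between $\HugeVertices$ and $\XA\cup\XB$); its conclusion yields one of $\mathbf{(\diamond1)}$--$\mathbf{(\diamond5)}$ with parameters $\tilde\Omega=\sqrt[4]{\Omega^{**}}/2$ and with the other parameters produced from $\tilde\beta$, $\rho$, $\gamma$ and $\Omega^*$ via a straightforward calculation that matches those listed in the statement. In the second case we have $e_G(\HugeVertices,\XA\cup\XB)<\tilde\beta kn$, so~\eqref{eq:WantiCbound} gives $|\WantiC|\le 100\tilde\beta n/\eta$, a negligible set. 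We split on (K1) versus (K2). Under (K1), after removing from $\XA$ and from $\XB$ the ``bad'' sets $\gP$, $\HugeVertices$, $\exceptVertSplit$, $\shadowsplit$ and $\shadow_{\GD}(\WantiC,\eta^2k/10^5)$ (all small by Lemmas~\ref{lem:YAYB},~\ref{lem:RestrictionSemiregularMatching}, Setting~\ref{settingsplitting}, and~\eqref{eq:boundShadowsplit}), the edge count $2e_G(\XA)+e_G(\XA,\XB)$ loses at most $o(\eta kn)$, so the hypothesis of Lemma~\ref{lem:ConfWhenNOTCXAXB} is satisfied and one of $\mathbf{(\diamond6)}$--$\mathbf{(\diamond10)}$ pops out. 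Under (K2) we proceed analogously: removing from $\Mgood$ those pairs intersecting the same bad sets (using Lemma~\ref{lem:RestrictionSemiregularMatching} to control losses from the proportional splitting) leaves $\Omega(\eta n)$ vertices of the matching, meeting the hypothesis of Lemma~\ref{lem:ConfWhenMatching}, which again outputs a configuration from $\mathbf{(\diamond6)}$--$\mathbf{(\diamond10)}$.

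\textbf{Expected main obstacle.} The conceptual arguments will be short because the real work is done in Lemmas~\ref{lem:ConfWhenCXAXB}--\ref{lem:ConfWhenMatching} of Section~\ref{ssec:obtainingConf}. The main difficulty is rather the bookkeeping: every configuration in the statement of Lemma~\ref{outerlemma} has very delicate parameters (for instance $\eta^{27}\Omega^{**}/(4\cdot10^{66}(\Omega^*)^{11})$ in~$\mathbf{(\diamond2)}$), and these must be produced by cascading through the parameter hierarchy~\eqref{eq:KONST}, the output parameters of the three structural lemmas, and the losses incurred by Lemma~\ref{lem:YAYB} and by the proportional splitting in Definition~\ref{def:proportionalsplitting}. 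Verifying that, after the cleaning, the hypotheses of Lemma~\ref{lem:ConfWhenNOTCXAXB} (resp.~Lemma~\ref{lem:ConfWhenMatching}) are met with the precise constants needed to yield configurations with the stated parameters will be the most tedious step; a secondary subtlety is that while~$\HugeVertices$ itself is not literally small in Case~B, it is negligible as far as $\XA\cup\XB$ is concerned, and one must check that this weaker notion suffices for the downstream lemmas to proceed without referring to $\HugeVertices$ in the resulting configurations.
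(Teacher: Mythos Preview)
Your high-level case split on $e_G(\HugeVertices,\XA\cup\XB)$ is correct, and Case~A (invoking Lemma~\ref{lem:ConfWhenCXAXB}) is fine. But your treatment of Case~B under {\bf(K1)} has a genuine gap.

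You write that after cleaning, ``the hypothesis of Lemma~\ref{lem:ConfWhenNOTCXAXB} is satisfied and one of $\mathbf{(\diamond6)}$--$\mathbf{(\diamond10)}$ pops out.'' Neither clause is correct. First, Lemma~\ref{lem:ConfWhenNOTCXAXB} only ever outputs Configuration~$\mathbf{(\diamond6)}$, not $\mathbf{(\diamond6)}$--$\mathbf{(\diamond10)}$. Second, and more seriously, its hypothesis~\eqref{eq:manyXAXAXBobt} demands $e_{\Gexp}(\YA_1,\YA_2)\ge 2\rho kn$, i.e.\ many edges specifically in the nowhere-dense graph $\Gexp$. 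Condition {\bf(K1)} only gives many edges in $G$; after subtracting the (small) contribution from $\HugeVertices$ you still have edges in $\Gcapt$, which decompose as $\Gexp$-edges plus $\GD$-edges. When the $\Gexp$-contribution is small (which it may well be), Lemma~\ref{lem:ConfWhenNOTCXAXB} simply does not apply, and you have no route to any configuration. The paper handles this by a substantial further argument: it passes to the cleaned dense-spot family $\DenseSpots_\class$ of Lemma~\ref{lem:clean-spots}, partitions $\YA_1,\YA_2$ into five ``types'' $\mathbb{Y}^{(1)},\ldots,\mathbb{Y}^{(5)}$ according to their position relative to $V(\Gexp)$, $\largeintoatoms$, $R$ and $\smallatoms$, isolates one type carrying $\ge 2\rho kn$ edges, and then invokes Lemma~\ref{lem:Isabelle} (built on Lemma~\ref{lem:edgesEmanatingFromDensePairsIII}) to \emph{manufacture} a \semiregular matching inside that type. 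Only this manufactured matching has the structure required by the (M2) and {\bf(t1)}--{\bf(t5)} hypotheses of Lemma~\ref{lem:ConfWhenMatching}.

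Your treatment of {\bf(K2)} is closer in spirit, but still incomplete: Lemma~\ref{lem:ConfWhenMatching} does not take an arbitrary cleaned submatching of $\Mgood$; its input must satisfy one of the very specific type conditions {\bf(t1)}, {\bf(t2)}, {\bf(t3)}, {\bf(t5)} (under {\bf(cA)}) on $V_1(\M)$. Producing this requires, for each pair of $\Mgood\colouringpI{0}$, restricting to a quarter lying in a single $\mathbb{Y}^{(j)}$-set and then selecting the $j$ whose resulting matching is large. This type-classification step is not bookkeeping---it is what lets Lemma~\ref{lem:ConfWhenMatching} branch to the correct configuration among $\mathbf{(\diamond6)}$--$\mathbf{(\diamond10)}$---and it is missing from your plan.
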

\begin{remark}
The effect of changing the parameters $\proporce{1}$ and $\proporce{2}$ in Setting~\ref{settingsplitting} can be more substantial that a mere change of the parameters in one configuration asserted by Lemma~\ref{outerlemma}. That is, it may happen that for some values of $\proporce{1}$ and $\proporce{2}$ the only configuration that occurs in the graph $G_\PARAMETERPASSING{L}{outerlemma}$ is, say, $\mathbf{(\diamond6)}\big(\cdot,\cdot,\cdot,\cdot,\cdot,\proporce{2}(1+\frac\eta{20})k\big)$, while for other values of $\proporce{1}$ and $\proporce{2}$, the only configuration that occurs is, say, 
$\mathbf{(\diamond8)}\big(\cdot,\cdot,\cdot,\cdot,\cdot,\cdot,\cdot,\cdot, \proporce{1}(1+\frac\eta{20})k,\proporce{2}(1+\frac\eta{20})k\big)$.

Recall that $\proporce{1}$ and $\proporce{2}$ are set proportionally to the sizes of the internal- and end- shrubs of the tree $T_\PARAMETERPASSING{T}{thm:main}$, respectively. Thus the above tells us that different trees $T_\PARAMETERPASSING{T}{thm:main}$ may be embedded into different parts of $G_\PARAMETERPASSING{T}{thm:main}$, and using different embedding techniques.
\end{remark}
Note that it follows from the main results of our previous papers~\cite{cite:LKS-cut0,cite:LKS-cut1} that graphs from Theorem~\ref{thm:main} indeed satisfy the hypothesis of Lemma~\ref{outerlemma}. More specifically, after obtaining a sparse decomposition of $G_\PARAMETERPASSING{T}{thm:main}$ in \cite[Lemma~\ref{p0.lem:LKSsparseClass}]{cite:LKS-cut0}, we can apply \cite[Lemma~\ref{p1.prop:LKSstruct}]{cite:LKS-cut1} which asserts that {\bf(K1)} or {\bf(K2)} are fulfilled.

\section{Cleaning}\label{ssec:cleaning}
This section contains five ``cleaning lemmas''
(Lemma~\ref{lem:envelope}--\ref{lem:clean-Match}). The basic setting of
all these lemmas is the same. There is a system of vertex sets and some density assumptions on edges between certain sets of this
system. The assertion is that a small number of vertices can be discarded
from the sets so that some conditions on the minimum degree are fullfilled.
While the cleaning strategy is simply discarding the vertices which violate these
minimum degree conditions the analysis of the outcome is non-trivial. The simplest application of such an approach was the proof of Lemma~\ref{lem:clean-spots} above.

Lemmas~\ref{lem:envelope}--\ref{lem:clean-Match} are used to get the structures
required by (pre-)configurations introduced in Section~\ref{ssec:TypesConf}.

\bigskip
The first lemma will be used to obtain preconfiguration $\mathbf{(\clubsuit)}$
in certain situations.
\begin{lemma}\label{lem:envelope}\Referee{(E)}
Let $\psi\in (0,1)$, and $\Gamma,\Omega,\Omega'\ge 1$ be arbitrary, with 
\begin{equation}\label{eq:jaksestavidum}
\psi^3\Omega\ge 4\Gamma^2\Omega'\;.
\end{equation} Let $P$ and $Q$ be two
disjoint vertex sets in a graph $G$. Assume that $Y\subset V(G)$ is given. We
assume that \begin{align}\label{eq:envelopeAss1}\mindeg(P,Q)&\ge \Omega
k\;, \mbox{and}\\
\label{eq:52p}
\maxdeg(Q)&\le \Gamma k\;.
\end{align} Then there exist sets
$P'\subset P$, $Q''\subset Q'\subset Q\setminus Y$  such that the
following holds.
\begin{enumerate}[(a)]
\item \label{it5.1:a}$\maxdeg(Q',P\setminus P')< \psi k$,
\item \label{it5.1:b}$\maxdeg(Q'',Q\setminus (Q'\cup Y))<\psi k$, 
\item \label{it5.1:c}$\mindeg(P',Q')\ge \Omega' k$, and
\item\label{en:52e} $e(P',Q'')\ge (1-\psi)e(P,Q)-|Y\cap Q|\Gamma k$.
\end{enumerate}
\end{lemma}
\begin{proof}
Initially, set $P':=P$, $Q':=Q\setminus Y$, and $Q'':=Q'$. We shall
sequentially\footnote{No particular order is imposed on the vertices.} discard from the sets $P'$, $Q'$ and $Q''$ those vertices that
violate any of the properties (a)--(c). Further, if a vertex $v\in Q$ is removed
from $Q'$ then we remove it from the set $Q''$ as well. We thus have $Q''\subset Q'$ in each step. 
After this sequential cleaning procedure
finishes it only remains to establish~\eqref{en:52e}.

First, observe that the way we constructed $P'$ (together with~\eqref{eq:envelopeAss1}) ensures
that 
\begin{equation}\label{DianaAndMatej}
e(P\setminus P',Q'')\le e(P\setminus P',Q')\le \frac{\Omega'}{\Omega}e(P,Q)\;.
\end{equation}

Let $Q^{\ref{it5.1:a}}\subset Q$ be the set of the vertices removed from $Q'$ because of condition~\eqref{it5.1:a}.

Note that a vertex~$u$ of  $P^{\ref{it5.1:c}}=P\setminus P'$ was removed at some point from the set~$P'$ because~\eqref{it5.1:c} failed for~$u$. Let~$C'_u$ denote the set~$Q'$ just before this time.	 Let $f(u):=\deg(u, C_u')$. A vertex $v\in Q^{\ref{it5.1:a}}= Q\setminus (Q'\cup Y)$ was removed at some point from the set $Q'$ because~\eqref{it5.1:a} failed for~$v$. Let~$A'_v$ be the set $P'$ just before this time.
Let
$g(v):=\deg(v,P\setminus A'_v)$. Observe that $\sum_{u\in P^{\ref{it5.1:c}}}f(u)\ge
\sum_{v\in Q^{\ref{it5.1:a}}}g(v)$. Indeed, at the moment when $v\in Q$ is removed
from~$Q'$, the $g(v)$ edges that $v$ sends to the set $P\setminus A'_v$ are 
counted in $\sum_{u\in \neighbour(v)\cap P^{\ref{it5.1:c}}}f(u)$. Note also that we have $f(u)\le \Omega'k$ and $g(v)\ge \psi k$ for each $u\in P^{\ref{it5.1:c}}$ and each $v\in Q^{\ref{it5.1:a}}$, because~$u$ and~$v$ fail~\eqref{it5.1:c} and~\eqref{it5.1:a}, respectively. We therefore have
\begin{equation}\label{eq:okno1}
|P^{\ref{it5.1:c}}|\Omega'k
\ge
\sum_{u\in P^{\ref{it5.1:c}}}f(u)\ge
\sum_{v\in Q^{\ref{it5.1:a}}}g(v)
\ge |Q^{\ref{it5.1:a}}|\psi k\;.
\end{equation}
By~\eqref{eq:envelopeAss1} we have
\begin{equation}\label{eq:dvere1}
|P^{\ref{it5.1:c}}|\le \sum_{u\in P^{\ref{it5.1:c}}}\frac{\deg(u,Q)}{\Omega k} \le \frac{e(P,Q)}{\Omega k}\;.
\end{equation}
Putting~\eqref{eq:okno1} and~\eqref{eq:dvere1} together, we get that
\begin{equation}\label{eq:BoundB}
|Q^{\ref{it5.1:a}}|\le \frac{\Omega'}{\psi \Omega k}e(P,Q)\;.
\end{equation}
Because vertices in  $Q'\setminus Q''$ fail property~\eqref{it5.1:b} we have
\begin{align}
\begin{split}\label{eq:BoundC}
|Q'\setminus Q''|\psi k &
\le \sum_{w\in Q'\setminus Q''}\deg(w,Q\setminus (Q'\cup Y))
\le |Q\setminus (Q'\cup Y)|\Gamma k\\
&=|Q^{\ref{it5.1:a}}|\Gamma k\leByRef{eq:BoundB} \frac{\Gamma\Omega'}{\psi\Omega}e(P,Q)\;.
\end{split}
\end{align}
Finally, we can lower-bound $e(P',Q'')$ as follows.
\begin{align*}e(P',Q'')&\ge
e(P,Q)-e(P\setminus P',Q'')-|Y\cap Q|\Gamma k-|Q^{\ref{it5.1:a}}|\Gamma k-|Q'\setminus
Q''|\Gamma k\\
\JUSTIFY{by~\eqref{DianaAndMatej},~\eqref{eq:BoundB},~\eqref{eq:BoundC}}
&\ge
e(P,Q)\Big(1-\frac{\Omega'}{\Omega}-\frac{\Gamma\Omega'}{\psi\Omega}-\frac{\Gamma^2\Omega'}{\psi^2\Omega}\Big)-|Y\cap
Q|\Gamma k\\
\JUSTIFY{by~\eqref{eq:jaksestavidum}}&\ge(1-\psi)e(P,Q)-|Y\cap Q|\Gamma k \;.\end{align*}
\end{proof}

\bigskip
The purpose of the lemmas below
(Lemmas~\ref{lem:clean-C+yellow}--\ref{lem:clean-Match}) is to distill
vertex sets for configurations $\mathbf{(\diamond2)}$-$\mathbf{(\diamond10)}$.
They will be applied in Lemmas~\ref{lem:ConfWhenCXAXB},~\ref{lem:ConfWhenNOTCXAXB},~\ref{lem:ConfWhenMatching}.
This is the final ``cleaning step'' on our way to the proof of Theorem~\ref{thm:main} --- the outputs of these lemmas can by used for a vertex-by-vertex embedding of any tree $T\in\treeclass{k}$ (although the corresponding embedding procedures in~\cite{cite:LKS-cut3} are quite complex).

The first two of these cleaning lemmas (Lemmas~\ref{lem:clean-C+yellow}
and~\ref{lem:clean-C+black}) are suited when the set $\HugeVertices$ of
vertices of huge degrees (cf.\ Setting~\ref{commonsetting}) needs to be considered.

For the following lemma, recall that we defined $[r]$ as the set of the first $r$ natural numbers,  excluding~$0$.

\begin{lemma}\label{lem:clean-C+yellow}For all
$r,\Omega^*,\Omega^{**}\in \NN$, and
 $\delta,\gamma,\eta\in (0,1)$,   with
$\left(\frac{3\Omega^*}\gamma\right)^r\delta<\eta/10$,
and $\Omega^{**}>1000$ the
following holds. Suppose  there are vertex sets $X_0, X_1,\ldots,X_r$ and $Y$ of
an $n$-vertex graph $G$ such that
\begin{enumerate}
  \item\label{hyp:Ysmall} $|Y|<\eta n/(4\Omega^*)$,
  \item \label{hyp:C+y-edges}$e(X_0,X_1)\geq \eta kn$,
\item \label{hyp:C+y-large} $\mindeg(X_0,X_1)\ge \Omega^{**}k$,
  \item \label{hyp:C+y-deg}
  $\mindeg(X_i,X_{i+1})\ge \gamma k$ for all $i\in [r-1]$, and 
  \item \label{hyp:C+y-bounded} $\maxdeg\left(Y\cup\bigcup_{i\in [r]}X_i\right)\le
  \Omega^* k$.
\end{enumerate}
Then there are sets $X_i'\subseteq X_i$ for $i=0,1,\ldots,r$ such that
\begin{enumerate}[(a)]
  \item \label{conc:X1Ydisj} $X_1'\cap Y=\emptyset$,
  \item \label{conc:C+y-deg}  
  $\mindeg(X_i',X_{i-1}')\ge \delta k$ for all $i\in [r]$,
  \item \label{conc:C+y-avoid}  
  $\maxdeg(X_i',X_{i+1}\setminus X_{i+1}')<\gamma k/2$ for all $i\in [r-1]$,
  \item \label{conc:C+y-large} $\mindeg(X_0',X_1')\ge
  \sqrt{\Omega^{**}}k$, and
  \item \label{conc:C+u-edges} $e(X_0',X_1')\ge \eta kn/2$, in particular
  $X_0'\neq\emptyset$.
\end{enumerate}
\end{lemma}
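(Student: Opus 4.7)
The strategy is to construct $X_0', X_1', \ldots, X_r'$ by a cleaning procedure. I initialize $X_1^{(0)} := X_1 \setminus Y$ and $X_i^{(0)} := X_i$ for $i \neq 1$ (giving~(a)), and then iteratively remove from each $X_i$ any vertex violating~(b) or~(c) with respect to the current cleaned sets, until the process stabilises at sets $X_i^{(\infty)}$. I set $X_i' := X_i^{(\infty)}$ for $i \geq 1$, then define $X_0' := \{v \in X_0 : \deg(v, X_1') \geq \sqrt{\Omega^{**}} k\}$ to enforce~(d), and perform a final small trim of~$X_1'$ to restore~(b) at $i=1$ relative to the shrunk~$X_0'$.

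The key analytic task is bounding the edges in $X_0 \times X_1$ lost during this cleaning. At stability each removed $v \in X_i \setminus X_i^{(\infty)}$ falls into one of three categories: the initial $Y$-removal (for $i=1$); the intrinsic (b)-failure set $R_i^{(b)} := \{v \in X_i : \deg(v, X_{i-1}^{(\infty)}) < \delta k\}$; or the cascaded (c)-failure set $R_i^{(c)} := \{v \in X_i : \deg(v, X_{i+1} \setminus X_{i+1}^{(\infty)}) \geq \gamma k/2\}$. The $Y$-removal contributes at most $|Y|\Omega^* k \leq \eta kn/4$ edges. For $R_1^{(b)}$, each of its vertices has fewer than $\delta k$ edges into $X_0$, so $e(X_0, R_1^{(b)}) < \delta k n < \eta kn/10$. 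For the cascaded losses, a standard double count gives $|R_i^{(c)}| \leq (2\Omega^*/\gamma) \, |X_{i+1} \setminus X_{i+1}^{(\infty)}|$ at each level: each new vertex of $R_i^{(c)}$ contributes $\geq \gamma k/2$ edges into the removed part of $X_{i+1}$, while each vertex of $X_{i+1}$ has at most $\Omega^* k$ neighbours in $X_i$. Iterating the cascade from level $r$ down to level $1$ amplifies the seed removals (which are of order $\delta n$ per level, by the per-level (b)-bound) by a factor at most $(2\Omega^*/\gamma)^{r-1}$, and the hypothesis $(3\Omega^*/\gamma)^r \delta < \eta/10$ is exactly what makes the total $X_0$--$R_1$ edge loss stay below $\eta k n/2$, yielding~(e).

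For condition~(d), a vertex $v \in X_0 \setminus X_0'$ has lost at least $\Omega^{**} k - \sqrt{\Omega^{**}} k \geq \Omega^{**} k/2$ of its $X_1$-neighbours in passing from $X_1$ to $X_1'$, so a further double count gives $|X_0 \setminus X_0'| \leq 2\Omega^* |X_1 \setminus X_1'|/\Omega^{**}$, with the associated edge loss absorbed into the overall $\eta k n/2$ budget. The final trim of~$X_1'$ to restore~(b) at $i=1$ removes only those $v \in X_1'$ with $\deg(v, X_0 \setminus X_0') \geq \deg(v,X_0) - \delta k$, of which there are at most $O(|X_0 \setminus X_0'| \Omega^*/\delta)$; their contribution to $e(X_0', X_1')$ is negligible.

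The main obstacle is the careful bookkeeping of the interlocking (b)- and (c)-cascades, in particular because the intrinsic (b)-failures at deep levels can be large in absolute terms while still producing only a small cascade into the critical edge set $e(X_0, X_1)$. This is exactly why the hypothesis is calibrated as $(3\Omega^*/\gamma)^r \delta < \eta/10$: the geometric factor $3\Omega^*/\gamma$ per level dominates the combined (b)-amplification and (c)-amplification, and the initial scale $\delta$ captures the per-level intrinsic loss of $\delta k n$ edges into the preceding layer, so that the final accumulated loss is just under $\eta k n/10$, leaving comfortable slack against the $\eta k n/2$ target in~(e).
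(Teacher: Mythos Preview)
Your overall strategy and the cascade bounds for~(c) are essentially the paper's, and the key inequality $(3\Omega^*/\gamma)^r\delta < \eta/10$ is used correctly. The gap is in your two-phase structure. Once you define $X_0'$ and then perform the ``final small trim'' of $X_1'$, two other conditions can break: condition~(b) at $i=2$ (some $v\in X_2'$ may now have fewer than $\delta k$ neighbours in the shrunk $X_1'$) and condition~(d) (some $v\in X_0'$ may drop below $\sqrt{\Omega^{**}}k$). You do not address either cascade. Moreover, your claimed bound $O(|X_0\setminus X_0'|\,\Omega^*/\delta)$ on the number of trimmed vertices is not justified: a trimmed $v$ satisfies only $\deg(v,X_0\setminus X_0')>\deg(v,X_0)-\delta k\ge 0$, which gives no usable lower bound for a double count, and there is no $\maxdeg$ hypothesis on $X_0$ (hypothesis~5 covers only $Y\cup\bigcup_{i\in[r]}X_i$).

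The paper avoids the whole difficulty by running a \emph{single} interleaved cleaning that enforces (b), (c) \emph{and} (d) simultaneously: vertices of $X_0'$ that violate~(d) are discarded inside the same loop. The crucial observation is that any $v\in X_0$ removed for~(d) has, at its moment of removal, fewer than $\sqrt{\Omega^{**}}k$ neighbours in the current $X_1'$; since $\deg(v,X_1)\ge\Omega^{**}k$, one gets directly $\sum_{v\in X_0\setminus X_0'} h_0(v)\le e(X_0,X_1)/\sqrt{\Omega^{**}}$, which bounds the $X_0$-side edge loss in one stroke. Any knock-on removals from $X_1'$ for~(b) at $i=1$ are then automatically absorbed into the term $\sum_{v\in X_1^{(b)}} g_1(v)<\delta kn$, and the process stabilises with (a)--(d) all holding at once. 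Merging the (d)-cleaning into your main loop, rather than postponing it, is the missing step; the rest of your argument then goes through as written.
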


\begin{proof}
In the formulae below we refer to hypotheses of the lemma
as~``\ref{hyp:Ysmall}.''--``\ref{hyp:C+y-bounded}.''.

Set $X_1':=X_1\setminus Y$. For $i=0,2,3,4,\ldots,r$, set
$X_i':=X_i$. Discard sequentially from $X_i'$ any vertex that
violates any of the
Properties~\eqref{conc:C+y-deg}--\eqref{conc:C+y-large}.
Properties~\eqref{conc:X1Ydisj}--\eqref{conc:C+y-large} are
trivially satisfied  when the procedure terminates. To show that
Property~\eqref{conc:C+u-edges} holds at this point, we bound the
number of edges from $e(X_0,X_1)$ that are incident with $X_0\setminus X_0'$ or
with $X_1\setminus X_1'$ in an amortized way.

For $i\in\{0,\ldots,r\}$ and for $v\in X_i\setminus X_i'$  we
write
\begin{align*}
f_i(v)&:=\deg\big(v,X_{i+1}\setminus X_{i+1}'(v)\big)\;,\\
g_i(v)&:=\deg\big(v,X_{i-1}'(v)\big)\;\mbox{, and}\\
h_i(v)&:=\deg\big(v,X_{i+1}'(v)\big)\;.
\end{align*}
where the sets $X_{i-1}'(v),X_{i}'(v),X_{i+1}'(v)$
above refer to the moment just before $v$ is removed from $X_i'$ (we do
not define $f_i(v)$ and $h_i(v)$ for $i=r$ and $g_i(v)$ for $i=0$).
 
For $i\in[r]$ let
$X_i^{\ref{conc:C+y-deg}}$ denote the vertices in 
 $X_i\setminus X_i'$ that were removed from $X_i'$ because of violating 
 Property~\eqref{conc:C+y-deg}. Then for a given $i\in [r]$ we have that 
 \begin{equation}\label{eq:X_i^a}
 \sum_{v\in X_i^{\ref{conc:C+y-deg}}}
 g_i(v)<\delta kn.
 \end{equation}
For $i=1,\ldots,r-1$
 let  $X_i^{\ref{conc:C+y-avoid}}$  denote
the vertices in $X_i\setminus X_i'$ that violated
Property~\eqref{conc:C+y-avoid}. Set $X_r^{\ref{conc:C+y-avoid}}:=\emptyset$.
\begin{figure}[t]
\centering 
\includegraphics{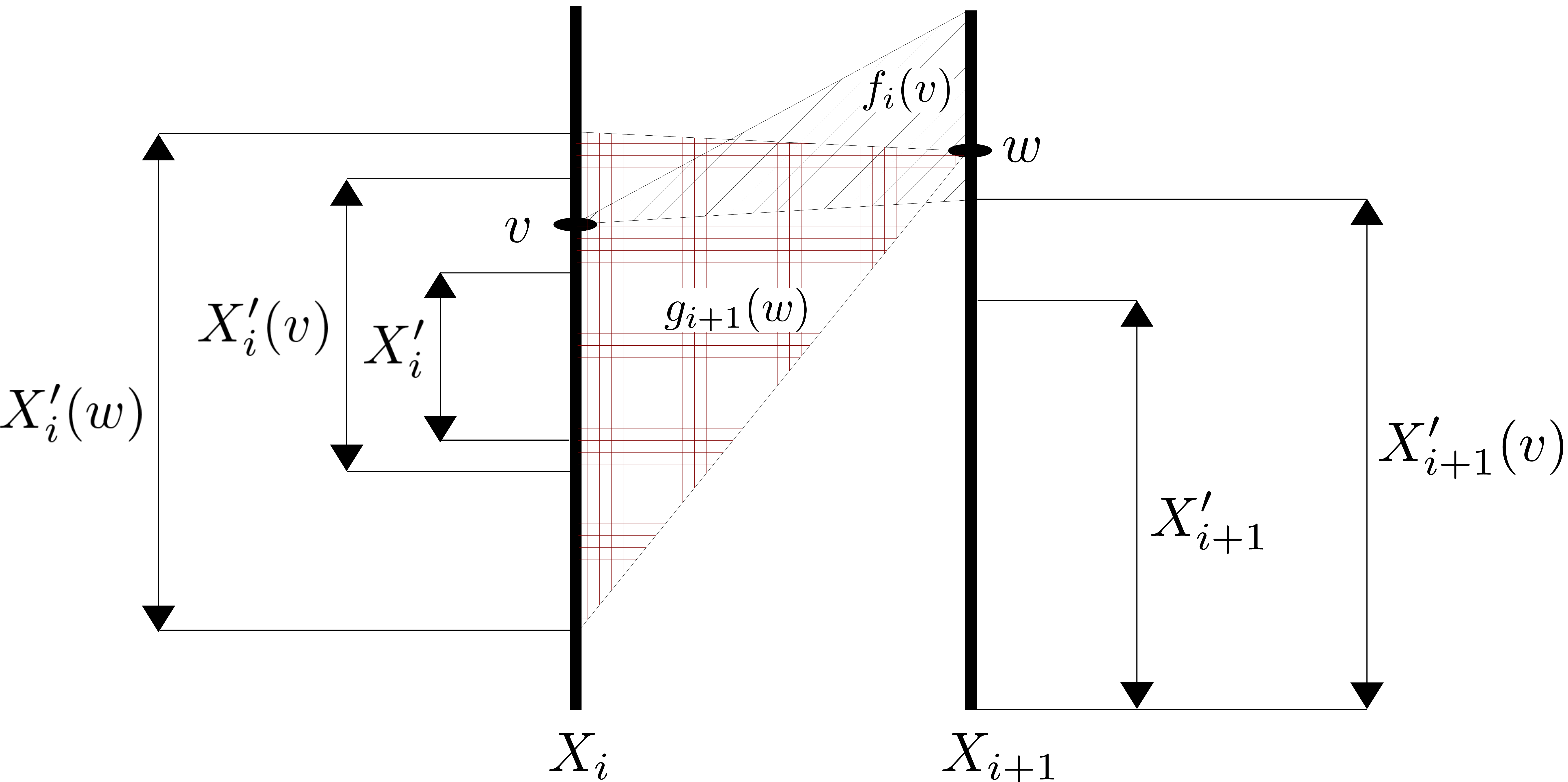}
\caption{Situation in~\eqref{eq:C+y-induction}. A summand from $\sum_{v\in
X_i^{\ref{conc:C+y-avoid}}}f_i(v)$ (corresponding edges hatched), and a summand from $\sum_{w\in
X_{i+1}\setminus
X_{i+1}'}g_{i+1}(w)$. 
Thus the former sum counts the number of edges $vw$ such that $v\in X_i^c$ and $w\in X_{i+1}\setminus X_{i+1}'(w)$. For each such pair $vw$ we have that $X_i'(v)\subseteq X_i'(w)$, as $w$ must have been removed from $X_{i+1}'$ prior to $v$ being removed from $X_i'$. Hence, the edge $vw$ is counted in $g_{i+1}(w)$ as well. 
Similar counting is used in~\eqref{eq:yel-induction} and in~\eqref{eq:Match-induction1}.}\label{fig:FiGi}
\end{figure}
For
a given $i\in [r-1]$ we have\Referee{(F1)}
\begin{equation}\label{eq:C+y-induction}
|X_i^{\ref{conc:C+y-avoid}}|\cdot \gamma k/2\le \sum_{v\in
X_i^{\ref{conc:C+y-avoid}}}f_i(v)\leBy{Fig~\ref{fig:FiGi}} 
\sum_{w\in
 X_{i+1} \setminus
X_{i+1}'}g_{i+1}(w)\;\overset{\ref{hyp:C+y-bounded}., \eqref{eq:X_i^a}}{<}\;\delta
kn+|X_{i+1}^{\ref{conc:C+y-avoid}}|\cdot \Omega^*k\;,
\end{equation}
as $X_i\setminus X_i'=X_i^{\ref{conc:C+y-deg}}\cup X_i^{\ref{conc:C+y-avoid}}$,
for $i=2,\ldots,r$. Using~\eqref{eq:C+y-induction}
for $j=0,\ldots,r-1$, we inductively deduce that
\begin{equation}\label{eq:Indk}
|X_{r-j}^{\ref{conc:C+y-avoid}}|\frac{\gamma
}2\le \sum_{i=0}^{j-1}\left(\frac{2\Omega^*}\gamma\right)^i\delta
n\;.
\end{equation}
(The left-hand side is zero for $j=0$.) The
bound~\eqref{eq:Indk} for $j=r-1$ gives
\begin{equation}\label{eq:B73}
|X_{1}^{\ref{conc:C+y-avoid}}|\le
\frac2\gamma\cdot\sum_{i=0}^{r-2}\left(\frac{2\Omega^*}\gamma\right)^i\delta
n\le\frac{2(2\Omega^*)^{r-1}}{\gamma^r}\delta n\;.
\end{equation}
Therefore,
 \begin{equation}\label{eq:e(YX_1^c,X_0)}
 e(X_0,Y\cup X_1^{\ref{conc:C+y-avoid}})\leq |Y\cup
 X_1^{\ref{conc:C+y-avoid}}|\cdot
 \Omega^*k \overset{\eqref{eq:B73},  
 \ref{hyp:Ysmall}.}{\le}\frac {\eta kn}{4}+\left(\frac
 {2\Omega^*}{\gamma}\right)^r\delta kn\;.
\end{equation}
For any vertex  $u\in X_0\setminus X_0'$ we
have $h_0(u)<\sqrt{\Omega^{**}}k$,
 and at the same time by Hypothesis~\ref{hyp:C+y-large}.\ we have
$\deg(u,X_1)\geq \Omega^{**}k$. So,
\begin{equation}\label{eq:h(X_0-X_0')}
\sum_{u\in
X_0\setminus X_0'}h_0(u)\le \frac{e(X_0,X_1)}{\sqrt{\Omega^{**}}}\;.
\end{equation} We have
\begin{align}\label{eq:contemplate}
e(X_0',X_1')&\ge e(X_0,X_1)-e(X_0,Y\cup
X_1^{\ref{conc:C+y-avoid}})-\sum_{u\in X_0\setminus
X_0'}h_0(u)-\sum_{v\in X_1^{\ref{conc:C+y-deg}}}g_1(v)\;.
\end{align}
(Consult Figure~\ref{fig:contemplate}.)
\begin{figure}[t]
	\centering 
	\includegraphics{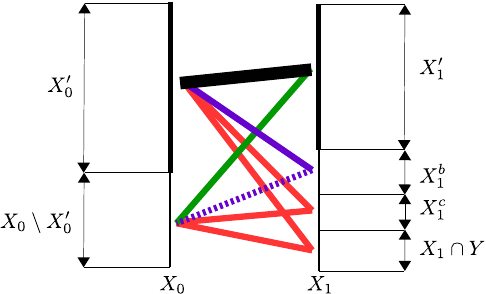}
\caption{The terms in~\eqref{eq:contemplate}. The term $e(X_0,Y\cup
X_1^{\ref{conc:C+y-avoid}})$ are shown in red, some edges of the term $\sum_{u\in X_0\setminus
X_0'}h_0(u)$ are shown in green (note that we undercount here, as the summands $h_0(u)$ reflect preliminary situations in the set $X_1'$). It is clear that each edge between $X_1^{\ref{conc:C+y-deg}}$ and $X_0'$ (solid blue) is counted in $\sum_{v\in X_1^{\ref{conc:C+y-deg}}}g_1(v)$. Consider now an edge $xv$, $x\in X_0\setminus X_0'$, $v\in X_1^{\ref{conc:C+y-deg}}$ (dashed blue). Suppose first that $x$ was removed from $X_0'$ before $v$ was put in $X_1^{\ref{conc:C+y-deg}}$. Then the edge $xv$ was counted in $\sum_{u\in X_0\setminus
X_0'}h_0(u)$. Suppose next that $v$ was put in $X_1^{\ref{conc:C+y-deg}}$ before $x$ was removed from $X_0'$. Then $xv$ was counted in $\sum_{v\in X_1^{\ref{conc:C+y-deg}}}g_1(v)$.}
 \label{fig:contemplate}
\end{figure}
 Therefore,
\begin{align*}
e(X_0',X_1')&\ge e(X_0,X_1)-e(X_0,Y\cup
X_1^{\ref{conc:C+y-avoid}})-\sum_{u\in X_0\setminus
X_0'}h_0(u)-\sum_{v\in X_1^{\ref{conc:C+y-deg}}}g_1(v)\\
\JUSTIFY{by \eqref{eq:X_i^a},
\eqref{eq:e(YX_1^c,X_0)}, \eqref{eq:h(X_0-X_0')}}
&\ge
e(X_0,X_1)-\frac {\eta kn}{4}-\left(\frac {2\Omega^*}\gamma\right)^r \delta kn-
\frac{e(X_0,X_1)}{\sqrt{\Omega^{**}}}-\delta kn\\
\JUSTIFY{by \ref{hyp:C+y-edges}.}&\ge  \eta kn/2\;,
\end{align*} \Referee{(12)}
proving Property~\eqref{conc:C+u-edges}.
\end{proof}

\begin{lemma}\label{lem:clean-C+black}
Let $\delta,\eta,\Omega^*,\Omega^{**},h>0$, let $G$ be  an $n$-vertex graph,
let $X_0, X_1, Y\subset V(G)$, and let $\mathcal C$ be a family of subsets of
$V(G)$  such that
\begin{enumerate}
\item\label{hypfburg}
$20(\delta+\frac2{\sqrt{\Omega^{**}}})<\eta$,
  \item \label{hyp:C+b-edges} $2 kn \geq e(X_0,X_1)\geq \eta kn$,
  \item \label{hyp:C+b-large} $\mindeg(X_0,X_1)\ge \Omega^{**}k$, 
  \item   \label{hyp:C+b-bounded} $\maxdeg(X_1)\le  \Omega^* k$,
  \item $|Y|<\eta n/(4\Omega^*)$, and\label{hypfburg5}
  \item $10h|\mathcal C|\Omega^*<\eta n$.\label{hypfburg6}
\end{enumerate} 
Then  there are sets $X_0'\subseteq X_0$ and $X_1'\subseteq X_1\setminus Y$ such that \begin{enumerate}[(a)]
   \item \label{conc:C+b-large} $\mindeg(X_0',X_1')\ge
   \sqrt{\Omega^{**}}k$,
  \item  \label{conc:C+b-deg} $\mindeg(X_1',X_0')\ge\delta k$, 
  \item \label{conc:C+b-cluster}for all $C\in \mathcal C$, either $X_1'\cap
  C=\emptyset$, or $|X_1'\cap C|\ge h$, and 
  \item \label{conc:C+b-edges}$e(X_0',X_1')\ge \eta kn/2$.
\end{enumerate}
\end{lemma}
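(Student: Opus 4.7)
The plan is a standard sequential cleaning, analogous to (and simpler than) that in Lemma~\ref{lem:clean-C+yellow}. Initialize $X_0':=X_0$ and $X_1':=X_1\setminus Y$, and then iterate the following operations in any order, as long as at least one applies:
\begin{itemize}
\item if some $v\in X_0'$ has $\deg(v,X_1')<\sqrt{\Omega^{**}}k$, remove $v$ from $X_0'$ (violation of~(a));
\item if some $v\in X_1'$ has $\deg(v,X_0')<\delta k$, remove $v$ from $X_1'$ (violation of~(b));
\item if some $C\in\mathcal C$ satisfies $0<|X_1'\cap C|<h$, remove $X_1'\cap C$ from $X_1'$ (violation of~(c)).
\end{itemize}
Once the procedure stops, (a), (b), (c) and the containment $X_1'\subseteq X_1\sm Y$ are immediate; only~(d) requires a calculation.

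For~(d) I would use amortized edge counting. Write $R_0:=X_0\sm X_0'$ and $R_1:=(X_1\sm Y)\sm X_1'$. Each edge in $e(X_0,X_1)$ that is not in $e(X_0',X_1')$ is either incident with $Y\cap X_1$, or is destroyed when one of its endpoints is removed while the other is still present. Bounding by the degree at the moment of removal, one obtains
\begin{equation*}
e(X_0',X_1')\ge e(X_0,X_1)-|Y|\Omega^*k-\sum_{v\in R_0}\sqrt{\Omega^{**}}k -\sum_{\substack{v\in R_1\\\text{removed for (b)}}}\!\!\delta k-\sum_{\substack{v\in R_1\\\text{removed for (c)}}}\!\!\Omega^*k\,.
\end{equation*}
Now I would bound each term. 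The term $|Y|\Omega^*k$ is at most $\eta kn/4$ by hypothesis~\ref{hypfburg5}. For the $R_0$ term, hypothesis~\ref{hyp:C+b-large} combined with~\ref{hyp:C+b-edges} gives $|X_0|\le 2n/\Omega^{**}$, so $|R_0|\sqrt{\Omega^{**}}k\le 2nk/\sqrt{\Omega^{**}}$, and by hypothesis~\ref{hypfburg} this is at most $\eta kn/10$. The (b)-sum is bounded trivially by $n\delta k\le \eta kn/20$, again using~\ref{hypfburg}. Finally, the crucial observation for the (c)-sum is that each $C\in\mathcal C$ can trigger the third bullet at most once (because once $X_1'\cap C$ drops below $h$ and is emptied, it stays empty), so the total number of vertices removed for~(c) is at most $h|\mathcal C|$; hypothesis~\ref{hypfburg6} then makes this contribution at most $\eta kn/10$. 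Summing gives a total loss below $\eta kn/2$, and together with~\ref{hyp:C+b-edges} this yields~(d).

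The main thing to be careful about is bookkeeping the order of removals so that each lost edge is charged exactly once to the correct side, and verifying that the loop really terminates (which is clear since $|X_0'|+|X_1'|$ strictly decreases at every step). Everything else is a direct constant-chasing argument against the gap inequality in hypothesis~\ref{hypfburg}.
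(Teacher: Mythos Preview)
Your proposal is correct and follows essentially the same sequential cleaning approach as the paper. The only cosmetic difference is in bounding the $R_0$ contribution: you deduce $|X_0|\le 2n/\Omega^{**}$ from hypotheses~\ref{hyp:C+b-edges} and~\ref{hyp:C+b-large} and multiply by $\sqrt{\Omega^{**}}k$, whereas the paper uses $\sum_{v\in X_0\setminus X_0'}f(v)\le e(X_0,X_1)/\sqrt{\Omega^{**}}$ directly; both yield the same $2kn/\sqrt{\Omega^{**}}$ bound.
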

\begin{proof}
Set $X_0':=X_0$ and $X_1':=X_1\setminus Y$. Discard sequentially from $X_0'$
any vertex violating
Property~(\ref{conc:C+b-large}). We discard from $X_1'$ any vertex
violating Property~(\ref{conc:C+b-deg}). Last, we discard from $X_1'$ all the vertices lying in any set $C\in
\mathcal C$ violating~(\ref{conc:C+b-cluster}). The deletions from $X_0'$, or $X_1'$ can take turns in an arbitrary order until no more are possible.
When the process ends, we verify Property~(\ref{conc:C+b-edges}) by bounding the
number of edges in $e(X_0,X_1)$ incident with $X_0\setminus X_0'$ or with
$X_1\setminus X_1'$. Given Assumption~\ref{hyp:C+b-edges}, and since
by Assumptions~\ref{hyp:C+b-bounded} and~\ref{hypfburg5} there are
at most $\frac14\eta kn$ edges incident with $Y\cap X_1$
it suffices to prove that
\begin{equation}\label{eq:ItSuff}
e(X_0, X_1)- e(X_0', X_1')- e(Y\cap X_1, X_0)<\frac{\eta
kn}4\;.
\end{equation}

Denote by $X_1^{\ref{conc:C+b-deg}}$ the set of vertices in $X_1\setminus
(Y\cup X_1')$ that violated Property~(\ref{conc:C+b-deg}), and  by
$X_1^{\ref{conc:C+b-cluster}}$ the set of vertices in $X_1\setminus (Y\cup X_1')$ that violated
Property~(\ref{conc:C+b-cluster}). Note that for each $C\in\mathcal C$, we have $|X_1^{\ref{conc:C+b-cluster}}\cap C|<h$, and thus
\begin{equation}\label{eq:Xch}
|X_1^{\ref{conc:C+b-cluster}}|\le h |\mathcal C|\;.
\end{equation}
For a vertex $v\in X_1\setminus (Y\cup X_1')$, let $g(v)$
denote\Referee{(F2)} $\deg(v,X_0'(v))$, where $X_0'(v)$ denotes the set $X_0'$ just before $v$ is removed from
$X_1'$. Analogously we define $f(v)$, for $v\in X_0\setminus X_0'$, as
$\deg(v,X_1'(v))$ where the set $X_1'(v)$ denotes the set $X_1'$ just before~$v$ is removed from~$X_1'$. We have $\sum_{v\in X_1^{\ref{conc:C+b-deg}}}g(v)<\delta kn$, \Referee{(13)}\Referee{(14)}
\begin{align*}
\sum_{v\in X_1^{\ref{conc:C+b-cluster}}}g(v)\leBy{\ref{hyp:C+b-bounded}.}  |X_1^{\ref{conc:C+b-cluster}}| \Omega^* k\leByRef{eq:Xch}
h|\mathcal C|\cdot  \Omega^* k\text{, and}\\
\sum_{v\in X_0\setminus X_0'}f(v)\leBy{\ref{hyp:C+b-large}.}
\frac{e(X_0,X_1)}{\sqrt{\Omega^{**}}}\leBy{\ref{hyp:C+b-edges}.}
\frac2{\sqrt{\Omega^{**}}}kn\;.
\end{align*}
Thus,
\begin{align*}
e(X_0, X_1)&- e(X_0', X_1')- e(Y\cap X_1, X_0)\\&=\sum_{v\in
X_1^{\ref{conc:C+b-deg}}}g(v)+\sum_{v\in X_1^{\ref{conc:C+b-cluster}}}g(v)+\sum_{v\in X_0\setminus
X_0'}f(v)\\
&<\big(\delta+\frac2{\sqrt{\Omega^{**}}}\big)kn+h|\mathcal
C|\Omega^*k\\
\JUSTIFY{by~\ref{hypfburg}.\ and \ref{hypfburg6}.}&<\frac{\eta kn}4\;.
\end{align*}
establishing~\eqref{eq:ItSuff}.
\end{proof}

The next two lemmas (Lemmas~\ref{lem:clean-yellow} and~\ref{lem:clean-Match})
deal with cleaning outside the set of huge degree vertices $\HugeVertices$.

\begin{lemma}\label{lem:clean-yellow} For all $r, \Omega\in \NN$,
$r\ge 2$ and all $\gamma,\delta,\eta>0$ such that
\begin{equation}\label{eq:condCY}
 \left(\frac{8\Omega}\gamma\right)^r\delta\le\frac\eta{10}
\end{equation}
 the following holds. Suppose there are
vertex sets $Y, X_0, X_1,\ldots,X_r\subset V$, where $V$ is a set of $n$
vertices. Suppose that edge sets $E_1,\ldots,E_r$ are given on $V$.
The expressions $\deg_i$, $\maxdeg_i$,
$\mindeg_i$, and $e_i$ below refer to the edge set $E_i$. Suppose that the following properties are fulfilled
\begin{enumerate}
  \item \label{hyp:yel-Ysmall}$|Y|<\delta n$,
  \item \label{hyp:yel-edges}$e_1(X_0,X_1)\geq \eta kn$,
  \item \label{hyp:yel-deg}for all $i\in [r-1]$ we have $\mindeg_{i+1}( X_i\setminus
  Y,X_{i+1})\ge \gamma k$, 
  \item \label{hyp:yel-bounded} for all $i\in\{0,\ldots,r-1\}$, we have
  $\maxdeg_{i+1}(X_{i})\le \Omega k$, and
  $\maxdeg_{i+1}(X_{i+1})\le \Omega k$.
\end{enumerate}
Then there are sets $X_i'\subseteq X_i\setminus Y$ ($i=0,\ldots,r$) satisfying
the following.
\begin{enumerate}[(a)]
  \item \label{conc:yel-deg}For all $i\in [r]$ and we have 
  $\mindeg_{i}(X_{i}',X_{i-1}')\ge \delta k$,
  \item \label{conc:yel-avoid}for all $i\in [r-1]$ we have
  $\maxdeg_{i+1}(X_i',X_{i+1}\setminus X_{i+1}')<\gamma k/2$,
    \item \label{conc:yel-X0X1}$\mindeg_1(X_0',X'_1)\ge \delta k$, and
    \item \label{conc:yel-edges} $e_1(X'_0,X'_1)\ge \eta kn/2$
\end{enumerate}
\end{lemma}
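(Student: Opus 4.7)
My plan is to follow verbatim the template of the proof of Lemma~\ref{lem:clean-C+yellow}, adapted to the present setting in which there is no huge-degree hypothesis and the edge sets vary with the layer index. Initialize $X_i':=X_i\setminus Y$ for every $i\in\{0,\ldots,r\}$, so that the relation $X_i'\subseteq X_i\setminus Y$ is built in from the start. Then sequentially discard from $X_i'$ any vertex violating Property~\eqref{conc:yel-deg}, \eqref{conc:yel-avoid}, or~\eqref{conc:yel-X0X1} (the last only at $i=0$), until the process terminates. Note that~\eqref{conc:yel-deg} never fires at $i=0$ and~\eqref{conc:yel-avoid} never fires at $i\in\{0,r\}$. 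At termination, Properties~\eqref{conc:yel-deg}--\eqref{conc:yel-X0X1} hold by construction, so only~\eqref{conc:yel-edges} remains to be checked.

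For each discarded $v\in X_i\setminus(X_i'\cup Y)$ I introduce the quantities $f_i(v)$, $g_i(v)$, $h_i(v)$ verbatim as in the proof of Lemma~\ref{lem:clean-C+yellow} (now interpreted with respect to the appropriate edge set $E_i$ or $E_{i+1}$), and let $X_i^{(\mathrm a)}$, $X_i^{(\mathrm b)}$, $X_0^{(\mathrm c)}$ denote the vertices discarded because of the respective property. Exactly as there, one has $\sum_{v\in X_i^{(\mathrm a)}}g_i(v)<\delta kn$ for each $i\in[r]$ and $\sum_{v\in X_0^{(\mathrm c)}}h_0(v)<\delta kn$, together with the key amortised bound
\[
\tfrac{\gamma k}{2}|X_i^{(\mathrm b)}|\le \sum_{v\in X_i^{(\mathrm b)}}f_i(v)\le \sum_{w\in X_{i+1}\setminus X_{i+1}'}g_{i+1}(w)\qquad(i\in[r-1]),
\]
whose second inequality uses only the fact that, for every edge counted on the left, the endpoint $w$ was discarded before $v$ and hence $v\in X_i'(w)$. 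Splitting the right-hand sum over $Y\cap X_{i+1}$, $X_{i+1}^{(\mathrm a)}$, and $X_{i+1}^{(\mathrm b)}$ and using Hypotheses~\ref{hyp:yel-Ysmall} and~\ref{hyp:yel-bounded} gives the recurrence $|X_i^{(\mathrm b)}|\gamma/2 \le 2\delta\Omega n + |X_{i+1}^{(\mathrm b)}|\Omega$. Unrolling from the base case $|X_r^{(\mathrm b)}|=0$ and combining with~\eqref{eq:condCY}, I obtain $|X_1^{(\mathrm b)}|\Omega k\le\tfrac{\eta}{10}kn$.

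To finish, I observe that every edge of $E_1\cap(X_0\times X_1)$ lost when passing from $(X_0,X_1)$ to $(X_0',X_1')$ either has an endpoint in $Y$, or is charged (depending on which of its endpoints was removed first) to an $h_0$-summand for some $v\in X_0^{(\mathrm c)}$ or to a $g_1$-summand for some $w\in X_1^{(\mathrm a)}\cup X_1^{(\mathrm b)}$. Assembling everything,
\[
e_1(X_0,X_1)-e_1(X_0',X_1')\le 2|Y|\Omega k + 2\delta kn + |X_1^{(\mathrm b)}|\Omega k\le (2\delta\Omega+2\delta+\tfrac{\eta}{10})kn,
\]
which is at most $\eta kn/2$ thanks to~\eqref{eq:condCY}, establishing~\eqref{conc:yel-edges}. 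The only delicate step is tuning the constants in the recurrence for $|X_i^{(\mathrm b)}|$ tightly enough so that the amplification factor $(2\Omega/\gamma)^r$ accumulated across the $r$ layers is absorbed by the slack $(8\Omega/\gamma)^r\delta\le\eta/10$ supplied by~\eqref{eq:condCY}; the extra factor of $4$ per layer (the $8$ in the hypothesis versus the $2$ in the recurrence) is precisely what makes this work for every $r\ge 2$.
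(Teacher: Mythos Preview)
Your proof is correct and follows essentially the same approach as the paper's own proof: initialize $X_i':=X_i\setminus Y$, sequentially delete violators of \eqref{conc:yel-deg}--\eqref{conc:yel-X0X1}, and bound the $E_1$-edges lost via the amortised quantities $f_i,g_i,h_i$ and the recurrence for $|X_i^{(\mathrm b)}|$. The only cosmetic difference is that you explicitly carry the $Y$-contribution ($\delta\Omega n$) through the recurrence, whereas the paper absorbs it directly into the slack of the inductive bound $|X_{r-j}^{(\mathrm b)}|\le(8\Omega/\gamma)^j\delta n$; both ways the factor $8$ in \eqref{eq:condCY} comfortably swallows the extra $\Omega$.
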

\begin{proof}
We proceed similarly as in the proof of Lemma~\ref{lem:clean-C+yellow}.
Set $X_i':=X_i\setminus Y$ for each $i=0,\ldots,r$.
Discard sequentially from $X_i'$ any vertex that violates
Property~(\ref{conc:yel-deg}) or~(\ref{conc:yel-avoid}), or~(\ref{conc:yel-X0X1}).
When the procedure terminates, we certainly have
that~(\ref{conc:yel-deg})--(\ref{conc:yel-X0X1}) hold. We then show that
Property~(\ref{conc:yel-edges}) holds by bounding the number of edges from
$e_1(X_0,X_1)$ that are incident with $X_0\setminus X_0'$ or with $X_1\setminus
X_1'$. For $i\in\{0,\ldots,r\}$ and
for $v\in X_i\setminus X_i'$ we write\Referee{(F2)}
\begin{align*}
f_{i+1}(v)&:=\deg_{i+1}(v,X_{i+1}\setminus X_{i+1}'(v))\;,\\
g_{i}(v)&:=\deg_i(v,X_{i-1}'(v))\;\mbox{, and}\\
h(v)&:=\deg_1(v,X_{1}'(v))\;,
\end{align*}
where the sets $X_1'(v), X_{i-1}'(v)$ and $X_{i+1}'(v)$
above refer to the sets $X_1'$, $X_{i-1}'$, and $X_{i+1}'$, respectively, at the moment\footnote{if $v\in Y$ then this
moment is the zero-th step} just before~$v$ is removed from $X_i'$  (we do not define $f_{i+1}(v)$ for
$i=r$ and $g_i(v)$ for $i=0$).

Let $X^{\ref{conc:yel-deg}}_i\subset X_i$, $X^{\ref{conc:yel-avoid}}_i\subset
X_i$ for $i\in [r-1]$ be the sets of vertices
removed from $X'_i$ because of Property~(\ref{conc:yel-deg})
and~(\ref{conc:yel-avoid}),\Referee{(15)} respectively. Set
$X^{\ref{conc:yel-deg}}_r:=X_r\setminus X_r'$ and
$X^{\ref{conc:yel-X0X1}}_0:=X_0\setminus X'_0$.
We have for each $i\in[r]$,
\begin{align}\label{eq:HezkejVodotrysk}
\sum_{v\in X_i^{\ref{conc:yel-deg}}}g_i(v)&< \delta kn\;.
\end{align}
Also, note that we have
\begin{equation}\label{eq:X0X1}
\sum_{v\in X_0^{\ref{conc:yel-X0X1}}}h(v)\le \delta kn\;.
\end{equation}

We set $X_r^{\ref{conc:yel-avoid}}:=\emptyset$. For a given $i\in [r-1]$ we have
\begin{align}\nonumber
|X_i^{\ref{conc:yel-avoid}}|\cdot \frac{\gamma k}2&\le \sum_{v\in
X_i^{\ref{conc:yel-avoid}}}f_{i+1}(v)\\
\nonumber
\JUSTIFY{see Figure~\ref{fig:FiGi}}&\le \sum_{v\in
X_{i+1}\setminus
X_{i+1}'}g_{i+1}(v)\\ 
\JUSTIFY{by~\ref{hyp:yel-bounded}.,
\eqref{eq:HezkejVodotrysk}}&\leq \delta
kn+|X_{i+1}^{\ref{conc:yel-avoid}}| \Omega k\;,\label{eq:yel-induction}
\end{align}
as $X_i\setminus X_i'\subset X_i^{\ref{conc:yel-deg}}\cup
X_i^{\ref{conc:yel-avoid}}\cup Y$, for $i=2,\ldots,r$. Using~\eqref{eq:yel-induction}, we deduce inductively that
\begin{equation}\label{eq:Indk1}
\left|X_{r-j}^{\ref{conc:yel-avoid}}\right|\le \left(\frac{8\Omega}\gamma\right)^j\delta
n\;,
\end{equation}
for $j=0,\ldots,r-1$.
(The left-hand side is zero for $j=0$.)
Therefore,
 \begin{align*}
e_1(X_0',X_1')&\geq
 e_1(X_0,X_1)-(|Y|+|X_1^{\ref{conc:yel-avoid}}|)\Omega k-\sum_{v\in
 X_1^{\ref{conc:yel-deg}}}g_1(v)-\sum_{v\in X^{\ref{conc:yel-X0X1}}_0}h(v)\\
 \JUSTIFY{by~\ref{hyp:yel-edges},
 \eqref{eq:Indk1},~\eqref{eq:HezkejVodotrysk},~\eqref{eq:X0X1}}&\ge \eta
 kn-\left(\frac{8\Omega}{\gamma}\right)^r\delta kn-2\delta kn\\
 &\ge \frac{\eta}2kn\;,
\end{align*}
establishing Property~(\ref{conc:yel-edges}).
\end{proof}

\begin{lemma}\label{lem:clean-Match}
For all $r, \Omega\in \NN$, $r\ge 2$
and all $\gamma,\eta,\delta,\epsilon,\mu,d>0$ with
\begin{equation}\label{eq:condCYmatch}
\text{ $20\epsilon<d$ \ and }\  \left(\frac{8\Omega}\gamma\right)^r\delta\le\frac\eta{30}
\end{equation}
the following holds. Suppose there are
vertex sets $Y, X_0, X_1,\ldots,X_r\subset V$, where $V$ is a set of
$n$ vertices. Let $P^{(1)}_i,\ldots,P^{(p)}_i$ partition $X_i$, for $i=0,1$.
Suppose that edge sets $E_1,E_2,E_3,\ldots,E_r$ are given on $V$.
The expressions $\deg_i$, $\maxdeg_i$, and $\mindeg_i$ below refer to the edge set $E_i$. Suppose that 
\begin{enumerate}
  \item \label{hyp:Match-Ysmall}$|Y|<\delta n$,
  \item \label{hyp:Match-edges}$|X_1|\geq \eta n$,
  \item \label{hyp:Match-deg}for all $i\in [r-1]$ we have
  $\mindeg_{i+1}(X_i\setminus Y,X_{i+1})\ge \gamma k$,
  \item \label{hyp:Match-reg} the family
  $\left\{(P^{(j)}_0,P^{(j)}_1)\right\}_{j\in[p]}$ is an $(\epsilon,d,\mu k)$-\semiregular matching with respect to the edge set $E_1$, and
  \item \label{hyp:Match-bounded}for all $i\in\{0,\ldots,r-1\}$,
  $\maxdeg_{i+1}(X_{i+1})\le \Omega k$, and (when $i\not=r$) $\maxdeg_{i+1}(X_i)\le \Omega k$.
\end{enumerate}
Then there exists a non-empty family $\mathcal Y\subset [p]$\Referee{(16)} and a family $\{(Q_0^{(j)},Q_1^{(j)})\}_{j\in\mathcal Y}$
of vertex-disjoint $(4\epsilon,\frac d4)$-super-regular pairs with respect to $E_1$, with
\begin{enumerate}[(a)]
 \item \label{conc:Match-superreg}  $|Q_0^{(j)}|,|Q_1^{(j)}|\ge \frac{\mu k}2$ for each  $j\in\mathcal Y$,
\end{enumerate}
 and sets $X_0':=\bigcup Q_0^{(j)}\subset X_0\setminus Y$, $X_1':=\bigcup Q_1^{(j)}\subset X_1\setminus Y$, $X_i'\subseteq X_i\setminus Y$ ($i=2,\ldots,r$) such that
\begin{enumerate}[(a)]
  \setcounter{enumi}{1}
  \item \label{conc:Match-deg}for all $i\in [r-1]$ we have 
  $\mindeg_{i+1}(X_{i+1}',X_{i}')\ge \delta k$, and
  \item \label{conc:Match-avoid}for all $i\in [r-1]$, we have
  $\maxdeg_{i+1}(X_i',X_{i+1}\setminus X_{i+1}')<\gamma k/2$.
\end{enumerate}
\end{lemma}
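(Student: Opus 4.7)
The plan is to adapt the sequential-cleaning template of Lemma~\ref{lem:clean-yellow}, preceded by an initial super-regularization of the \semiregular matching and followed by a super-regular extraction.

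First I would extract, for each $j\in[p]$, the super-regular subpair $A_j\subseteq P_0^{(j)}\setminus Y$, $B_j\subseteq P_1^{(j)}\setminus Y$ obtained by removing $Y$ together with the at most $\epsilon|P_i^{(j)}|$ vertices per side whose $E_1$-degree into the opposite side is less than $(d-\epsilon)$ times its size. By Fact~\ref{fact:BigSubpairsInRegularPairs} and hypothesis~\ref{hyp:Match-reg}, the pair $(A_j,B_j)$ is $(2\epsilon,d-2\epsilon)$-super-regular whenever both sides retain size at least $\epsilon\mu k$. I then initialize $X_0':=\bigcup_j A_j$, $X_1':=\bigcup_j B_j$, and $X_i':=X_i\setminus Y$ for $i\ge 2$, and run the sequential cleaning of Lemma~\ref{lem:clean-yellow}: iteratively discard from each $X_i'$ the vertices violating \eqref{conc:Match-deg} or \eqref{conc:Match-avoid}, but with thresholds strengthened to $2\delta k$ and $\gamma k/4$ in order to leave slack for the final step. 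The amortized counting from the proof of Lemma~\ref{lem:clean-yellow} transfers verbatim (using the functions $f_i,g_i,h$, the inductive bound~\eqref{eq:Indk1}, and hypothesis~\ref{hyp:Match-bounded} for the max-degree control), yielding $|X_i\setminus X_i'|\le(8\Omega/\gamma)^{r-i}\delta n$ for each $i$; in particular $|X_1\setminus X_1'|\le\eta n/30$ by~\eqref{eq:condCYmatch}.

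Next I carve out the super-regular pairs. For each $j$, set $Q_0^{(j)}:=A_j\cap X_0'$ and $Q_1^{(j)}:=B_j\cap X_1'$, and put $j$ in $\mathcal Y$ exactly when $\min\{|Q_0^{(j)}|,|Q_1^{(j)}|\}\ge\max\{\mu k/2,|A_j|/2\}$. For $j\in\mathcal Y$, Fact~\ref{fact:BigSubpairsInRegularPairs} applied with inclusion ratio $\alpha\ge 1/2$ yields that $(Q_0^{(j)},Q_1^{(j)})$ is $(4\epsilon,d-4\epsilon)$-regular, and a short degree computation using the super-regular min-degree of $(A_j,B_j)$ upgrades this to $(4\epsilon,d/4)$-super-regularity, proving~\eqref{conc:Match-superreg}. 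Non-emptiness of $\mathcal Y$ follows by contradiction: were it empty, at least half of the $p$ pairs would fail on, say, the $|Q_1^{(j)}|$ side, contributing $|X_1\setminus X_1'|\ge p\mu k/4\ge|X_1|/4\ge\eta n/4$ and contradicting the stage-2 bound.

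The main obstacle is that redefining the final $X_0':=\bigcup_{j\in\mathcal Y}Q_0^{(j)}$ and $X_1':=\bigcup_{j\in\mathcal Y}Q_1^{(j)}$ strictly shrinks these sets, so \eqref{conc:Match-deg} with $i=1$ and \eqref{conc:Match-avoid} with $i=0$ require re-verification. The $\delta k$- resp.~$\gamma k/4$-slack introduced in the sequential cleaning absorbs the additional per-vertex losses, provided one shows that no vertex of $X_i'$ has more than this slack of neighbours in $\bigcup_{j\notin\mathcal Y}Q_i^{(j)}$. This is the principal technical burden and is handled by a supplementary cleanup pass on $X_2',\ldots,X_r'$ (leaving $X_0',X_1'$ untouched), whose losses are controlled by the same inductive counting as in the sequential-cleaning stage.
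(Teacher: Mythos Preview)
Your proposal has a genuine gap at the super-regularity step. The assertion that ``a short degree computation using the super-regular min-degree of $(A_j,B_j)$'' upgrades $(Q_0^{(j)},Q_1^{(j)})$ to $(4\epsilon,d/4)$-super-regularity fails: from $\mindeg_1(A_j,B_j)\ge(d-2\epsilon)|B_j|$ one only gets $\mindeg_1(Q_0^{(j)},Q_1^{(j)})\ge(d-2\epsilon)|B_j|-|B_j\setminus Q_1^{(j)}|$, and your $\mathcal Y$-criterion permits $|B_j\setminus Q_1^{(j)}|$ as large as roughly $|B_j|/2$, so for small $d$ this bound is negative. Super-regularity is simply not inherited by half-sized subpairs of a super-regular pair. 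A related problem afflicts the supplementary cleanup: removing a set $R_2\subseteq X_2'$ to restore \eqref{conc:Match-deg} for $i=1$ may cause vertices of the (untouched) $X_1'$ to acquire $E_2$-degree exceeding your $\gamma k/4$ slack into $R_2$, breaking \eqref{conc:Match-avoid} for $i=1$; you have no per-vertex control on $\deg_2(v,R_2)$, and since you insist on leaving $X_1'$ fixed there is no mechanism to repair this.

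The paper avoids both difficulties by running all cleaning conditions \emph{simultaneously}: alongside the tests for \eqref{conc:Match-deg} and \eqref{conc:Match-avoid}, one also removes from $X_i'\cap P_i^{(j)}$ (for $i\in\{0,1\}$) any vertex whose current $E_1$-degree into $X_{1-i}'\cap P_{1-i}^{(j)}$ drops below $\tfrac d4|P_{1-i}^{(j)}|$, and one discards an entire pair $j$ whenever $Y\cup X_1^c$ occupies a quarter of either side. The key observation (Claim~\ref{cl:useReg} in the paper) is that for surviving pairs, regularity limits the vertices ever removable by the super-regular test to the at most $2\epsilon|P_i^{(j)}|$ atypical ones per side --- a vertex with typical degree into the opposite side never triggers the test as long as that side retains three quarters of its mass. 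Because this runs concurrently with the other tests, the terminal $X_0',X_1'$ already satisfy all of (a)--(c), and no post-hoc redefinition or supplementary pass is needed.
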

\begin{proof}
Initially, set $\mathcal{J}:=\emptyset$ and $X_i':=X_i\setminus Y$ for each $i=0,\ldots,r$. Discard sequentially from
$X_i'$ any vertex that violates any of the Properties~(\ref{conc:Match-deg}) or~(\ref{conc:Match-avoid}).
We would like to keep track of these vertices and therefore we call $X^b_i, X^c_i\subset X_i$ the sets
of vertices removed from $X'_i$ because of
Property~(\ref{conc:Match-deg}), and~(\ref{conc:Match-avoid}),
respectively.
Further, for $i=0,1$ and for $j\in[p]$
remove any vertex $v\in X'_i\cap P^{(j)}_i$ from $X'_i$ if
\begin{equation}\label{eq:pocitac}
\deg_1(v,X'_{1-i}\cap P^{(j)}_{1-i})\le \frac{d|P_{1-i}^{(j)}|}4\;. 
\end{equation}
For $i=0,1$, let $X_i^a$ be the set of those vertices of $X_i$ that were removed because of~\eqref{eq:pocitac}.

If for some $j\in[p]$ we have $|P^{(j)}_0\cap Y|>\frac{|P^{(j)}_0|}4$ or $|P^{(j)}_1\cap (Y\cup X^c_1)|>\frac{|P^{(j)}_1|}4$ we remove simultaneously the sets $P^{(j)}_0$ and $P^{(j)}_1$ entirely from $X_0'$ and $X_1'$, i.e., we set $X'_0:=X'_0\setminus P^{(j)}_0$ and $X'_1:=X'_1\setminus P^{(j)}_1$. We also add the index $j$ to the set $\mathcal{J}$ in this case.

When the procedure terminates define $\mathcal Y:=[p]\setminus \mathcal J$, and for $j\in\mathcal Y$ set $(Q_0^{(j)},Q_1^{(j)}):=(P_0^{(j)}\cap X_0',P_1^{(j)}\cap X_1')$. The sets $X_i'$ obviously satisfy Properties (\ref{conc:Match-deg})--(\ref{conc:Match-avoid}). We now turn to verifying Property~(\ref{conc:Match-superreg}). This relies on the following claim.
\begin{claim}\label{cl:useReg}
If $j\in[p]\sm \mathcal J$ then
$|P^{(j)}_0\cap X^a_0|\le\frac{|P^{(j)}_0|}4$ and $|P^{(j)}_1\cap X^a_1|\le\frac{|P^{(j)}_1|}4$.
\end{claim}
\begin{proof}[Proof of Claim~\ref{cl:useReg}]
Recall that $E_1$ is the relevant underlying edge set when working with the
pairs $(P_0^{(j)},P_1^{(j)})$.  Also, recall that only vertices from $Y\cup
X^a_0$ were removed from $P_0^{(j)}$ and only vertices from $Y\cup X^a_1\cup
X^c_1$ were removed from $P_1^{(j)}$.

Since $j\notin \mathcal J$, the pair $(P_0^{(j)}\setminus Y,P_1^{(j)}\setminus (Y\cup X_1^c))$ is $2\epsilon$-regular of density at least $0.9d$ by Fact~\ref{fact:BigSubpairsInRegularPairs}. Let 
\begin{align*}
K_0&:=\big\{v\in P_0^{(j)}\setminus Y\::\:\deg_1(v,P_1^{(j)}\setminus (Y\cup X_1^c))<0.8d|P_1^{(j)}\setminus (Y\cup X_1^b)|\big\}\;\mbox{, and}\\
K_1&:=\big\{v\in P_1^{(j)}\setminus (Y\cup X_1^c)\::\:\deg_1(v,P_0^{(j)}\setminus Y )<0.8d|P_0^{(j)}\setminus Y|\big\}\;.
\end{align*}
By Fact~\ref{fact:manyTypicalVertices}, we have $|K_0|\le 2\epsilon
|P_0^{(j)}\setminus Y|\le 0.1d|P_0^{(j)}|$ and $|K_1|\le 0.1 d|P_1^{(j)}|$. In particular, we have
\begin{align}
\begin{split}\label{eq:WO1}
\mindeg_1(P^{(j)}_0\setminus (Y\cup K_0),P^{(j)}_1\setminus (Y\cup X^c_1\cup K_1))&\ge 0.8 d|P_1^{(j)}\setminus (Y\cup X_1^c)| -|K_1|\\
&\ge 0.8 d\cdot 0.75 |P_1^{(j)}|-0.1d|P_1^{(j)}|\\
&>0.25d|P_1^{(j)}|\;,\mbox{and}
\end{split}\\
\begin{split}\label{eq:WO2}
\mindeg_1(P^{(j)}_1\setminus (Y\cup X^c_1\cup K_1),P^{(j)}_0\setminus (Y\cup
K_0))&\ge 0.8 d|P_0^{(j)}\setminus Y| -|K_0|\\&\ge 0.8 d\cdot 0.75
|P_0^{(j)}|-0.1d|P_0^{(j)}|\\
&>0.25d|P_0^{(j)}|\;.
\end{split}
\end{align}
Then~\eqref{eq:WO1} and~\eqref{eq:WO2} allow us to prove that $P^{(j)}_i\cap X^a_i\subset K_i$ for $i=0,1$. Indeed, assume inductively that $P^{(j)}_i\cap X^a_i\subset K_i$ for $i=0,1$ throughout the cleaning process until a certain step. Then~\eqref{eq:WO1} and~\eqref{eq:WO2} assert that no vertex outside of $P^{(j)}_0\setminus (Y\cup K_0)$ or of $P^{(j)}_1\setminus (Y\cup X^c_1\cup K_1)$ can be removed because of~\eqref{eq:pocitac}, proving the induction step.
The claim follows.
\end{proof}
Putting together the definition of $\mathcal J$ (through which one controls the size of $P_i^{(j)}\cap (Y\cup X_i^c)$) and Claim~\ref{cl:useReg} (which controls the size of $P_i^{(j)}\cap X_i^a$) we get for each $j\in\mathcal Y$ and $i=0,1$, $$|Q_i^{(j)}|\ge\frac{|P_i^{(j)}|}2\ge \frac{\mu k}2\;.$$
Therefore, these pairs are $4\epsilon$-regular (cf.\ Fact~\ref{fact:BigSubpairsInRegularPairs}). We get the property of $(4\epsilon,\frac d4)$-super-regularity from the definition of $X_i^c$ (cf.~\eqref{eq:pocitac}). Thus, the pairs $(Q_0^{(j)},Q_1^{(j)})$ are as required for Lemma~\ref{lem:clean-Match} and satisfy its Property~(\ref{conc:Match-superreg}).

\medskip

The only thing we have to prove is that the set $X_1'$ is nonempty. By the definition, for each $j\in \mathcal J$, we either have 
$|P^{(j)}_1| \le 4(|(Y\cup X_1^c)\cap P^{(j)}_1|)$ or $|P^{(j)}_0| \le 4|Y\cap P^{(j)}_0|$. We use that $|P^{(j)}_0|=|P^{(j)}_1|$ to see that
\begin{equation}\label{eq:casak}
\left|\bigcup_{\mathcal J} P^{(j)}_1\right| \leq 4(|Y|+|X_1^c|) \;.
\end{equation}

For $i\in\{1,\ldots,r\}$ and for $v\in X_i\setminus X_i'$ write\Referee{(F3)}
\begin{align*}
f_{i+1}(v)&:=\deg_{i+1}(v,X_{i+1}\setminus X_{i+1}'(v))\;\mbox{, and}\\
g_{i}(v)&:=\deg_i(v,X_{i-1}'(v))\;. 
\end{align*}
where the sets $X_1'(v), X_{i-1}(v)'$ and $X_{i+1}'(v)$
above refer to the sets $x-1'$, $X_{i-1}'$, and $X_{i+1}'$, respectively, at the  moment\footnote{if $v\in Y$ then this
moment is the zero-th step} just before~$v$ is removed from $X_i'$ (we do not define
$f_{i+1}(v)$ for $i=r$).

Observe that for each $i\in\{2,\ldots,r\}$, we have
\begin{align}\label{eq:HezkejVodotrysk1}
\sum_{v\in X_i^b}g_i(v)&< \delta kn\;.
\end{align}

We set $X_r^c:=\emptyset$. For a given $i\in [r-1]$ we have
\begin{align}\nonumber
|X_i^{\ref{conc:Match-avoid}}|\cdot \frac{\gamma k}2&\le \sum_{v\in
X_i^{\ref{conc:Match-avoid}}}f_{i+1}(v)\\
\nonumber
\JUSTIFY{see Figure~\ref{fig:FiGi}} & \le \sum_{v\in
X_{i+1}\setminus 
X_{i+1}'}g_{i+1}(v)
\\ \label{eq:Match-induction1}
\JUSTIFY{by~\ref{hyp:Match-Ysmall}.\ ,\ref{hyp:Match-bounded}.\ ,
\eqref{eq:HezkejVodotrysk1}} & <\delta
kn+|X_{i+1}^{\ref{conc:Match-avoid}}| \Omega k,
\end{align}
as $X_i\setminus X_i'\subset X_i^b\cup X_i^{\ref{conc:Match-avoid}}\cup Y$,
for $i=2,\ldots,r$. Using~\eqref{eq:Match-induction1}, we deduce inductively that
$|X_{r-j}^{\ref{conc:Match-avoid}}|\le \left(\frac{8\Omega}\gamma\right)^j\delta
n$ for $j=1,2,\ldots,r-1$, and in particular that
\begin{equation}\label{eq:VX1B}
|X_{1}^{\ref{conc:Match-avoid}}|\le
\left(\frac{8\Omega}\gamma\right)^{r-1}\delta n\;.
\end{equation}

As $X_1^a=\emptyset$, we obtain that
\begin{align*}
|X'_1 | & =\left|X_1\setminus \left(\bigcup_{j\in\mathcal J}P^{(j)}_1\cup
\bigcup_{j\in\mathcal Y}\big(P^{(j)}_1\cap (Y\cup X_1^a\cup
X_1^c)\big)\right)\right|\\ 
\JUSTIFY{by~\eqref{eq:casak}}& \geq |X_1|-4(|Y|+|X_1^c|) -
\left|\bigcup_{j\in\mathcal Y}\left(P^{(j)}_1\cap X_1^a
\right)\right|\\ 
\JUSTIFY{by~\ref{hyp:Match-Ysmall}., \eqref{eq:condCYmatch}, \eqref{eq:VX1B}} &
\ge |X_1|-\frac{\eta n}2 - \left|\bigcup_{j\in\mathcal
Y}(P^{(j)}_1\cap X_1^a )\right|\\ \JUSTIFY{by Cl~\ref{cl:useReg}}  & \ge
|X_1|-\frac{\eta n}2 - \frac{|X_1|}4\\ \JUSTIFY{by \ref{hyp:Match-edges}.}& >0,
\end{align*} 
as desired.
\end{proof}

\section{Obtaining a configuration}\label{ssec:obtainingConf}
In this section we prove that the structure in the graph
$G\in\LKSgraphs{n}{k}{\eta}$ guaranteed by the main results of~\cite{cite:LKS-cut0,cite:LKS-cut1}
always leads to one of the configurations
$\mathbf{(\diamond1)}$--$\mathbf{(\diamond10)}$, as promised in Lemma~\ref{outerlemma}. We distinguish two cases. When the set  $\HugeVertices$ of
 vertices of huge degree (coming from a sparse decomposition
of $G$) sees many edges, then one of the configurations $\mathbf{(\diamond1)}$--$\mathbf{(\diamond5)}$ must
occur (cf.  Lemma~\ref{lem:ConfWhenCXAXB}). Otherwise, when the edges incident with $\HugeVertices$ can be neglected,
we obtain one of the configurations
$\mathbf{(\diamond6)}$--$\mathbf{(\diamond10)}$ 
(cf. Lemmas~\ref{lem:ConfWhenNOTCXAXB} and~\ref{lem:ConfWhenMatching}). 

Lemmas~\ref{lem:ConfWhenCXAXB},~\ref{lem:ConfWhenNOTCXAXB},
and~\ref{lem:ConfWhenMatching} are stated in the first subsection of this section, and their proofs
occupy
Sections~\ref{sssec:ProofConfWhenCXAXB},~\ref{sssec:ProofConfWhenNOTCXAXB},
and~\ref{sssec:ProofConfWhenMatching}, respectively. The proof of Lemma~\ref{outerlemma} is in Section~\ref{ssec:proofofouterlemma}. 

\subsection{Statements of the auxiliary lemmas}\label{sssec:StatementsofResultsObtaining}

The proof of the main result of this paper, Lemma~\ref{outerlemma}, relies
on Lemmas~\ref{lem:ConfWhenCXAXB},~\ref{lem:ConfWhenNOTCXAXB} and~\ref{lem:ConfWhenMatching} below. For an input graph $G_\PARAMETERPASSING{L}{outerlemma}$ one of these lemmas is applied depending on the majority type of ``good'' edges in $G_\PARAMETERPASSING{L}{outerlemma}$. Observe that {\bf(K1)} of \cite[Lemma~\ref{p1.prop:LKSstruct}]{cite:LKS-cut1} guarantees edges between $\HugeVertices$ and $\XA\cup \XB$, or between $\XA$ and $\XA\cup \XB$ either in $E(\Gexp)$ or in $E(\GD)$. Lemma~\ref{lem:ConfWhenCXAXB} is used if we find edges between $\HugeVertices$ and $\XA\cup \XB$. Lemma~\ref{lem:ConfWhenNOTCXAXB} is used  if we find edges of $E(\Gexp)$  between $\XA$ and $\XA\cup \XB$. The remaining case can be reduced to the setting of Lemma~\ref{lem:ConfWhenMatching}.  Lemma~\ref{lem:ConfWhenMatching} is also used to obtain a configuration if we are in case {\bf(K2)} of \cite[Lemma~\ref{p1.prop:LKSstruct}]{cite:LKS-cut1}.

\begin{lem}\label{lem:ConfWhenCXAXB}
Suppose we are in Setting~\ref{commonsetting}. Assume that
\begin{align}
\label{libelle7.38b}
e_{\Gcapt}(\HugeVertices,\XA\cup\XB)&\ge \frac{\eta^{13}
kn}{10^{28}(\Omega^*)^3}.
\end{align}
Then $G$ contains at least one of the configurations 
\begin{itemize}
\item$\mathbf{(\diamond1)}$,
\item$\mathbf{(\diamond2)}\BUG{\left( \frac{\eta^{39}\Omega^{**}}{4\cdot
10^{90}(\Omega^*)^{11}},\frac{\sqrt[4]{\Omega^{**}}}2,\frac{\eta^{13}\rho^2}{128\cdot
10^{30}\cdot (\Omega^*)^5}\right)}$,
\item$\mathbf{(\diamond3)}\BUG{\left(\frac{\eta^{39}\Omega^{**}}{4\cdot
10^{90}(\Omega^*)^{11}},\frac{\sqrt[4]{\Omega^{**}}}2,\frac\gamma2,\frac{\eta^{13}\gamma^2}{128\cdot
10^{30}\cdot(\Omega^*)^5}\right)}$,
\item$\mathbf{(\diamond4)}\BUG{\left(\frac{\eta^{39}\Omega^{**}}{4\cdot
10^{90}(\Omega^*)^{11}},\frac{\sqrt[4]{\Omega^{**}}}2,\frac\gamma2,\frac{\eta^{13}\gamma^3}{384\cdot
10^{30}(\Omega^*)^6}\right)}$, or 
\item$\mathbf{(\diamond5)}\BUG{\left(\frac{\eta^{39}\Omega^{**}}{4\cdot
10^{90}(\Omega^*)^{11}},
\frac{\sqrt[4]{\Omega^{**}}}2,\frac{\eta^{13}}{128\cdot
10^{30}\cdot
(\Omega^*)^3},\frac{\eta}2,\frac{\eta^{13}}{128\cdot
10^{30}\cdot (\Omega^*)^4}\right)}$.
\end{itemize}
\end{lem}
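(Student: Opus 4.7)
My plan is first to distil Preconfiguration $\mathbf{(\clubsuit)}$ from the hypothesis, and then to split into cases according to the ``flavour'' of the captured neighbourhood of the vertices of the resulting set $L''$, obtaining one of the configurations $\mathbf{(\diamond2)}$--$\mathbf{(\diamond5)}$ in each branch (with $\mathbf{(\diamond1)}$ as a lucky shortcut if a very dense bipartite pair turns up along the way).

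\emph{Initial trim and extraction of $\mathbf{(\clubsuit)}$.} Setting~\ref{commonsetting}\eqref{commonsetting:numbercaptured} gives that all but $2\rho kn$ edges of $G$ are captured, so Lemma~\ref{lem:YAYB} with $\beta=2\rho$ yields $|L_\#|\le 40\rho n/\eta$ and $|(\XA\cup\XB)\setminus\YB|\le 1200\rho n/\eta^2$. By the smallness of $\rho$ in~\eqref{eq:KONST}, restricting the hypothesis~\eqref{libelle7.38b} to edges between $\HugeVertices$ and $((\XA\cup\XB)\cap\YB)\setminus L_\#$ still retains almost all the edges, and by~\eqref{eq:defLsharp} this restricted target sits inside $\largevertices{9\eta/10}{k}{\Gcapt}\setminus\HugeVertices$, which is exactly the universe for $L'$ in $\mathbf{(\clubsuit)}$. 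I would then apply Lemma~\ref{lem:clean-C+yellow} twice in sequence with parameters $\delta,\gamma\sim \eta^{O(1)}/(\Omega^*)^{O(1)}$: the first pass discards vertices violating~\eqref{eq:clubsuitCOND1} to yield $\HugeVertices'\subset\HugeVertices$ and $L'$, and the second pass, now applied within $(\HugeVertices',L')$, enforces~\eqref{eq:clubsuitCOND3} and extracts $L''\subset L'$ and $\HugeVertices''\subset\HugeVertices'$ together with the high-degree output $\mindeg_{\Gcapt}(\HugeVertices',L'')\ge\sqrt{\Omega^{**}}k$. After tracking the loss factor $(2\Omega^*/\gamma)^r$ through each application, the resulting $\Omega^\star$ satisfies the advertised threshold $\eta^{27}\Omega^{**}/(4\cdot10^{66}(\Omega^*)^{11})$ of~\eqref{eq:clubsuitCOND2}.

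\emph{Case split and final configuration.} Each $v\in L''$ has $\deg_{\Gcapt}(v)\ge(1+9\eta/10)k$, and by Definition~\ref{capturededgesdef} this degree distributes over four edge types: $\Gblack$-edges, $\Gexp$-edges, edges incident to $\HugeVertices$, and edges incident to $\smallatoms$. A pigeonhole over $v\in L''$ picks a positive-fraction subset $V_1\subset L''$ that is concentrated in one of these four types. The $\HugeVertices$-heavy branch only strengthens~\eqref{eq:clubsuitCOND2} and can be absorbed. The $\Gexp$-heavy branch gives $V_1\subset V(\Gexp)\cap\YB\cap L''$ with many $\Gexp$-neighbours, and Lemma~\ref{lem:clean-yellow} applied to the nowhere-dense graph $\Gexp$ (whose hypothesis $\mindeg(\Gexp)>\rho k$ plays the role of $\gamma k$ in the lemma) produces the bipartite pair $(V_1,V_2)$ witnessing $\mathbf{(\diamond2)}$. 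The $\smallatoms$-heavy branch gives either $V_1\subset\smallatoms\cap L''$ directly (in the setup of $\mathbf{(\diamond3)}$) or a path $V_1\to\smallatoms'\to V_2$ through an intermediate $\smallatoms$-layer (in the setup of $\mathbf{(\diamond4)}$), both obtained by Lemma~\ref{lem:clean-yellow} with $r=2$ on the $\GD\cup\Gcapt$-neighbourhood, the split being governed by whether the max-degree condition~\eqref{eq:WHtc} or~\eqref{confi4lastcondi} is the binding one. The $\Gblack$-heavy branch gives $V_1\subset\bigcup\clusters\cap L''$; here Lemma~\ref{lem:clean-C+black} applies with $\mathcal C:=\clusters$, and its conclusion~(c) is precisely the cluster-saturation condition~\eqref{eq:diamond5P4} of $\mathbf{(\diamond5)}$. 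In every branch the conditions $\mindeg_{\Gcapt}(\HugeVertices'',V_1)\ge\tilde\Omega k$ and $\mindeg_{\Gcapt}(V_1,\HugeVertices'')\ge\beta k$ are inherited from~\eqref{eq:clubsuitCOND2} because $\Omega^\star\gg\tilde\Omega=\sqrt[4]{\Omega^{**}}/2$ and because the flavour restriction only discards a constant-order fraction of $L''$. Should at any intermediate step a bipartite pair emerge both sides of which have $G$-degree $\ge k$ and at least $k/2$ cross-edges per vertex, we simply declare $\mathbf{(\diamond1)}$ and terminate.

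\textbf{Main obstacle.} The principal difficulty is the parameter bookkeeping: each application of Lemma~\ref{lem:clean-C+yellow}, Lemma~\ref{lem:clean-yellow} or Lemma~\ref{lem:clean-C+black} costs a multiplicative factor of the form $(8\Omega^*/\gamma)^r$ in the surviving edge mass, so three to four cleaning applications have to be threaded while still landing above the very specific thresholds $\eta^{27}\Omega^{**}/(10^{66}(\Omega^*)^{11})$, $\eta^9\rho^2/(10^{22}(\Omega^*)^5)$, etc.\ stated in the lemma. In particular, the intermediate parameters must be chosen so that (i) the residual $\tilde\Omega k$-degree from $\HugeVertices''$ to $V_1$ survives the flavour-based restriction, and (ii) the hypotheses of the downstream cleaning lemma in each case branch (for instance the cluster-count hypothesis \ref{hypfburg6}.\ of Lemma~\ref{lem:clean-C+black}, which requires $10h|\mathcal C|\Omega^*<\eta n$) remain verifiable with these diminished parameters.
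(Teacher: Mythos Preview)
Your high-level architecture (extract $\mathbf{(\clubsuit)}$, then case-split on the flavour of the captured neighbourhood) matches the paper, but two concrete points diverge from it and one of them is a real gap.

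First, the paper obtains $\mathbf{(\clubsuit)}$ in a single application of Lemma~\ref{lem:envelope} with $P:=\HugeVertices$, $Q:=\largevertices{\eta}{k}{G}\setminus\HugeVertices$, $Y:=L_\#$; its output $P',Q',Q''$ is \emph{exactly} the nested triple $\HugeVertices',L',L''$ required by~\eqref{eq:clubsuitCOND1}--\eqref{eq:clubsuitCOND3}. Your ``apply Lemma~\ref{lem:clean-C+yellow} twice'' does not obviously produce a pair $L''\subset L'$ inside the same ambient set with the two separate max-degree caps~\eqref{eq:clubsuitCOND1} and~\eqref{eq:clubsuitCOND3}; that lemma returns a chain $X_0',X_1',\dots$, not a nesting.

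Second, and more seriously, your treatment of the $\HugeVertices$-heavy branch is wrong. Saying it ``only strengthens~\eqref{eq:clubsuitCOND2} and can be absorbed'' conflates directions: condition~\eqref{eq:clubsuitCOND2} bounds $\mindeg_{\Gcapt}(\HugeVertices',L'')$, not $\mindeg_{\Gcapt}(L'',\HugeVertices')$, and a subset of $L''$ whose degree is concentrated in $\HugeVertices$ satisfies none of the side-conditions of $\mathbf{(\diamond2)}$--$\mathbf{(\diamond5)}$ (no $\Gexp$-degree, no $\smallatoms$-link, no $\Gblack$-degree). This branch is precisely where $\mathbf{(\diamond1)}$ must come from, and the paper handles it \emph{before} extracting $\mathbf{(\clubsuit)}$: it sets $\NUP:=\{v:\deg_{\Gcapt}(v,\HugeVertices)\ge k\}$ and shows that if $e_{\Gcapt}(\HugeVertices,\NUP)\ge e_{\Gcapt}(\HugeVertices,\XA\cup\XB)/8$ then either the bipartite graph $\Gcapt[\HugeVertices,\NUP]$ has average degree $\ge k$ (giving $\mathbf{(\diamond1)}$) or $|\HugeVertices|>|\NUP|$ forces $e(G)>kn$, a contradiction. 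Only afterwards does the paper restrict to $L''\cap(\XA\cup\XB)\cap\NDOWN$ and partition it as $N_1\cup N_2\cup N_3\cup N_4$ according to membership in $V(\Gexp)$, $\smallatoms$, $\shadow_{\Gcapt}(\smallatoms,\gamma k)$, and the rest --- note this is a partition of the \emph{target} of the $\HugeVertices^*$-edges, not a partition of $L''$ by where its own degree goes, which makes the subsequent invocation of Lemma~\ref{lem:clean-C+yellow} (with $X_0\subset\HugeVertices^*$, $X_1=N_i$) clean. Your per-vertex pigeonhole on the degree of $v\in L''$ is a different decomposition and would require an extra cleaning pass to recover the $\mindeg_{\Gcapt}(\HugeVertices'',V_1)\ge\tilde\Omega k$ condition, which you claim is ``inherited'' but is not.
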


\smallskip

\begin{lem}\label{lem:ConfWhenNOTCXAXB}
Suppose that we are in Setting~\ref{commonsetting} and
Setting~\ref{settingsplitting}. If there exist two disjoint sets $\YA_1,\YA_2\subset V(G)$ such that
\begin{align}
\label{eq:manyXAXAXBobt}
e_{\Gexp}(\YA_1,\YA_2)&\ge 2\rho kn\;,
\end{align}
and either
\begin{align}
\label{2ndcondiObt2}
\YA_1\cup \YA_2\subset \XA\colouringpI{0}\setminus (\gP\cup \exceptVertSplit\cup
\shadowsplit)&\mbox{, or}\\
\label{3rdcondiObt2}
\YA_1\subset \XA\colouringpI{0}\setminus (\gP\cup \exceptVertSplit\cup
\shadowsplit\cup \gP_2\cup \gP_3)&\mbox{, and }\YA_2\subset
\XB\colouringpI{0}\setminus (\gP\cup \exceptVertSplit\cup \shadowsplit)\;,
\end{align}
then $G$ has
configuration~$\mathbf{(\diamond6)}(\frac{\eta^3\rho^4}{10^{14}(\Omega^*)^3},0,1,1,
\frac {3\eta^3}{2\cdot 10^3},\proporce{2}\left(1+\frac{\eta}{20}\right)k)$.
\end{lem}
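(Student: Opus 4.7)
The plan is to apply Lemma~\ref{lem:clean-yellow} with $r=3$ to produce the four sets $V_0,V_1,V_2,V_3$ required by Configuration~$\mathbf{(\diamond 6)}$. The given $\Gexp$-edges between $\YA_1$ and $\YA_2$ will yield $V_0,V_1\subset\colouringp{0}$, and the $\Gexp$-expansion will be extended twice into $\colouringp{1}$ to build $V_2,V_3$. Concretely, I would take $X_0:=\YA_2\cap V(\Gexp)$, $X_1:=\YA_1\cap V(\Gexp)$, $X_2:=X_3:=V(\Gexp)\cap\colouringp{1}$, all edge sets equal to $E(\Gexp)$, and $Y:=\exceptVertSplit$. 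The edge-count hypothesis $e_{\Gexp}(X_0,X_1)\ge 2\rho kn$ follows directly from~\eqref{eq:manyXAXAXBobt} since every endpoint of an $\Gexp$-edge lies in $V(\Gexp)$. The consecutive minimum-degree hypotheses~\ref{hyp:yel-deg} reduce to $\mindeg(\Gexp)>\rho k$ combined with Definition~\ref{def:proportionalsplitting}\eqref{It:H5} and~\eqref{eq:proporcevelke}: for any $v\in V(\Gexp)\setminus\exceptVertSplit$, one has $\deg_{\Gexp}(v,V(\Gexp)\cap\colouringp{1})\ge \proporce{1}\rho k-O(k^{0.9})\ge \frac{\eta\rho}{200}k$, which is the value I would choose for $\gamma_\PARAMETERPASSING{L}{lem:clean-yellow}$. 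With $\bar{\eta}_\PARAMETERPASSING{L}{lem:clean-yellow}:=2\rho$, $\Omega_\PARAMETERPASSING{L}{lem:clean-yellow}:=\Omega^*$, and target $\delta:=\frac{\eta^3\rho^4}{10^{14}(\Omega^*)^3}$, condition~\eqref{eq:condCY} holds with ample slack by~\eqref{eq:KONST}.

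Setting $V_0:=X_0'$, $V_1:=X_1'$, $V_2:=X_2'$, $V_3:=X_3'$, Preconfiguration~$\mathbf{(exp)}(\delta)$ on $(V_0,V_1)$ follows from the symmetric bounds~\eqref{conc:yel-deg} with $i=1$ and~\eqref{conc:yel-X0X1}, while $V_0\cap V_1=\emptyset$ is inherited from the disjointness of $\YA_1,\YA_2$. The ``decreasing'' inequalities~\eqref{COND:D6:2} and~\eqref{COND:D6:4} are exactly~\eqref{conc:yel-deg} for $i=2,3$ (using $\Gexp\subset G$ for the former), and the ``increasing'' inequalities~\eqref{COND:D6:1} and~\eqref{COND:D6:3} come from combining~\eqref{conc:yel-avoid} with the $\gamma_\PARAMETERPASSING{L}{lem:clean-yellow}k$ lower bound on $\deg_{\Gexp}(v,X_{i+1})$, yielding a residual of at least $\frac{\eta\rho}{400}k\gg\delta k$. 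Non-emptiness of $V_1$ follows from~\eqref{conc:yel-edges}, and non-emptiness of $V_2$ and $V_3$ then propagates along the same ``increasing'' bound.

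It remains to witness Preconfiguration~$\mathbf{(\heartsuit 1)}$ or~$\mathbf{(\heartsuit 2)}$. The containment $V_0,V_1\subset\colouringp{0}\setminus(\shadowsplit\cup\shadow_{\GD}(\WantiC,\frac{\eta^2k}{10^5}))$ holds in both cases, because $\YA_1,\YA_2$ avoid $\shadowsplit$ by hypothesis and $\shadow_{\GD}(\WantiC,\frac{\eta^2k}{10^5})\subset\gP$ by the very definition of $\gP$. In case~\eqref{2ndcondiObt2}, both $V_0,V_1\subset\XA\setminus(\gP\cup\exceptVertSplit)$, so Lemma~\ref{lem:propertyYA12}\eqref{eq:propertyYA12cA} delivers $\mindeg_{\Gcapt}(V_0\cup V_1,\Vgood\colouringpI{2})\ge\proporce{2}(1+\eta/20)k=h_2$, which is Preconfiguration~$\mathbf{(\heartsuit 2)}(h_2)$. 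In case~\eqref{3rdcondiObt2}, $V_1\subset\XA\setminus(\gP\cup\gP_2\cup\gP_3\cup\exceptVertSplit)$ and $V_0\subset\XB\setminus(\gP\cup\exceptVertSplit)$; then~\eqref{eq:propertyYA12cA} and~\eqref{eq:propertyYA12cB3} yield $\mindeg_{\Gcapt}(V_1,\Vgood\colouringpI{2})\ge h_2$ and $\maxdeg_{\Gcapt}(V_1,\bigcup\mathcal F)\le\gamma' k$, while~\eqref{eq:propertyYA12cB2} gives $\mindeg_{\Gcapt}(V_0,\Vgood\colouringpI{2})\ge h_2/2$; together with the $(\M_A\cup\M_B)$-cover~$\mathcal F$ from the ``moreover'' clause of Lemma~\ref{lem:propertyYA12}, this is Preconfiguration~$\mathbf{(\heartsuit 1)}(\gamma',h_2)$. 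The main technical point requiring care is parameter balancing---ensuring that the $\gamma_\PARAMETERPASSING{L}{lem:clean-yellow}k$ lower bound in the $\Gexp$-expansion absorbs the additive $O(k^{0.9})$ loss from proportional splitting, and that~\eqref{eq:condCY} is compatible with the prescribed value of $\delta$.
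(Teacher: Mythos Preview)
Your proposal is correct and follows essentially the same route as the paper: apply Lemma~\ref{lem:clean-yellow} with $r=3$, taking $X_0,X_1$ from $\YA_2,\YA_1$ and $X_2=X_3=V(\Gexp)\colouringpI{1}$, then invoke Lemma~\ref{lem:propertyYA12} to obtain $\mathbf{(\heartsuit1)}$ or $\mathbf{(\heartsuit2)}$. The only cosmetic differences are that the paper pre-cleans $\YA_1$ to $\YA_1':=\{v\in\YA_1:\deg_{\Gexp}(v,\YA_2)\ge\rho k\}$ rather than intersecting with $V(\Gexp)$, and sets $E_2:=E(\Gcapt)$ rather than $E(\Gexp)$; both choices are immaterial since the relevant configuration conditions~\eqref{COND:D6:1}--\eqref{COND:D6:2} are phrased in $G$.
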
 

\smallskip

\begin{lem}\label{lem:ConfWhenMatching}
Suppose that we are in Setting~\ref{commonsetting} and
Setting~\ref{settingsplitting}. Let $\DenseSpots_\class$ be as in Lemma~\ref{lem:clean-spots}. Suppose that there exists an
$(\bar\epsilon,\bar d,\beta k)$-\semiregular matching $\M$,  with $V(\M)\subset \colouringp{0}$, $|V(\M)|\ge \frac {\rho n}{\Omega^*}$, and fulfilling one of the following two properties.
\begin{itemize}
 \item[{\bf(M1)}] $\M$ is absorbed by $\Mgood$, $\bar\epsilon:=\frac{10^5\epsilon'}{\eta^2}$, $\bar d:=\frac{\gamma^2}4$, and $\beta:=\frac
{\eta^2\clustersize}{8\cdot 10^3 k}$.
 \item[{\bf(M2)}] $E(\M)\subset E(\DenseSpots_\class)$, $\M$ is absorbed by $\DenseSpots_\class$, $\bar\epsilon:=\epsilonD$, $\bar d:=\frac{\gamma^3\rho}{32\Omega^*}$, and $\beta:=\frac{\alphaD\rho}{\Omega^*}$.
\end{itemize}
Suppose further that one of the following occurs.
\begin{itemize}
  \item [$\mathbf{(cA)}$] $V(\M)\subset \XA\colouringpI{0}\setminus
  (\gP\cup\exceptVertSplit\cup\shadowsplit)$, and we have for the set
$$R:=\shadow_{\Gcapt}\left((\largeintoatoms\cap\largevertices{\eta}{k}{G})\setminus V(\M_A\cup\M_B),\frac{2\eta^2 k}{10^5}\right)$$
one of the following
\begin{itemize}
 \item[{\bf(t1)}] $V_1(\M)\subset \shadow_{\Gcapt}(V(\Gexp),\rho k)$,
 \item[{\bf(t2)}]$V_1(\M)\subset \largeintoatoms$,
 \item[{\bf(t3)}]$V_1(\M)\subset R\setminus (\shadow_{\Gcapt}(V(\Gexp),\rho k)\cup \largeintoatoms)$, or
\item[{\bf(t5)}]$V(\M)\subset V(\Gblack)\setminus\left(\shadow_{\Gcapt}(V(\Gexp),\rho k)\cup \largeintoatoms\cup R\right)$.
\end{itemize}
  \item [$\mathbf{(cB)}$] $V_1(\M)\subset \XA\colouringpI{0}\setminus
  (\gP\cup \gP_2\cup\gP_3\cup\exceptVertSplit\cup\shadowsplit)$ and
  $V_2(\M)\subset \XB\colouringpI{0}\setminus
  (\gP\cup\exceptVertSplit\cup\shadowsplit)$, and we have
\begin{itemize}
 \item[{\bf(t1)}]$V_1(\M)\subset \shadow_{\Gcapt}(V(\Gexp),\rho k)$,
 \item[{\bf(t2)}]$V_1(\M)\subset \largeintoatoms$, or
 \item[{\bf(t3--5)}]$V_1(\M)\cap \left(\shadow_{\Gcapt}(V(\Gexp),\rho k)\cup \largeintoatoms\right)=\emptyset$.
\end{itemize}
\end{itemize}
Then at least one of the
following configurations occurs:
\begin{itemize}
  \item $\mathbf{(\diamond6)}\big(\frac{\eta^3\rho^4}{10^{12}(\Omega^*)^4},4\epsilonD,
  \frac{\gamma^3\rho}{32\Omega^*},\frac{\eta^2\nu}{2\cdot10^4
  },\frac{3\eta^3}{2000},\proporce{2}(1+\frac\eta{20})k\big)$,
  \item $\mathbf{(\diamond7)}\big(\frac
{\eta^3\gamma^3\rho}{10^{12}(\Omega^*)^4},\frac
{\eta\gamma}{400},4\epsilonD,\frac{\gamma^3\rho}{32\Omega^*},\frac{\eta^2\nu}{2\cdot10^4
}, \frac{3\eta^3}{2000}, \proporce{2}(1+\frac\eta{20})k\big)$,
  \item
$\mathbf{(\diamond8)}\big(\frac{\eta^4\gamma^4\rho}{10^{15}
(\Omega^*)^5},\frac{\eta\gamma}{400},\frac{400\epsilon}{\eta},4\epsilonD,\frac
d2,\frac{\gamma^3\rho}{32\Omega^*},\frac{\eta\pi\clustersize}{200k},\frac{\eta^2\nu}{2\cdot10^4
}, \proporce{1}(1+\frac\eta{20})k,\proporce{2}(1+\frac\eta{20})k\big)$,
 \item 
 $\mathbf{(\diamond9)}\big(\frac{\rho
\eta^8}{10^{27}(\Omega^*)^3},\frac
{2\eta^3}{10^3}, \proporce{1}(1+\frac{\eta}{40})k,
\proporce{2}\left(1+\frac{\eta}{20}\right)k, \frac{400\varepsilon}{\eta}, \frac{d}2,
\frac{\eta\pi\clustersize}{200k},4\epsilonD,\frac{\gamma^3\rho}{32\Omega^*},
\frac{\eta^2\nu}{2\cdot10^4}\big)$,
  \item $\mathbf{(\diamond10)}\big( \epsilon, \frac{\gamma^2
d}2,\pi\sqrt{\epsilon'}\nu k,\frac
{(\Omega^*)^2k}{\gamma^2},\frac\eta{40} \big)$.
\end{itemize}
\end{lem}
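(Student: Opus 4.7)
The plan is to proceed by case analysis on $\mathbf{(cA)}$ versus $\mathbf{(cB)}$, combined with the sub-cases $\mathbf{(t1)}$--$\mathbf{(t5)}$, and with the two matching types $\mathbf{(M1)}$ and $\mathbf{(M2)}$. In each sub-case the matching $\M$ simultaneously supplies (i) a Preconfiguration of heart type $\mathbf{(\heartsuit1)}$ or $\mathbf{(\heartsuit2)}$ with $V_0,V_1 \subseteq V(\M) \subseteq \colouringp{0}$; (ii) a Preconfiguration of structural type $\mathbf{(exp)}$ or $\mathbf{(reg)}$ (from $\Gexp$-edges in (t1), or from the regular pairs encoded by $\M$ in (M1)/(M2)); and (iii) a chain of sets $V_2,V_3$ (and $V_4$, and possibly a secondary \semiregular matching $\mathcal N$) living in $\colouringp{1}$ that meet the minimum-degree requirements of one of the Configurations $\mathbf{(\diamond6)}$--$\mathbf{(\diamond10)}$.

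First I would extract the heart preconfiguration from Lemma~\ref{lem:propertyYA12}. In case $\mathbf{(cA)}$, taking $V_0 = V_1 = V(\M)$, the inclusion~\eqref{eq:trivka} places $V(\M)$ outside $\shadowsplit\cup\shadow_{\GD}(\WantiC,\eta^2 k/10^5)$, while~\eqref{eq:propertyYA12cA} supplies the required lower bound on $\mindeg_{\Gcapt}(V(\M),\Vgood\colouringpI{2})$; this yields $\mathbf{(\heartsuit2)}(h_2)$ with $h_2 = \proporce{2}(1 + \eta/20)k$. In case $\mathbf{(cB)}$, setting $V_0 := V_2(\M) \subseteq \XB$ and $V_1 := V_1(\M) \subseteq \XA\setminus(\gP_2\cup\gP_3)$, equations~\eqref{eq:propertyYA12cA} and~\eqref{eq:propertyYA12cB2} give the degree conditions~\eqref{COND:P1:3} and~\eqref{COND:P1:4}, while~\eqref{eq:propertyYA12cB3} together with the ``moreover'' part of Lemma~\ref{lem:propertyYA12} (that $\mathcal F$ from~\eqref{def:Fcover} is an $(\M_A\cup\M_B)$-cover) supplies the cover-avoidance~\eqref{COND:P1:5}; this yields $\mathbf{(\heartsuit1)}(\gamma',h_2)$ with $\gamma' = 3\eta^3/(2\cdot 10^3)$ and $h_2 = \proporce{2}(1+\eta/20)k/1$ (with the factor $1/2$ absorbed into the definition of $V_0$ via~\eqref{eq:propertyYA12cB2}).

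Second, for each sub-case I would build a chain $X_0 := V_1(\M), X_1, X_2, \ldots$ with many edges between consecutive sets, and clean it to extract $V_2,V_3,V_4$ inside $\colouringp{1}$. In $\mathbf{(t1)}$, $V_1(\M)$ has many $\Gcapt$-neighbours in $V(\Gexp)$ (by the shadow condition), and each such neighbour has at least $\rho k$ neighbours in $V(\Gexp)$ via $\Gexp$; two successive applications of Lemma~\ref{lem:clean-yellow} to this two-step chain, combined with the proportional splitting (Definition~\ref{def:proportionalsplitting}\eqref{It:H5}) that localises edges into $\colouringp{1}$, yield the sets satisfying~\eqref{COND:D6:1}--\eqref{COND:D6:4}, producing Configuration~$\mathbf{(\diamond6)}$. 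In $\mathbf{(t2)}$, $V_1(\M) \subseteq \largeintoatoms$ gives a $\Gcapt$-step into $\smallatoms$; the avoiding property of $\smallatoms$ then supplies a dense-spot continuation into $\colouringp{1}$, yielding Configuration~$\mathbf{(\diamond7)}$ after Lemma~\ref{lem:clean-yellow}. In $\mathbf{(t3)}$ (case $\mathbf{(cA)}$), the shadow set $R$ embeds an extra step through $(\largeintoatoms\cap \largevertices{\eta}{k}{G})\setminus V(\M_A\cup\M_B)$, which supplies the auxiliary \semiregular matching $\mathcal N$ needed in~\eqref{COND:D8:7} of Configuration~$\mathbf{(\diamond8)}$; here super-regularity of $\mathcal N$ is arranged by Lemma~\ref{lem:clean-Match}. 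In $\mathbf{(t5)}$ the matching $\M$ lies cleanly inside $V(\Gblack)$ away from all shadows; Lemma~\ref{lem:clean-Match} applied to $\M$ yields the super-regular family of $\mathbf{(reg)}$, which either instantiates Configuration~$\mathbf{(\diamond9)}$ (together with an $(\M_A\cup\M_B)$-absorbed $\mathcal N$ from clusters reached via $\GD$) or, in the presence of a full regularized substructure on $\V(\M) \subset \bigcup\clusters$, Configuration~$\mathbf{(\diamond10)}$, taking $\BGblack$ (restricted and refined) as $(\tilde G,\V)$ and $\M$ itself as the consistent matching. Case $\mathbf{(cB)}$ with $\mathbf{(t3\text{--}5)}$ is handled analogously but with the heart being $\mathbf{(\heartsuit1)}$ rather than $\mathbf{(\heartsuit2)}$.

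The main obstacle will be the parameter bookkeeping: each configuration prescribes rather specific thresholds on densities, minimum degrees, and size lower bounds (see the long parameter lists in the conclusion of Lemma~\ref{outerlemma}), and these must be tracked through the shadows, through the cleaning hypotheses of Lemmas~\ref{lem:clean-yellow} and~\ref{lem:clean-Match} (which demand edge-count lower bounds like $e(X_0,X_1)\ge \eta kn$), and through the proportional splitting (which sheds $k^{0.9}$ and $n^{0.9}$ factors that must be dominated). Non-emptiness of the final sets in each sub-case reduces to verifying the ``enough edges'' hypothesis, which follows from the assumed lower bound $|V(\M)|\ge \rho n/\Omega^*$ together with the Fact~\ref{fact:shadowbound} and Fact~\ref{fact:shadowboundEXPANDER} bounds on the exceptional sets $\exceptVertSplit$, $\shadowsplit$, $\WantiC$, $\gP$, $\gP_1$, $\gPatoms$, $L_\#$ of Setting~\ref{commonsetting}, which are all small by Lemma~\ref{lem:YAYB} and Setting~\ref{commonsetting}\eqref{commonsetting:numbercaptured}.
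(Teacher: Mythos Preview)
Your case analysis and the mappings $\mathbf{(t1)}\to\mathbf{(\diamond6)}$, $\mathbf{(t2)}\to\mathbf{(\diamond7)}$, $\mathbf{(t3)(cA)}\to\mathbf{(\diamond8)}$ agree with the paper, and invoking Lemma~\ref{lem:propertyYA12} for the heart preconfigurations is exactly right. There are, however, two genuine gaps.

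First, in $\mathbf{(t1)}$ and $\mathbf{(t2)}$ you call Lemma~\ref{lem:clean-yellow}, but Configurations $\mathbf{(\diamond6)}$ and (especially) $\mathbf{(\diamond7)}$ require that the \emph{same} pair $(V_0,V_1)$ witness both the heart preconfiguration and Preconfiguration~$\mathbf{(reg)}$, the latter demanding a family of super-regular pairs covering $V_0\cup V_1$. Lemma~\ref{lem:clean-yellow} does not output such pairs. The paper instead applies Lemma~\ref{lem:clean-Match} in every one of $\mathbf{(t1)}$, $\mathbf{(t2)}$, $\mathbf{(t3)(cA)}$, $\mathbf{(t3\text{--}5)(cB)}$, feeding in the partition $\V(\M)$ of $X_0:=V_2(\M)$, $X_1:=V_1(\M)$; its conclusion~\eqref{conc:Match-superreg} yields the super-regular family for $\mathbf{(reg)}$ while the same call handles the chain $X_2,X_3,\ldots$ into $\colouringp{1}$. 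Your fallback to $\mathbf{(exp)}$ in $\mathbf{(t1)}$ does not work either: $\mathbf{(exp)}$ needs $\Gexp$-edges between $V_0$ and $V_1$, and $V_0,V_1\subset V(\M)$ carries no such guarantee (the shadow condition only gives $\Gcapt$-edges from $V_1(\M)$ into $V(\Gexp)$).

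Second, you conflate the endgame. In the paper, $\mathbf{(t3\text{--}5)(cB)}\to\mathbf{(\diamond9)}$ and $\mathbf{(t5)(cA)}\to\mathbf{(\diamond10)}$ are separate, not alternatives within one case. Configuration~$\mathbf{(\diamond9)}$ requires $\mathbf{(\heartsuit1)}$ specifically, and the cover condition~\eqref{COND:P1:5} is only available via~\eqref{eq:propertyYA12cB3}, i.e.\ only in $\mathbf{(cB)}$; so $\mathbf{(\diamond9)}$ cannot arise from $\mathbf{(t5)(cA)}$. Conversely, $\mathbf{(\diamond10)}$ is not obtained via Lemma~\ref{lem:clean-Match} at all. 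The paper builds a regularized graph $(G^\circ,\V^\circ)$ from scratch on $\V(\M_A\cup\M_B)$ together with truncated clusters of $\clusters$, identifies a family $\mathcal L^\circ\subset\V^\circ$ of high-$G^\circ$-degree clusters, and then uses $\M$ only as a seed to locate (via a counting argument, and in case $\mathbf{(M2)}$ a further passage through a $\BGblack$-edge) a pair $X_A,X_B\in\V^\circ$ whose typical vertices see $(1+\tfrac{\eta}{40})k$ of $V(\M_A\cup\M_B)\cup\bigcup\mathcal L^\circ$. The consistent matching in Definition~\ref{def:CONF10} is $\M_A\cup\M_B$, not $\M$. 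This construction is the most involved part of the whole lemma and is not captured by a cleaning-lemma application.
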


\subsection{Proof of Lemma~\ref{outerlemma}}\label{ssec:proofofouterlemma}
Throughout this section (and including subordinate lemmas) we assume that we have the setting of Lemma~\ref{outerlemma}. In particular, we shall assume Settings~\ref{commonsetting} and~\ref{settingsplitting}.\Referee{(19)}

We distinguish different types of edges captured in cases
{\bf(K1)} and {\bf(K2)}. If in case {\bf(K1)} many of the captured edges from
$\XA$ to $\XA\cup \XB$ are incident with $\HugeVertices$, we will get one of the
configurations $\mathbf{(\diamond1)}$--$\mathbf{(\diamond5)}$ by employing
Lemma~\ref{lem:ConfWhenCXAXB}. Otherwise, there must be many edges from $\XA$ to
$\XA\cup \XB$ in the graph $\Gexp$, or in $\GD$.
Lemma~\ref{lem:ConfWhenNOTCXAXB} shows that the former case leads to
configuration $\mathbf{(\diamond6)}$. We will reduce the latter case to the
situation in Lemma~\ref{lem:ConfWhenMatching} which gives one of the configurations
$\mathbf{(\diamond6)}$--$\mathbf{(\diamond10)}$.

We use Lemma~\ref{lem:ConfWhenMatching} to give one of the
configurations
$\mathbf{(\diamond6)}$--$\mathbf{(\diamond10)}$ also in case
{\bf(K2)}. \footnote{Actually, our proof of
Lemma~\ref{lem:ConfWhenMatching} implies that one does not get configuration $\mathbf{(\diamond9)}$ in case {\bf(K2)}; but this fact is never needed.} 

\bigskip

Let us now turn to the details of the proof. If
$e_{\Gcapt}(\HugeVertices,\XA\cup\XB)\ge \frac{\eta^{13}kn}{10^{28}(\Omega^*)^3}$
\Referee{(17)} then
we use Lemma~\ref{lem:ConfWhenCXAXB} to obtain one of the configurations
$\mathbf{(\diamond1)}$--$\mathbf{(\diamond5)}$, with the parameters as in 
the statement of Lemma~\ref{outerlemma}.

Recall that every edge of $G$ incident to $\HugeVertices$ is captured. Thus, in the remainder of the proof we assume that\Referee{(17)}
\begin{equation}\label{eq:notmanyfromhuge}
e_G(\HugeVertices,\XA\cup\XB)=e_{\Gcapt}(\HugeVertices,\XA\cup\XB)< \frac{\eta^{13}kn}{10^{28}(\Omega^*)^3}\;.
\end{equation}

We now bound the size of the set $\gP$. By
Setting~\ref{commonsetting}\eqref{commonsetting:numbercaptured} we have that 
\begin{equation*}
 |E(G)\sm E(\Gcapt)|\leq 2\rho kn .
\end{equation*}
We shall therefore use Lemma~\ref{lem:YAYB} with $\beta_\PARAMETERPASSING{L}{lem:YAYB}=2\rho$.\Referee{(D)} This choice of $\beta_\PARAMETERPASSING{L}{lem:YAYB}$ is consistent with~\eqref{eq:verifybeta}; indeed, by~\eqref{eq:KONST} we have that $\eta\gg \rho\gg \gamma$, and thus $\rho\gg \eta^2\sqrt{\gamma}$.\footnote{Recall that the choice of constants in~\eqref{eq:KONST} proceeds from left to right.}
From Lemma~\ref{lem:YAYB} we get
$|L_\sharp|\le \frac{40\rho n}{\eta}$, $|\XA\setminus\YA|\le \frac{1200\rho
n}{\eta^2}$, and $|(\XA\cup \XB)\setminus\YB|\le \frac{1200\rho
n}{\eta^2}$. Further, using~\eqref{eq:notmanyfromhuge},
Lemma~\ref{lem:YAYB} also gives that $|\WantiC|\le
\frac{\eta^{12}n}{10^{26}(\Omega^*)^3}$. It follows from Setting~\ref{commonsetting}\eqref{commonsettingNicDoNAtom} that $|\gPatoms|\le \gamma n$. Lastly, by Setting~\ref{commonsetting}\eqref{commonsetting4} we have $|\gP_1|\le 2\gamma n$.
Thus,
\begin{align}\nonumber
|\gP|&\le |\XA\setminus\YA|+|(\XA\cup
\XB)\setminus\YB|+|\WantiC| +|L_\sharp|+|\gP_1|\\
\nonumber
&~~~+\left|\shadow_{\GD\cup\Gcapt}(\WantiC\cup
L_\sharp\cup \gPatoms\cup \gP_1,\frac{\eta^2 k}{10^5})\right|\\
\label{eq:sizeofP}
&\overset{\eqref{eq:KONST}}\le \frac{2\eta^{10}n}{10^{21}(\Omega^*)^2}\;,
\end{align}
where we used Fact~\ref{fact:shadowbound} to bound the size of the shadows (to this end recall that by Property~\ref{def:classgap} of Definition~\ref{sparseclassdef}, the graph $\GD\cup\Gcapt$ indeed has maximum degree at most $\Omega^*k$).\Referee{(18)}

\bigskip

Let us first turn our attention to case {\bf(K1)}.
By Definition~\ref{def:proportionalsplitting} we have $\HugeVertices\cap
\colouringp{0}=\emptyset$. Therefore,
\begin{align}
\nonumber
e_{\Gcapt}\big(\XA\colouringpI{0}\setminus \gP, &(\XA\cup
\XB)\colouringpI{0}\setminus \gP\big)=e_{\Gcapt}\big((\XA\setminus
(\HugeVertices\cup \gP))\colouringpI{0},(\XA\setminus
(\HugeVertices\cup \gP))\colouringpI{0}\cup (\XB\setminus
\gP)\colouringpI{0}\big)\\
\nonumber
\JUSTIFY{by Def~\ref{def:proportionalsplitting}~\eqref{It:H6}}&\ge
\proporce{0}^2\cdot e_{\Gcapt}\big(\XA\setminus (\HugeVertices\cup \gP), (\XA\cup
\XB)\setminus (\HugeVertices\cup \gP)\big)-k^{0.6}n^{0.6}\\
\nonumber
\JUSTIFY{by~\eqref{eq:proporcevelke}}&\ge \frac
{\eta^2}{10^4}\big(e_{\Gcapt}(\XA,
\XA\cup \XB)-2e_{\Gcapt}(\HugeVertices,
\XA\cup \XB)-2|\gP|\Omega^*k\big)-k^{0.6}n^{0.6}\\
\nonumber
\JUSTIFY{by~{\bf(K1)},~\eqref{eq:notmanyfromhuge},~\eqref{eq:sizeofP}}&\ge
\frac {\eta^2}{10^4}\Big(\frac {\eta kn}{4}-\frac{2\eta^{13}
kn}{10^{28}(\Omega^*)^3}-\frac{4\eta^{10}
kn}{10^{21}\Omega^*}\Big)-k^{0.6}n^{0.6}\\ 
&>\frac {\eta^3 kn}{10^{5}}\;.
\label{eq:Diana9}
\end{align}

We consider the
following two complementary cases:
\begin{itemize}
  \item [$\mathbf{(wA)}$] {$e_{\Gcapt}((\XA\setminus \gP)\colouringpI{0})\ge
 40\rho kn$.}
  \item [$\mathbf{(wB)}$] {$e_{\Gcapt}((\XA\setminus
  \gP)\colouringpI{0})<40\rho kn$.}
\end{itemize}

Note that  $\XA\setminus
\gP\subseteq \YA$, and $(\XA\cup \XB)\setminus
\gP\subseteq \YB$.
  We shall now define in each of the cases~$\mathbf{(wA)}$ and~$\mathbf{(wB)}$ certain sets $\YA_1, \YA_2$. The way these sets shall be defined will guarantee a lower bound on the number of edges between them. Although the definition of these sets is different for the cases~$\mathbf{(wA)}$ and~$\mathbf{(wB)}$, for ease of notation they receive the same names.  
  
In case~$\mathbf{(wA)}$ a standard argument (take a maximal cut) gives disjoint sets $\YA_1, \YA_2\subseteq 
(\XA\setminus (\gP\cup \exceptVertSplit\cup
\shadowsplit))\colouringpI{0}\subseteq \YA$ with
\begin{align}
\nonumber
e_{\Gcapt}(\YA_1,\YA_2)\ge &\frac 12(e_{\Gcapt}(\XA\setminus
\gP)\colouringpI{0}-|\exceptVertSplit\cup \shadowsplit|\cdot \Omega^*k)\\
\label{eq:PocitaniCaseA}
\JUSTIFY{by
 Def~\ref{def:proportionalsplitting}\eqref{It:H1} and by~\eqref{eq:boundShadowsplit}}\ge & \frac
 12(40\rho kn-2\epsilon \Omega^*kn)\notag \\  > &19\rho kn\;.
\end{align}

Let us now define $\YA_1, \YA_2$ for case~$\mathbf{(wB)}$.
Setting~\ref{commonsetting}\eqref{commonsettingXAS0} implies that
\begin{equation}
\label{eq:Dsmallvecer}
|\gP_2|\le \sqrt\gamma n\;\mbox{.}
\end{equation}

Also, by Definition~\ref{def:proportionalsplitting}\eqref{It:H6} we have
\begin{align*}
e_{\Gcapt}(\XA)&\le \frac{1}{\proporce{0}^2}\big(e_{\Gcapt}((\XA\setminus
\gP)\colouringpI{0})+k^{0.6}n^{0.6}\big)+e_{\Gcapt}(\HugeVertices,\XA)+|\gP|\Omega^*k\\
\JUSTIFY{by~\eqref{eq:proporcevelke},~$\mathbf{(wB)}$,~\eqref{eq:notmanyfromhuge},
and~\eqref{eq:sizeofP}}&\le\frac{10^4}{\eta^2}\cdot \big(40\rho kn+
k^{0.6}n^{0.6}\big)+
\frac{\eta^{13}}{10^{28}(\Omega^*)^3}kn+\frac{\eta^{10}}{10^{20}\Omega^*}kn\\
\JUSTIFY{by~\eqref{eq:KONST}}&<\frac {\eta^{8}}{10^{15}\Omega^*}kn\;.
\end{align*}

Consequently, 
$$|\gP_3|\cdot\frac{\eta^3k}{10^3}\le e_{\Gcapt}(\gP_3,\XA)\leq 2\cdot \frac{\eta^8}{10^{15}\Omega^*}kn,$$
and thus,
\begin{align}
\label{eq:D2mala}
|\gP_3|&\le 2\cdot \frac{\eta^5}{10^{12}\Omega^*}n\;.
\end{align}
Set $\YA_1:=(\XA\setminus (\gP\cup \gP_2\cup \gP_3\cup
\exceptVertSplit\cup \shadowsplit))\colouringpI{0}\subseteq \YA$ and
$\YA_2:=(\XB\setminus (\gP\cup \exceptVertSplit\cup
\shadowsplit))\colouringpI{0}\subseteq \YB$. Then the sets $\YA_1$ and $\YA_2$ are
disjoint and we have
\begin{align}
\nonumber
e_{\Gcapt}(\YA_1,\YA_2)&\ge
e_{\Gcapt}\left((\XA\setminus
\gP)\colouringpI{0},((\XA\cup\XB)\setminus
\gP)\colouringpI{0}\right)-2e_{\Gcapt}((\XA\setminus
\gP)\colouringpI{0})\\
\nonumber &~~~-(|\gP_2|+|\gP_3|+2|\exceptVertSplit|+2|\shadowsplit|)\cdot
\Omega^*k
\\
\nonumber
\JUSTIFY{by~\eqref{eq:Diana9}, $\mathbf{(wB)}$, \eqref{eq:Dsmallvecer},
\eqref{eq:D2mala}, D\ref{def:proportionalsplitting}\eqref{It:H1}, \eqref{eq:boundShadowsplit}} &\ge\frac{\eta^3kn}{
10^{5}}-80\rho kn-\sqrt{\gamma}\Omega^*kn-\frac{2\eta^5}{10^{12}}kn-4\epsilon
\Omega^* kn\\
&\geByRef{eq:KONST}
19\rho kn\;.\label{eq:PocitaniCaseB}
\end{align}
We have thus defined $\YA_1, \YA_2$ for both cases~$\mathbf{(wA)}$ and~$\mathbf{(wB)}$.

Observe first that if
$e_{\Gexp}(\YA_1,\YA_2)\ge 2\rho kn$
then we may apply
Lemma~\ref{lem:ConfWhenNOTCXAXB} to obtain
Configuration~$\mathbf{(\diamond6)}(\frac{\eta^3\rho^4}{10^{14}(\Omega^*)^3},0,1,1,
\frac {3\eta^3}{2\cdot 10^3},\proporce{2}\left(1+\frac{\eta}{20}\right)k)$.
Hence, from now on, let us assume that $e_{\Gexp}(\YA_1,\YA_2)> 2\rho kn$.
Then
by~\eqref{eq:PocitaniCaseA} and~\eqref{eq:PocitaniCaseB} we have that $$e_{\GD}(\YA_1,\YA_2)\ge 17\rho kn.$$

 We fix a family $\DenseSpots_\class$ as in Lemma~\ref{lem:clean-spots}. In particular, we have 
 \begin{align}\label{flordelino}
  e_{\DenseSpots_\class}(\YA_1,\YA_2)&\ge 16\rho kn\;,\mbox{and}\\
  \label{eq:DenseSpotClassBoundedMaxDeg}
  \maxdeg(\DenseSpots_\class)&\le
  \maxdeg(\DenseSpots)\leBy{D\ref{sparseclassdef}~\ref{def:classgap}.}\Omega^*k\;.
 \end{align}

Let $R:=\shadow_{\Gcapt}\left((\largeintoatoms\cap\largevertices{\eta}{k}{G})\setminus V(\M_A\cup\M_B),\frac{2\eta^2 k}{10^5}\right)$. For $i=1,2$ define 
\begin{align}
\begin{split}\label{eq:defY1Y5}
\mathbb{Y}^{(1)}_i& :=\shadow_{G}(V(\Gexp),\rho k)\cap \YA_i\;,\\
\mathbb{Y}^{(2)}_i& :=(\largeintoatoms\cap \YA_i)\setminus\mathbb{Y}^{(1)}_i\;,\\
\mathbb{Y}^{(3)}_i& :=(R\cap \YA_i)\setminus(\mathbb{Y}^{(1)}_i\cup \mathbb{Y}^{(2)}_i)\;,\\
\mathbb{Y}^{(4)}_i& :=(\smallatoms\cap \YA_i)\setminus(\mathbb{Y}^{(1)}_i\cup \mathbb{Y}^{(2)}_i\cup \mathbb{Y}^{(3)}_i)\;,\\
\mathbb{Y}^{(5)}_i& := \YA_i\setminus(\mathbb{Y}^{(1)}_i\cup\ldots \cup 
\mathbb{Y}^{(4)}_i)\;.
\end{split}
\end{align}
Clearly, the sets $\mathbb{Y}^{(j)}_i$ partition $\YA_i$ for $i=1,2$.

We now present two lemmas (one for
case {\bf(wA)} and one for case {\bf(wB)}) which help to distinguish several subcases based on  the majority type of edges we find between $\YA_1$ and $\YA_2$. The first of the two lemmas follows by a
simple counting argument from~\eqref{flordelino}.

\begin{lemma}\label{lem:MajorityTypeCB}
In case {\bf(wB)}, we have one of the following.
\begin{itemize}
 \item[{\bf(t1)}] $e_{\DenseSpots_{\class}}\left(\mathbb{Y}^{(1)}_1,\YA_2\right)\ge 2\rho kn$,
 \item[{\bf(t2)}] $e_{\DenseSpots_{\class}}\left(\mathbb{Y}^{(2)}_1,\YA_2\right)\ge 2\rho kn$,
\item[{\bf(t3)}] $e_{\DenseSpots_{\class}}\left(\mathbb{Y}^{(3)}_1,\YA_2\right)\ge 2\rho kn$,
\item[{\bf(t4)}] $e_{\DenseSpots_{\class}}\left(\mathbb{Y}^{(4)}_1,\YA_2\right)\ge 2\rho kn$, or
\item[{\bf(t5)}] $e_{\DenseSpots_{\class}}\left(\mathbb{Y}^{(5)}_1,\YA_2\right)\ge 2\rho kn$.
\end{itemize}
\end{lemma}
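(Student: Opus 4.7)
The proof is a straightforward pigeonhole argument, as the statement just before the lemma already assembles all the ingredients. The plan is to observe that the sets $\mathbb{Y}^{(1)}_1, \mathbb{Y}^{(2)}_1, \mathbb{Y}^{(3)}_1, \mathbb{Y}^{(4)}_1, \mathbb{Y}^{(5)}_1$ (as defined in~\eqref{eq:defY1Y5}) form a partition of $\YA_1$, so that
\begin{equation*}
e_{\DenseSpots_\class}(\YA_1,\YA_2)=\sum_{j=1}^{5} e_{\DenseSpots_\class}\big(\mathbb{Y}^{(j)}_1,\YA_2\big).
\end{equation*}
Since we are in case \textbf{(wB)}, the bound~\eqref{flordelino} established immediately above the lemma gives $e_{\DenseSpots_\class}(\YA_1,\YA_2)\ge 16\rho kn$. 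By averaging, at least one of the five summands on the right-hand side is at least $\tfrac{16\rho kn}{5}>2\rho kn$, which is exactly one of the conclusions \textbf{(t1)}--\textbf{(t5)}.

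There is no real obstacle here: the whole point of this lemma is to package the case distinction that will be fed into Lemma~\ref{lem:ConfWhenMatching} under hypothesis \textbf{(cA)}. The only thing to double-check is that the partition is indeed a partition (which is stated in the sentence immediately preceding Lemma~\ref{lem:MajorityTypeCB}) and that the constant $16/5$ beats $2$ with room to spare, so no additional slack needs to be tracked.
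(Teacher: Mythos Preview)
Your proof is correct and matches the paper's own treatment: the paper simply says this lemma ``follows by a simple counting argument from~\eqref{flordelino}'', which is exactly the partition-and-average argument you give. One small slip in your closing remark: case~\textbf{(wB)} feeds into hypothesis~$\mathbf{(cB)}$ of Lemma~\ref{lem:ConfWhenMatching}, not~$\mathbf{(cA)}$ (in~\textbf{(wB)} we have $\YA_1\subset\XA$ and $\YA_2\subset\XB$).
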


Our second lemma is a bit more involved.

\begin{lemma}\label{lem:MajorityTypeCA}
In case {\bf(wA)}, we have one of the following.
\begin{itemize}
 \item[{\bf(t1)}] $e_{\DenseSpots_{\class}}(\mathbb{Y}^{(1)}_1,\YA_2)+e_{\DenseSpots_{\class}}(\YA_1,\mathbb{Y}^{(1)}_2)\ge 4\rho kn$,
 \item[{\bf(t2)}] $e_{\DenseSpots_{\class}}\left(\mathbb{Y}^{(2)}_1,\YA_2\setminus \mathbb{Y}^{(1)}_2\right)+e_{\DenseSpots_{\class}}\left(\YA_1\setminus \mathbb{Y}^{(1)}_1,\mathbb{Y}^{(2)}_2\right)\ge 4\rho kn$,
\item[{\bf(t3)}] $e_{\DenseSpots_{\class}}\left(\mathbb{Y}^{(3)}_1,\YA_2\setminus (\mathbb{Y}^{(1)}_2\cup \mathbb{Y}^{(2)}_2)\right)
+
e_{\DenseSpots_{\class}}\left(\YA_1\setminus (\mathbb{Y}^{(1)}_1\cup \mathbb{Y}^{(2)}_1),\mathbb{Y}^{(3)}_2\right)\ge 4\rho kn$, or
\item[{\bf(t5)}] $e_{\DenseSpots_{\class}}\left(\mathbb{Y}^{(5)}_1,\mathbb{Y}^{(5)}_2\right)\ge 2\rho kn$.
\end{itemize}
\end{lemma}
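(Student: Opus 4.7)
The strategy is to decompose the edges counted by~\eqref{flordelino} according to the partition~\eqref{eq:defY1Y5} and argue that if (t1), (t2) and (t3) all fail, then (t5) must hold. What distinguishes case (wA) from case (wB) is that in (wA) the vertex sets $\YA_1,\YA_2$ lie inside $\colouringp{0}$, and so outside $\HugeVertices$; this is precisely what allows us to absorb the ``(t4)'' contribution into (t5).

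First, I would observe that the three quantities thresholded in (t1), (t2), (t3) together majorize the count of $\DenseSpots_\class$-edges between $\YA_1$ and $\YA_2$ having at least one endpoint in $\mathbb{Y}^{(1)}_1\cup\mathbb{Y}^{(1)}_2\cup\mathbb{Y}^{(2)}_1\cup\mathbb{Y}^{(2)}_2\cup\mathbb{Y}^{(3)}_1\cup\mathbb{Y}^{(3)}_2$: indeed, by the asymmetric telescoping in the definitions of (t2) and (t3), a short case-check on which $\mathbb{Y}^{(s)}_i$ contains each endpoint shows that every such edge is counted at least once among the three sums. Consequently, if (t1), (t2) and (t3) all fail, then by~\eqref{flordelino} at least $16\rho kn-12\rho kn=4\rho kn$ edges of $\DenseSpots_\class$ between $\YA_1$ and $\YA_2$ have both endpoints in $\mathbb{Y}^{(4)}\cup\mathbb{Y}^{(5)}$.

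The core of the argument is the following degree bound. Since we are in case (wA), $\YA_1,\YA_2\subset\colouringp{0}\subset V(G)\setminus \HugeVertices$, so no vertex under consideration is huge. For any $u\in\YA_i$ with $u\notin\mathbb{Y}^{(1)}_i\cup\mathbb{Y}^{(2)}_i$, the definition~\eqref{eq:defY1Y5} forces $u\notin\largeintoatoms$, hence by~\eqref{eq:deflargeintoatoms} we have $\deg_{\Gcapt}(u,\smallatoms)=\deg_{\Gcapt-\HugeVertices}(u,\smallatoms)\le \rho k/(100\Omega^*)$. Because $\mathbb{Y}^{(4)}_j\subset\smallatoms$ and $E(\DenseSpots_\class)\subset E(\Gcapt)$ (by Lemma~\ref{lem:clean-spots} together with Definition~\ref{capturededgesdef}), summing this bound over $u\in\mathbb{Y}^{(4)}_j\cup\mathbb{Y}^{(5)}_j$ for $j=1,2$ yields
$$e_{\DenseSpots_\class}\big(\mathbb{Y}^{(4)}_1,\mathbb{Y}^{(4)}_2\cup\mathbb{Y}^{(5)}_2\big)+e_{\DenseSpots_\class}\big(\mathbb{Y}^{(4)}_1\cup\mathbb{Y}^{(5)}_1,\mathbb{Y}^{(4)}_2\big)\le \tfrac{2\rho kn}{100\Omega^*}\le \tfrac{\rho kn}{50}\,.$$
Subtracting this from the residual $4\rho kn$ edges leaves more than $2\rho kn$ edges entirely inside $\mathbb{Y}^{(5)}_1\times \mathbb{Y}^{(5)}_2$, which is exactly (t5).

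The main obstacle is the careful bookkeeping of the asymmetric thresholds in (t1)--(t3); once one verifies that every $\DenseSpots_\class$-edge with an endpoint in $\mathbb{Y}^{(1)}\cup\mathbb{Y}^{(2)}\cup\mathbb{Y}^{(3)}$ is picked up by at least one of the three sums, the remainder is a single one-line degree count. The reason a separate ``(t4)'' does not appear here (as it does in Lemma~\ref{lem:MajorityTypeCB}) is exactly that (wA) confines $\YA_1\cup\YA_2$ to $V(G)\setminus\HugeVertices$, which rules out the pathological high-degree interaction of $\smallatoms$ with $\HugeVertices$-vertices that is possible in the (wB) setting.
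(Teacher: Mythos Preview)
Your proof is correct and follows the same approach as the paper: cover all $\DenseSpots_\class$-edges between $\YA_1$ and $\YA_2$ by the five telescoped quantities, and bound the ``(t4)'' contribution using that vertices outside $\largeintoatoms$ have degree at most $\rho k/(100\Omega^*)$ into $\smallatoms\supset\mathbb{Y}^{(4)}$.

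One expository correction, though: your final paragraph misidentifies what distinguishes (wA) from (wB). In \emph{both} cases $\YA_1,\YA_2\subset\colouringp{0}\subset V(G)\setminus\HugeVertices$, so $\HugeVertices$ is not the issue. The real reason the (t4) contribution can be absorbed here is that in (wA) you partition \emph{both} sides into $\mathbb{Y}^{(j)}$-classes, so edges from $\mathbb{Y}^{(4)}_1$ can be bounded from the $\YA_2$-side (the vertices of $\mathbb{Y}^{(4)}_2\cup\mathbb{Y}^{(5)}_2$ avoid $\largeintoatoms$). In Lemma~\ref{lem:MajorityTypeCB} only $\YA_1$ is partitioned and $\YA_2$ (which sits in $\XB$) may well meet $\largeintoatoms$, so the analogous bound is unavailable and (t4) must be kept as a separate case.
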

\begin{proof}
By~\eqref{flordelino}, we only need to establish that 
$$e_{\DenseSpots_{\class}}\left(\mathbb{Y}^{(4)}_1,\YA_2\setminus (\mathbb{Y}^{(1)}_2\cup \mathbb{Y}^{(2)}_2\cup \mathbb{Y}^{(3)}_2)\right)
+
e_{\DenseSpots_{\class}}\left(\YA_1\setminus (\mathbb{Y}^{(1)}_1\cup \mathbb{Y}^{(2)}_1\cup \mathbb{Y}^{(3)}_1),\mathbb{Y}^{(4)}_2\right)< \rho kn\;.$$
For this, note that $\mathbb{Y}^{(4)}_1\subset\smallatoms$ and that $\YA_2\setminus (\mathbb{Y}^{(1)}_2\cup \mathbb{Y}^{(2)}_2\cup \mathbb{Y}^{(3)}_2)$ is disjoint from $\largeintoatoms$. Thus we have $e_{\DenseSpots_{\class}}\left(\mathbb{Y}^{(4)}_1,\YA_2\setminus (\mathbb{Y}^{(1)}_2\cup \mathbb{Y}^{(2)}_2\cup \mathbb{Y}^{(3)}_2)\right)<\frac{\rho k n}{100\Omega^*}$. We can bound the other summand using a symmetric argument.
\end{proof}

We can now provide a crucial step for finishing case~{\bf(K1)}.
\begin{lemma}\label{lem:Isabelle}
Let $G^*$ be the spanning subgraph of $\GD$ formed by the edges of
$\DenseSpots_\class$. If there are two disjoint sets $Z_1$ and $Z_2$ with
$e_{G^*}(Z_1,Z_2)\ge 2\rho kn$ then there exists an
$(\epsilonD,\frac{\gamma^3\rho}{32\Omega^*},\frac{\alphaD\rho
k}{\Omega^*})$-\semiregular matching $\mathcal N$ in $G^*$ with $V_i(\mathcal
N)\subset Z_i$ ($i=1,2$), and $|V(\mathcal N)|\ge\frac{\rho n}{\Omega^*}$.
\end{lemma}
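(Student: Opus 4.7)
The plan is to derive Lemma~\ref{lem:Isabelle} as a direct consequence of Lemma~\ref{lem:edgesEmanatingFromDensePairsIII}. I would invoke the latter with parameters $\bar\Omega:=\Omega^*$, $\bar\rho:=\gamma^3/8$, $\bar\nu:=\nu$, $\bar\tau:=2\rho$, $\bar\epsilon:=\epsilonD$. With these choices the three output parameters of Lemma~\ref{lem:edgesEmanatingFromDensePairsIII} match the targeted parameters $(\epsilonD,\frac{\gamma^3\rho}{32\Omega^*},\frac{\alphaD\rho k}{\Omega^*})$: the regularity is $\epsilonD$, the density lower bound is $\frac{\bar\tau\bar\rho}{8\bar\Omega}=\frac{\gamma^3\rho}{32\Omega^*}$, and the piece-size bound $\bar\alpha\bar\nu k$ exceeds $\frac{\alphaD\rho k}{\Omega^*}$ provided $\alphaD\le\bar\alpha\nu\Omega^*/\rho$ in the hierarchy~\eqref{eq:KONST} (the point being that $\bar\alpha$ depends only on the constants to the left of $\alphaD$ in~\eqref{eq:KONST}). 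Moreover $|V(\bar{\mathcal{M}})|\ge\frac{\bar\tau n}{2\bar\Omega}=\frac{\rho n}{\Omega^*}$ gives the desired lower bound directly.

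The tuple $(\bar G,\bar\DenseSpots,\bar H,\bar{\mathcal{A}})\in \mathcal{G}(n,k,\bar\Omega,\bar\rho,\bar\nu,\bar\tau)$ needed to apply Lemma~\ref{lem:edgesEmanatingFromDensePairsIII} is built as follows. Take $\bar G:=G^*$ and $\bar\DenseSpots:=\DenseSpots_\class$. By Lemma~\ref{lem:clean-spots}, $\DenseSpots_\class$ is a family of edge-disjoint $(\gamma^3k/4,\gamma/2)$-dense spots whose edges span $E(G^*)$, so it is a $(\bar\rho k,\bar\rho)$-dense cover of $\bar G$; and $\maxdeg(\bar G)\le\Omega^* k$ holds because $V(G^*)\subseteq V(G)\setminus\HugeVertices$, by Definition~\ref{sparseclassdef}\eqref{def:classgap}. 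For the ensemble I take $\bar{\mathcal{A}}:=\{C\in\clusters:C\cap Z_1\neq\emptyset\}$; by Definition~\ref{bclassdef}\eqref{Csize} each cluster has size at least $\nu k=\bar\nu k$, and by Definition~\ref{bclassdef}\eqref{defBC:prepartition} each cluster is atomic with respect to every dense spot in $\DenseSpots$, hence also in $\DenseSpots_\class$ (since $\DenseSpots_\class$ is absorbed by $\DenseSpots$, by Lemma~\ref{lem:clean-spots}). I take $\bar H:=G^*[A_H,B_H]$ with $A_H:=Z_1\cap\bigcup\clusters$ and $B_H:=Z_2$, so that $A_H\subseteq\bigcup\bar{\mathcal{A}}$ is automatic. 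Conclusion~\eqref{bedingung1} of Lemma~\ref{lem:edgesEmanatingFromDensePairsIII} then forces $V_1(\bar{\mathcal{M}})\subseteq A_H\subseteq Z_1$ and $V_2(\bar{\mathcal{M}})\subseteq B_H=Z_2$, as required for $\mathcal N$.

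The remaining step, which I expect to be the main technical obstacle, is verifying the edge count $e(\bar H)\ge\bar\tau kn=2\rho kn$. By Lemma~\ref{lem:clean-spots}, $E(G^*)\subseteq E(\Gblack)\cup E(G[\smallatoms,\smallatoms\cup\bigcup\clusters])$, so every edge of $G^*[Z_1,Z_2]$ has at least one endpoint in $\bigcup\clusters$ unless both endpoints lie in $\smallatoms$. Splitting the edges by which side carries the cluster endpoint, in the typical case a positive fraction have the $Z_1$-endpoint in a cluster and the above setup succeeds (possibly after absorbing a constant factor into $\bar\tau$ or $\bar\rho$); if instead the $Z_2$-endpoint is the cluster one for most edges, one exchanges the roles of $Z_1$ and $Z_2$, taking $A_H:=Z_2\cap\bigcup\clusters$, and then transposes the output matching to restore $V_i(\mathcal N)\subseteq Z_i$. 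The delicate subcase is when most of $e_{G^*}(Z_1,Z_2)$ sits between $Z_1\cap\smallatoms$ and $Z_2\cap\smallatoms$: the cluster ensemble is useless there, and one would instead locate a single dense spot $D=(U,W;F)\in\DenseSpots_\class$ capturing a large share of these edges and, using that $(U,W)$ has density at least $\gamma/2$ and $\mindeg(D)\ge\gamma^3k/4$, extract a single regular sub-pair inside $(U\cap Z_1\cap\smallatoms,W\cap Z_2\cap\smallatoms)$ of the required size and density by a direct application of the regularity lemma to the bipartite pair $(U,W)$.
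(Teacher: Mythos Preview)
Your approach has a genuine gap in the parameters. You claim the piece-size bound works ``provided $\alphaD\le\bar\alpha\nu\Omega^*/\rho$ in the hierarchy~\eqref{eq:KONST}'', but this cannot be arranged: in~\eqref{eq:KONST} the order is $\pi\ge\alphaD\ge\epsilon'\ge\nu$, so $\nu$ is fixed \emph{after} $\alphaD$, and the hierarchy only lets you bound each constant from above by functions of the constants to its \emph{left}. You cannot impose $\alphaD\le C\nu$ for any $C$ determined earlier. Concretely, your ensemble consists of clusters, which have size at most $\epsilon' k\le\alphaD k$ (Definition~\ref{bclassdef}\eqref{Csize} with the parameter $\epsilon'$ of Setting~\ref{commonsetting}), so the output piece size $\bar\alpha\nu k$ is inevitably far below the target $\frac{\alphaD\rho k}{\Omega^*}$.

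The paper sidesteps all of this with a much simpler ensemble: the singleton $\{Z_1\}$. Since $\maxdeg(G^*)\le\Omega^* k$ and $e_{G^*}(Z_1,Z_2)\ge 2\rho kn$, one gets $|Z_1|\ge\frac{2\rho n}{\Omega^*}\ge\frac{2\rho k}{\Omega^*}$, so $\{Z_1\}$ is a $(\rho k/\Omega^*)$-ensemble and one may take $\bar\nu=\rho/\Omega^*$. Now $\bar\alpha$ depends only on $\Omega^*,\gamma,\rho,\epsilonD$ --- all to the left of $\alphaD$ --- and one simply chooses $\alphaD\le\bar\alpha$, giving piece size $\bar\alpha\cdot\frac{\rho k}{\Omega^*}\ge\frac{\alphaD\rho k}{\Omega^*}$. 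With $A_H=Z_1$ there is no case split at all: no worry about whether endpoints lie in $\bigcup\clusters$ or in $\smallatoms$, and your ``delicate subcase'' evaporates. (That subcase was in any event not convincingly handled: a single regular pair carved from one dense spot in $\DenseSpots_\class$ has at most $O(\Omega^* k/\gamma)$ vertices by Fact~\ref{fact:sizedensespot}, nowhere near the required $\rho n/\Omega^*$.)
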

\begin{proof}
By~\eqref{eq:DenseSpotClassBoundedMaxDeg}, the maximum degree of $G^*$ is bounded by $\Omega^* k$.\Referee{(20)}
Therefore, we have $|Z_1|\ge \frac{2\rho n}{\Omega^*}\ge \frac{2\rho
k}{\Omega^*}$. Thus, $$(G^*,\DenseSpots_\class,G^*[Z_1,Z_2],\{Z_1\})\in\mathcal
G\left(v(\GD), k, \Omega^*, \frac{\gamma^3}4, \frac{\rho}{\Omega^*},2\rho\right)\;,$$
where the class of the right-hand side was defined in Definition~\ref{tupelclass}.
Lemma~\ref{lem:edgesEmanatingFromDensePairsIII} (which applies with these parameters by the choice of $\alphaD$ and $k_0$ by~\eqref{eq:KONST})
immediately gives the desired output.
\end{proof}
We use Lemma~\ref{lem:Isabelle} with $Z_1,Z_2$ being the pair of sets containing
many edges as in the cases {\bf(t1)}--{\bf(t3)} and {\bf(t5)} of
Lemma~\ref{lem:MajorityTypeCA}\footnote{The quantities in
Lemma~\ref{lem:MajorityTypeCA} have two summands. We take the sets $Z_1$,$Z_2$
as those appearing in the majority summand.} and {\bf(t1)}--{\bf(t5)} of
Lemma~\ref{lem:MajorityTypeCB}. The lemma outputs a \semiregular matching
$\M_\PARAMETERPASSING{L}{lem:ConfWhenMatching}:=\mathcal N_\PARAMETERPASSING{L}{lem:Isabelle}$. This matching is a basis of the input for Lemma~\ref{lem:ConfWhenMatching}{\bf(M2)} (subcase {\bf(t1)}--{\bf(t3)}, {\bf(t5)}, or {\bf(t3--5)}). Thus, we get one of the configurations $\mathbf{(\diamond6)}$--$\mathbf{(\diamond10)}$ as in the statement of the lemma. This finishes the proof for case {\bf(K1)}.

\bigskip

Let us now turn our attention to case {\bf(K2)}.
For every pair $(X,Y)\in \Mgood$, let $X'\subseteq X\colouringpI{0}\setminus
(\gP\cup \exceptVertSplit\cup \shadowsplit)$ and $Y'\subseteq Y\colouringpI{0}\setminus
(\gP\cup \exceptVertSplit\cup \shadowsplit)$ be maximal with $|X'|=|Y'|$. Define
$\mathcal N:=\{(X',Y')\::\: (X,Y)\in \Mgood\;,\; |X'|\ge \frac
{\eta^2\clustersize}{2\cdot 10^3}\}$. By Lemma~\ref{lem:RestrictionSemiregularMatching}, and using~\eqref{eq:KONST} and~\eqref{eq:proporcevelke},
we know that 
$$|V(\Mgood\colouringpI{0})|\ge  \frac {\eta^2n}{400}.$$ Therefore, we have
\begin{align}\label{campoafuera}
|V(\mathcal N)|&\ge |V(\Mgood\colouringpI{0})|-2|\gP\cup \exceptVertSplit\cup
\shadowsplit|-2\frac {\eta^2n}{2\cdot 10^3}\notag \\
\JUSTIFY{by {\bf(K2)}, \eqref{eq:sizeofP}, Def\ref{def:proportionalsplitting}\eqref{It:H1}, \eqref{eq:boundShadowsplit}}
&\ge \frac {\eta^2n}{400}-\frac {4\cdot
\eta^{10} n}{10^{21}(\Omega^*)^2}-4\varepsilon n-\frac {\eta^2n}{10^3}\notag \\&>
\frac {\eta^2n}{1000}\;.
\end{align}
By Fact~\ref{fact:BigSubpairsInRegularPairs}, $\mathcal N$ is a $(\frac {4\cdot
10^3\epsilon'}{\eta^2}, \frac{\gamma^2}2,\frac
{\eta^2\clustersize}{2\cdot 10^3})$-\semiregular matching.

We use the definitions of the sets $\mathbb{Y}^{(1)}_i,\ldots,\mathbb{Y}^{(5)}_i$ as given in~\eqref{eq:defY1Y5} with $\YA_i:=V_i(\mathcal N)$ ($i=1,2$). As $V(\mathcal N)\subset V(\Gblack)$, we have that $\mathbb{Y}^{(4)}_i=\emptyset$ ($i=1,2$).
A set $X\in \V_i(\mathcal N)$ is said to be of \emph{Type~1} if $\left|X\cap
\mathbb{Y}^{(1)}_i\right|\ge\frac14|X|$. 
Analogously, we define elements of $\V(\mathcal N)$ of \emph{Type~2}, \emph{Type~3}, and \emph{Type~5}.  

By~\eqref{campoafuera} and as $V(\Mgood)\subset \XA$, we are in subcase $\mathbf{(wA)}$. 
For each  $(X_1,X_2)\in\mathcal N$ with at least one $X_i\in \{X_1,X_2\}$
being of Type~1, set $X_i':=X_i \cap \mathbb{Y}^{(1)}_i$ 
and take an arbitrary set $X_{3-i}'\subset X_{3-i}$ of size $|X_i'|$. Note that
by Fact~\ref{fact:BigSubpairsInRegularPairs} $(X'_i,X_{3-i}')$ forms a
$\frac{10^5\epsilon'}{\eta^2}$-regular pair of density at least $\gamma^2/4$. We let $\mathcal N_1$ be the \semiregular matching consisting of all pairs
$(X'_i,X_{3-i}')$ obtained in this way.\footnote{Note that we are thus changing
the orientation of some subpairs.}

Likewise, we construct $\mathcal N_2,\mathcal N_3$ and $\mathcal N_5$ using the features of
Type~2, 3, and 5. Observe that the matchings $\mathcal N_i$ may intersect.

Because of~\eqref{campoafuera} and since we included at least one quarter of each $\mathcal N$-edge into one of
$\mathcal N_1,\mathcal N_2,\mathcal N_3$ and $\mathcal N_5$, one of the
\semiregular matchings $\mathcal N_i$ satisfies $|V(\mathcal N_i)|\geq \frac{\eta^2n}{16\cdot 1000} \geq \frac{\rho}{\Omega^*}n$. So, $\mathcal N_i$ serves as a matching $\M_\PARAMETERPASSING{L}{lem:ConfWhenMatching}$ for Lemma~\ref{lem:ConfWhenMatching}{\bf(M1)}. Thus, we get one of the configurations $\mathbf{(\diamond6)}$--$\mathbf{(\diamond10)}$ as in the statement of the lemma. This finishes case {\bf(K2)}.

\subsection{Proof of Lemma~\ref{lem:ConfWhenCXAXB}}\label{sssec:ProofConfWhenCXAXB}
Set $\tilde\eta:=\frac{\eta^{13}}{10^{28}(\Omega^*)^3}$. 
Define $\NUP:=\{v\in V(G)\::\: \deg_{\Gcapt}(v,\HugeVertices)\ge
k\}$, and $\NDOWN:=\neighbour_{\Gcapt}(\HugeVertices)\setminus \NUP$. 
Recall that by the definition of the class $\LKSsmallgraphs{n}{k}{\eta}$, the
set $\HugeVertices$ is independent, and thus the sets $\NUP$ and $\NDOWN$ are
disjoint from $\HugeVertices$. Also, using the same definition, we have
\begin{align} 
\label{eq:NCpodL} \neighbour_{\Gcapt}(\HugeVertices)&\subset
\largevertices{\eta}{k}{G}\setminus \HugeVertices\quad\mbox{, and thus }\\
\label{eq:MuzuProniknoutsL}
e_{\Gcapt}(\HugeVertices,
B)&=e_{\Gcapt}(\HugeVertices, B\cap\largevertices{\eta}{k}{G})\;\mbox{for any $B\subset V(G)$.}
\end{align}

We shall distinguish two cases.

\noindent\underline{\bf Case A:} $e_{\Gcapt}(\HugeVertices,\NUP)\ge
e_{\Gcapt}(\HugeVertices,\XA\cup\XB)/8$.\\ Let us focus on the bipartite
subgraph $H'$ of $\Gcapt$ induced by the sets $\HugeVertices$ and $\NUP$.
Obviously, the average degree of the vertices of $\NUP$ in $H'$ is at least $k$.

First, suppose that $|\HugeVertices|\le |\NUP|$. Then,
the average degree of $\HugeVertices$ in $H'$ is at least $k$, and hence, the
average degree of $H'$ is at least $k$. Thus, there exists a bipartite subgraph $H\subset H'$ with $\mindeg(H)\ge k/2$. Furthermore, $\mindeg_{\Gcapt}(V(H))\ge k$. We conclude that we are in Configuration~$\mathbf{(\diamond1)}$.

Now, suppose $|\HugeVertices|>|\NUP|$. Using the bounds given by Case~A, and using
\eqref{libelle7.38b}, we get $$|\NUP|\ge
\frac{e_{\Gcapt}(\HugeVertices,\NUP)}{\Omega^*k}\ge\frac{\tilde{\eta}kn}{8\Omega^*k}=\frac{\tilde{\eta}n}{8\Omega^*}\;.$$
Therefore, we have
 $$e(G)\ge\sum_{v\in\HugeVertices}\deg_{\Gcapt}(v)\ge
 |\HugeVertices|\Omega^{**}k>
 |\NUP|\Omega^{**}k\ge\frac{\tilde{\eta}n}{8\Omega^*}\Omega^{**}k\geByRef{eq:KONST}
 kn\;,$$ a contradiction to Property~\ref{def:LKSsmallC} of Definition~\ref{def:LKSsmall}.

\bigskip
\noindent\underline{\bf Case B:} $e_{\Gcapt}(\HugeVertices,\NUP)<
e_{\Gcapt}(\HugeVertices,\XA\cup\XB)/8$.\\ Consequently, we get
\begin{equation}\label{eq:eGCNUB}
e_{\Gcapt}(\HugeVertices,(\XA\cup\XB)\setminus \NUP)\ge
\frac78e_{\Gcapt}(\HugeVertices,\XA\cup\XB)\geByRef{libelle7.38b} \frac
78\tilde\eta kn\;.
\end{equation}

\def\Lenve{\PARAMETERPASSING{L}{lem:envelope}}

\Referee{(E)}We now apply Lemma~\ref{lem:envelope} to $\Gcapt$ with input sets $P_\Lenve:=\HugeVertices$, $Q_\Lenve:=\largevertices{\eta}{k}{G}\setminus \HugeVertices$,
$Y_\Lenve:=\largevertices{\eta}{k}{G}\setminus\largevertices{\frac9{10}\eta}{k}{\Gcapt}$,
and parameters $\psi_\Lenve:=\tilde \eta/100$, $\Gamma_\Lenve:=\Omega^*$,
$\Omega_\Lenve:=\Omega^{**}$, and 
 $\Omega'_\Lenve:=\tilde \eta^3\Omega^{**}/(4\cdot 10^6(\Omega^*)^2)$.
Assumption~\eqref{eq:envelopeAss1} of
the lemma follows from~\eqref{eq:NCpodL}\BUG{, and Assumption~\eqref{eq:jaksestavidum} holds by the choice of $\Omega'_\Lenve$}. The lemma yields three sets
$L'':=Q''_\Lenve$, $L':=Q'_\Lenve$, and $\HugeVertices':=P'_\Lenve$, and it is easy to check that they witness
Preconfiguration~$\mathbf{(\clubsuit)}( \frac{\tilde \eta^3\Omega^{**}}{4\cdot 10^6(\Omega^*)^2})$.

Recall that $e(G)\le kn$. Since by the definition of $Y_\Lenve$, we have
$|Y_\Lenve|\le\frac{40\rho}{\eta}n$,
we obtain from Lemma 5.1~\eqref{en:52e}
 that
\begin{align}
\BUG{e_{\Gcapt}(\HugeVertices,\largevertices{\eta}{k}{G})-e_{\Gcapt}(\HugeVertices',L'')}&\BUG{\le
\frac{\tilde \eta}{100}
e_{\Gcapt}(\HugeVertices,\largevertices{\eta}{k}{G})+|Y_\Lenve|\Omega^*
k}\nonumber\\
\nonumber &\BUG{\le \frac{\tilde
\eta}{100}kn+\frac{40\rho
n}{\eta}\cdot\Omega^*k}\\
&\leByRef{eq:KONST}
\frac{\tilde\eta}2 kn.\label{libelle7.36b}
\end{align}
 So, 
\begin{align}\nonumber
e_{\Gcapt}\big(\HugeVertices',(L''\cap(\XA\cup\XB))\setminus \NUP\big)
&\ge
e_{\Gcapt}\big(\HugeVertices,(\largevertices{\eta}{k}{G}\cap(\XA\cup\XB))\setminus
\NUP\big)
\nonumber\\
&~~-\big(e_{\Gcapt}(\HugeVertices,\largevertices{\eta}{k}{G}\big)-e_{\Gcapt}(\HugeVertices',L'')\big)
\nonumber\\
\nonumber 
&=e_{\Gcapt}(\HugeVertices,(\XA\cup\XB)\setminus
\NUP)
\\ \nonumber
&~~-\big(e_{\Gcapt}(\HugeVertices,\largevertices{\eta}{k}{G}\big)-e_{\Gcapt}(\HugeVertices',L'')\big)\\
&\nonumber\geByRef{libelle7.36b}e_{\Gcapt}(\HugeVertices,(\XA\cup\XB)\setminus \NUP)
-\frac{\tilde\eta}2 kn\\ &\geByRef{eq:eGCNUB} \frac38\tilde\eta kn\;.\label{eq:eGCNUP}
\end{align}

We define $$\HugeVertices^*:=\left\{v\in\HugeVertices'\::\: \deg_{\Gcapt}(v,L''\cap (\XA\cup\XB)\cap
\NDOWN)\ge\sqrt{\Omega^{**}}k\right\}\;.$$ 

Using that $e(G)\le kn$, we shall prove the following.
\begin{lemma}\label{lem:C*XAXBNDOWN}
We have 
$e_{\Gcapt}(\HugeVertices^*,L''\cap (\XA\cup\XB)\cap \NDOWN)\ge \frac18\tilde\eta kn$.
\end{lemma}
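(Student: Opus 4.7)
The plan is to combine the density estimate~\eqref{eq:eGCNUP} with a trivial upper bound on $|\HugeVertices|$ coming from the fact that $G\in\LKSsmallgraphs{n}{k}{\eta}$ (so $e(G)\le kn$).

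First I would observe the set inclusion that makes~\eqref{eq:eGCNUP} directly relevant to the target quantity. Any vertex of $L''\cap(\XA\cup\XB)$ that is adjacent in $\Gcapt$ to some vertex of $\HugeVertices'\subseteq\HugeVertices$ lies in $\neighbour_{\Gcapt}(\HugeVertices)$; if additionally it is not in $\NUP$, then by definition of $\NDOWN$ it belongs to $\NDOWN$. Therefore every edge counted in $e_{\Gcapt}(\HugeVertices',(L''\cap(\XA\cup\XB))\setminus\NUP)$ is in fact an edge into $L''\cap(\XA\cup\XB)\cap\NDOWN$, so~\eqref{eq:eGCNUP} gives
\begin{equation*}
 e_{\Gcapt}\bigl(\HugeVertices',\,L''\cap(\XA\cup\XB)\cap\NDOWN\bigr)\;\ge\;\tfrac38\tilde\eta kn.
\end{equation*}

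Next I would bound $|\HugeVertices|$. Since $\mindeg_G(\HugeVertices)\ge\Omega^{**}k$ and $e(G)\le kn$ by Property~\ref{def:LKSsmallC} of Definition~\ref{def:LKSsmall}, the handshake inequality yields
\begin{equation*}
 |\HugeVertices|\,\Omega^{**}k\;\le\;\sum_{v\in\HugeVertices}\deg_G(v)\;\le\;2e(G)\;\le\;2kn,
\end{equation*}
so $|\HugeVertices|\le 2n/\Omega^{**}$. Every vertex $v\in\HugeVertices'\setminus\HugeVertices^*$ contributes, by definition, fewer than $\sqrt{\Omega^{**}}k$ edges of $\Gcapt$ into $L''\cap(\XA\cup\XB)\cap\NDOWN$, so
\begin{equation*}
 e_{\Gcapt}\bigl(\HugeVertices'\setminus\HugeVertices^*,\,L''\cap(\XA\cup\XB)\cap\NDOWN\bigr)\;<\;|\HugeVertices|\sqrt{\Omega^{**}}k\;\le\;\frac{2kn}{\sqrt{\Omega^{**}}}\;\le\;\tfrac14\tilde\eta kn,
\end{equation*}
where the last inequality uses $\sqrt{\Omega^{**}}\ge 8/\tilde\eta$, which is guaranteed by the hierarchy~\eqref{eq:KONST} since $\tilde\eta=\eta^{13}/(10^{28}(\Omega^*)^3)$ depends only on $\eta$ and $\Omega^*$, whereas $\Omega^{**}$ is chosen much larger than any function of these.

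Subtracting the two displayed estimates completes the proof, giving $e_{\Gcapt}(\HugeVertices^*,L''\cap(\XA\cup\XB)\cap\NDOWN)\ge \tfrac38\tilde\eta kn-\tfrac14\tilde\eta kn=\tfrac18\tilde\eta kn$. There is no real obstacle here; the only point that needs a moment's care is checking that the constant hierarchy makes $\sqrt{\Omega^{**}}$ dominate $1/\tilde\eta$, which is immediate from $\frac{1}{\Omega^*}\gg\frac{1}{\Omega^{**}}$ in~\eqref{eq:KONST}.
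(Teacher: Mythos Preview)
Your proof is correct and follows essentially the same idea as the paper's: both use~\eqref{eq:eGCNUP} together with the bound $e(G)\le kn$ and $\mindeg_G(\HugeVertices)\ge\Omega^{**}k$ to control the contribution of $\HugeVertices'\setminus\HugeVertices^*$. The only cosmetic differences are that the paper argues by contradiction and uses the independence of $\HugeVertices$ to get $\sum_{v\in\HugeVertices}\deg_G(v)\le e(G)$ rather than your weaker handshake bound $\le 2e(G)$; the extra factor of~2 is harmless given the generous hierarchy.
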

\begin{proof}
Suppose otherwise. Then by~\eqref{eq:eGCNUP}, we obtain that
 $$e_{\Gcapt}(\HugeVertices'\setminus \HugeVertices^*,L''\cap (\XA\cup\XB)\cap\NDOWN)
 \ge \frac14\tilde\eta kn\;.$$ On the other hand, by the definition of $\HugeVertices^*$,
 $$|\HugeVertices'\setminus\HugeVertices^*|\sqrt{\Omega^{**}}k\ge e_{\Gcapt}(\HugeVertices'\setminus\HugeVertices^*,L''\cap (\XA\cup\XB)\cap\NDOWN)\;.$$ 
Consequently, we have $$|\HugeVertices'\setminus\HugeVertices^*|\ge\frac{\tilde\eta kn}{4\sqrt{\Omega^{**}}k}=\frac{\tilde\eta n}{4\sqrt{\Omega^{**}}}\;.$$ 
Thus, as $\HugeVertices$ is independent, $$e(G)\ge \sum_{v\in\HugeVertices}\deg_{\Gcapt}(v)\ge |\HugeVertices|\Omega^{**}k\ge |\HugeVertices'\setminus\HugeVertices^*|\Omega^{**}k\ge \frac{\tilde\eta}4 \sqrt{\Omega^{**}}kn\gByRef{eq:KONST} kn\;,$$ a contradiction.
\end{proof}

Let us define $O:=\shadow_{\Gcapt}(\smallatoms,\gamma k)$. 
Next, we define \begin{align*}
N_1&:=V(\Gexp)\cap L''\cap (\XA\cup\XB)\cap \NDOWN\;,\\
N_2&:=\smallatoms\cap L''\cap 
(\XA\cup\XB)\cap \NDOWN \;,\\
N_3&:=O\cap L''\cap  (\XA\cup\XB)\cap \NDOWN\;\mbox{, and}\\
N_4&:=(L''\cap (\XA\cup\XB)\cap
\NDOWN)\setminus(N_1\cup N_2\cup N_3)\;.
                \end{align*}
Observe that
\begin{equation}\label{eq:ON4}
O\cap N_4=\emptyset\;.
\end{equation}
Further, for $i=1,\ldots,4$ define
$$C_i:=\left\{v\in\HugeVertices^*\::\: \deg_{\Gcapt}(v, N_i)\ge \deg_{\Gcapt}(v, L''\cap (\XA\cup\XB)\cap
\NDOWN)/4\right\}\;.$$ 
An easy calculation gives that there exists an index $i\in[4]$ such
that
\begin{equation}\label{eq:OneInFour}
e_{\Gcapt}(C_i,N_i)\ge \frac1{16}e_{\Gcapt}(\HugeVertices^*,L''\cap (\XA\cup\XB)\cap
\NDOWN)\geBy{L\ref{lem:C*XAXBNDOWN}}\frac1{128}\tilde\eta kn\;.
\end{equation}

\def\LSP{\PARAMETERPASSING{L}{lem:clean-C+black}}
\def\L74{\PARAMETERPASSING{L}{lem:clean-C+yellow}}
Set $Y:=(\XA\cup\XB)\setminus (\YB\cup\HugeVertices)=(\XA\cup\XB)\setminus \YB$, and
$\eta_\L74=\eta_\LSP:=\frac{1}{128}\tilde\eta$. By Lemma~\ref{lem:YAYB} we have
\begin{equation}\label{eq:Ysmall}
|Y|< \frac{\eta_\L74 n}{4\Omega^*}\;.
\end{equation}

We split the rest of the proof into four subcases according to the value of $i$.

\noindent\underline{\bf Subcase B, $i=1$.}\\
We shall apply Lemma~\ref{lem:clean-C+yellow} with
 $r_\L74:=2$,
$\Omega^*_\L74:=\Omega^*,\Omega^{**}_\L74:=\sqrt{\Omega^{**}}/4$,
$\delta_\L74:=\frac{\eta_\L74\rho^2}{100(\Omega^*)^2}$, $\gamma_\L74:=\rho$,
$\eta_\L74$, $X_0:=C_1$, $X_1:=N_1$, and $X_2:=V(\Gexp)$, and
$Y$, and the graph $G_\L74$, which is formed by the vertices of $G$, with all edges from $E(\Gcapt)$ that are
in $E(\Gexp)$ or that  are incident with $\HugeVertices$. We briefly
verify the assumptions of Lemma~\ref{lem:clean-C+yellow}. First of all the choice of  $\delta_\L74$ guarantees
that $\left(\frac{3\Omega_\L74^*}{\gamma_\L74}\right)^2\delta_\L74<\frac{\eta_\L74}{10}$. Assumption~\ref{hyp:Ysmall} is
given by~\eqref{eq:Ysmall}. Assumption~\ref{hyp:C+y-edges} holds since we assume 
that~\eqref{eq:OneInFour} is satisfied for $i=1$ and by definition of
$\eta_\L74$. Assumption~\ref{hyp:C+y-large} follows from the definitions of $C_1$ and of $\HugeVertices^*$. Assumption~\ref{hyp:C+y-deg} follows from the fact that $X_1\subset V(\Gexp)=X_2$, and since $\mindeg(\Gexp)>\rho k$ which is guaranteed by the definition of
a $(k,\Omega^{**},\Omega^*,\Lambda,\gamma,\epsilon',\nu,\rho)$-sparse decomposition. This definition also guarantees 
Assumption~\ref{hyp:C+y-bounded}, as $Y\cup X_1\cup X_2\subset V(G)\setminus \HugeVertices$.

Lemma~\ref{lem:clean-C+yellow} outputs sets $\HugeVertices'':=X_0'$, $V_1:=X_1'$,
$V_2:=X_2'$ with $\mindeg_{\Gcapt}(\HugeVertices'', V_1)\ge \sqrt[4]{\Omega^{**}}k/2$ (by~\eqref{conc:C+y-large}),
$\maxdeg_{\Gexp}(V_1,X_2\setminus V_2)< \rho k/2$ (by~\eqref{conc:C+y-avoid}),
$\mindeg_{\Gcapt}(V_1,\HugeVertices'')\ge \delta_\L74 k$ (by~\eqref{conc:C+y-deg}), and
$\mindeg_{\Gexp}(V_2,V_1)\ge \delta_\L74 k$ (by~\eqref{conc:C+y-deg}).
By~\eqref{conc:X1Ydisj},  we have that $V_1\subset \YB\cap
L''$. As
$\mindeg_{\Gexp}(V_1,X_2)\ge \mindeg(\Gexp)\ge \rho k$, we have $\mindeg_{\Gexp}(V_1,V_2)\ge \mindeg_{\Gexp}(V_1,X_2)-\maxdeg_{\Gexp}(V_1,X_2\setminus
V_2)\ge \delta_\L74 k$. 

Since $L'$, $L''$,  $\HugeVertices'$
 witness
Preconfiguration~$\mathbf{(\clubsuit)}(\frac{\tilde \eta^{3}\Omega^{**}}{4\cdot 10^6(\Omega^*)^2})$,
 this verifies that we have Configuration $\mathbf{(\diamond2)}\left(\frac{\tilde \eta^{3}\Omega^{**}}{4\cdot
10^{6}(\Omega^*)^{2}},\sqrt[4]{\Omega^{**}}/2,\frac{\tilde\eta\rho^2}{12800(\Omega^*)^2}\right)$.

\noindent\underline{\bf Subcase B, $i=2$.}\\
We apply Lemma~\ref{lem:clean-C+yellow} with numerical parameters $r_\L74:=2$,
$\Omega^*_\L74:=\Omega^*$, $\Omega^{**}_\L74:=\sqrt{\Omega^{**}}/4$,
$\delta_\L74:=\frac{\eta_\L74
\gamma^2}{100(\Omega^*)^2}$, $\gamma_\L74:=\gamma$, and $\eta_\L74$. Further input
to the lemma are sets $X_0:=C_2$, $X_1:=N_2$, and $X_2:=V(G)\setminus
\HugeVertices$, and the set $Y$. The underlying graph $G_\L74$ is the graph
$\GD$ with all edges incident with $\HugeVertices$ added. Verifying assumptions
of Lemma~\ref{lem:clean-C+yellow} is analogous to Subcase~B, $i=1$ with the exception of Assumption 4. To verify this, it suffices to observe that each vertex in $X_1$ is contained
in at least one $(\gamma k,\gamma)$-dense spot from $\DenseSpots$
(cf.~Definition~\ref{def:avoiding}), and thus has degree at least $\gamma k$ in
$X_2$.
 
Lemma~\ref{lem:clean-C+yellow} outputs sets $X_0',X_1'$, and $X_2'$ which witness
Configuration~$\mathbf{(\diamond3)}(\frac{\tilde \eta^{3}\Omega^{**}}{4\cdot
10^{6}(\Omega^*)^{2}}$, $\sqrt[4]{\Omega^{**}}/2$, $\gamma/2$, $\frac{\tilde\eta\gamma^2}{12800(\Omega^*)^2})$. In fact, the only thing not analogous to the preceding subcase is that we have to check~\eqref{eq:WHtc}. In other words, we have to verify that $$\maxdeg_{\GD}\big(X_1', V(G)\setminus (X_2'\cup\HugeVertices)\big)\leq \frac{\gamma k}2\;.$$ As $V(G)\setminus (X_2'\cup\HugeVertices)=X_2\setminus X'_2$, this follows from~\eqref{conc:C+y-avoid} of Lemma~\ref{lem:clean-C+yellow}.

\noindent\underline{\bf Subcase B, $i=3$.}\\
We apply Lemma~\ref{lem:clean-C+yellow} with numerical parameters $r_\L74:=3$,
$\Omega^*_\L74:=\Omega^*$, $\Omega^{**}_\L74:=\sqrt{\Omega^{**}}/4$, 
$\delta_\L74:=\frac{\eta_\L74\gamma^3}{300(\Omega^*)^3}$, $\gamma_\L74:=\gamma$,
and $\eta_\L74$. Further inputs are the sets $X_0:=C_3$, $X_1:=N_3$, $X_2:=\smallatoms$, and $X_3:=V(G)\setminus \HugeVertices$, and the set $Y$. The underlying graph is
$G_\L74:=\Gcapt\cup\GD$.
Verifying assumptions Lemma~\ref{lem:clean-C+yellow} is analogous to Subcase~B,
$i=1$, only for Assumption 4 we observe that $\mindeg_{\Gcapt\cup\GD}
(X_1,X_2)\ge\mindeg_{\Gcapt}
(X_1,X_2)\geq\gamma k$ by definition of $X_1=N_3\subset O$, and
$\mindeg_{\Gcapt\cup\GD}(X_2,X_3)\ge \mindeg_{\GD}(X_2,X_3)\ge\gamma
k$ for the same reason as in Subcase~B, $i=2$.
  
Lemma~\ref{lem:clean-C+yellow} outputs
Configuration~$\mathbf{(\diamond4)}\left( \frac{\tilde \eta^{3}\Omega^{**}}{4\cdot
10^{6}(\Omega^*)^{2}},\sqrt[4]{\Omega^{**}}/2,\gamma/2,\frac{\tilde\eta\gamma^3}{38400(\Omega^*)^3}\right)$, with $\HugeVertices'':=X_0'$, $V_1:=X_1'$, $\smallatoms':=X'_2$ and $V_2:=X'_3$. Indeed, all calculations are similar to the ones in the preceding two subcases, we only need to note additionally that $\mindeg_{\Gcapt\cup\GD}(V_1,\smallatoms')\geq \frac{\gamma k}{2} \frac{\tilde\eta\gamma^3 k}{38400(\Omega^*)^3}$, which follows from the definition of $N_3$ and of $O$.

\noindent\underline{\bf Subcase B, $i=4$.}\\
We have $\clusters\neq\emptyset$ and $\clustersize$ is the size of an arbitrary cluster in $\clusters$. We are going to apply
Lemma~\ref{lem:clean-C+black} with $\delta_\LSP:=\eta_\LSP/100$, $\eta_\LSP$,
$h_\LSP:=\eta_\LSP \clustersize/(100\Omega^*)$, $\Omega^*_\LSP:=\Omega^*$,
$\Omega^{**}_\LSP:=\sqrt{\Omega^{**}}/4$ and sets $X_0:=C_4$, $X_1:=N_4$, and
$Y$. The underlying graph is $G_\LSP:=\Gcapt$, and $\mathcal C_\LSP$ is the set
of clusters $\clusters$.

The fact $e(G)\leq kn$ together with~\eqref{eq:OneInFour} and the choice of $\eta_\LSP$ gives
Assumption~\ref{hyp:C+b-edges} of  Lemma~\ref{lem:clean-C+black}. The choice of
$C_4$ and $\HugeVertices^*$ ensures Assumption~\ref{hyp:C+b-large}. The fact that
$X_1\cap \HugeVertices=\emptyset$ yields Assumption~\ref{hyp:C+b-bounded}. With the
help of~\eqref{eq:KONST} it is easy to check Assumption~\ref{hypfburg}.
Inequality~\eqref{eq:Ysmall} implies Assumption~\ref{hypfburg5}.
To
verify Assumption~\ref{hypfburg6}, it
is enough to use that $|\mathcal C_\LSP|\le \frac{n}\clustersize$. We have thus verified all the
assumptions of Lemma~\ref{lem:clean-C+black}.

We claim that Lemma~\ref{lem:clean-C+black} outputs Configuration
$\mathbf{(\diamond5)}\Big(\frac{\tilde \eta^{3}\Omega^{**}}{4\cdot
10^{6}(\Omega^*)^{2}}$,
$\sqrt[4]{\Omega^{**}}/2$, $\frac{\tilde\eta}{12800}$, $\frac\eta2,\frac{\tilde\eta}{12800\Omega^*}\Big)$, with $\HugeVertices'':=X_0'$ and $V_1:=X_1'$.
In fact,  all conditions of the configuration, except 
condition~\eqref{confi5last}, which we check below, are easy to verify. (Note that $V_1\subseteq \YB$ since $V_1\subseteq X_1=N_4\subseteq\XA\cup\XB$. Also,  $V_1\subseteq L''$, and thus $V_1$ is disjoint from $\HugeVertices$. Moreover, by the conditions of Lemma~\ref{lem:clean-C+black}, $V_1$ is disjoint from $Y$. So, $V_1\subset\YB$.) 
For~\eqref{confi5last}, observe that~\eqref{eq:ON4}
implies that $\maxdeg_{\Gcapt}(N_4,\smallatoms)\leq\gamma k$. Further, we have
$X_1'\subset N_4\setminus Y$. So for all $x\in X_1'\subseteq
\NDOWN\setminus Y$, we have that $\deg_{\Gcapt}(x,V(G)\setminus \HugeVertices)\ge \frac{9\eta k}{10}$. As $N_4\subseteq \bigcup\clusters\setminus
V(\Gexp)$, we obtain $\deg_{\Gblack}(x)\ge \frac{9\eta
k}{10}-\gamma k\ge\frac{\eta k}2$, satisfying~\eqref{confi5last}.

\subsection{Proof of Lemma~\ref{lem:ConfWhenNOTCXAXB}}\label{sssec:ProofConfWhenNOTCXAXB}
Set $\YA_1':=\{v\in\YA_1\::\:\deg_{\Gexp}(v,\YA_2)\ge\rho k\}$. By~\eqref{eq:manyXAXAXBobt} we have
\begin{align}
\label{eq:manyXAXAXBobtLP}
e_{\Gexp}(\YA_1',\YA_2)&\ge \rho kn\;.
\end{align}

Set $r_\PARAMETERPASSING{L}{lem:clean-yellow}:=3$,
$\Omega_\PARAMETERPASSING{L}{lem:clean-yellow}:=\Omega^*$,
$\gamma_\PARAMETERPASSING{L}{lem:clean-yellow}:=\frac{\rho\eta}{10^3}$, 
$\delta_\PARAMETERPASSING{L}{lem:clean-yellow}:=\frac
{\eta^3\rho^4}{10^{14}(\Omega^*)^3}$,
$\eta_\PARAMETERPASSING{L}{lem:clean-yellow}:=\rho$.
Observe that~\eqref{eq:condCY} is satisfied for these parameters.
Set $Y_\PARAMETERPASSING{L}{lem:clean-yellow}:=\exceptVertSplit$, $X_0:=\YA_2$,
$X_1:=\YA_1'$, $X_2=X_3:=V(\Gexp)\colouringpI{1}$, and $V:=V(G)$.
Let $E_2:=E(\Gcapt)$, and $E_1=E_3:=E(\Gexp)$. We now briefly verify
conditions~\ref{hyp:yel-Ysmall}--\ref{hyp:yel-bounded} of Lemma~\ref{lem:clean-yellow}.
Condition~\ref{hyp:yel-Ysmall} follows from 
Definition~\ref{def:proportionalsplitting}\eqref{It:H1} and~\eqref{eq:KONST}.
Condition~\ref{hyp:yel-edges} follows from~\eqref{eq:manyXAXAXBobtLP}. 
Using Definition~\ref{def:proportionalsplitting}\eqref{It:H5},~\eqref{eq:proporcevelke} and~\eqref{eq:KONST}, we see that
Condition~\ref{hyp:yel-deg} for $i=1$ follows from the definition of $\YA_1'$, and for $i=2$ from the fact that $\mindeg(\Gexp)\ge\rho k$.
Lastly, Condition~\ref{hyp:yel-bounded} follows from the fact that
$\bigcup_{i=0}^3 X_i$ is disjoint from $\HugeVertices$. 

Lemma~\ref{lem:clean-yellow} yields four non-empty
sets $X_0',\ldots,X_3'$. By 
assertions~\eqref{conc:yel-deg},~\eqref{conc:yel-avoid},~\eqref{conc:yel-X0X1}, and
hypothesis~\ref{hyp:yel-deg} of Lemma~\ref{lem:clean-yellow},
for all $i\in\{0,1,2,3\}$, $j\in\{i-1, i+1\}\sm \{-1,4\}$ we have
\begin{equation}
\label{eq:towardsD62D63Exp}
\mindeg_{H_{i,j}}(X_i',X_j')\ge
\delta_\PARAMETERPASSING{L}{lem:clean-yellow} k,
\end{equation}
where $H_{i,j}=\Gexp$, except for $\{i,j\}=\{1,2\}$, where $H_{i,j}=G_\class$.

Thus, the sets $X'_0$ and $X'_1$ witness
Preconfiguration~$\mathbf{(exp)}(\delta_\PARAMETERPASSING{L}{lem:clean-yellow})$.
By Lemma~\ref{lem:propertyYA12}, and by~\eqref{2ndcondiObt2} and~\eqref{3rdcondiObt2}, the pair
$X_0',X_1'$ together with the cover $\mathcal F$ from~\eqref{def:Fcover} witnesses either 
Preconfiguration~$\mathbf{(\heartsuit1)}(\frac
{3\eta^3}{2\cdot 10^3},\proporce{2}\left(1+\frac{\eta}{20}\right)k)$ (with respect to $\mathcal F$)
or Preconfiguration~$\mathbf{(\heartsuit2)}(\proporce{2}\left(1+\frac{\eta}{20}\right)k)$.

Notice that~\eqref{eq:towardsD62D63Exp}
establishes the properties~\eqref{COND:D6:1}--\eqref{COND:D6:4}. Thus the sets $X_0',\ldots,X_3'$ witness
Configuration~$\mathbf{(\diamond6)}(\delta_\PARAMETERPASSING{L}{lem:clean-yellow},0,1,1,
\frac{3\eta^3}{2\cdot 10^3},\proporce{2}\left(1+\frac{\eta}{20}\right)k)$.

\HIDDENTEXT{The old version (something might be wanted to be copied from it for OBT in HIDDENTEXT.txt, ``PALOALTO''}

\subsection{Proof of Lemma~\ref{lem:ConfWhenMatching}}\label{sssec:ProofConfWhenMatching}
In Lemmas~\ref{whatwegetfrom(t1)}, \ref{whatwegetfrom(t2)}, \ref{whatwegetfrom(t3)}, \ref{ObtConf9}, \ref{whatwegetfrom(t5)} below, we show that cases $\mathbf{(t1)}$, $\mathbf{(t2)}$, $\mathbf{(t3)}$, $\textbf{(t3--t5)}$, and $\mathbf{(t5)}$ of Lemma~\ref{lem:ConfWhenMatching}
lead to configuration $\mathbf{(\diamond6)}$, $\mathbf{(\diamond7)}$, $\mathbf{(\diamond8)}$, $\mathbf{(\diamond9)}$, and $\mathbf{(\diamond10)}$, respectively. While the first three of these cases are handled by a fairly straightforward application of the Cleaning Lemma (Lemma~\ref{lem:clean-Match}), the latter two cases require some further non-trivial computations.

\def\MgoodR{\Mgood\colouringpI{0}} 

\begin{lemma}\label{whatwegetfrom(t1)} 
In case $\mathbf{(t1)}$ (of either subcase $\mathbf{(cA)}$ or subcase $\mathbf{(cB)}$) we obtain Configuration
$\mathbf{(\diamond6)}\big(\frac{\eta^3\rho^4}{10^{12}(\Omega^*)^4},4\epsilonD, \frac{\gamma^3\rho}{32\Omega^*}, 
 \frac{\eta^2\nu}{2\cdot10^4}, \frac{3\eta^3}{2000},
\proporce{2}(1+\frac\eta{20})k\big)$.
\end{lemma}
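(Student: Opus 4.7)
The plan is to derive Configuration $\mathbf{(\diamond6)}$ from a single application of Lemma~\ref{lem:clean-Match}, producing a chain $V_0$--$V_1$--$V_2$--$V_3$ in which $(V_0,V_1)$ comes from a super-regularized version of the matching $\M$ (witnessing Preconfiguration~$\mathbf{(reg)}$), $V_2\subset V(\Gexp)\colouringpI{1}$ is reached from $V_1$ via the shadow hypothesis of case~$\mathbf{(t1)}$, and $V_3\subset V(\Gexp)\colouringpI{1}$ is reached from $V_2$ via $\mindeg(\Gexp)>\rho k$. Preconfiguration~$\mathbf{(\heartsuit1)}$ (in case~$\mathbf{(cB)}$) or $\mathbf{(\heartsuit2)}$ (in case~$\mathbf{(cA)}$) will then follow from Lemma~\ref{lem:propertyYA12}.

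Concretely, I apply Lemma~\ref{lem:clean-Match} with $r_\PARAMETERPASSING{L}{lem:clean-Match}:=3$, $X_0:=V_2(\M)$, $X_1:=V_1(\M)$, $X_2:=X_3:=V(\Gexp)\colouringpI{1}$ (the same underlying set carried as two separate indices, which the lemma permits because it does not require the $X_i$ to be disjoint), $Y:=\exceptVertSplit\cup\shadowsplit$, $E_1:=E(\M)$, $E_2:=E(\Gcapt)$, $E_3:=E(\Gexp)$, and $\{(P_0^{(j)},P_1^{(j)})\}:=\M$. The numerical parameters are $\Omega_\PARAMETERPASSING{L}{lem:clean-Match}:=\Omega^*$, $\gamma_\PARAMETERPASSING{L}{lem:clean-Match}:=\eta\rho/200$, $\delta_\PARAMETERPASSING{L}{lem:clean-Match}:=\eta^3\rho^4/(10^{12}(\Omega^*)^4)$, $\eta_\PARAMETERPASSING{L}{lem:clean-Match}:=\rho/(2\Omega^*)$, $\epsilon_\PARAMETERPASSING{L}{lem:clean-Match}:=\epsilonD$, $\mu_\PARAMETERPASSING{L}{lem:clean-Match}:=\eta^2\nu/10^4$, and $d_\PARAMETERPASSING{L}{lem:clean-Match}:=\bar d$. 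The smallness condition~\eqref{eq:condCYmatch} is easily checked since $(8\Omega^*/\gamma_\PARAMETERPASSING{L}{lem:clean-Match})^3\delta_\PARAMETERPASSING{L}{lem:clean-Match}$ is of order $\rho/\Omega^*$. Hypothesis~\ref{hyp:Match-Ysmall} follows from Definition~\ref{def:proportionalsplitting}\eqref{It:H1} and~\eqref{eq:boundShadowsplit}; \ref{hyp:Match-edges} from $|V(\M)|\geq\rho n/\Omega^*$; \ref{hyp:Match-deg} for $i=1$ combines the shadow assumption with Definition~\ref{def:proportionalsplitting}\eqref{It:H5}, and for $i=2$ uses $\mindeg(\Gexp)>\rho k$ and the same splitting property; \ref{hyp:Match-reg} is exactly what $\mathbf{(M1)}$ or $\mathbf{(M2)}$ provides (after routine inequality checks on $\bar\epsilon$, $\bar d$, $\beta$); and \ref{hyp:Match-bounded} follows from the max-degree bound in the sparse decomposition outside $\HugeVertices$.

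Set $V_0:=\bigcup Q_0^{(j)}$, $V_1:=\bigcup Q_1^{(j)}$, $V_2:=X_2'$, $V_3:=X_3'$. The output super-regular pairs supply Preconfiguration~$\mathbf{(reg)}(4\epsilonD,\gamma^3\rho/(32\Omega^*),\eta^2\nu/(2\cdot 10^4))$. Three of the four min-degree conditions of $\mathbf{(\diamond6)}$ come directly from conclusion~(b) of Lemma~\ref{lem:clean-Match}; the missing one, $\mindeg_{\Gexp}(V_2,V_3)\geq\delta k$, follows because each $v\in V_2$ has $\deg_{\Gexp}(v,X_3)\geq\gamma_\PARAMETERPASSING{L}{lem:clean-Match} k$ (the same estimate used to verify~\ref{hyp:Match-deg} for $i=2$), while conclusion~(c) discards at most $\gamma_\PARAMETERPASSING{L}{lem:clean-Match} k/2$ of those $\Gexp$-neighbours. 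For the $\mathbf{(\heartsuit)}$-preconfiguration I invoke Lemma~\ref{lem:propertyYA12}: in case~$\mathbf{(cA)}$ both $V_0,V_1\subset\XA$ and~\eqref{eq:propertyYA12cA} gives $\mathbf{(\heartsuit2)}(h_2)$; in case~$\mathbf{(cB)}$, $V_1\subset\XA\setminus(\gP_2\cup\gP_3)$ and $V_0\subset\XB$, so~\eqref{eq:propertyYA12cA}, \eqref{eq:propertyYA12cB2} and~\eqref{eq:propertyYA12cB3} (relative to the cover $\mathcal F$ of~\eqref{def:Fcover}) yield $\mathbf{(\heartsuit1)}(3\eta^3/2000,h_2)$. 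The required containment $V_0,V_1\subset\colouringp{0}\setminus(\shadowsplit\cup\shadow_{\GD}(\WantiC,\eta^2 k/10^5))$ is automatic since the latter shadow is absorbed by $\gP$, which $\M$ avoids by hypothesis. The main technical hurdle is the density bookkeeping in case~$\mathbf{(M2)}$, where the factor-$1/4$ density loss inherent to Lemma~\ref{lem:clean-Match} must be reconciled with the declared output density $\gamma^3\rho/(32\Omega^*)$ of the super-regular pairs.
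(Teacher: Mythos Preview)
Your approach matches the paper's essentially line for line: the same call to Lemma~\ref{lem:clean-Match} with $r=3$, the same sets $X_0,\dots,X_3$, the same edge sets $E_1,E_2,E_3$ and exception set $Y=\exceptVertSplit\cup\shadowsplit$, followed by Lemma~\ref{lem:propertyYA12} for the $\mathbf{(\heartsuit)}$-preconfiguration (the paper merely does the parameter conversion to $4\epsilonD$, $\gamma^3\rho/(32\Omega^*)$, $\eta^2\nu/(2\cdot10^4)$ at the end rather than upfront). One small slip: with $r=3$, conclusion~(b) of Lemma~\ref{lem:clean-Match} ranges over $i\in[r-1]=\{1,2\}$ and hence supplies only \emph{two} of the four degree conditions of $\mathbf{(\diamond6)}$ (namely~\eqref{COND:D6:2} and~\eqref{COND:D6:4}), not three --- condition~\eqref{COND:D6:1} requires exactly the same hypothesis-plus-(c) argument you correctly spell out for~\eqref{COND:D6:3}.
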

\begin{proof}

 We use Lemma~\ref{lem:clean-Match}
with the following input parameters:
$r_\PARAMETERPASSING{L}{lem:clean-Match}:=3$,
$\Omega_\PARAMETERPASSING{L}{lem:clean-Match}:=\Omega^*$,
$\gamma_\PARAMETERPASSING{L}{lem:clean-Match}:=\eta\rho/200$,
$\eta_\PARAMETERPASSING{L}{lem:clean-Match}:=\rho/(2\Omega^*)$,
$\delta_\PARAMETERPASSING{L}{lem:clean-Match}:=\eta^3\rho^4/(10^{12}(\Omega^*)^4)$,
$\epsilon_\PARAMETERPASSING{L}{lem:clean-Match}:=\bar{\epsilon}$,
$\mu_\PARAMETERPASSING{L}{lem:clean-Match}:=\beta$ and 
$d_\PARAMETERPASSING{L}{lem:clean-Match}:=\bar{d}$. Note these parameters
satisfy the numerical conditions of Lemma~\ref{lem:clean-Match}. We use the vertex sets
$Y_\PARAMETERPASSING{L}{lem:clean-Match}:=\exceptVertSplit\cup \shadowsplit$, $X_0:=V_2(\M)$,
$X_1:=V_1(\M)$, $X_2=X_3:=V(\Gexp)\colouringpI{1}$, and $V:=V(G)$. 
The partitions of $X_0$
and $X_1$ in Lemma~\ref{lem:clean-Match} are the ones induced by $\V(\M)$, and
the set $E_1$ consists of all edges from $E(\DenseSpots_\class)$ between pairs from $\M$. 
Further, set $E_2:=E(\Gcapt)$ and $E_3:=E(\Gexp)$. 
 
Let us verify the conditions of Lemma~\ref{lem:clean-Match}.
Condition~\ref{hyp:Match-Ysmall} follows from 
Definition~\ref{def:proportionalsplitting}\eqref{It:H1} and~\eqref{eq:boundShadowsplit}.
Condition~\ref{hyp:Match-edges} holds by the assumption on $\M$.
Condition~\ref{hyp:Match-deg} follows from Definition~\ref{def:proportionalsplitting}\eqref{It:H5}
by~\eqref{eq:proporcevelke}, and for $i=1$ also from the definition of $\M$.
Condition~\ref{hyp:Match-reg} holds by the definition of $\M$. Finally,
Condition~\ref{hyp:Match-bounded} follows from the properties of the sparse
decomposition~$\class$.

Lemma~\ref{lem:clean-Match} outputs four sets $X'_0,\ldots,X'_3$. By
Lemma~\ref{lem:propertyYA12}, the sets $X_0'$ and $X_1'$ witness
Preconfiguration~$\mathbf{(\heartsuit
1)}(3\eta^3/(2\cdot 10^3),\proporce{2}\left(1+\frac{\eta}{20}\right)k)$, or
$\mathbf{(\heartsuit 2)}(\proporce{2}\left(1+\frac{\eta}{20}\right)k)$.
Further, Lemma~\ref{lem:clean-Match}\eqref{conc:Match-superreg} gives that $(X_0',X_1')$ witnesses Preconfiguration
$\mathbf{(reg)}(4\bar{\epsilon},\bar{d}/4,\beta/2)$.
 It is now easy to verify that we have Configuration
$\mathbf{(\diamond6)}\big(\frac{\eta^3\rho^4}{10^{12}(\Omega^*)^4},4\bar{\epsilon},\frac
{\bar{d}}{4},\frac{\beta}{2},\frac{3\eta^3}{2\cdot 10^3},
\proporce{2}(1+\frac\eta{20})k\big)$.

This leads to
Configuration~$\mathbf{(\diamond6)}$ with parameters as claimed. Indeed, no matter whether we have {\bf(M1)} or
{\bf(M2)}, we have $4\epsilonD\ge 4\cdot \frac{10^5\epsilon'}{\eta^2}$, and
$\gamma^3\rho/(32\Omega^*)\le\gamma^2/4$, and $\eta^2\nu/(2\cdot10^4)\le\eta^2\clustersize/(8\cdot10^3
 k)\le\eta^2\epsilon'/(8\cdot10^3)\le \alphaD\rho/\Omega^*$ (for the latter recall that $\clustersize\leq \eps ' k$ by Definition~\ref{bclassdef}~\eqref{Csize}).
\end{proof}

\begin{lemma}\label{whatwegetfrom(t2)}
In case $\mathbf{(t2)}$ (of either subcase $\mathbf{(cA)}$ or subcase $\mathbf{(cB)}$) we obtain Configuration
$\mathbf{(\diamond7)}\big(\frac{\eta^3\gamma^3\rho}{10^{12}(\Omega^*)^4},\frac{\eta\gamma}{400},4\epsilonD, \frac{\gamma^3\rho}{32\Omega^*},
 \frac{\eta^2\nu}{2\cdot10^4},
\frac{3\eta^3}{2\cdot 10^3}, \proporce{2}(1+\frac\eta{20})k\big)$.
\end{lemma}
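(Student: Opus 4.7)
The plan is to mirror the proof of Lemma~\ref{whatwegetfrom(t1)}, but with $\smallatoms$ playing the role that $V(\Gexp)$ played before. Instead of the chain $V_2(\M)\to V_1(\M)\to V(\Gexp)\to V(\Gexp)$ connected by $\GD$, $\Gcapt$, $\Gexp$, I will run Lemma~\ref{lem:clean-Match} on the chain $V_2(\M)\to V_1(\M)\to\smallatoms\colouringpI{1}\to\colouringp{1}$, where the $V_1(\M)\to\smallatoms\colouringpI{1}$ step is justified by the hypothesis $V_1(\M)\subset\largeintoatoms$, and the $\smallatoms\colouringpI{1}\to\colouringp{1}$ step is justified by the fact that vertices in $\smallatoms$ lie in dense spots and therefore have large $\GD$-degree. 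This exactly produces the $(\smallatoms\text{-valued } V_2, \GD\text{-reached } V_3)$ pattern required by Configuration~$\mathbf{(\diamond7)}$.

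Concretely, I will invoke Lemma~\ref{lem:clean-Match} with $r=3$, $\Omega=\Omega^*$, $\gamma_\PARAMETERPASSING{L}{lem:clean-Match}=\eta\gamma/200$, $\delta_\PARAMETERPASSING{L}{lem:clean-Match}=\eta^3\gamma^3\rho/(10^{12}(\Omega^*)^4)$, $\epsilon_\PARAMETERPASSING{L}{lem:clean-Match}=\bar\epsilon$, $d_\PARAMETERPASSING{L}{lem:clean-Match}=\bar d$, $\mu_\PARAMETERPASSING{L}{lem:clean-Match}=\beta$, $\eta_\PARAMETERPASSING{L}{lem:clean-Match}=\rho/(2\Omega^*)$, with $Y=\exceptVertSplit\cup\shadowsplit$, sets $X_0:=V_2(\M)$, $X_1:=V_1(\M)$, $X_2:=\smallatoms\colouringpI{1}$, $X_3:=\colouringp{1}$, the partitions on $X_0,X_1$ coming from $\V(\M)$, and edge sets $E_1$ consisting of the edges of $\DenseSpots_\class$ lying inside pairs of $\M$, $E_2:=E(\Gcapt)$, $E_3:=E(\GD)$. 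The numerical conditions~\eqref{eq:condCYmatch} are straightforward from~\eqref{eq:KONST}; the smallness of $Y$ follows from Def~\ref{def:proportionalsplitting}\eqref{It:H1} and~\eqref{eq:boundShadowsplit}; and the bounded-maximum-degree hypothesis follows from the sparse decomposition.

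The main obstacle will be verifying hypothesis~\ref{hyp:Match-deg} of Lemma~\ref{lem:clean-Match}. For $i=1$, I need $\mindeg_{\Gcapt}(V_1(\M)\setminus Y,\smallatoms\colouringpI{1})\ge(\eta\gamma/200)k$: the definition~\eqref{eq:deflargeintoatoms} of $\largeintoatoms$ gives $\deg_{\Gcapt}(v,\smallatoms)\ge\rho k/(100\Omega^*)$ for every $v\in V_1(\M)$, and applying Def~\ref{def:proportionalsplitting}\eqref{It:H5} (summed over the $J\subset[10]$ containing the index of $\smallatoms=\VXV_5$) to each such $v\notin\exceptVertSplit$ yields $\deg_{\Gcapt}(v,\smallatoms\colouringpI{1})\ge\proporce{1}\rho k/(100\Omega^*)-k^{0.9}$, which exceeds $(\eta\gamma/200)k$ by~\eqref{eq:KONST} and~\eqref{eq:proporcevelke}. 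For $i=2$, every $v\in\smallatoms$ satisfies $\deg_{\GD}(v)\ge\gamma k$ since $v$ lies in some $D\in\DenseSpots$, and Def~\ref{def:proportionalsplitting}\eqref{It:H5} applied with $H=\GD$ and summed over all $J\subset[10]$ gives $\deg_{\GD}(v,\colouringp{1})\ge\proporce{1}\gamma k-k^{0.9}\ge(\eta\gamma/200)k$ for $v\notin\exceptVertSplit$.

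With the hypotheses verified, Lemma~\ref{lem:clean-Match} produces a super-regular family $\{(Q_0^{(j)},Q_1^{(j)})\}_{j\in\mathcal Y}$ and sets $V_0:=X_0'$, $V_1:=X_1'$, $V_2:=X_2'\subset\smallatoms\colouringpI{1}\setminus\exceptVertSplit$, $V_3:=X_3'\subset\colouringp{1}$. Part~\eqref{conc:Match-superreg} of the lemma gives Preconfiguration $\mathbf{(reg)}(4\epsilonD,\gamma^3\rho/(32\Omega^*),\eta^2\nu/(2\cdot10^4))$ exactly as in the proof of Lemma~\ref{whatwegetfrom(t1)} (using that both {\bf(M1)} and {\bf(M2)} give parameters absorbed by these bounds). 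Lemma~\ref{lem:propertyYA12} applied to $V_0,V_1\subset V(\M)$ yields Preconfiguration $\mathbf{(\heartsuit1)}(3\eta^3/(2\cdot10^3),\proporce{2}(1+\eta/20)k)$ in subcase~$\mathbf{(cB)}$ (the $\gP_2,\gP_3$ exclusion in~$\mathbf{(cB)}$ is exactly what makes~\eqref{eq:propertyYA12cB3} applicable to $V_1$), or Preconfiguration~$\mathbf{(\heartsuit2)}(\proporce{2}(1+\eta/20)k)$ in subcase~$\mathbf{(cA)}$. Finally, conditions~\eqref{COND:D7:1}--\eqref{COND:D7:4} of Configuration~$\mathbf{(\diamond7)}$ follow from Lemma~\ref{lem:clean-Match}(b),(c): (b) for $i=2$ gives~\eqref{COND:D7:4}; (c) for $i=2$ combined with hypothesis~\ref{hyp:Match-deg} yields $\mindeg_{\Gcapt}(V_1,V_2)\ge(\eta\gamma/400)k\ge\delta k$, which implies~\eqref{COND:D7:1}; (b) for $i=1$ yields~\eqref{COND:D7:2}; and (c) for $i=2$ directly gives $\maxdeg_{\GD}(V_2,X_3\setminus V_3)<(\eta\gamma/400)k=\rho' k$, which is~\eqref{COND:D7:3} since $X_3=\colouringp{1}$.
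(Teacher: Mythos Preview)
Your proof is correct and follows essentially the same route as the paper's: the same invocation of Lemma~\ref{lem:clean-Match} with the same sets $X_0,\ldots,X_3$, edge sets $E_1,E_2,E_3$, and the same appeal to Lemma~\ref{lem:propertyYA12} for the $\mathbf{(\heartsuit1)}/\mathbf{(\heartsuit2)}$ preconfigurations. Your verification of the hypotheses (in particular the $i=1$ case of Condition~\ref{hyp:Match-deg} via $V_1(\M)\subset\largeintoatoms$, and the $i=2$ case via the dense-spot coverage of $\smallatoms$) is in fact more explicit than the paper's. One tiny slip: when you derive~\eqref{COND:D7:1} you cite ``(c) for $i=2$'', but it is (c) for $i=1$ that bounds $\maxdeg_{\Gcapt}(X_1',X_2\setminus X_2')$; the instance $i=2$ is what you correctly use for~\eqref{COND:D7:3}.
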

\begin{proof}
 We use
Lemma~\ref{lem:clean-Match} with the following input parameters:
$r_\PARAMETERPASSING{L}{lem:clean-Match}:=3$,
$\Omega_\PARAMETERPASSING{L}{lem:clean-Match}:=\Omega^*$,
$\gamma_\PARAMETERPASSING{L}{lem:clean-Match}:=\eta\gamma/200$,
$\eta_\PARAMETERPASSING{L}{lem:clean-Match}:=\rho/\Omega^*$,
$\delta_\PARAMETERPASSING{L}{lem:clean-Match}:=\eta^3\gamma^3\rho/(10^{12}(\Omega^*)^4)$,
$\epsilon_\PARAMETERPASSING{L}{lem:clean-Match}:=\bar{\epsilon}$,
$\mu_\PARAMETERPASSING{L}{lem:clean-Match}:=\beta$ and 
$d_\PARAMETERPASSING{L}{lem:clean-Match}:=\bar d$. We use the
 vertex sets
$Y_\PARAMETERPASSING{L}{lem:clean-Match}:=\exceptVertSplit\cup \shadowsplit$, $X_0:=V_2(\M)$,
$X_1:=V_1(\M)$, $X_2:=\smallatoms\colouringpI{1}$,
$X_3:=\colouringp{1}$, and $V:=V(G)$. The partitions of $X_0$
and $X_1$ in Lemma~\ref{lem:clean-Match} are the ones induced by $\V(\M)$, and
the set $E_1$ consists of all edges from $E(\DenseSpots_{\class})$ between pairs from $\M$. Further, set $E_2:=E(\Gcapt)$ and $E_3:=E(\GD)$. 

The conditions of Lemma~\ref{lem:clean-Match} are verified as before, let us just note that
Condition~\ref{hyp:Match-deg} follows from Definition~\ref{def:proportionalsplitting}\eqref{It:H5}
and by~\eqref{eq:proporcevelke}, and for $i=1$ from the definition of $\M$,  
while for $i=2$ it holds since $\smallatoms$ is covered by the set $\DenseSpots$
of $(\gamma k, \gamma)$-dense spots (cf.~Definition~\ref{def:avoiding}).

It is now easy to check that the output of Lemma~\ref{lem:clean-Match} are sets that witness Configuration 
$\mathbf{(\diamond7)}\big(\frac{\eta^3\gamma^3\rho}{10^{12}(\Omega^*)^4},\frac{\eta\gamma}{400},4\bar{\epsilon},\frac{\bar{d}}{4},\frac{\beta}{2},
\frac{3\eta^3}{2\cdot 10^3}, \proporce{2}(1+\frac\eta{20})k\big)$.
\end{proof}

Before proceeding with dealing with cases $\mathbf{(t3)}$, $\mathbf{(t5)}$ and
{\bf (t3--5)} we state some properties of the matching
$\bar\M:=\big(\M_A\cup\M_B\big)\colouringpI{1}$.

\begin{lemma}\label{lem:MRes}
For $V_{\mathrm{leftover}}:=V(\M_A\cup\M_B)\colouringpI{1}\setminus V(\bar\M)$ and  $Y_{\bar\M}:=\exceptVertSplit\cup\shadowsplit\cup\shadow_{\GD}(V_{\mathrm{leftover}},\frac{\eta^2 k}{1000})$, we have
\begin{enumerate}[(a)]
\item \label{eq:M-semiregN}
$\bar\M$ is a
$(\frac{400\varepsilon}{\eta},\frac
d2,\frac{\eta\pi\clustersize}{200})$-\semiregular matching absorbed by $\M_A\cup
\M_B$ and
$V(\bar\M)\subseteq\colouringp{1}$, and
\item  \label{eq:sizeYM}
$|Y_{\bar\M}|\le\frac{3000\epsilon\Omega^* n}{\eta^2}$.
\end{enumerate}
\end{lemma}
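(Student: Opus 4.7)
The plan is to derive Lemma~\ref{lem:MRes} as a direct corollary of Lemma~\ref{lem:RestrictionSemiregularMatching} applied with $\mathcal{N}_{\PARAMETERPASSING{L}{lem:RestrictionSemiregularMatching}} := \M_A \cup \M_B$ and $i := 1$. Indeed, in the notation of Setting~\ref{settingsplitting} we have $\bar\M = (\M_A \cup \M_B)\colouringpI{1} = \mathcal{N}\colouringpI{1}$, so Lemma~\ref{lem:RestrictionSemiregularMatching} immediately delivers the $(\frac{400\varepsilon}{\eta}, \frac{d}{2}, \frac{\eta\pi\clustersize}{200})$-regularized matching property required by part~(a). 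The fact that $\bar\M$ is absorbed by $\M_A\cup\M_B$ is immediate from the definition of $(X,Y)\colouringpI{i}$ as a pair of subsets of $(X,Y)$. The containment $V(\bar\M) \subseteq \colouringp{1}$ likewise follows from that definition, since by construction $X' \subseteq X \cap \colouringp{1}$ and $Y' \subseteq Y \cap \colouringp{1}$ for each restricted pair.

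For part~(b), I would estimate each of the three sets contributing to $Y_{\bar\M}$. The bound $|\exceptVertSplit| \le \exp(-k^{0.1})n$ comes straight from Definition~\ref{def:proportionalsplitting}\eqref{It:H1}, and $|\shadowsplit| \le \epsilon n$ is simply~\eqref{eq:boundShadowsplit}. The crucial step is to handle the third summand $\shadow_{\GD}(V_{\mathrm{leftover}}, \tfrac{\eta^2 k}{1000})$, and here the \emph{moreover} part of Lemma~\ref{lem:RestrictionSemiregularMatching} is exactly what we need: it asserts that for every $v \notin \shadowsplit$,
\[
\deg_{\GD}\!\bigl(v,\, V(\M_A\cup\M_B)\colouringpI{1} \setminus V(\bar\M)\bigr) \le \frac{\eta^2 k}{10^5},
\]
i.e., $\deg_{\GD}(v, V_{\mathrm{leftover}}) \le \eta^2 k / 10^5$. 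Contrapositively, $\shadow_{\GD}(V_{\mathrm{leftover}}, \eta^2 k/10^5) \subseteq \shadowsplit$, and since $\eta^2 k / 1000 > \eta^2 k / 10^5$, we have the even stronger containment $\shadow_{\GD}(V_{\mathrm{leftover}}, \eta^2 k/1000) \subseteq \shadow_{\GD}(V_{\mathrm{leftover}}, \eta^2 k/10^5) \subseteq \shadowsplit$.

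Combining these three estimates yields
\[
|Y_{\bar\M}| \le \exp(-k^{0.1})n + \epsilon n + \epsilon n \le 3\epsilon n,
\]
which is certainly much smaller than the required $\frac{3000 \epsilon \Omega^* n}{\eta^2}$ (recall $\Omega^*, 1/\eta \ge 1$). There is essentially no obstacle in this proof; the only step requiring a small amount of care is the monotonicity of the shadow operator in the degree threshold, which gives the desired inclusion of the shadow at level $\eta^2 k / 1000$ into $\shadowsplit$ via the weaker-threshold bound supplied by Lemma~\ref{lem:RestrictionSemiregularMatching}.
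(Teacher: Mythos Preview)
Your proof is correct. For part~(a) you and the paper do the same thing: invoke Lemma~\ref{lem:RestrictionSemiregularMatching} with $\mathcal N=\M_A\cup\M_B$ and $i=1$.

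For part~(b) the two arguments diverge. The paper bounds $|V_{\mathrm{leftover}}|$ explicitly (using Definition~\ref{def:proportionalsplitting}\eqref{It:H1},\eqref{It:H3} to get $|V_{\mathrm{leftover}}|\le 2\epsilon n$) and then applies the generic shadow bound Fact~\ref{fact:shadowbound} to obtain $|\shadow_{\GD}(V_{\mathrm{leftover}},\eta^2k/1000)|\le |V_{\mathrm{leftover}}|\cdot 1000\Omega^*/\eta^2$; it is this last factor that produces the $\Omega^*/\eta^2$ in the final bound. You instead observe that the \emph{moreover} clause of Lemma~\ref{lem:RestrictionSemiregularMatching} already says that any vertex outside $\shadowsplit$ has $\GD$-degree at most $\eta^2k/10^5<\eta^2k/1000$ into $V_{\mathrm{leftover}}$, so the third set in the definition of $Y_{\bar\M}$ is contained in $\shadowsplit$. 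This is cleaner---it reuses a computation already packaged in Lemma~\ref{lem:RestrictionSemiregularMatching} rather than repeating a parallel one---and it yields the sharper estimate $|Y_{\bar\M}|\le 2\epsilon n$, well inside the stated $3000\epsilon\Omega^*n/\eta^2$.
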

\begin{proof}
Lemma~\ref{lem:MRes}~\eqref{eq:M-semiregN} follows from Lemma~\ref{lem:RestrictionSemiregularMatching}.

Observe that from properties \eqref{It:H1} and \eqref{It:H3} of Definition~\ref{def:proportionalsplitting} we can calculate that
\begin{equation}\label{sizeofleftoverN}
|V_\text{leftover}|\leq 3\cdot k^{0.9}\cdot |\M_A\cup \M_B|+\left|\bigcup
\exceptSemSplit\cup \exceptSemSplit^*\right|\leq 3\cdot k^{0.9}\cdot \frac{n}{2\pi
\clustersize}+2\exp(-k^{0.1})\leByRef{eq:KONST}2\epsilon n.
\end{equation}
Then
\begin{align*}
|Y_{\bar\M}|&\le |\bar V|+|\shadowsplit|+
\left|\shadow_{\GD}\left(V_\text{leftover}, \frac{\eta^2 k}{1000}\right)\right|\\ \JUSTIFY{by Fact~\ref{fact:shadowbound}}&\le |\bar V| + |\shadowsplit|+|V_\text{leftover}| \frac{1000\Omega^*}{\eta^2}\\
\JUSTIFY{by~\eqref{sizeofleftoverN}, \textrm{D}\ref{def:proportionalsplitting}\eqref{It:H1}, \eqref{eq:KONST} \eqref{eq:boundShadowsplit}}&<
\frac{3000\epsilon\Omega^* n}{\eta^2}\;,
\end{align*}
as desired for Lemma~\ref{lem:MRes}\eqref{eq:sizeYM}.
\end{proof}

\begin{lemma}\label{whatwegetfrom(t3)}
In Case $\mathbf{(t3)} \mathbf{(cA)}$  we obtain Configuration
$\mathbf{(\diamond8)}\big(\frac{\eta^4\gamma^4\rho }{10^{15}
(\Omega^*)^5},\frac{\eta\gamma}{400},\frac{400\epsilon}{\eta},4\bar{\epsilon},\frac
d2,\frac{\bar{d}}{4},\frac{\eta\pi\clustersize}{200k},\frac{\beta}{2},$ $
\proporce{1}(1+\frac\eta{20})k,\proporce{2}(1+\frac\eta{20})k\big)$.
\end{lemma}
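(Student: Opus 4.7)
The plan is to extend the cleaning chain used in the proof of Lemma~\ref{whatwegetfrom(t2)} by one additional layer, reflecting the fact that in case $\mathbf{(t3)(cA)}$ the matching $\M$ reaches $\smallatoms$ only via the intermediate set of out-of-matching large vertices forced by membership in $R$. Apply Lemma~\ref{lem:clean-Match} with $r_{\PARAMETERPASSING{L}{lem:clean-Match}}:=4$, $\Omega_{\PARAMETERPASSING{L}{lem:clean-Match}}:=\Omega^*$, $\gamma_{\PARAMETERPASSING{L}{lem:clean-Match}}:=\eta\gamma/400$, $\eta_{\PARAMETERPASSING{L}{lem:clean-Match}}:=\rho/\Omega^*$, $\delta_{\PARAMETERPASSING{L}{lem:clean-Match}}:=\eta^4\gamma^4\rho/(10^{15}(\Omega^*)^5)$, $\epsilon_{\PARAMETERPASSING{L}{lem:clean-Match}}:=\bar\epsilon$, $d_{\PARAMETERPASSING{L}{lem:clean-Match}}:=\bar d$, $\mu_{\PARAMETERPASSING{L}{lem:clean-Match}}:=\beta$, vertex sets
$X_0:=V_2(\M)$, $X_1:=V_1(\M)$, $X_2:=\bigl((\largeintoatoms\cap\largevertices{\eta}{k}{G})\setminus V(\M_A\cup\M_B)\bigr)\cap\colouringp{0}\setminus(\gP\cup\exceptVertSplit\cup\shadowsplit)$, $X_3:=\smallatoms\colouringpI{1}$, $X_4:=\colouringp{1}$, $Y_{\PARAMETERPASSING{L}{lem:clean-Match}}:=\exceptVertSplit\cup\shadowsplit$, and edge sets $E_1$ consisting of the edges of $\DenseSpots_\class$ inside pairs of $\M$, $E_2=E_3:=E(\Gcapt)$, $E_4:=E(\GD)$. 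The three chain minimum-degree hypotheses are verified, respectively, via: the definition of $R$ combined with Definition~\ref{def:proportionalsplitting}\eqref{It:H5}, \eqref{eq:proporcevelke}, and the size bound~\eqref{eq:sizeofP} on $|\gP|$; the definition of $\largeintoatoms$ together with proportional splitting; and the $(\gamma k,\gamma)$-dense-spot coverage of $\smallatoms$ by $\DenseSpots$ together with proportional splitting.

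Denote the outputs $V_0:=X_0'$, $V_1:=X_1'$, $V_2:=X_2'$, $V_3:=X_3'$, $V_4:=X_4'$. The accompanying super-regular pairs $\{(Q_0^{(j)},Q_1^{(j)})\}_{j\in\mathcal Y}$ from Lemma~\ref{lem:clean-Match}\eqref{conc:Match-superreg} make $(V_0,V_1)$ witness $\mathbf{(reg)}(4\bar\epsilon,\bar d/4,\beta/2)$, which under~\eqref{eq:KONST} refines to the required $\mathbf{(reg)}(4\epsilonD,\gamma^3\rho/(32\Omega^*),\eta^2\nu/(2\cdot10^4))$ in both cases {\bf(M1)} and {\bf(M2)}. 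By Lemma~\ref{lem:propertyYA12}\eqref{eq:propertyYA12cA} the same pair also witnesses Preconfiguration~$\mathbf{(\heartsuit2)}(\proporce{2}(1+\eta/20)k)$. Take $\mathcal N:=\bar\M\setminus\NAtom$ with $\bar\M:=(\M_A\cup\M_B)\colouringpI{1}$ from Lemma~\ref{lem:MRes}: this is a $\bigl(\frac{400\epsilon}\eta,\frac d2,\frac{\eta\pi\clustersize}{200}\bigr)$-\semiregular matching absorbed by $(\M_A\cup\M_B)\setminus\NAtom$ with $V(\mathcal N)\cap\smallatoms=\emptyset$, so $V(\mathcal N)\subset\colouringp{1}\setminus V_3$. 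Conditions~\eqref{COND:D8:1}--\eqref{COND:D8:6} follow directly from Lemma~\ref{lem:clean-Match}\eqref{conc:Match-deg}, \eqref{conc:Match-avoid}.

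The main obstacle is~\eqref{COND:D8:7}: for each $v\in V_2$, we must verify
$\deg_{\GD}(v,V_3)+\deg_{\Gblack}(v,V(\mathcal N))\ge\proporce{1}(1+\eta/20)k$.
Since $V_2\subset\XA\setminus(\gP\cup\exceptVertSplit)$, a look at the proof of Lemma~\ref{lem:propertyYA12}\eqref{eq:propertyYA12cA} yields the stronger inequality $\mindeg_{\Gcapt}(V_2,\Vgood\colouringpI{1})\ge\proporce{1}\bigl((1+\eta/10)k-\eta k/100\bigr)-k^{0.9}$, providing a buffer of order $\proporce{1}\eta k/20$. Classify the $\Gcapt$-neighbours of any $v\in V_2$ in $\Vgood\colouringpI{1}$ by captured edge type: $\HugeVertices$-edges are excluded since $\Vgood\cap\HugeVertices=\emptyset$, leaving only $\Gblack$-edges to clusters, $\Gexp$-edges, and edges from $E_G(\smallatoms,\smallatoms\cup\bigcup\clusters)$ to $\smallatoms$; the last of these lie in $\GD$. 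By Lemma~\ref{lem:clean-Match}\eqref{conc:Match-avoid} applied with $i=2$, at most $\gamma k/2$ of these $\GD$-edges leave $V_3$. The $\Gblack$-edges missing $V(\mathcal N)$ split into three groups---to clusters outside $V(\M_A\cup\M_B)$, to $V(\NAtom)$, and to pairs of $\M_A\cup\M_B$ lost when restricting to $\bar\M$---each bounded by $\eta^2k/10^5$ via, respectively, $\gP_1\subset\gP$ (together with its $\Gblack$-shadow inside $\gP$), $\gPatoms$ and its $\GD\cup\Gcapt$-shadow both inside $\gP$, and the moreover clause of Lemma~\ref{lem:RestrictionSemiregularMatching} using $V_2\cap\shadowsplit=\emptyset$. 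Finally, $V_2\subset\YA$ gives $\deg_{G-\Gcapt}(v)\le\eta k/100$, and combining $V_2\cap\gP=\emptyset$ with the definition of $\gP_3\subset\gP$ controls the residual $\Gexp$-contribution from $v$ into $\Vgood\colouringpI{1}$. Summing all losses stays within the $\proporce{1}\eta k/20$ buffer, yielding~\eqref{COND:D8:7} with $h_1=\proporce{1}(1+\eta/20)k$ and completing Configuration~$\mathbf{(\diamond8)}$ with the claimed parameters.
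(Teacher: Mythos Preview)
Your overall architecture is right and matches the paper's, but the verification of~\eqref{COND:D8:7} contains a genuine gap stemming from your choice of $X_2$.

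You write that ``$\gP_3\subset\gP$'' and that this controls the $\Gexp$-contribution from $v\in V_2$. Both halves of this fail. First, $\gP_3=\XA\cap\shadow_{\Gcapt}(\XA,\eta^3k/10^3)$ is \emph{not} a subset of $\gP$; inspect the definition of $\gP$ in Setting~\ref{commonsetting} and you will see $\gP_3$ nowhere enters. Second, even if it were, $\gP_3$ bounds $\deg_{\Gcapt}(v,\XA)$, which has nothing to do with $\deg_{\Gexp}(v)$. Since your $X_2$ does not exclude $V(\Gexp)$, a vertex $v\in V_2$ may well lie in $V(\Gexp)$, and then its captured degree into $\Vgood\colouringpI{1}$ can contain up to $\Omega^*k$ edges of $\Gexp$, overwhelming your $\proporce{1}\eta k/20$ buffer. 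The paper avoids this simply by removing $V(\Gexp)$ (and $\smallatoms$) from $X_2$; then $v\notin V(\Gexp)$ forces $\deg_{\Gexp}(v)=0$, and~\eqref{eq:OPL} goes through.

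There is a second, analogous error: you assert ``$\gPatoms$ and its $\GD\cup\Gcapt$-shadow both inside $\gP$''. Only the shadow is in $\gP$; $\gPatoms$ itself need not be. Consequently $v\notin\gP$ does not give $v\notin\gPatoms$, so your bound on $\deg_{\Gblack}(v,V(\NAtom))$ is unsupported. Again the paper's remedy is to excise $\gPatoms$ explicitly from $X_2$. Once $X_2$ is defined as the paper does (removing $V(\Gexp)\cup\smallatoms\cup\WantiC\cup L_\sharp\cup\gPatoms\cup\gP_1$ rather than $\gP$), and one takes $Y:=Y_{\bar\M}$ so as to control $\deg_{\Gblack}(v,V_{\mathrm{leftover}})$ directly, the remainder of your argument (and the verification of Condition~\ref{hyp:Match-deg} for $i=1$ via $v\in R\setminus(\shadow_{\Gcapt}(V(\Gexp),\rho k)\cup\largeintoatoms)$) goes through as in the paper.
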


\begin{proof}
We use Lemma~\ref{lem:clean-Match} with the following input parameters:
$r_\PARAMETERPASSING{L}{lem:clean-Match}:=4$,
$\Omega_\PARAMETERPASSING{L}{lem:clean-Match}:=\Omega^*$,
$\gamma_\PARAMETERPASSING{L}{lem:clean-Match}:=\eta\gamma/200$,
$\eta_\PARAMETERPASSING{L}{lem:clean-Match}:=\rho/\Omega^*$,
$\delta_\PARAMETERPASSING{L}{lem:clean-Match}:=\eta^4\gamma^4\rho/(10^{15}(\Omega^*)^5)$,
$\epsilon_\PARAMETERPASSING{L}{lem:clean-Match}:=\bar{\epsilon}$,
$\mu_\PARAMETERPASSING{L}{lem:clean-Match}:=\beta$ and
$d_\PARAMETERPASSING{L}{lem:clean-Match}:=\bar d$. 
We use the following vertex sets
$Y_\PARAMETERPASSING{L}{lem:clean-Match}:=Y_{\bar\M}$, $X_0:=V_2(\M)$,
$X_1:=V_1(\M)$, $$X_2:=(\largevertices{\eta}{k}{G}\cap\largeintoatoms)\colouringpI{0}\setminus\big(V(\Gexp)\cup\smallatoms\cup V(\M_A\cup\M_B)\cup \WantiC\cup L_\sharp\cup \gPatoms\cup\gP_1\big)\;,$$
$X_3:=\smallatoms\colouringpI{1}$,  $X_4:=\colouringp{1}$, and $V:=V(G)$. The partitions $P^{(j)}_i$ of $X_0$ and $X_1$ in Lemma~\ref{lem:clean-Match} are the ones induced by $\V(\M)$, and
the set $E_1$ consists of all edges from $E(\DenseSpots_\class)$ between pairs from $\M$.  Further, set $E_2=E_3:=E(\Gcapt)$ and $E_4:=E(\GD)$.

Most of the conditions of Lemma~\ref{lem:clean-Match} are verified as before, let us only note the few differences. Condition~\ref{hyp:Match-Ysmall} follows from Lemma~\ref{lem:MRes}\eqref{eq:sizeYM}. Using Definition~\ref{def:proportionalsplitting}\eqref{It:H5}
and~\eqref{eq:proporcevelke}, we find that
Condition~\ref{hyp:Match-deg} for $i=2$ follows from the definition of $\largeintoatoms$, and Condition~\ref{hyp:Match-deg}
for $i=3$ holds as it is the same as Condition~\ref{hyp:Match-deg}
for $i=2$ in Lemma~\ref{whatwegetfrom(t2)}. To verify Condition~\ref{hyp:Match-deg} for $i=1$ we first observe that since we are in case  $\mathbf{(t3)} $, we have 
\begin{equation}\label{eq:usememe}
V_1(\M)\subset 
\shadow_{\Gcapt}\left((\largeintoatoms\cap\largevertices{\eta}{k}{G})\setminus
V(\M_A\cup\M_B),\frac{2\eta^2 k}{10^5}\right)\setminus(
\shadow_{\Gcapt}(V(\Gexp),\rho k)\cup \largeintoatoms)\;.
\end{equation}
Also, since we are in case $\mathbf{(cA)}$, we have
\begin{equation}\label{eq:usememezwei}
V_1(\M)\cap\gP=\emptyset\;.
\end{equation}

Thus, for each $v\in V_1(\M)$ we have, using Definition~\ref{def:proportionalsplitting}\eqref{It:H5},
\begin{align*}
\deg_{\Gcapt}(v,X_2)&\ge\proporce{0} \Big(\deg_{\Gcapt}(v,(\largevertices{\eta}{k}{G}\cap\largeintoatoms)\setminus V(\M_A\cup\M_B))\\
&~~~~~~~~~-\deg_{\Gcapt}(v,V(\Gexp)\cup\smallatoms\cup\WantiC\cup L_\sharp\cup \gPatoms\cup\gP_1) \Big)-k^{0.9}\\
\JUSTIFY{by~\eqref{eq:usememe} \& \eqref{eq:usememezwei} \& \eqref{eq:proporcevelke}}&\ge \frac{\eta}{100}\left(\frac{2\eta^2 k}{10^5}-\rho k- \frac{\rho k}{100\Omega^*}-\frac{\eta^2 k}{10^5}\right)-k^{0.9}\\
\JUSTIFY{by~\eqref{eq:KONST}}&\ge \frac{\eta \gamma k}{200}\;,
\end{align*}
which indeed verifies Condition~\ref{hyp:Match-deg} for $i=1$.

Define $\mathcal N:=\bar\M\setminus\{(X,Y)\in\bar\M\::\:X\cup Y\subset
V(\NAtom)\}$. By Lemma~\ref{lem:MRes}~\eqref{eq:M-semiregN} we have that
$\mathcal N\subseteq \bar{\mathcal M}$ is a
$(\frac{400\epsilon}{\eta},\frac{d}{2},\frac{\eta\pi
\clustersize}{200})$-\semiregular matching absorbed by $\M_A\cup\M_B$, and that
$V(\mathcal N)\subset\colouringp{1}$.

To see that the output of
Lemma~\ref{lem:clean-Match} together with the matching $\mathcal N$ leads to Configuration 
$\mathbf{(\diamond8)}\big(\frac{\eta^4\gamma^4\rho }{10^{15}
(\Omega^*)^5},\frac{\eta\gamma}{400},\frac{400\epsilon}{\eta},4\bar{\epsilon},\frac
d2,\frac{\bar{d}}{4},\frac{\eta\pi\clustersize}{200k},\frac{\beta}{2},
\proporce{1}(1+\frac\eta{20})k,\proporce{2}(1+\frac\eta{20})k\big)$ let us show that~\eqref{COND:D8:7} is satisfied (the other conditions are more easily seen to hold). 

For this, let $v\in X_2'$. We have to show that

\begin{equation}\label{todososolhos}
\deg_{\GD}(v,X'_3)+\deg_{\Gblack}(v,V(\mathcal N))
\ge  \proporce{1}\left(1+\frac{\eta}{20}\right)k.
\end{equation}

 Note that $v\not\in V(\Gexp)$, and thus $\deg_{\Gexp}(v)=0$. This allows us to
calculate as follows:

\begin{align}
\begin{split}\label{eq:OPL}
\deg_{\GD}(v,X'_3)+\deg_{\Gblack}(v,V(\mathcal N))&\ge
\deg_{\Gcapt}(v,\colouringp{1})-\deg_{\GD}(v,X_3\setminus X'_3)\\
&~~~-\deg_{\Gblack}(v,V(\NAtom))-\deg_{\Gblack}(v,V_\mathrm{leftover})\\
&~~~-\deg_{\Gblack}(v,V(G)\setminus V(\M_A\cup\M_B))\;.
\end{split}
\end{align}

We now bound the terms of the right-hand side of~\eqref{eq:OPL}.
From Definition~\ref{def:proportionalsplitting}\eqref{It:H5} we obtain that
$\deg_{\Gcapt}(v,\colouringp{1})\ge
\proporce{1}\left(\deg_{\Gcapt}(v)-\deg_G(v,\HugeVertices)\right)-k^{0.9}$.
Lemma~\ref{lem:clean-Match}\eqref{conc:Match-avoid} gives that
$\deg_{\GD}(v,X_3\setminus X'_3)\le \frac{\eta\gamma k}{400}$. As $v\not\in
\gPatoms\cup V(\M_A\cup \M_B)$, we have $\deg_{\Gblack}(v,V(\NAtom))<\gamma k$. As $v\not\in Y_{\bar\M}$ and thus $v\not\in
\shadow_{\GD}\left(V_\text{leftover}, \frac{\eta^2 k}{1000}\right)$  we have
$\deg_{\GD}(v,V_\text{leftover})\le \frac{\eta^2 k}{1000}$. Lastly, recall that
$v\not\in \gP_1\cup V(\M_A\cup \M_B)$, and consequently
$\deg_{\Gblack}(v,V(G)\setminus V(\M_A\cup\M_B))<\gamma k$. Putting these bounds together, we find that
\begin{align*}
\deg_{\GD}(v,X'_3)+\deg_{\Gblack}(v,V(\mathcal N))&\ge
\proporce{1}\left(\deg_{\Gcapt}(v)-\deg_G(v,\HugeVertices)\right)-\frac{2\eta^2 k}{1000}\\
\JUSTIFY{as $v\in \largevertices{\eta}{k}{G}\setminus
(L_\sharp\cup\WantiC)$}&\ge \proporce{1}\left(\left(1+\frac{9\eta}{10}\right)k-\frac{\eta
k}{100}\right)-\frac{ \eta^2 k}{500}\\
\JUSTIFY{by~\eqref{eq:proporcevelke} \& \eqref{eq:KONST}}&\ge  \proporce{1}\left(1+\frac{\eta}{20}\right)k\;.
\end{align*}
This proves~\eqref{todososolhos}.
\end{proof}

\begin{lemma}\label{ObtConf9}
In case {\bf(t3--5)}$\mathbf{(cB)}$ we get Configuration
$\mathbf{(\diamond9)}\big(\frac{\rho
\eta^8}{10^{27}(\Omega^*)^3},\frac
{2\eta^3}{10^3}, \proporce{1}(1+\frac{\eta}{40})k,
\proporce{2}\left(1+\frac{\eta}{20}\right)k,$ $ \frac{400\varepsilon}{\eta},
\frac{d}2, \frac{\eta\pi\clustersize}{200k},4\epsilonD,\frac{\gamma^3\rho}{32\Omega^*}, \frac{\eta^2 \nu}{2\cdot 10^4}\big)$.
\end{lemma}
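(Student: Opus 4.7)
The plan is to set $\mathcal N:=\bar\M=(\M_A\cup\M_B)\colouringpI{1}$ as the semiregular matching in $\colouringp{1}$ required by Configuration~$\mathbf{(\diamond9)}$ (its parameters match by Lemma~\ref{lem:MRes}\eqref{eq:M-semiregN}), and to take $\mathcal F':=\mathcal F$ from~\eqref{def:Fcover}, which is an $(\M_A\cup\M_B)$-cover by Lemma~\ref{lem:propertyYA12}. The super-regular pairs demanded by Preconfiguration~$\mathbf{(reg)}$ and the set $V_2$ will both be extracted from a single application of Lemma~\ref{lem:clean-Match} to $\M$ inside $\colouringp{0}$.

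Specifically, I will invoke Lemma~\ref{lem:clean-Match} with $r:=2$, vertex sets $X_0:=V_2(\M)$, $X_1:=V_1(\M)$, $X_2:=(V(\bar\M)\setminus\bigcup\mathcal F)\cap\bigcup\clusters$ (the intersection with $\bigcup\clusters$ enforces the required inclusion $V_2\subset\bigcup\clusters$), $Y_\PARAMETERPASSING{L}{lem:clean-Match}:=\exceptVertSplit\cup\shadowsplit\cup Y_{\bar\M}$, partitions $P_0^{(j)},P_1^{(j)}$ induced by $\V(\M)$, edge sets $E_1$ taken to be the edges of $\DenseSpots_\class$ between pairs of $\M$ and $E_2:=E(\GD)$, and numerical parameters $\delta_\PARAMETERPASSING{L}{lem:clean-Match}:=\frac{\rho\eta^8}{10^{27}(\Omega^*)^3}$, $\gamma_\PARAMETERPASSING{L}{lem:clean-Match}:=\frac{\eta^2}{10^5}$, $\epsilon_\PARAMETERPASSING{L}{lem:clean-Match}:=\bar\epsilon$, $d_\PARAMETERPASSING{L}{lem:clean-Match}:=\bar d$, $\mu_\PARAMETERPASSING{L}{lem:clean-Match}:=\beta$. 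Most hypotheses are verified as in the proof of Lemma~\ref{whatwegetfrom(t3)}: the bound on $|Y_\PARAMETERPASSING{L}{lem:clean-Match}|$ combines Def.~\ref{def:proportionalsplitting}\eqref{It:H1}, \eqref{eq:boundShadowsplit} and Lemma~\ref{lem:MRes}\eqref{eq:sizeYM}; $|X_1|\ge\rho n/(2\Omega^*)$ delivers~\ref{hyp:Match-edges}; regularity of $\M$ gives~\ref{hyp:Match-reg}; and maximum degrees are bounded by $\Omega^*k$ throughout.

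The heart of the proof is the estimate $\deg_{\GD}(v,X_2)\ge\proporce{1}(1+\frac{\eta}{5})k$ for every $v\in X_1\setminus Y_\PARAMETERPASSING{L}{lem:clean-Match}$, which both discharges hypothesis~\ref{hyp:Match-deg} and, after accounting for the cleaning loss of at most $\gamma_\PARAMETERPASSING{L}{lem:clean-Match} k/2$ via~\eqref{conc:Match-avoid}, yields $\mindeg_{\GD}(V_1,V_2)\ge h_1=\proporce{1}(1+\frac{\eta}{40})k$. For such $v\notin\smallatoms$, I start from $\deg_{\Gcapt}(v)\ge(1+\frac{9\eta}{10})k$ (since $v\notin L_\sharp\subset\gP$) and use the decomposition $\deg_{\Gcapt}(v)=\deg_{\Gblack}(v)+\deg_{\Gexp}(v)+\deg_G(v,\HugeVertices)+\deg_G(v,\smallatoms)$: subtracting $\deg_G(v,\HugeVertices)\le\frac{\eta k}{100}$ (from $v\notin\WantiC$), $\deg_{\Gexp}(v)\le\rho k$ (case~$\mathbf{(t3\text{--}5)}$), and $\deg_G(v,\smallatoms)\le\frac{\rho k}{100\Omega^*}$ (from $v\notin\largeintoatoms$) yields $\deg_{\Gblack}(v)\ge(1+\frac{4\eta}{5})k$. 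Crucially, instead of using a $\gP_1$-bound (which would fail when $v\in V(\M_A)$, as can happen under case~$\mathbf{(M1)}$), I bound $\deg_{\Gblack}(v,\bigcup\clusters\setminus V(\M_A\cup\M_B))$ directly via $\gP_2$ and $\gP_3$: the $\largevertices$-portion sits in $\XA\setminus V(\M_A)$ and has $\Gblack$-degree $\le\frac{\eta^3k}{10^3}$ (from $v\notin\gP_3$), while the $\smallvertices$-portion within $\bigcup\clusters\setminus V(\Gexp)$ lies in $S^0\setminus V(\M_A)$ and has $\Gblack$-degree $\le\sqrt\gamma k$ (from $v\notin\gP_2$), the $V(\Gexp)$-piece being already absorbed in the $\rho k$-bound. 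Combining with $\deg_{\Gblack}(v,\bigcup\mathcal F)\le\frac{3\eta^3}{2\cdot 10^3}k$ from Lemma~\ref{lem:propertyYA12}\eqref{eq:propertyYA12cB3}, projecting to $\colouringp{1}$ via Def.~\ref{def:proportionalsplitting}\eqref{It:H5}, and subtracting $\deg_{\GD}(v,V_\mathrm{leftover})\le\frac{\eta^2 k}{10^3}$ (from $v\notin Y_{\bar\M}$) yields the claim. The residual possibility $v\in\smallatoms$ (which can arise in case~$\mathbf{(M2)}$ under subcase~$\mathbf{(t3)}$) is handled by a parallel argument: being in $\smallatoms\cap V_1(\M)$ forces $v$ to sit inside a dense spot of $\DenseSpots_\class$, whose edges lie in $\GD$, so the analogous decomposition of $\Gcapt$-edges bounds the $\GD$-degree directly.

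Once Lemma~\ref{lem:clean-Match} has been applied, set $V_0:=X_0'$, $V_1:=X_1'$, $V_2:=X_2'$. Preconfiguration~$\mathbf{(reg)}(4\bar\epsilon,\bar d/4,\beta/2)$ is delivered by~\eqref{conc:Match-superreg}; Preconfiguration~$\mathbf{(\heartsuit1)}(\frac{2\eta^3}{10^3},\proporce{2}(1+\frac{\eta}{20})k)$ with cover $\mathcal F$ is read off from Lemma~\ref{lem:propertyYA12}~\eqref{eq:trivka}, \eqref{eq:propertyYA12cA}, \eqref{eq:propertyYA12cB2} and~\eqref{eq:propertyYA12cB3}; and $\mindeg_{\GD}(V_2,V_1)\ge\delta k$ is the output~\eqref{conc:Match-deg}. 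The main obstacle is precisely the degree estimate above: the natural $\gP_1$-argument collapses in case~$\mathbf{(M1)}$, necessitating the re-routing through $\gP_2$ and $\gP_3$ with careful bookkeeping for the $\smallvertices\cap\bigcup\clusters$-portion, and the scenario $v\in\smallatoms$ requires its own (analogous) treatment.
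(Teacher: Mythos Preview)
Your overall architecture is the same as the paper's: apply Lemma~\ref{lem:clean-Match} with $r=2$ to $\M$ to obtain the super-regular pairs and the set $V_2$, take $\mathcal N=\bar\M$, and verify Preconfiguration~$\mathbf{(\heartsuit1)}$ via Lemma~\ref{lem:propertyYA12}. The degree computation you outline is also essentially equivalent to the paper's (the paper routes through $\mindeg_{\Gcapt}(V_1(\M),\Vgood\colouringpI{1})$ from Lemma~\ref{lem:propertyYA12}, which packages the projection cleanly; your direct route works too, provided you project $\deg_{\Gcapt}$ \emph{before} passing to $\Gblack$, since Definition~\ref{def:proportionalsplitting}\eqref{It:H5} does not list $\Gblack$ among the admissible graphs~$H$).

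There is, however, a genuine gap in your choice of cover. Definition~\ref{def:CONF9} requires the chain $V_2\subseteq V(\mathcal N)\setminus\bigcup\mathcal F'\subset\bigcup\clusters$; the second inclusion is a constraint on $\mathcal N$ and $\mathcal F'$, not merely on $V_2$. With $\mathcal N=\bar\M$ and your cover $\mathcal F$, this fails: by Setting~\ref{commonsetting}\eqref{commonsetting3} some $X\in\V_2(\M_B)$ may lie in $\largevertices{\eta}{k}{G}\cap\smallatoms$, and such $X$ are neither in $\V_1(\M_B)$ nor in $\{C\in\V(\M_A):C\subset\XA\}$, so $X\not\subset\bigcup\mathcal F$ while $X\cap\bigcup\clusters=\emptyset$. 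Intersecting $X_2$ with $\bigcup\clusters$ does force $V_2\subset\bigcup\clusters$, but it does not repair the required inclusion $V(\mathcal N)\setminus\bigcup\mathcal F\subset\bigcup\clusters$. The paper's fix is to enlarge the cover to $\mathcal F':=\mathcal F\cup\{X\in\V(\M_B):X\subset\smallatoms\}$; this remains an $(\M_A\cup\M_B)$-cover, and the extra sets contribute at most $\frac{\rho k}{100\Omega^*}$ to $\deg_{\Gcapt}(v,\bigcup\mathcal F')$ since $v\notin\largeintoatoms$ in case~\textbf{(t3--5)}, so the $\mathbf{(\heartsuit1)}$ bound $\frac{2\eta^3}{10^3}k$ still holds. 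With $\mathcal F'$ in place, your intersection with $\bigcup\clusters$ becomes redundant and the rest of your argument goes through.
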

\begin{proof}
Recall that by Lemma~\ref{lem:propertyYA12}
we know that $\mathcal F$, as defined in~\eqref{def:Fcover}, is an $(\M_A\cup \M_B)$-cover. We introduce another $(\M_A\cup \M_B)$-cover, 
$$\mathcal F':=\mathcal F\cup\{X\in \V(\M_B)\::\: X\subset \smallatoms\}\;.$$
By~\eqref{eq:propertyYA12cB3} and as we are in case $\mathbf{(cB)}$, we have $\maxdeg_{\Gcapt}\left(V_1(\M),\bigcup
\mathcal F\right)\le \frac{2\eta^3}{3\cdot 10^3} k$.
Furthermore, as we are in case {\bf(t3--5)}, we have
$V_1(\M)\cap \largeintoatoms=\emptyset$. Thus,

\begin{equation}\label{cl:laptopN}
\maxdeg_{\Gcapt}\left(V_1(\M),\bigcup
\mathcal F'\right)\le \frac{2\eta^3}{10^3} k.
\end{equation}

 We use
Lemma~\ref{lem:clean-Match} with the following input parameters:
$r_\PARAMETERPASSING{L}{lem:clean-Match}:=2$,
$\Omega_\PARAMETERPASSING{L}{lem:clean-Match}:=\Omega^*$,
$\gamma_\PARAMETERPASSING{L}{lem:clean-Match}:=\eta^4/10^{11}$,
$\eta_\PARAMETERPASSING{L}{lem:clean-Match}:=\rho/2\Omega^*$,
$\delta_\PARAMETERPASSING{L}{lem:clean-Match}:=\rho
\eta^8/(10^{27}(\Omega^*)^3)$,
$\epsilon_\PARAMETERPASSING{L}{lem:clean-Match}:=\bar{\epsilon}$, $\mu_\PARAMETERPASSING{L}{lem:clean-Match}:=\beta$ and
$d_\PARAMETERPASSING{L}{lem:clean-Match}:=\bar d$. 
We use the following vertex sets
$Y_\PARAMETERPASSING{L}{lem:clean-Match}:=Y_{\bar\M}$, $X_0:=V_2(\M)$,
$X_1:=V_1(\M)$, and $X_2:=V(\bar\M)\setminus \bigcup\mathcal F'\subset
\bigcup\clusters\colouringpI{1}$.
The partitions of $X_0$ and $X_1$ in Lemma~\ref{lem:clean-Match} are the ones
induced by $\V(\M)$, and the set $E_1$ consists of all edges from $E(\DenseSpots_{\class})$
between pairs from $\M$.  Further, set $E_2:=E(\GD)$.

Condition~\ref{hyp:Match-Ysmall} of Lemma~\ref{lem:clean-Match} follows from Lemma~\ref{lem:MRes}\eqref{eq:sizeYM}.
Condition~\ref{hyp:Match-edges} follows by the assumption of Lemma~\ref{ObtConf9} on the size of $V(\M)$. 
Condition~\ref{hyp:Match-reg} follows from the definition of $\mathcal M$. 
Condition~\ref{hyp:Match-bounded} holds since $V(\M)$ does not meet $\HugeVertices$.

It remains to see Condition~\ref{hyp:yel-deg}, for $i=1$.
For this, first note that from Lemma~\ref{lem:propertyYA12} we get
that
\begin{equation}\label{batman}
\mindeg_{\Gcapt}\left(V_1(\M),\Vgood\colouringpI{1}\right)\overset{\mathbf{(cB)}}\ge
\mindeg_{\Gcapt}\left(\XA\setminus(\gP\cup
\exceptVertSplit),\Vgood\colouringpI{1}\right)\ge\proporce{1}\left(1+\frac{\eta}{20}\right)k\;.
\end{equation}

From this, we calculate that
\begin{align}\label{tikitikitiki}
\notag
\mindeg_{\GD}\big(V_1(\M),V(\M_A\cup \M_B)\colouringpI{1}\big) & 
\ge
\mindeg_{\Gcapt}\big(V_1(\M),V(\M_A\cup
 \M_B)\colouringpI{1}\big)\\ 
 \nonumber & ~~~-\maxdeg_{\Gexp}\big(V_1(\M),
 V(\M_A\cup\M_B)\big)\\
 \JUSTIFY{by~\eqref{eq:defVgood} \& ~\eqref{eq:defV+}}&\ge \notag
 \mindeg_{\Gcapt}\big(V_1(\M), \Vgood\colouringpI{1}\big)\\ 
\notag  &~~~-\maxdeg_{\Gcapt}\big(V_1(\M),
 \smallatoms\big)\\ 
  &~~~\notag-\maxdeg_{\Gcapt}\big(V_1(\M),
  \largevertices{\eta}{k}{G}\setminus V(\M_A\cup\M_B)\big) \\  
  \notag &~~~-\maxdeg_{\Gcapt}\big(V_1(\M),
 V(\Gexp)\setminus V(\M_A\cup\M_B)\big)\\
 & \notag ~~~-\maxdeg_{\Gcapt}\big(V_1(\M), V(\Gexp)\cap
 V(\M_A\cup\M_B)\big)\\
  \JUSTIFY{by~\eqref{batman}, as $V_1(\M)\cap
\largeintoatoms=\emptyset$ \& $\mathbf{(cB)}$} &\ge \proporce{1}\left(1+\frac{\eta}{20}k\right)-\frac{\rho k}{100\Omega^*}\notag \\ 
 \nonumber &~~~-\maxdeg_{\Gcapt}\big(\XA\setminus \gP_3,\XA\big)\notag \\ 
 \nonumber &~~~-\maxdeg_{\Gcapt}(V_1(\M),
V(\Gexp))\\
  \JUSTIFY{by def of $\gP_3$ \& as $V_1(\M)\cap \shadow_{G}(V(\Gexp),\rho
 k)=\emptyset$ by \bf{(t3--5)}} &\ge  \proporce{1}\left(1+\frac{\eta}{20}\right)k
 -\frac{\rho k}{100\Omega^*}-\frac{\eta^3 k}{10^3}-\rho k.
\end{align}

We obtain
\begin{align}
\nonumber
\mindeg_{\GD}(V_1(\M)\setminus
Y_\PARAMETERPASSING{L}{lem:clean-Match},X_2)&
\ge  \mindeg_{\GD}\big(V_1(\M)\setminus
Y_{\bar M},V(\bar \M)\big)-\maxdeg_{\GD}(V_1(\M),
\bigcup \mathcal F')\\ 
\JUSTIFY{by def of $\bar \M$, ~\eqref{cl:laptopN}} & \ge  \nonumber
\mindeg_{\GD}\big(V_1(\M),V(\M_A\cup \M_B)\colouringpI{1}\big)\\
\nonumber &~~~-\maxdeg_{\GD}(V_1(\M)\setminus 
 Y_{\bar M}, V_{\mathrm{leftover}})
 -\frac {2\eta^3k}{10^3}\\
 \nonumber
  \JUSTIFY{by~\eqref{tikitikitiki} and by def of $Y_\PARAMETERPASSING{L}{lem:clean-Match}$} &\ge  \proporce{1}\left(1+\frac{\eta}{20}\right)k -\frac{\rho k}{100\Omega^*}-\frac{\eta^3 k}{10^3}-\rho k-\frac{\eta^2 k}{1000}-\frac {2\eta^3k}{10^3}\\
  \label{eq:densecase-degToV_1N}
&\ge
 \proporce{1}(1+\frac{\eta}{30})k\;.
\end{align}

Since the last term is greater than
$\gamma_\PARAMETERPASSING{L}{lem:clean-Match} k = \frac {\eta^4}{10^{11}}k$ by~\eqref{eq:proporcevelke}, we
see that Condition~\ref{hyp:yel-deg} of Lemma~\ref{lem:clean-Match} is satisfied.

Lemma~\ref{lem:clean-Match} outputs three non-empty sets
$X_0',X_1',X_2'$ disjoint from $Y_\PARAMETERPASSING{L}{lem:clean-Match}$,
together with $(4\bar\epsilon,\frac{\bar d}4)$-super-regular pairs
$\{Q_0^{(j)},Q_1^{(j)}\}_{j\in\mathcal Y}$ which cover $(X'_0,X'_1)$ with the
following properties.
\begin{align} 
\label{eq:MaSi}
\JUSTIFY{by
Lemma~\ref{lem:clean-Match}~\eqref{conc:Match-superreg}}&\min\left\{|Q_0^{(j)}|,|Q_1^{(j)}|\right\}\ge
\frac{\beta k}{2}\;\mbox{for each $j\in\mathcal Y$}\;,\\
\label{eq:towardsD92N}\JUSTIFY{by
Lemma~\ref{lem:clean-Match}~\eqref{conc:Match-deg}}&\mindeg_{\GD}(X_2',X_1')\ge
\delta_\PARAMETERPASSING{L}{lem:clean-Match} k\;,\\ 
\begin{split}
\label{eq:towardsD93N}\JUSTIFY{by
Lemma~\ref{lem:clean-Match}~\eqref{conc:Match-avoid} and \eqref{eq:densecase-degToV_1N}}&\mindeg_{\GD}(X_1',X_2')
\ge\proporce{1}(1+\frac{\eta}{30})k-\frac
{\eta^4k}{2\cdot 10^{11}}\\& ~~~~~~~~~~~~~~~~~~~~~~~~~\ge \proporce{1}(1+\frac{\eta}{40})k\;.
\end{split}
\end{align}

We now verify that the sets $X_0',X_1',X_2'$, 
the \semiregular matching $\mathcal
N_{\PARAMETERPASSING{D}{def:CONF9}}:=\bar\M$ together with the
$(\M_A\cup\M_B)$-cover $\mathcal F'$, and the family $\{(Q_0^{(j)},Q_0^{(j)})\}_{j\in\mathcal Y}$ satisfy all the conditions of
Configuration~$\mathbf{(\diamond9)}(\delta_{\PARAMETERPASSING{L}{lem:clean-Match}},\frac
{2\eta^3}{10^3}, \proporce{1}(1+\frac{\eta}{40})k,
\proporce{2}\left(1+\frac{\eta}{20}\right)k, \frac{400\varepsilon}{\eta},
\frac{d}2, \frac{\eta\pi\clustersize}{200k},4\epsilonD,\gamma^3\rho/32\Omega^*, \eta^2 \nu/2\cdot 10^4)$.

By Lemma~\ref{lem:propertyYA12}, since we are in case $\mathbf{(cB)}$ and by~\eqref{cl:laptopN}, the pair
$X_0',X_1'$ together with the $(\M_A\cup \M_B)$-cover $\mathcal F'$ witnesses
Preconfiguration~$\mathbf{(\heartsuit1)}(\frac
{2\eta^3}{10^3},\proporce{2}\left(1+\frac{\eta}{20}\right)k)$. By Lemma~\ref{lem:MRes}~\eqref{eq:M-semiregN},
$\bar\M$ is as required for Configuration~$\mathbf{(\diamond9)}$.

To see that $G$ is in Preconfiguration~$\mathbf{(reg)}(4\epsilonD,\gamma^3\rho/32\Omega^*, \eta^2 \nu/2\cdot 10^4)$, note that $4\bar\eps\le 4\epsilonD$ and $\bar d/4\ge  \gamma^3\rho/32\Omega^*$ (in both cases {\bf (M1)} and  {\bf (M2)}).
Further,
 Property~\eqref{COND:reg:0} follows from~\eqref{eq:MaSi} since $\beta/2\ge \eta^2 \nu/2\cdot 10^4$.
 
  Finally, by definition of
$X_2$, the set $X_2'$ is as required, with
 Property~\eqref{conf:D9-XtoV} following from~\eqref{eq:towardsD93N}, and Property~\eqref{conf:D9-VtoX} following from~\eqref{eq:towardsD92N}.
\end{proof}

We are now reaching the last lemma of this section, dealing with the last remaining case.

\begin{lemma}\label{whatwegetfrom(t5)}
In Case $\mathbf{(t5)}\mathbf{(cA)}$ we get Configuration
$\mathbf{(\diamond10)}\big(\epsilon,\frac{\gamma^2
d}{2},\pi\sqrt{\epsilon'}\nu k, \frac
{(\Omega^*)^2k}{\gamma^2},\frac{\eta}{40}\big)$.
\end{lemma}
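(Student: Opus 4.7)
I would take $\V := \clusters$ and $\tilde G := \Gblack$. Verifying that $(\tilde G, \V)$ is an $(\epsilon, \gamma^2 d/2, \pi\sqrt{\epsilon'}\nu k, (\Omega^*)^2 k/\gamma^2)$-regularized graph is routine: cluster sizes are $\clustersize \ge \nu k \ge \pi\sqrt{\epsilon'}\nu k$; $\tilde G[C]$ is empty for each $C \in \clusters$; each non-empty pair $\Gblack[C_1, C_2]$ is $\epsilon'$-regular of density at least $\gamma^2$ by Definition~\ref{bclassdef}\eqref{defBC:RL} (and $\epsilon'\le\epsilon$, $\gamma^2\ge\gamma^2 d/2$ via~\eqref{eq:KONST}); finally, by Definition~\ref{bclassdef}\eqref{defBC:prepartition}, any cluster $C$ is contained in at most $\Omega^*/\gamma$ dense spots of $\DenseSpots$ (Fact~\ref{fact:boundedlymanyspots}), each of size at most $\Omega^* k/\gamma$ (Fact~\ref{fact:sizedensespot}), giving $|\bigcup\neighbour_{\Gblack}(C)|\le (\Omega^*)^2 k/\gamma^2$.

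For the consistent semiregular matching $\M^\star$: in case $\mathbf{(M1)}$, since $\M$ is absorbed by $\Mgood\colouringpI{0}$, each $(X,Y)\in\M$ sits inside a unique cluster pair of $\Mgood$, and $\M^\star$ is defined to be the collection of these cluster pairs. This is semiregular in $(\tilde G,\V)$ with the required parameters, since $\Mgood$-pairs correspond to $\BGblack$-edges (Setting~\ref{commonsetting}\eqref{commonsettingMgood}). In case $\mathbf{(M2)}$ the $\M$-parts may span several clusters, but $V(\M)\cap\smallatoms=\emptyset$ implies every $\M$-edge lies in $\Gblack$; the pairs $(X_i,Y_i)\in\M$ together carry many $\Gblack$-edges between $V_1(\M)$ and $V_2(\M)$, and a greedy extraction in $\BGblack$ (using the cluster-degree bound $(\Omega^*)^2/(\gamma^2\nu)$) yields $\M^\star$. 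Picking any pair $(A,B)\in\M^\star$ gives condition~(a) of $\mathbf{(\diamond10)}$.

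The central degree bound is $\deg_{\tilde G}(v)\ge (1+\eta/12)k$ for every $v\in V(\M)$. Indeed, $v\in\XA$ gives $\deg_G(v)\ge(1+\eta)k$; $v\notin L_\sharp\subseteq \gP$ (from case $\mathbf{(cA)}$) gives $\deg_{\Gcapt}(v)\ge(1+\tfrac{9\eta}{10})k$; $v\notin\WantiC\subseteq\gP$ gives $\deg_G(v,\HugeVertices)\le\tfrac{\eta k}{100}$; case $\mathbf{(t5)}$ supplies $v\notin\shadow_{\Gcapt}(V(\Gexp),\rho k)$, hence $\deg_{\Gexp}(v)\le \deg_{\Gcapt}(v,V(\Gexp))\le \rho k$, and $v\notin\largeintoatoms$, hence $\deg_G(v,\smallatoms)=\deg_{\Gcapt}(v,\smallatoms)\le\tfrac{\rho k}{100\Omega^*}$ (the equality using that edges to $\smallatoms$ are captured and $v\notin\HugeVertices$). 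For $v\in\bigcup\clusters$ the local identity $\deg_{\Gcapt}(v)=\deg_{\Gblack}(v)+\deg_{\Gexp}(v)+\deg_G(v,\HugeVertices)+\deg_G(v,\smallatoms)$ then yields $\deg_{\tilde G}(v)=\deg_{\Gblack}(v)\ge(1+\tfrac{\eta}{12})k\ge(1+\tfrac{\eta}{40})k$.

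Set $W:=\{v\in\bigcup\clusters:\deg_{\tilde G}(v)\ge(1+\eta/40)k\}$ and $\LargeTen:=\{C\in\V:|C\cap W|\ge(1-\epsilon)|C|\}$. Then $V(\M)\subseteq W$, and condition~(c) of $\mathbf{(\diamond10)}$ is immediate by definition. The main obstacle is verifying condition~(b): for all but $\tilde\epsilon|A|$ vertices $v\in A$, and similarly for $B$, we need $\deg_{\tilde G}(v, V(\M^\star)\cup\bigcup\LargeTen)\ge(1+\eta/40)k$. Since $A,B$ are clusters in $\XA$ (in case $\mathbf{(M1)}$ by definition of $\Mgood$, and in case $\mathbf{(M2)}$ by choosing $A,B$ to be clusters substantially populated by $V(\M)$), the same calculation as above gives $\deg_{\tilde G}(v)\ge(1+\eta/12)k$ for all but a small exceptional fraction of $A$, namely the vertices of $A\cap(\gP\cup\shadow_{\Gcapt}(V(\Gexp),\rho k)\cup\largeintoatoms)$, which is small on average over pairs of $\M^\star$ by the bounds on $|\gP|,|L_\sharp|,|\WantiC|$ from Lemma~\ref{lem:YAYB}. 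The remaining task is showing that the $\Gblack$-neighbors of such $v$ land mostly in $V(\M^\star)\cup\bigcup\LargeTen$, which follows from an averaging argument exploiting that $V(\M)$ has linear size $\Omega(\rho n/\Omega^*)$ and concentrates in clusters that predominantly qualify for $\LargeTen$, while the $\BGblack$-neighborhood of $A$ is confined to at most $(\Omega^*)^2/(\gamma^2\nu)$ clusters.
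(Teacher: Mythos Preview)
Your central degree bound $\deg_{\Gblack}(v)\ge(1+\eta/12)k$ for $v\in V(\M)$ is correct and matches the paper's computation~\eqref{eq:3veci2N}. But the proof breaks down at condition~(b) of $\mathbf{(\diamond10)}$, and the handwave at the end cannot be filled in.

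The problem is your choice of regularized graph and matching. You take $\V=\clusters$ and let the matching $\M^\star$ be something small carved out of the input $\M$. For condition~(b) you then need, for most $v\in A$, that $\deg_{\Gblack}(v,V(\M^\star)\cup\bigcup\LargeTen)\ge(1+\eta/40)k$. Since $\M^\star$ is tiny, essentially all of this degree must land in $\bigcup\LargeTen$, i.e., in clusters almost entirely made up of vertices of large $\Gblack$-degree. But there is no reason for this: a $\Gblack$-neighbour of $v$ can perfectly well sit in a cluster dominated by vertices of $\smallvertices{\eta}{k}{G}$, or by large-degree vertices that lose their degree to $\HugeVertices$, $\smallatoms$, or $\Gexp$. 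Your ``averaging argument'' --- that $V(\M)$ has linear size and sits in $W$ --- says nothing about where the \emph{neighbours} of $V(\M)$ lie. Knowing the $\BGblack$-neighbourhood of $A$ has bounded size does not help either.

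The paper's fix is structural, not averaging. It refines the partition to $\V^\circ:=\V(\M_A\cup\M_B)\cup(\gC\setminus\gC^-)$, where each cluster is split into its $\M_A\cup\M_B$-parts and a residual piece with $L_\#\cup\WantiC\cup\gP_1$ removed. This makes $\M_A\cup\M_B$ itself consistent with $(\tilde G,\V^\circ)$, and it is $\M_A\cup\M_B$ --- not anything derived from the input $\M$ --- that serves as the matching in $\mathbf{(\diamond10)}$. The set $\mathcal L^\circ$ then consists of residual pieces with high minimum degree in $G^\circ$. The key structural fact (the paper's~\eqref{fact:VplusCapG0}) is that every vertex of $V_+\cap V(G^\circ)$ not already in $V(\M_A\cup\M_B)\cup\bigcup\mathcal L^\circ$ must lie in $V(\Gexp)\cup(\largeintoatoms\cap\largevertices{\eta}{k}{G})$. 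Since $v\in V(\M)$ avoids the shadows of both sets by $\mathbf{(t5)}$ (and avoids $R$ in case $\mathbf{(cA)}$), its degree into $\Vgood$ is forced almost entirely into $V(\M_A\cup\M_B)\cup\bigcup\mathcal L^\circ$. This is what makes condition~(b) go through; the input matching $\M$ is used only to locate the single pair $(A,B)$ via Subclaim~\ref{lem:ABobt3}, not as the matching of the configuration.
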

\begin{proof}
Since we are in case  $\mathbf{(t5)}$, we have  $V(\M)\subset V(\Gblack)$. Therefore,
\begin{align}
\mindeg_{\Gblack}(V(\M),\Vgood)&\ge \notag
\mindeg_{\Gcapt}(V(\M),V_+\setminus L_\sharp)
-\maxdeg_{\Gcapt}(V(\M),\HugeVertices)\\
\notag
&~~~-\maxdeg_{\Gcapt}(V(\M),\smallatoms)
-\maxdeg_{\Gcapt}(V(\M),V(\Gexp))\\
\label{eq:3veci2N}
&\ge (1+\frac\eta{20})k,
\end{align}
where the last line follows as $V(\M)\subseteq \XA\sm\gP\subseteq\YA\setminus\WantiC$ by $\mathbf{(cA)}$ and furthermore, $V(\M)\cap (\shadow_{G}(V(\Gexp),\rho k)\cup \largeintoatoms)=\emptyset$ by  $\mathbf{(t5)}$.

Define
\begin{align}
\nonumber
\gC&:=\left\{C\setminus \big(L_\#\cup V(\M_A\cup \M_B)\cup
\WantiC\cup\gP_1\big)\::\: C\in\clusters\right\}\;,
\index{mathsymbols}{*C@$\gC$}
\\ \nonumber
\gC^-&:=\left\{C\in\gC\::\:
|C|<\sqrt{\epsilon'}\clustersize\right\}\;,
\index{mathsymbols}{*C@$\gC^-$}
\end{align}

 We have
 \begin{equation}
 \left|\bigcup \gC^-\right|\leq \sum_{C\in\gC}\sqrt{\epsilon'}|C|\le
\sqrt{\epsilon'} n\;.\label{eq:CminusMala}
 \end{equation}

Set $\V^\circ:= \V(\M_A\cup \M_B)\cup (\gC\setminus \gC^-)$ and let $G^\circ$ be
the subgraph of $G$ with vertex set $\bigcup \V^\circ$ and all edges from $E(\Gblack)$
induced by $\bigcup \V^\circ$ plus all edges of $E(\Gcapt)\setminus E(\Gexp)$
between $X$ and $Y$ for all $(X,Y)\in\M_A\cup \M_B$.
Apply Fact~\ref{fact:BigSubpairsInRegularPairs} (and recall Definition~\ref{bclassdef}~\eqref{defBC:RL}) to see that each pair of sets $X,Y\in\V^\circ$ forms an $\eps$-regular pair of density either $0$ or at least $\gamma^2d/2$ (whose edges either lie in $\Gblack$ or touch $\smallatoms$).

Next, observe that from Setting~\ref{commonsetting}\eqref{commonsetting2},
Fact~\ref{fact:sizedensespot} and Fact~\ref{fact:boundedlymanyspots}, and using Definition~\ref{bclassdef}\eqref{defBC:prepartition}, we find that for all $X\in \V^\circ$ which lie in some cluster of $\clusters$, we have
$|\bigcup \neighbour_{G^\circ}(X)|\le |\bigcup\neighbour_{\GD}(X)|\le 
\frac {\Omega^*}{\gamma}\cdot \frac {\Omega^* k}{\gamma}$.
Also, observe that for all $X\in \V^\circ$ which do not lie in some cluster of
$\clusters$, we know from Setting~\ref{commonsetting}\eqref{commonsetting3}
that $X$ does not see any edges from $E(\Gblack)$. This means that  $\bigcup
\neighbour_{G^\circ}(X)$ is contained in the partner of $X$ in $\M_A\cup M_B$
(which has size at most $\clustersize\le \epsilon'k$ by
Setting~\ref{commonsetting}\eqref{commonsetting3} and
Definition~\ref{bclassdef}\eqref{Csize}).

Thus we obtain that
\begin{equation}\label{lem:regularizedobt3}
\text{$(G^\circ,{\V}^\circ)$ is
an $(\epsilon, \frac{\gamma^2
d}2,\pi\sqrt{\epsilon'}\clustersize, \frac
{(\Omega^*)^2k}{\gamma^2})$-regularized graph.}
\end{equation}

 Define $$\mathcal
L^\circ:=\left\{X\in \V^\circ\setminus  \V(\M_A\cup \M_B)\::\: \mindeg_{G^\circ}(X)\ge
(1+\frac{\eta}2)k\right\}.$$

We claim that the following holds.
\begin{claim}\label{lem:thedesiredAandBdoexist}
There are  distinct $X_A,X_B\in \V^\circ$, with $E(G^\circ[X_A,X_B])\neq \emptyset$, such that we have $\deg_{\Gblack}(v,V(\M_A\cup\M_B)\cup\bigcup\mathcal L^\circ)\ge
(1+\frac{\eta}{40})k$ for all but at most $2\epsilon'\clustersize$ vertices $v\in X_A$, and all but at most $2\epsilon'\clustersize$ vertices $v\in X_B$. 
\end{claim}

Then, setting $\tilde G_\PARAMETERPASSING{D}{def:CONF10}:=G^\circ$, $\V_\PARAMETERPASSING{D}{def:CONF10}:=\V^\circ$, $\M_\PARAMETERPASSING{D}{def:CONF10}:=\M_A\cup \M_B$, $\mathcal L^*_\PARAMETERPASSING{D}{def:CONF10}:=\mathcal L^\circ$, $A_\PARAMETERPASSING{D}{def:CONF10}:= X_A$, and  $B_\PARAMETERPASSING{D}{def:CONF10}:= X_B$,
we have obtained
Configuration~$\mathbf{(\diamond10)}\big( \epsilon, \frac{\gamma^2
d}2,\pi\sqrt{\epsilon'}\nu k,\frac {(\Omega^*)^2k}{\gamma^2},\eta/40 \big)$.
Indeed,
 using~\eqref{lem:regularizedobt3}, and the definition of $\mathcal L^\circ$ we see that 
$( \tilde
G_\PARAMETERPASSING{D}{def:CONF10},\V_\PARAMETERPASSING{D}{def:CONF10})$,
$\M_\PARAMETERPASSING{D}{def:CONF10}$ and $\mathcal
L^*_\PARAMETERPASSING{D}{def:CONF10}$ are as desired and
fulfil~\eqref{diamond10cond3}.
  Claim~\ref{lem:thedesiredAandBdoexist} together with the fact that
  $\deg_{G^\circ}(v, V(\M_A\cup \M_B)\cup \bigcup \mathcal L^\circ)\ge
  \deg_{\Gblack}(v, V(\M_A\cup \M_B)\cup \bigcup \mathcal L^\circ)$ for all
  $v\in V(G^\circ)$ ensure that also \eqref{diamond10cond1}
  and~\eqref{diamond10cond2} hold.

\medskip

It only remains to prove Claim~\ref{lem:thedesiredAandBdoexist}.

\begin{proof}[Proof of Claim~\ref{lem:thedesiredAandBdoexist}]
 In order  to find $X_A$ and $X_B$ as in the statement of the claim, we shall exploit the matching $\M$; the relation between $\M$ and
$(G^\circ,{\V}^\circ)$, $\M_A\cup\M_B$, and $\mathcal L^\circ$ is not direct. We
proceed as follows. In  Subclaim~\ref{lem:ABobt3} we find a suitable $\M$-edge.
In case $\mathbf{(M1)}$ this $\M$-edge gives readily a suitable pair
$(A_\PARAMETERPASSING{D}{def:CONF10},B_\PARAMETERPASSING{D}{def:CONF10})$. In
case $\mathbf{(M2)}$ we have to work on the $\M$-edge to get a suitable
$\BGblack$-edge, this will be done  in Subclaim~\ref{cl:CACB}. Only then do we
find $(A_\PARAMETERPASSING{D}{def:CONF10},B_\PARAMETERPASSING{D}{def:CONF10})$.

\begin{subclaim}\label{lem:ABobt3}
There is an $\M$-edge $(A,B)$ such that
$\deg_{\Gblack}(v,V(\M_A\cup\M_B)\cup\bigcup\mathcal L^\circ)\ge
(1+\frac{\eta}{40})k+\frac{\eta k}{200}$ for at least $|A|/2$ vertices $v\in A$, and at least $|B|/2$
vertices $v\in B$.
\end{subclaim}
\begin{proof}[Proof of Subclaim~\ref{lem:ABobt3}]
Set
$S:=\shadow_{\Gblack}(\bigcup \gC^-,\frac{\eta
 k}{200})$, and note that by Fact~\ref{fact:shadowbound} we have $|S|\le |\bigcup\mathcal C^-|\cdot \frac {200\Omega^*}{\eta}$. So, setting
 $\M_S:=\{(X,Y)\in\M\::\:|(X\cup Y)\cap S|\ge |X\cup Y|/4\}$ we find that $$|V(\M_S)|\ \le \  4|S| \ \overset{\eqref{eq:CminusMala}}\le \ \frac
{800\sqrt{\epsilon'}\Omega^*n}{\eta} \ < \ \frac{\rho n}{ \Omega^*} \ \leq \
|V(\M)|,$$ where the last inequality holds by the assumption of
Lemma~\ref{whatwegetfrom(t5)}. Consequently, $\M\neq\M_S$.

Let $(A,B)\in\M\setminus \M_S$. We will show that $(A,B)$ satisfies the
requirements of the subclaim. 
 We start by proving that
 \begin{equation}\label{fact:VplusCapG0}
V_{+}\cap V(G^\circ)\sm (V(\M_A\cup \M_B)\cup \bigcup \mathcal L^\circ)\subseteq  V(\Gexp)\cup
(\largeintoatoms\cap \largevertices{\eta}{k}{G}).
 \end{equation}
Indeed, observe that by~\eqref{defV+eq},
 \begin{align*}
 V_{+}\cap V(G^\circ) &\subseteq V(\M_A\cup \M_B)\cup V(\Gexp) \cup \big(
 \largevertices{\eta}{k}{G}\sm (L_\sharp \cup\WantiC\cup \gP_1)\big)\\
 &\subseteq V(\M_A\cup \M_B)\cup V(\Gexp) \cup 
 \big(\largevertices{\frac
 {9\eta}{10}}{k}{\Gcapt}
 \setminus (\WantiC\cup \gP_1)\big)\;.
 \end{align*}
 
 So, in order to show~\eqref{fact:VplusCapG0}, it suffices to see that 
  for each $X\in \V^\circ\setminus \V(\M_A\cup \M_B)$ with $X\subseteq
 \largevertices{\frac
 {9\eta}{10}}{k}{\Gcapt}
 \setminus (\WantiC\cup \gP_1\cup V(\Gexp)\cup
 \largeintoatoms)$ we have $X\in \mathcal L^\circ$. So assume $X$ is as above. Let $v\in X$. We calculate
 \begin{align*}
 \deg_{\Gblack}(v, V(G^\circ))&\ge \deg_{\Gblack}(v, V(\M_A\cup \M_B))\\
 \JUSTIFY{$v\notin V(\Gexp)$}&\ge \left(1+\frac{9\eta}{10}\right)k-\deg_G(v,\HugeVertices)-\deg_{\GD}(v,
 \smallatoms)\\ 
 & \ -\deg_{\Gblack}(v, \bigcup \clusters\setminus V(\M_A\cup \M_B))\\
 \JUSTIFY{$v\notin \WantiC\cup\largeintoatoms\cup \gP_1\cup V(\M_A\cup
 \M_B)$}&\ge \left(1+\frac{9\eta}{10}\right)k -\frac {\eta k}{100}-\frac{\rho
 k}{100\Omega^*} -\gamma k\\
&\ge \left(1+\frac{\eta}{20}\right)k\;.
 \end{align*}
 We deduce that $X\in\mathcal L^\circ$, completing the proof of~\eqref{fact:VplusCapG0}.
 
 Next, observe that by the definition of $\mathcal C$, we have 
 \begin{align}
  V_+\cap V(G^\circ) & \notag \supseteq \Vgood\cap V(G^\circ)  \\ \notag & \supseteq \Vgood\sm \big( \Vgood\sm V(G^\circ)\big)\\  & \supseteq \Vgood\sm (\WantiC\cup \gP_1\cup \bigcup
\gC^- \cup \smallatoms\cup V(\Gexp)).\label{vorabrechnung}
 \end{align}

We are now ready to prove Subclaim~\ref{lem:ABobt3}.
For each vertex $v\in A\setminus S$, we have
\begin{align*}
\deg_{\Gblack}\left(v,V(\M_A\cup \M_B)\cup \bigcup \mathcal L^\circ\right)
&\ge  \deg_{\Gblack}(v, V_+\cap V(G^\circ))\\
 &~~~-\deg_{\Gblack}\left(v,(V_+\cap V(G^\circ))\setminus (V(\M_A\cup \M_B)\cup
 \mathcal L^\circ)\right)\\
\JUSTIFY{by~\eqref{vorabrechnung}, ~\eqref{fact:VplusCapG0}} 
&\ge
\deg_{\Gblack}(v, \Vgood)
-\deg_{\Gblack}(v, \WantiC\cup \gP_1\cup \bigcup
\gC^-)\\
&~~~-\deg_{\Gblack}(v,\smallatoms)-2\deg_{\Gblack}(v,V(\Gexp))\\
&~~~-\deg_{\Gblack}\Big(v,(\largeintoatoms\cap \largevertices{\eta}{k}{G})\setminus V(\M_A\cup\M_B)\Big)\\
 \JUSTIFY{by ~\eqref{eq:3veci2N}, as $v\not\in  S\cup\gP$, by $\mathbf{(t5)}$} 
 &\ge  \left(1+\frac{\eta}{20}\right)k-
\frac{\eta^2 k}{10^5}-\frac{\eta k}{200}-\frac{\rho k}{100\Omega^*}-2\rho k
-\frac{2\eta^2 k}{10^5}
\\
&>
  (1+\frac{\eta}{40})k+\frac {\eta k}{200}\;,
\end{align*}
where for the second to last inequality we used the abreviation  `by
$\mathbf{(t5)}$' to indicate that this case implies that  $v\notin
\shadow_{\Gcapt}(V(\Gexp), \rho k)\cup \shadow_{\Gcapt}((\largeintoatoms\cap
\largevertices{\eta}{k}{G})\setminus V(\M_A\cup M_B), \frac{2\eta^2k}{10^5})$.
As $|A\setminus S|\ge |A|/2$, we note that the set $A$ satisfies the requirements of the claim.

The same calculations hold for $B$. This finishes the proof of
Subclaim~\ref{lem:ABobt3}.
\end{proof}

The next auxiliary subclaim is needed in our proof of
Claim~\ref{lem:thedesiredAandBdoexist} in case {\bf(M2)}.
\begin{subclaim}\label{cl:CACB}
Suppose that case {\bf(M2)} occurs. Then there exists an edge $C_AC_B\in E(\BGblack)$ such that $\deg_{\Gblack}(v,V(\M_A\cup\M_B)\cup\bigcup\mathcal L^\circ)\ge
(1+\frac{\eta}{40})k+\frac{\eta k}{400}$ for all but at most
$2\epsilon'\clustersize$ vertices $v\in C_A$, and all but at most
$2\epsilon'\clustersize$ vertices $v\in C_B$. Moreover, there exist $A,B\in
\V(\M)$ such that $|C_A\cap A|>\sqrt{\epsilon'}\clustersize$ and $|C_B\cap
B|>\sqrt{\epsilon'}\clustersize$.
\end{subclaim}
\begin{proof}[Proof of Subclaim~\ref{cl:CACB}]
Let $(A,B)\in\M$ be given as in Subclaim~\ref{lem:ABobt3}.  Let $P_A\subset A$,
and $P_B\subset B$ be the vertices which fail the assertion of
Subclaim~\ref{lem:ABobt3}. Note that with this notation,
Subclaim~\ref{lem:ABobt3} states that
\begin{equation}\label{apalache}
|A\sm P_A|\geq |A|/2.
\end{equation}

Call a cluster $C\in \clusters$ \emph{$A$-negligible} if $|C\cap (A\setminus
P_A)|\le \frac {\gamma^3\clustersize}{16\Omega^*k}|A|$. Let $R_A$ be the union
of all $A$-negligible clusters. 

Recall that $(A,B)$ is entirely contained in one dense spot from
$(U,W;F)\in \DenseSpots_\class$ (cf.\ {\bf(M2)}). So by
Fact~\ref{fact:sizedensespot}, and since the spots in $\DenseSpots_\class$ are $(\frac{\gamma^3
k}4,\frac{\gamma^3 k}4)$-dense, we know that $\max\{|U|,|W|\}\leq
\frac{4\Omega^* k}{\gamma^3}$. In particular, there are at most $\frac{4\Omega^* k}{\gamma^3\clustersize}$ $A$-negligible clusters which intersect $A\cap R_A$.

As these clusters are all disjoint, we find that
 $$|(A\cap R_A)\sm P_A|\le \frac{4\Omega^* k}{\gamma^3\clustersize}\cdot  |C\cap (A\setminus
P_A)|\le \frac{|A|}{4}.$$ 
This gives 
$$|A\setminus (P_A\cup R_A)|\geBy{\eqref{apalache}} \frac{|A|}2-|(A\cap R_A)\sm P_A|\ge \frac{|A|}4\;.$$ 

Similarly, we
can introduce the notion of $B$-negligible clusters, and the set $R_B$, and get
$|(B\cap R_B)\sm P_B|\le \frac{|B|}{4}$ and $|B\setminus (P_B\cup R_B)|\ge \frac{|B|}{4}$. 

By the regularity of the pair $(A,B)$ there exists at least one edge $ab\in E\big(G^*[A\setminus (P_A\cup R_A),B\setminus (P_B\cup R_B)]\big)$, where $a\in A$, $b\in B$, and $G^*$ is the graph formed by the edges of $\DenseSpots_\class$. As $V(\M)\subset V(\Gblack)$ by the assumption of case {\bf(t5)}, we have that $ab\in E(\Gblack)$. Let $C_A,C_B\in\clusters$ be the clusters containing $a$ and $b$, respectively. Note that $C_AC_B\in E(\BGblack)$.

Now as $a\notin R_A$, also $C_A$ is disjoint from $R_A$, and thus $$|C_A\cap (A\setminus
 P_A)|>\frac {\gamma^3\clustersize}{16\Omega^*k}\cdot \frac {\alphaD\rho
 k}{\Omega^*} > \sqrt{\epsilon'}\clustersize\;.$$
 This proves the ``moreover'' part of the claim for $C_A$.
 So there are at least $2\epsilon'\clustersize$ vertices $v$ in $C_A$ with $\deg_{\Gblack}(v,V(\M_A\cup\M_B)\cup\bigcup\mathcal L^\circ)\ge (1+\frac\eta{40})k+\frac{\eta k}{200}$ (by the definition of $P_A$). By Lemma~\ref{lem:degreeIntoManyPairs}, and using Facts~\ref{fact:sizedensespot} and~\ref{fact:boundedlymanyspots}, 
 we thus have that $\deg_{\Gblack}(v,V(\M_A\cup\M_B)\cup\bigcup\mathcal L^\circ)\ge (1+\frac\eta{40})k+\frac{\eta k}{400}$ for all but at most $2\epsilon'\clustersize$ vertices $v$ of $C_A$.
The same calculations hold for $C_B$. 
\end{proof}

In the remainder of the proof of Claim~\ref{lem:thedesiredAandBdoexist} we have
to distiguish between cases {\bf(M1)} and {\bf(M2)}.

Let us first consider the case {\bf(M2)}.
Let $C_A,C_B\in\clusters$ and $A,B\in\V(\M)$ be given by Subclaim~\ref{cl:CACB}.   
We have $|C_A\setminus (\WantiC
\cup
L_\sharp \cup \gP_1)|> \sqrt{\epsilon'}|C_A|$ by Subclaim~\ref{cl:CACB} and by
the definition of $\M$ and the definition of $\gP$. 
Thus, $C_A\cap V(G^\circ)$ is non-empty. Let $X_A\in\V^\circ$ be an arbitrary
set in $C_A$. Similarly, we obtain a set $X_B\in\V^\circ$, $X_B\subset C_B$. The
claimed properties of the pair $(X_A,X_B)$ follow directly from
Subclaim~\ref{cl:CACB}.

It remains to treat the case {\bf(M1)}.
Let $(A,B)$ be from Subclaim~\ref{lem:ABobt3}. Let $(X_A,X_B)\in\Mgood$ be such
that $X_A\supset A$ and $X_B\supset B$. Claim~\ref{lem:ABobt3} asserts that at least $$\frac{|A|}2\geBy{\bf{(M1)}}\frac{\eta^2\clustersize}{2\cdot 10^4} > 2\epsilon'\clustersize$$ vertices of $A$ have large degree (in $\Gblack$) into the set $V(\M_A\cup\M_B)\cup\bigcup\mathcal L^\circ$. Therefore, by Lemma~\ref{lem:degreeIntoManyPairs}, 
 $X_A$ and $X_B$ satisfy the assertion of the
Claim.

This proves Claim~\ref{lem:thedesiredAandBdoexist}. 
 \end{proof}
Recall that Claim~\ref{lem:thedesiredAandBdoexist} was the only missing piece in the proof of Lemma~\ref{whatwegetfrom(t5)}. The proof of Lemma~\ref{whatwegetfrom(t5)} is thus complete.
\end{proof}

The proof of  Lemma~\ref{lem:ConfWhenMatching} follows by putting together Lemmas~\ref{whatwegetfrom(t1)}, \ref{whatwegetfrom(t2)}, \ref{whatwegetfrom(t3)}, \ref{ObtConf9}, and \ref{whatwegetfrom(t5)}.

\section{Acknowledgements}\label{sec:ACKN}
The work on this project lasted from the beginning of 2008 until 2014
and we are very grateful to the following institutions and funding bodies for
their support. 

\smallskip

During the work on this paper Hladk\'y was also affiliated with Zentrum
Mathematik, TU Munich and Department of Computer Science, University of Warwick. Hladk\'y was funded by a BAYHOST fellowship, a DAAD fellowship, 
Charles University grant GAUK~202-10/258009, EPSRC award EP/D063191/1, and by an EPSRC Postdoctoral Fellowship during the work on the project.

Koml\'os and Szemer\'edi acknowledge the support of NSF grant
DMS-0902241.

Piguet was also affiliated with the Institute of Theoretical Computer Science, Charles University in Prague, Zentrum
Mathematik, TU Munich, the Department of Computer Science and DIMAP,
University of Warwick, and the school of mathematics, University of Birmingham. The work leading
to this invention was supported by the European Regional Development Fund (ERDF), project ``NTIS --- New Technologies for Information Society'', European Centre of Excellence, CZ.1.05/1.1.00/02.0090.
The research leading to these results has received funding from the European Union Seventh
Framework Programme (FP7/2007-2013) under grant agreement no. PIEF-GA-2009-253925.
Piguet acknowledges the support of the Marie Curie fellowship FIST,
DFG grant TA 309/2-1, a DAAD fellowship,
Czech Ministry of
Education project 1M0545,  EPSRC award EP/D063191/1,
and  the support of the EPSRC
Additional Sponsorship, with a grant reference of EP/J501414/1 which facilitated her to
travel with her young child and so she could continue to collaborate closely
with her coauthors on this project. This grant was also used to host Stein in
Birmingham.

Stein was affiliated with the Institute of Mathematics and Statistics, University of S\~ao Paulo, and the Centre for Mathematical Modeling, University of Chile. She was
supported by a FAPESP fellowship, and by FAPESP travel grant  PQ-EX 2008/50338-0, also
CMM-Basal,  FONDECYT grants 11090141 and  1140766. She also received funding by EPSRC Additional Sponsorship EP/J501414/1.

We enjoyed the hospitality of the School of Mathematics of University of Birmingham, Center for Mathematical Modeling, University of Chile, Alfr\'ed R\'enyi Institute of Mathematics of the Hungarian Academy of Sciences and Charles University, Prague, during our long term visits.

The yet unpublished work of Ajtai, Koml\'os, Simonovits, and Szemer\'edi on the Erd\H{o}s--S\'os Conjecture was the starting point for our project, and our solution crucially relies on the methods developed for the Erd\H{o}s-S\'os Conjecture. Hladk\'y, Piguet, and Stein are very grateful to the former group for explaining them those techniques.

\medskip
A doctoral thesis entitled \emph{Structural graph theory} submitted by Hladk\'y in September 2012 under the supervision of Daniel Kr\'al at~Charles University in~Prague is based on the series of the papers~\cite{cite:LKS-cut0,cite:LKS-cut1, cite:LKS-cut2, cite:LKS-cut3}. The texts of the two works overlap greatly. We are grateful to PhD committee members Peter Keevash and Michael Krivelevich. Their valuable comments are reflected in the series. 

\bigskip
We thank the referees for their very detailed remarks.

\bigskip
The contents of this publication reflects only the authors' views and not necessarily the views of the European Commission of the European Union.

\printindex{mathsymbols}{Symbol index}
\printindex{general}{General index}

\newpage
\addcontentsline{toc}{section}{Bibliography}
\bibliographystyle{alpha}
\bibliography{bibl}

\newcommand{\etalchar}[1]{$^{#1}$}
\begin{thebibliography}{HPS{\etalchar{+}}15}

\bibitem[EFLS95]{EFLS95}
P.~Erd{\H{o}}s, Z.~F{\"u}redi, M.~Loebl, and V.~T. S{\'o}s.
\newblock Discrepancy of trees.
\newblock {\em Studia Sci. Math. Hungar.}, 30(1-2):47--57, 1995.

\bibitem[HKP{\etalchar{+}}a]{cite:LKS-cut0}
J.~Hladk{\'y}, J.~Koml{\'o}s, D.~Piguet, M.~Simonovits, M.~Stein, and
  E.~Szemer{\'e}di.
\newblock {The approximate Loebl--Koml\'os--S\'os Conjecture I: The sparse
  decomposition}.
\newblock Manuscript (arXiv:1408.3858).

\bibitem[HKP{\etalchar{+}}b]{cite:LKS-cut1}
J.~Hladk{\'y}, J.~Koml{\'o}s, D.~Piguet, M.~Simonovits, M.~Stein, and
  E.~Szemer{\'e}di.
\newblock {The approximate Loebl--Koml\'os--S\'os Conjecture II: The rough
  structure of LKS graphs}.
\newblock Manuscript (arXiv:1408.3871).

\bibitem[HKP{\etalchar{+}}c]{cite:LKS-cut2}
J.~Hladk{\'y}, J.~Koml{\'o}s, D.~Piguet, M.~Simonovits, M.~Stein, and
  E.~Szemer{\'e}di.
\newblock {The approximate Loebl--Koml\'os--S\'os Conjecture III: The finer
  structure of LKS graphs}.
\newblock Manuscript (arXiv:1408.3866).

\bibitem[HKP{\etalchar{+}}d]{cite:LKS-cut3}
J.~Hladk{\'y}, J.~Koml{\'o}s, D.~Piguet, M.~Simonovits, M.~Stein, and
  E.~Szemer{\'e}di.
\newblock {The approximate Loebl--Koml\'os--S\'os Conjecture IV: Embedding
  techniques and the proof of the main result}.
\newblock Manuscript (arXiv:1408.3870).

\bibitem[HP16]{HlaPig:LKSdenseExact}
J.~Hladk{\'y} and D.~Piguet.
\newblock Loebl-{K}oml\'os-{S}\'os {C}onjecture: dense case.
\newblock {\em J. Combin. Theory Ser. B}, 116:123--190, 2016.

\bibitem[HPS{\etalchar{+}}15]{LKS:overview}
J.~Hladk{\'y}, D.~Piguet, M.~Simonovits, M.~Stein, and E.~Szemer{\'e}di.
\newblock {The approximate Loebl--Koml\'os--S\'os conjecture and embedding
  trees in sparse graphs}.
\newblock {\em Electron. Res. Ann. Math. Sci.}, 22:1--11, 2015.

\bibitem[PS12]{PS07+}
D.~Piguet and M.~J. Stein.
\newblock An approximate version of the {L}oebl-{K}oml\'os-{S}\'os conjecture.
\newblock {\em J. Combin. Theory Ser. B}, 102(1):102--125, 2012.

\end{thebibliography}
\end{document}